\documentclass[11pt,a4paper,oneside,reqno]{amsart}
\usepackage{lmodern}
\usepackage[T1]{fontenc}
\usepackage[main=english, french]{babel}
\usepackage{amsopn}


\newtheorem{theorem}{Theorem}[section]

\newtheorem{corollary}[theorem]{Corollary}
\newtheorem{conjecture}[theorem]{Conjecture}
\newtheorem{lemma}[theorem]{Lemma}
\newtheorem{proposition}[theorem]{Proposition}

\newtheorem{definition}[theorem]{Definition}
\newtheorem{example}[theorem]{Example}
\newtheorem{remark}[theorem]{Remark}


\setlength{\parskip}{2pt}

\usepackage[all]{xy}
\usepackage{enumerate}
\usepackage{amsfonts,amssymb,amsmath,eucal,pinlabel,array,hhline}
\usepackage{slashed}
\usepackage{tabulary}
\usepackage{fancyhdr}
\usepackage{a4wide}
\usepackage{calrsfs,bbm,dsfont}
\usepackage[position=b]{subcaption}

\setlength\marginparwidth{2.4cm}
\setlength\marginparsep{.1cm}

\newcounter{notes}%

	\newcommand{\ignore}[1]{}  


\newcommand{\Li}{\mathrm{Li}_2}
\newcommand{\Vol}{\mathrm{Vol}}

	\usepackage{comment}
\usepackage{amsfonts}
\usepackage{multirow} 

\usepackage{brunnian} 

\usetikzlibrary{knots}

\usetikzlibrary{matrix}
\usetikzlibrary{arrows}

\usetikzlibrary{patterns}
\usetikzlibrary{shapes}
\usetikzlibrary{arrows}
\usetikzlibrary{decorations.markings}
\usetikzlibrary{decorations.pathreplacing}

\newcommand{\Log}{\mathrm{Log}}

\usetikzlibrary{decorations.pathreplacing,decorations.markings}

\tikzset{
  on each segment/.style={
    decorate,
    decoration={
      show path construction,
      moveto code={},
      lineto code={
        \path [#1]
        (\tikzinputsegmentfirst) -- (\tikzinputsegmentlast);
      },
      curveto code={
        \path [#1] (\tikzinputsegmentfirst)
        .. controls
        (\tikzinputsegmentsupporta) and (\tikzinputsegmentsupportb)
        ..
        (\tikzinputsegmentlast);
      },
      closepath code={
        \path [#1]
        (\tikzinputsegmentfirst) -- (\tikzinputsegmentlast);
      },
    },
  },
  mid arrow/.style={postaction={decorate,decoration={
        markings,
        mark=at position .5 with {\arrow[#1]{>}}
      }}}
      ,
}

\tikzset{
  on each segment/.style={
    decorate,
    decoration={
      show path construction,
      moveto code={},
      lineto code={
        \path [#1]
        (\tikzinputsegmentfirst) -- (\tikzinputsegmentlast);
      },
      curveto code={
        \path [#1] (\tikzinputsegmentfirst)
        .. controls
        (\tikzinputsegmentsupporta) and (\tikzinputsegmentsupportb)
        ..
        (\tikzinputsegmentlast);
      },
      closepath code={
        \path [#1]
        (\tikzinputsegmentfirst) -- (\tikzinputsegmentlast);
      },
    },
  },
  mid arrow d/.style={postaction={decorate,decoration={
        markings,
        mark=at position .5 with {\arrow[#1]{>>}}
      }}}
      ,
}

\tikzset{
  on each segment/.style={
    decorate,
    decoration={
      show path construction,
      moveto code={},
      lineto code={
        \path [#1]
        (\tikzinputsegmentfirst) -- (\tikzinputsegmentlast);
      },
      curveto code={
        \path [#1] (\tikzinputsegmentfirst)
        .. controls
        (\tikzinputsegmentsupporta) and (\tikzinputsegmentsupportb)
        ..
        (\tikzinputsegmentlast);
      },
      closepath code={
        \path [#1]
        (\tikzinputsegmentfirst) -- (\tikzinputsegmentlast);
      },
    },
  },
  mid arrow s/.style={postaction={decorate,decoration={
        markings,
        mark=at position .5 with {\arrow[#1]{stealth}}
      }}}
      ,
}

\tikzset{
  on each segment/.style={
    decorate,
    decoration={
      show path construction,
      moveto code={},
      lineto code={
        \path [#1]
        (\tikzinputsegmentfirst) -- (\tikzinputsegmentlast);
      },
      curveto code={
        \path [#1] (\tikzinputsegmentfirst)
        .. controls
        (\tikzinputsegmentsupporta) and (\tikzinputsegmentsupportb)
        ..
        (\tikzinputsegmentlast);
      },
      closepath code={
        \path [#1]
        (\tikzinputsegmentfirst) -- (\tikzinputsegmentlast);
      },
    },
  },
  mid arrow l/.style={postaction={decorate,decoration={
        markings,
        mark=at position .5 with {\arrow[#1]{latex}}
      }}}
      ,
}

\usepackage{graphicx}

\newcommand{\C}{\mathbb{C}}
\newcommand{\R}{\mathbb{R}}

\newcommand{\Z}{\mathbb{Z}}

\newcommand{\B}{\mathsf{b}}

\newcommand{\sarrow}{\,\mathbin{\rotatebox[origin=c]{90}{$\rightarrow$}}}
\newcommand{\darrow}{\,\mathbin{\rotatebox[origin=c]{90}{$\twoheadrightarrow$}}}

\usepackage{amssymb,amsmath,bm}
\usepackage{amsthm}
\usepackage{empheq}
\usepackage{textcomp}
\usepackage{enumerate}      
\usepackage{graphicx}        
\usepackage{caption}
\usepackage{nicematrix}
\usepackage{tikz-cd}
\usepackage{tikz}

\usepackage{colorinfo}

\usetikzlibrary{knots}

\usetikzlibrary{matrix}
\usetikzlibrary{arrows}

\usetikzlibrary{patterns}
\usetikzlibrary{shapes}
\usetikzlibrary{arrows}
\usetikzlibrary{decorations.markings}
\usetikzlibrary{decorations.pathreplacing}

\usepackage{mathrsfs}

\usepackage{url}

\renewcommand{\setminus}{{\smallsetminus}}

\usepackage{amssymb,amsmath,bm}
\usepackage{amsthm}
\usepackage{hyperref}
\usepackage{mathrsfs}
\usepackage{textcomp}
\usepackage{enumerate}
\usepackage{graphicx}
\usepackage{tikz}
\usepackage{verbatim}
\usepackage{nicematrix}



\numberwithin{equation}{section}

\def\Vol{\operatorname{Vol}}
\def \CC{\mathbb C}
\def \Re{\operatorname{Re}}
\def \arg{\operatorname{arg}}

\def\SS{{\mathbb S}}
\def\CC{{\mathbb C}}
\def\RR{{\mathbb R}}
\def\ZZ{{\mathbb Z}}

\def\QQ{{\mathbb Q}}

\def\Vol{\operatorname{Vol}}
\def\CS{\operatorname{CS}}
\def \Re{\operatorname{Re}}
\def \im{\operatorname{Im}}
\def \arg{\operatorname{arg}}
\def \Li{\operatorname{Li_2}}
\def \im{\operatorname{Im}}

\def \Log{\operatorname{Log}}
\def \BLog{\operatorname{\textbf{Log}}}
\def \Arg{\operatorname{Arg}}

\def \det{\operatorname{det}}
\def \Hess{\operatorname{Hess}}

\setlength{\tabcolsep}{1pt}

\addtolength{\columnsep}{2mm}

\title[Asymptotics aspects of Teichm\"{u}ller TQFT]{Asymptotics aspect of Teichm\"{u}ller TQFT for \\ generalized FAMED semi-geometric triangulations}
\author{Ka Ho Wong}

\begin{document}

\begin{abstract}
We introduce a generalized FAMED property for ideal triangulations of hyperbolic knot complements in $\SS^3$. Given a hyperbolic knot $K$ in $\SS^3$ and a semi-geometric triangulation $X$ of $\SS^3 \setminus K$ that is generalized FAMED with respect to the longitude. We prove that in the semi-classical limit $\hbar \to 0^+$, for any angle structure $\alpha$, the partition function $\mathscr{Z}_\hbar(X,\alpha)$ in Teichm\"uller TQFT decays exponentially with decrease rate the volume of $\SS^3 \setminus K$ equipped with a hyperbolic cone structure determined by $\alpha$, and that the 1-loop invariant of Dimofte-Garoufalidis emerges as the 1-loop term. With additional combinatorial conditions on the triangulations, we prove the existence of the Jones function and show that its decay rate is governed by the Neumann-Zagier potential function. In particular, the Andersen-Kashaev volume conjecture holds for every hyperbolic knot whose complement admits such kinds of triangulations.
\end{abstract}

\maketitle

\tableofcontents

\section{Introduction}
In \cite{BAW}, Ben-Aribi and the author introduce a new combinatorial notion called FAMED triangulations. The term FAMED stands for "Face Adjacency Matrices with Edge Duality". This condition plays an important role in the study of the asymptotics of partition functions in Teichm\"{u}ller TQFT defined by Andersen-Kashaev \cite{AK}. Precisely, Theorem 1.5 and Theorem 1.14 in \cite{BAW} prove the asymptotics expansion conjectures for the partition functions and Jones functions in Teichm\"uller TQFT for FAMED geometric triangulations. As a special case, the Andersen-Kashev volume conjecture holds for all FAMED geometric triangulations. 

In this paper, we generalize the results in \cite{BAW} in two directions. First, combinatorially, although it is a conjecture that every hyperbolic knot complement in $\SS^3$ admits a FAMED triangulation, there exists some ordered ideal triangulation that is not FAMED. As an attempt to resolve this problem, we introduce a \emph{generalized FAMED property} for ideal triangulations (see Definitions \ref{defgenFAMED}). Conjecturally, the condition holds for \emph{all} ordered ideal triangulations of hyperbolic knot complements in the three-sphere (computational supporting evidence will be provided in an upcoming preprint \cite{WC}). Next, geometrically, the existence of geometric triangulation is still an open problem in hyperbolic geoemetry. Nevertheless, it is known that for every cusped hyperbolic 3-manifold, there exists an ideal triangulation with geometric and flat tetrahedra. The standard way to obtain such an ideal triangulation is by triangulating the polyhedra decomposition of the Epstein-Penner decomposition of the cusped 3-manifold and inserting flat tetrahedra between two triangulated polyhedral faces if necessary. 
We call an ideal triangulation \emph{semi-geometric} if every tetrahedron in the triangulation is either geometric or flat. Our first main result provides an asymptotic expansion formula for partition functions in Teichm\"uller TQFT for any generalized FAMED semi-geometric triangulation (see Theorem \ref{mainthmZ}). For any hyperbolic knot $K$ in $\SS^3$ whose complement admits a generalized FAMED semi-geometric triangulation satisfying additional combinatorial properties (see Definition \ref{defgenFAMED5}), our second main result shows the existence of the Jones function in Teichm\"uller TQFT and provides an asymptotics expansion formula for the Jones function (see Theorem \ref{thm:Jones:genFAMED}), which in particular implies the Andersen-Kashaev volume conjecture for $K$.

\subsection{Combinatoric of ordered ideal triangulation and volume conjectures}
To state our main results, we briefly discuss the definitions of several matrices that will be used to construct the Teichm\"uller TQFT invariants of \cite{AK}. For simplicity, we only consider hyperbolic knot complements in $\SS^3$. Note that our notations are slightly different from those in \cite{AK, BAW}. 

Recall that a $3$-dimensional triangulation is \textit{ordered} (or \textit{branched}) if every tetrahedron is endowed with an order on its four vertices such that face gluings respect the relative vertex order (or equivalently, if the edges can be oriented so that no face is a $3$-cycle). See Figure \ref{fig:41:face:matrices} for an example we will develop in detail.

Let $K\subset \SS^3$ be a hyperbolic knot in $\SS^3$.
Let $X$ be an ordered ideal triangulation of $\SS^3\setminus K$ with $N$ tetrahedra $X^3 = \{T_1,\dots, T_N\}$.
If we rotate $T$ such that $0$ is in the center and $1$ at the top, then there are two possible places for vertices $2$ and $3$. We call $T$ a \textit{positive} (resp. \textit{negative}) tetrahedron if they are as shown on the left (resp. right) figure in the top row of Figure \ref{fig:41:face:matrices}. We denote $\varepsilon(T) \in \{ \pm 1\}$ the corresponding \textit{sign} of $T$.
We define $\mathcal{E}=\mathcal{E}(X)$ to be the diagonal matrix of size $N$ encoding the signs $\pm 1$ of the tetrahedra.

Let $X^2$ be the set of faces of $X$, of cardinal $2N$. For $k=0,1,2,3$, let $x_k : X^3 \to X^2$ be the map defined by sending a tetrahedron to its face that is opposite to the $k$-th vertex, and $\mathcal{X}_k \in M_{N,2N}(\Z)$ the matrix of coefficients
$(\mathcal{X}_k)_{i,j}:=\delta_{j\text{-th face},x_k(T_i)}$. Finally, define \textcolor{black}{matrices $\mathcal{B}:=\begin{pmatrix}
    0_N \\ \mathcal{E}
\end{pmatrix} \in M_{2N\times N}(\Z)$
and $\mathcal{A} = \begin{pmatrix}
    \mathcal{X}_0-\mathcal{X}_1+\mathcal{X}_2\\\mathcal{X}_2-\mathcal{X}_3
\end{pmatrix} \in M_{2N\times 2N}(\Z)$}. We informally call these matrices "face adjacency matrices" associated to the ordered triangulation $X$. See
Figure \ref{fig:41:face:matrices} for the matrices $\mathcal{X}_0, \mathcal{X}_1, \mathcal{X}_2, \mathcal{X}_3, \mathcal{A}, \mathcal{B}, \mathcal{E}$ of Thurston's ideal triangulation of the figure eight knot complement.

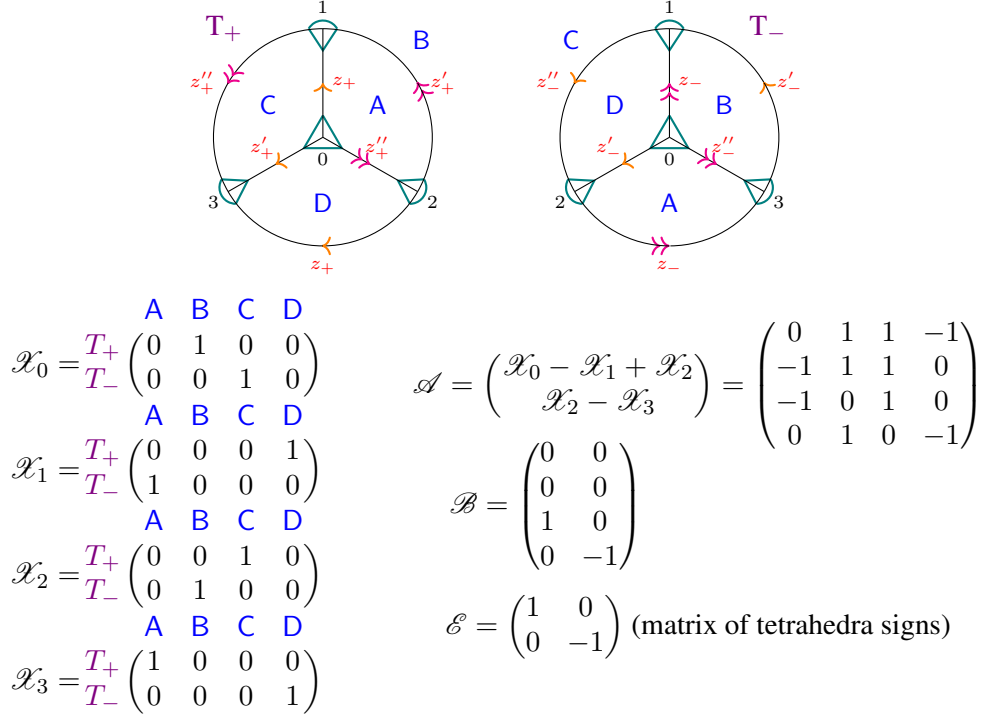
\begin{figure}[!h]
    \centering

\begin{tikzpicture}

\begin{scope}[scale=1.2]

\begin{scope}[xshift=0cm,yshift=0cm,rotate=0,scale=1.2]

\draw[color=violet] (-0.9,1) node{T$_+$} ;

\draw[color=red] (0.2,0.5) node{\tiny $z_+$} ;
\draw[color=red] (0,-1.2) node{\tiny $z_+$} ;
\draw[color=red] (0.5,-0.1) node{\tiny $z''_+$} ;
\draw[color=red] (-1.1,0.5) node{\tiny $z''_+$} ;
\draw[color=red] (-0.55,-0.1) node{\tiny $z'_+$} ;
\draw[color=red] (1.1,0.5) node{\tiny $z'_+$} ;

\draw (0,-0.2) node{\tiny $0$} ;
\draw (0,1.2) node{\tiny $1$} ;
\draw (1,-0.6) node{\tiny $2$} ;
\draw (-1,-0.6) node{\tiny $3$} ;

\begin{scope}[xshift=0cm,yshift=0cm,rotate=0,scale=.1]
    \draw[color=teal,thick] (0,2)--(-1.732,-1)--(1.732,-1)--(0,2);
\end{scope}

\begin{scope}[xshift=0cm,yshift=1cm,rotate=180,scale=.1]
    \draw[color=teal,thick ] (-1.3,0)--(0,2)--(1.3,0);
    \draw[color=teal,thick] (-1.3,0) arc (-150:-30:1.5);
\end{scope}

\begin{scope}[xshift=-0.86cm,yshift=-.5cm,rotate=-60,scale=.1]
    \draw[color=teal,thick ] (-1.3,0)--(0,2)--(1.3,0);
    \draw[color=teal,thick] (-1.3,0) arc (-150:-30:1.5);
\end{scope}

\begin{scope}[xshift=0.86cm,yshift=-.5cm,rotate=60,scale=.1]
    \draw[color=teal,thick ] (-1.3,0)--(0,2)--(1.3,0);
    \draw[color=teal,thick] (-1.3,0) arc (-150:-30:1.5);
\end{scope}

\draw[color=blue] (0.9,0.9) node{\small $\mathsf{\color{blue}B}$} ;
\draw[color=blue] (0,-0.6) node{\small $\mathsf{\color{blue}D}$} ;
\draw[color=blue] (-0.5,0.3) node{\small $\mathsf{\color{blue}C}$} ;
\draw[color=blue] (0.5,0.3) node{\small $\mathsf{\color{blue}A}$} ;


\path [draw=black,postaction={on each segment={mid arrow =orange, thick}}]
(0,0)--(-1.732/2,-0.5);

\path [draw=black,postaction={on each segment={mid arrow =orange, thick}}]
(0,0)--(0,1);

\path [draw=black,postaction={on each segment={mid arrow d =magenta, thick}}]
(0,0)--(1.732/2,-0.5);

\draw(1.732/2,-0.5) arc (-30:-89.5:1);
\draw[->,color=orange, thick](0,-1) arc (-89.5:-90:1);
\draw (0,-1) arc (-90:-150:1);

\draw(0,1) arc (90:29.5:1);
\draw[->>,color=magenta, thick](1.732/2,0.5) arc (29.5:30:1);
\draw (1.732/2,0.5) arc (30:-30:1);

\draw(0,1) arc (90:150:1);
\draw[->>,color=magenta, thick](-1.732/2,0.5) arc (149.5:150:1);
\draw (-1.732/2,0.5) arc (150:210:1);

\end{scope}

\begin{scope}[xshift=3.8cm,yshift=0cm,rotate=0,scale=1.2]

\draw[color=violet] (0.9,1) node{T$_-$} ;

\draw[color=red] (0.2,0.5) node{\tiny $z_-$} ;
\draw[color=red] (0,-1.2) node{\tiny $z_-$} ;
\draw[color=red] (0.5,-0.1) node{\tiny $z''_-$} ;
\draw[color=red] (-1.1,0.5) node{\tiny $z''_-$} ;
\draw[color=red] (-0.55,-0.1) node{\tiny $z'_-$} ;
\draw[color=red] (1.1,0.5) node{\tiny $z'_-$} ;

\draw (0,-0.2) node{\tiny $0$} ;
\draw (0,1.2) node{\tiny $1$} ;
\draw (1,-0.6) node{\tiny $3$} ;
\draw (-1,-0.6) node{\tiny $2$} ;

\begin{scope}[xshift=0cm,yshift=0cm,rotate=0,scale=.1]
    \draw[color=teal,thick] (0,2)--(-1.732,-1)--(1.732,-1)--(0,2);
\end{scope}

\begin{scope}[xshift=0cm,yshift=1cm,rotate=180,scale=.1]
    \draw[color=teal,thick ] (-1.3,0)--(0,2)--(1.3,0);
    \draw[color=teal,thick] (-1.3,0) arc (-150:-30:1.5);
\end{scope}

\begin{scope}[xshift=-0.86cm,yshift=-.5cm,rotate=-60,scale=.1]
    \draw[color=teal,thick] (-1.3,0)--(0,2)--(1.3,0);
    \draw[color=teal,thick] (-1.3,0) arc (-150:-30:1.5);
\end{scope}

\begin{scope}[xshift=0.86cm,yshift=-.5cm,rotate=60,scale=.1]
    \draw[color=teal,thick ] (-1.3,0)--(0,2)--(1.3,0);
    \draw[color=teal,thick] (-1.3,0) arc (-150:-30:1.5);
\end{scope}

\draw[color=blue] (-0.9,0.9) node{\small $\mathsf{\color{blue}C}$} ;
\draw[color=blue] (0,-0.6) node{\small $\mathsf{\color{blue}A}$} ;
\draw[color=blue] (-0.5,0.3) node{\small $\mathsf{\color{blue}D}$} ;
\draw[color=blue] (0.5,0.3) node{\small $\mathsf{\color{blue}B}$} ;


\path [draw=black,postaction={on each segment={mid arrow =orange, thick}}]
(0,0)--(-1.732/2,-0.5);

\path [draw=black,postaction={on each segment={mid arrow d =magenta, thick}}]
(0,0)--(0,1);

\path [draw=black,postaction={on each segment={mid arrow d =magenta, thick}}]
(0,0)--(1.732/2,-0.5);

\draw(1.732/2,-0.5) arc (-30:-89.5:1);
\draw[<<-,color=magenta, thick] (0,-1) arc (-89.5:-90:1);
\draw(0,-1) arc (-90:-150:1);

\draw(0,1) arc (90:29.5:1);
\draw[->,color=orange, thick](1.732/2,0.5) arc (29.5:30:1);
\draw (1.732/2,0.5) arc (30:-30:1);

\draw(0,1) arc (90:149.5:1);
\draw[->,color=orange, thick](-1.732/2,0.5) arc (149.5:150:1);
\draw (-1.732/2,0.5) arc (150:210:1);

\end{scope}

\end{scope}

\end{tikzpicture}

\begin{tikzpicture}

\draw (0,0) node {
$\mathcal{X}_0=\begin{pNiceMatrix}[first-row, first-col]
 &  \mathsf{\color{blue}A} & \mathsf{\color{blue}B} & \mathsf{\color{blue}C} & \mathsf{\color{blue}D} \\
\textcolor{violet}{T_+} & 0 & 1 & 0 & 0 \\
\textcolor{violet}{T_-} & 0 & 0 & 1 & 0
\end{pNiceMatrix}$
};

\draw (0,-1.4) node {
$\mathcal{X}_1=\begin{pNiceMatrix}[first-row, first-col]
 &  \mathsf{\color{blue}A} & \mathsf{\color{blue}B} & \mathsf{\color{blue}C} & \mathsf{\color{blue}D} \\
\textcolor{violet}{T_+} & 0 & 0 & 0 & 1 \\
\textcolor{violet}{T_-} & 1 & 0 & 0 & 0
\end{pNiceMatrix}$};

\draw (0,-2.8) node {
$\mathcal{X}_2=\begin{pNiceMatrix}[first-row, first-col]
 &  \mathsf{\color{blue}A} & \mathsf{\color{blue}B} & \mathsf{\color{blue}C} & \mathsf{\color{blue}D} \\
\textcolor{violet}{T_+} & 0 & 0 & 1 & 0 \\
\textcolor{violet}{T_-} & 0 & 1 & 0 & 0
\end{pNiceMatrix}$};

\draw (0,-4.2) node {
$\mathcal{X}_3=\begin{pNiceMatrix}[first-row, first-col]
 &  \mathsf{\color{blue}A} & \mathsf{\color{blue}B} & \mathsf{\color{blue}C} & \mathsf{\color{blue}D} \\
\textcolor{violet}{T_+} & 1 & 0 & 0 & 0 \\
\textcolor{violet}{T_-} & 0 & 0 & 0 & 1
\end{pNiceMatrix}$};

\draw (7,-.5) node {
$\mathcal{A} = \begin{pmatrix}
    \mathcal{X}_0-\mathcal{X}_1+\mathcal{X}_2\\\mathcal{X}_2-\mathcal{X}_3
\end{pmatrix}= \begin{pmatrix}
0&1&1&-1\\
-1&1&1&0\\
-1&0&1&0\\
0&1&0&-1
\end{pmatrix}$};

\draw (5,-1.6-.5) node {
$\mathcal{B}= \begin{pmatrix}
0&0\\
0&0\\
1&0\\
0&-1
\end{pmatrix}$};

\draw (7,-3.2-.5) node {
$\mathcal{E}=\begin{pmatrix}
    1&0\\0&-1
\end{pmatrix}$ (matrix of {tetrahedra signs})};

\end{tikzpicture}

\caption{Thurston's triangulation of $M=\SS^3 \setminus 4_1$, and the face adjacency matrices}
    \label{fig:41:face:matrices}
\end{figure}

Let rank$(\mathcal{A}) = r \leq 2N$. Let $\mathcal{E}_1, \mathcal{E}_2$ be product of elementary matrices such that 
\begin{align}\label{defmathcale}
\mathcal{E}_1 \mathcal{A} \mathcal{E}_2 = 
\begin{pmatrix}
I_r & O_1 \\
O_2 & O_3
\end{pmatrix},
\end{align}
where $O_1,O_2,O_3$ are zero matrices of suitable sizes.
Let $n = 2N-r$ be the nullity of $\mathcal{A}$. Write  
$$\mathcal{X}_0\mathcal{E}_2 = ( (\mathcal{X}_0\mathcal{E}_2)_r \mid (\mathcal{X}_0\mathcal{E}_2)_n) \quad \text{and} \quad \mathcal{E}_1\mathcal{B} = \begin{pmatrix} (\mathcal{E}_1\mathcal{B})_r \\ (\mathcal{E}_1\mathcal{B})_n \end{pmatrix}, $$
where $(\mathcal{X}_0\mathcal{E}_2)_r \in M_{N \times r}(\QQ), (\mathcal{X}_0\mathcal{E}_2)_n \in M_{N \times n}(\QQ)$, $(\mathcal{E}_1\mathcal{B})_r \in M_{r \times N}(\QQ)$ and $(\mathcal{E}_1\mathcal{B})_n \in M_{n\times N}(\QQ)$ respectively. In particular, we have 
\begin{align}\label{deltapart}
\begin{pmatrix}
(\mathcal{X}_0\mathcal{E}_2)_n^{\!\top} \\
(\mathcal{E}_1\mathcal{B})_n
\end{pmatrix} \in M_{2n \times N}(\QQ)
\end{align}
and
\begin{align}\label{defmathscrG}
\mathscr{G} = 
(\mathcal{X}_0\mathcal{E}_2)_r (\mathcal{E}_1\mathcal{B})_r  + \big((\mathcal{X}_0\mathcal{E}_2)_r (\mathcal{E}_1\mathcal{B})_r\big)^{\!\top}  
+
\frac{\mathcal{E}+ \rm{Id}_N}{2} \in M_{N\times N}(\QQ),
\end{align}
where the notation ${\!\top}$ denotes the transpose of a matrix. The matrices in (\ref{deltapart}) and (\ref{defmathscrG}) will show up in the kinematical kernel and the potential function of the Teichm\"uller TQFT partition function of $X$ (see Lemma \ref{KKdelta} and Proposition \ref{Tpartiexpress2} for details). In particular, the matrix in (\ref{deltapart}) imposes $2n$ linear equations on the integration variables due to the presence of the corresponding Dirac delta functions (see Definition \ref{newdefK}). 

We relate the matrices in (\ref{deltapart}) and (\ref{defmathscrG}) with Neumann-Zagier matrices with respect to the longitude as follows (a more detailed review will be provided in Section \ref{NZD}). Let $X^1=\{E_1,\dots, E_N\}$ be the set of 1-cells (edges) of the ideal triangulation $X$. It is known that for a knot complement, there exists a set of $N-1$ independent edges, in the sense that if the edge equations of these edges are satisfied, the last edge equation will automatically be satisfied. Furthermore, we assign $z_k$ to the edge with endpoints 0 and 1 and $z_k,z_k',z_k''$ have to cycle positively around each vertex in this order.  

Let $l$ be the the preferred longitude of $K$, which is null-homologous in the knot complement. Let $\mathrm{H}^\C_{X,l}(\mathbf{z})$ be the logarithmic holonomy of $l$ and let $\xi \in\CC$ be a complex number. We consider the following $N$ equations on logarithmic shape parameters $\BLog \mathbf{z},\BLog \mathbf{z'},\BLog \mathbf{z''}$ (where $\BLog \mathbf{z^{\cdot}}$ is the vector of parameters $\Log(z^{\cdot}_k)$) given by
\begin{itemize}
    \item the first $N-1$ gluing equations coming from edges of $X^1$ \textcolor{black}{and}
    \item the holonomy equation $\mathrm{H}^\C_{X,l}(\mathbf{z}) = \xi$.
\end{itemize}
This system of equations can be written
\begin{align}\label{equ}
 \mathbf{G} \BLog \mathbf{z} +\mathbf{G'} \BLog \mathbf{z'} + \mathbf{G''} \BLog \mathbf{z''} =
\begin{pmatrix}
    2i\pi \\ \vdots \\ 2i\pi \\ 
    \xi 
\end{pmatrix},
\end{align}
where $\mathbf{G},\mathbf{G'},\mathbf{G''} \in M_{N\times N}(\Z)$ are the \textit{Neumann-Zagier matrices} associated to the previous numbering of edges and the choice of curve $l$. We also define $\mathbf{A}:=\mathbf{G}-\mathbf{G'}$ and $\mathbf{B}:=\mathbf{G''}-\mathbf{G'}$ (see Section \ref{NZD}). By using the equation $\Log z + \Log z' + \Log z'' = \pi i,$ the system of equations can be rewritten as
\begin{align}\label{introglu}
    \mathbf{A} \BLog \mathbf{z} + \mathbf{B} \BLog \mathbf{z''} = i\boldsymbol{\nu} + \boldsymbol{\tilde{u}},
\end{align}
where $\boldsymbol{\nu} \in \pi\ZZ^{N}$ and $\boldsymbol{\tilde{u}} = (0,\dots,0, \xi)^{\!\top}$. 

Now, assume that nullity$(\mathbf{B})=2n = 2$ nullity$(\mathcal{A})$. Let $\mathbf{E} \in M_{N\times N}(\QQ)$ be the product of elementary matrices such that $(\mathbf{EB} | \mathbf{EA})$ is in the reduced row echelon form. It is known that rank$(\mathbf{A}|\mathbf{B}) = N$ (see \cite{NZ}). We write
$$
\mathbf{E}
=
\begin{pmatrix}
\mathbf{ E }_{N-2n}\\
\mathbf{ E }_{2n}
\end{pmatrix},
\mathbf{EB}
=
\begin{pmatrix}
\mathbf{ (EB) }_{N-2n}\\
\mathbf{O}
\end{pmatrix},
\mathbf{EA}
=
\begin{pmatrix}
 (\mathbf{EA})_{N-2n} \\
 (\mathbf{EA})_{2n}
\end{pmatrix}
,
$$
where $\mathbf{ (EA) }_{N-2n}\in M_{N-2n,N}(\QQ), \mathbf{ (EA) }_{2n}\in M_{2n,N}(\QQ), \mathbf{ (EB) }_{N-2n}\in M_{N-2n,N}(\QQ)$ and $\mathbf{O} \in M_{2n \times N}(\RR)$ is the $2n\times N$ zero matrix respectively. In particular, Equation (\ref{introglu}) becomes
\begin{empheq}[left=\empheqlbrace]{align}\label{introglu2}
    (\mathbf{EA})_{N-2n} \BLog \mathbf{z} + (\mathbf{EB})_{N-2n} \BLog \mathbf{z''} &= \mathbf{E}_{N-2n}( i \boldsymbol{\nu} + \boldsymbol{\tilde{u}}), \\
    (\mathbf{EA})_{2n} \BLog \mathbf{z} &= \mathbf{E}_{2n}( i \boldsymbol{\nu} + \boldsymbol{\tilde{u}}),
\end{empheq}
and the matrix $(\mathbf{EA})_{2n}$ imposes $2n$ linear equations on $\Log z_1, \dots, \Log z_N$. Using the symplectic properties of the Neumann-Zagier matrices, this linear subspace can be parametrized by using the matrix $(\mathbf{EB})_{N-2n}^{\!\top}$ (see Lemma \ref{NZpara}).

The following condition is a generalization of the FAMED condition introduced by Ben-Aribi and the author in \cite{BAW}. Conditions (3) and (4) in Definition \ref{defgenFAMED} below respectively ensure that
\begin{itemize}
    \item the system of linear equations imposed by the matrix in (\ref{deltapart}) and those imposed by $(\mathbf{EA})_{2n}$ are equivalent, and
    \item there is a correspondence between the critical point equation of the potential function and Equation (\ref{introglu2}). 
\end{itemize}
\begin{definition}\label{defgenFAMED} Let $K\subset \SS^3$ be a hyperbolic knot, $X$ be an ordered ideal triangulation of $\SS^3\setminus K$ and $\mathcal{A}$ be the face adjacency matrix associated to $X$ (as above).
Let $l$ be the preferred longitude of $K$. Let $\mathbf{A}, \mathbf{B}$ be the Neumann-Zagier matrices with respect to $l$ (as above). 
We say that $X$ is generalized FAMED with respect to $l$ if the following conditions hold.
\begin{enumerate}
\item The space of angle structures (see Section \ref{sub:angle:str}) is non empty: $\mathscr{A}_{X} \neq \emptyset$,
\item nullity$(\mathbf{B})$ $=$ 2nullity$(\mathscr{A}) < N$, 
\item The two $2n \times N$ matrices
$$
\mathbf{(EA)}_{2n}
\text{ and }
\begin{pmatrix}
(\mathcal{X}_0\mathcal{E}_2)_n^{\!\top} \\
(\mathcal{E}_1\mathcal{B})_n
\end{pmatrix}
$$
are row equivalent, and
\item The two $N \times 2N$ matrices
$$
\begin{pmatrix}
   (\mathbf{EB})_{N-2n} & (\mathbf{EA})_{N-2n}  \\
   \mathbf{O} & (\mathbf{EA})_{2n} 
\end{pmatrix} 
\text{ and }
\begin{pmatrix}
    (\mathbf{EB})_{N-2n} & \mathbf{(EB)}_{N-2n} \mathscr{G}  \\
    \mathbf{O} & (\mathbf{EA})_{2n} 
\end{pmatrix} 
$$
are row equivalent.
\end{enumerate}
\end{definition}

\begin{conjecture}\label{allgenFAMEDl}
    Every ordered ideal triangulation of a hyperbolic knot complement in $\SS^3$ that admits angle structures is generalized FAMED with respect to $l$.
\end{conjecture}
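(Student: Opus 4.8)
The plan is to reduce the conjecture to a purely linear-algebraic compatibility between two descriptions of the same hyperbolic data: on one side the face adjacency matrices $\mathcal{X}_0,\dots,\mathcal{X}_3,\mathcal{E},\mathcal{B}$ entering the Teichm\"uller TQFT kinematical kernel (Lemma \ref{KKdelta}) and potential (Proposition \ref{Tpartiexpress2}) through $(\ref{deltapart})$ and $(\ref{defmathscrG})$, and on the other the Neumann--Zagier matrices $\mathbf{G},\mathbf{G}',\mathbf{G}''$ (equivalently $\mathbf{A},\mathbf{B}$) recording how shape parameters cycle around edges and along the longitude. Since condition (1) is the hypothesis, everything reduces to (2), (3) and (4), each being a statement about row spaces of explicit integer matrices attached to $X$. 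First I would set up an explicit \emph{dictionary}: for a tetrahedron $T_i$ with its vertex order, the face $x_k(T_i)$ opposite vertex $k$ together with $\varepsilon(T_i)$ determines the local contribution of $T_i$ to every edge equation via the standard rule that $z,z',z''$ sit on opposite edge pairs and cycle positively; conversely the edge incidences used to build $\mathbf{G},\mathbf{G}',\mathbf{G}''$ are reconstructed from the $\mathcal{X}_k$ on the dual $2$--complex. Carrying this bookkeeping out uniformly (it is implicit in \cite{AK} and, in the FAMED case, in \cite{BAW}) should produce integer matrix identities expressing $\mathbf{A},\mathbf{B}$ through $\mathcal{X}_0-\mathcal{X}_1+\mathcal{X}_2$, $\mathcal{X}_2-\mathcal{X}_3$, $\mathcal{E}$ and the edge-incidence data.

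Given the dictionary, condition (2) --- $\mathrm{nullity}(\mathbf{B})=2\,\mathrm{nullity}(\mathcal{A})<N$ --- becomes a rank count: the kinematical kernel cuts the $2N$ face variables down to $n=\mathrm{nullity}(\mathcal{A})$ free directions, while the edge-plus-longitude system $(\ref{introglu})$ has, by the symplectic properties of \cite{NZ}, exactly $2n$ free directions in $\BLog\mathbf{z}$ after eliminating $\BLog\mathbf{z}''$; the factor $2$ is the doubling when one passes from the $z$'s alone to all shape variables, and $2n<N$ should follow because the longitude holonomy equation is independent from the $N-1$ independent edge equations. I would prove this by a Mayer--Vietoris / Euler-characteristic argument on the spine dual to $X$, or directly by comparing cokernels through the dictionary. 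Condition (3) --- row equivalence of $\mathbf{(EA)}_{2n}$ and $\left(\begin{smallmatrix}(\mathcal{X}_0\mathcal{E}_2)_n^{\!\top}\\(\mathcal{E}_1\mathcal{B})_n\end{smallmatrix}\right)$ --- then says the $2n$ Dirac-delta constraints of Definition \ref{newdefK} cut out the same subspace of $\QQ^N$ as the $2n$ equations $(\mathbf{EA})_{2n}\BLog\mathbf{z}=\mathbf{E}_{2n}(i\boldsymbol{\nu}+\boldsymbol{\tilde u})$; once both are identified with the annihilator of the span of the edge equations modulo the longitude this is immediate, and one only checks that the choices of $\mathcal{E}_1,\mathcal{E}_2$ and of $\mathbf{E}$ do not affect the relevant row spaces.

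The genuinely hard step is condition (4), row equivalence of the two $N\times 2N$ matrices built from $(\mathbf{EB})_{N-2n}$, $(\mathbf{EA})_{N-2n}$ and $(\mathbf{EB})_{N-2n}\mathscr{G}$. Unwinding the definitions, (4) asserts that substituting the Neumann--Zagier parametrization of the constraint subspace by $(\mathbf{EB})_{N-2n}^{\!\top}$ of Lemma \ref{NZpara} into both the gluing form $(\ref{introglu2})$ and the critical-point equation of the Teichm\"uller potential built from $\mathscr{G}$ yields the same linear system up to row operations --- i.e. the critical points of the Teichm\"uller TQFT potential are exactly the hyperbolic shapes, with the correct branch data. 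This is the ``quantum content of the gluing equations'' phenomenon, and the delicacy is that $\mathscr{G}=(\mathcal{X}_0\mathcal{E}_2)_r(\mathcal{E}_1\mathcal{B})_r+\big((\mathcal{X}_0\mathcal{E}_2)_r(\mathcal{E}_1\mathcal{B})_r\big)^{\!\top}+\tfrac{\mathcal{E}+\mathrm{Id}_N}{2}$ is the symmetrized Gauss-sum kernel coming from the quantum dilogarithm Boltzmann weights, not manifestly a Neumann--Zagier object; one must show that the symmetrization together with the correction term $\tfrac{\mathcal{E}+\mathrm{Id}_N}{2}$ reproduce precisely $\mathbf{A}$, $\mathbf{B}$ and the integer vector $i\boldsymbol{\nu}$. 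I expect a uniform proof to require identifying $\mathscr{G}$ conceptually --- realizing both the symplectic pairing $\mathbf{A}\mathbf{B}^{\!\top}=\mathbf{B}\mathbf{A}^{\!\top}$ and the combinatorial Gauss sum as coming from one intersection form on the dual spine --- rather than by direct manipulation; absent such an identification the practical route, taken in the companion work \cite{WC}, is to verify (2)--(4) computationally over the census of ordered triangulations, which strongly supports but does not prove the conjecture. Thus (1)--(3) look within reach by the linear-algebra-plus-topology route above, while (4) is where the real obstruction lies and where the conjecture remains genuinely open.
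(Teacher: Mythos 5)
The statement you were asked to prove is labelled \emph{Conjecture} \ref{allgenFAMEDl} in the paper, and the paper does not prove it: the authors explicitly defer to forthcoming computational evidence in \cite{WC} and treat it as open. So there is no ``paper's own proof'' to compare against, and your closing admission that the conjecture ``remains genuinely open'' is the correct assessment of the state of the art. That said, a few points about your sketch are worth flagging, since they suggest more optimism than the paper itself supports.

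First, your claim that conditions (1)--(3) ``look within reach'' is not backed by an actual argument, and the paper does not signal that (2) or (3) are easier than (4). For (2), the asserted relation $\mathrm{nullity}(\mathbf{B}) = 2\,\mathrm{nullity}(\mathcal{A})$ links the kernel of an $N\times N$ Neumann--Zagier matrix built from edge incidences to the kernel of a $2N\times 2N$ matrix built from face gluings; you gesture at a Mayer--Vietoris or Euler-characteristic argument on the dual spine, but the factor of~$2$ is exactly what would need a concrete homological identification, and ``the doubling when one passes from the $z$'s alone to all shape variables'' is not a proof --- it is a restatement of what must be shown. Likewise, for (3), saying the row equivalence is ``immediate'' once both matrices are identified with the annihilator of the edge equations modulo the longitude assumes the hard part (that such an identification exists and behaves correctly under the arbitrary choices of $\mathcal{E}_1,\mathcal{E}_2$ and $\mathbf{E}$), which is precisely what is conjectural. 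Second, your diagnosis of (4) as the genuinely hard step is reasonable --- $\mathscr{G}$ is built from the symmetrized kinematical-kernel quadratic form, not from gluing data --- but the paper nowhere indicates that (2) and (3) follow once (4) is handled, and nothing in your sketch establishes that dependency. The honest conclusion is that none of (2), (3), (4) is currently known to hold in general, and your proposal, while a sensible program, leaves every substantive step as a gap.
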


\begin{remark}
In the special case where $n=$ nullity$(\mathscr{A}) = 0$, (2) implies that $\det(\mathbf{B})$ and $ \det(\mathcal{A}) $ are non-zero. Besides, (3) becomes trivial. Moreover, $\mathbf{E} = \mathbf{B}^{-1}$ and (4) implies that $\mathbf{B}^{-1}\mathbf{A}=\mathscr{G}$. To compute $\mathscr{G}$, in Equation (\ref{defmathcale}), we can take $\mathcal{E}_1 = \mathcal{A}^{-1}$ and $\mathcal{E}_2 = \mathrm{Id}$. Altogether, the generalized FAMED conditions recover the FAMED condition introduced by Ben-Aribi and the author in \cite[Definition 1.1]{BAW}. 
\end{remark}

\begin{remark}\label{rmk4}
    In Definition \ref{defgenFAMED} (4), note that the first matrix is already in the reduced row echelon form. As a result, to transform the second matrix into its reduced row echelon form, we only need to apply elementary row operations that change the first $N-2n$ rows of the matrix. Those operations do not change $(\mathbf{EB})_{N-2n}$ and $(\mathbf{EA})_{2n}$. In particular, we can find a $N-2n \times 2n$ matrix $\mathbf{\tilde E}$ such that
\begin{align*}
    \begin{pmatrix}
        \mathrm{Id}_{N-2n} & \mathbf{\tilde E} \\
        \mathbf{O} & \mathrm{Id}_{2n}
    \end{pmatrix}
        \begin{pmatrix}
   (\mathbf{EB})_{N-2n} & (\mathbf{EA})_{N-2n}  \\
   \mathbf{O} & (\mathbf{EA})_{2n} 
\end{pmatrix}
= 
    \begin{pmatrix}
    (\mathbf{EB})_{N-2n} & \mathbf{(EB)}_{N-2n} \mathscr{G}  \\
    \mathbf{O} & (\mathbf{EA})_{2n} 
\end{pmatrix} 
\end{align*}
\end{remark}
We define a matrix $\mathbf{E'}$ and break it into two parts $\mathbf{E}'_{N-2n}$ and $\mathbf{E}'_{2n}$ by 
\begin{align}\label{defnE'}
    \mathbf{E'} =     \begin{pmatrix}
        \mathrm{Id}_{N-2n} & \mathbf{\tilde E} \\
        \mathbf{O} & \mathrm{Id}_{2n}
    \end{pmatrix} \mathbf{E} \quad \text{and} \quad 
    \mathbf{E'} =  \begin{pmatrix}
        \mathbf{E}'_{N-2n} \\
        \mathbf{E}'_{2n} 
    \end{pmatrix}.
\end{align}
Note that $\mathbf{E}'_{2n} = \mathbf{E}_{2n}$. We impose the following additional conditions stated in Definition \ref{defgenFAMED5} to study the Andersen-Kashaev volume conjecture (see Conjecture \ref{conj:vol:BAW}). Definition \ref{defgenFAMED5}(2) implies that
$$     (\mathbf{EA})_{2n} \BLog \mathbf{z} = \mathbf{E}_{2n}( i \boldsymbol{\nu} + \boldsymbol{\tilde{u}}) = \mathbf{E}_{2n}( i \boldsymbol{\nu}) $$
and it allows us to parametrize this solution subspace in a way that is independent of $\xi$ (see Lemma \ref{NZpara}). Definition \ref{defgenFAMED5}(3) implies that the last column of the matrix $\mathbf{E}_{N-2n}'$ in (\ref{defnE'}) captures the Neumann-Zagier data of the meridian. 
\begin{definition}\label{defgenFAMED5}
Let $K\subset \SS^3$ be a hyperbolic knot, $l$ be the longitude of $K$, $m$ be the meridian of $K$, and $X$ be an ordered ideal triangulation of $\SS^3\setminus K$. We say that $X$ is generalized FAMED with respect to $(l,m)$ if the following conditions are satisfied.
\begin{enumerate}
    \item $X$ is generalized FAMED with respect to $l$ (see Definition \ref{defgenFAMED}),
    \item If $n\neq 0$, all entries of the last column of $\mathbf{E}_{2n}$ are zero.
    \item Let $k_1<\dots< k_{N-2n}$ be the positions of pivots of $(\mathbf{EB})_{N-2n}$. Let $(C_1,\dots, C_{N-2n})^{\!\top}$ be the last column of $-2\mathbf{E}_{N-2n}'$. Let $m$ be the meridian of $K$ with the orientation such that the algebraic intersection number of $l$ and $m$ is $1$. There exists a representative in the homotopy class of $m$ with logarithmic holonomy equal to 
$$C_1\Log z_{k_1} + \dots + C_{N-2n} \Log z_{k_{N-2n}} + k\pi i$$
for some $k\in \QQ$.
\end{enumerate}
\end{definition}
\begin{figure}
    \centering
    \includegraphics[width=0.8\linewidth]{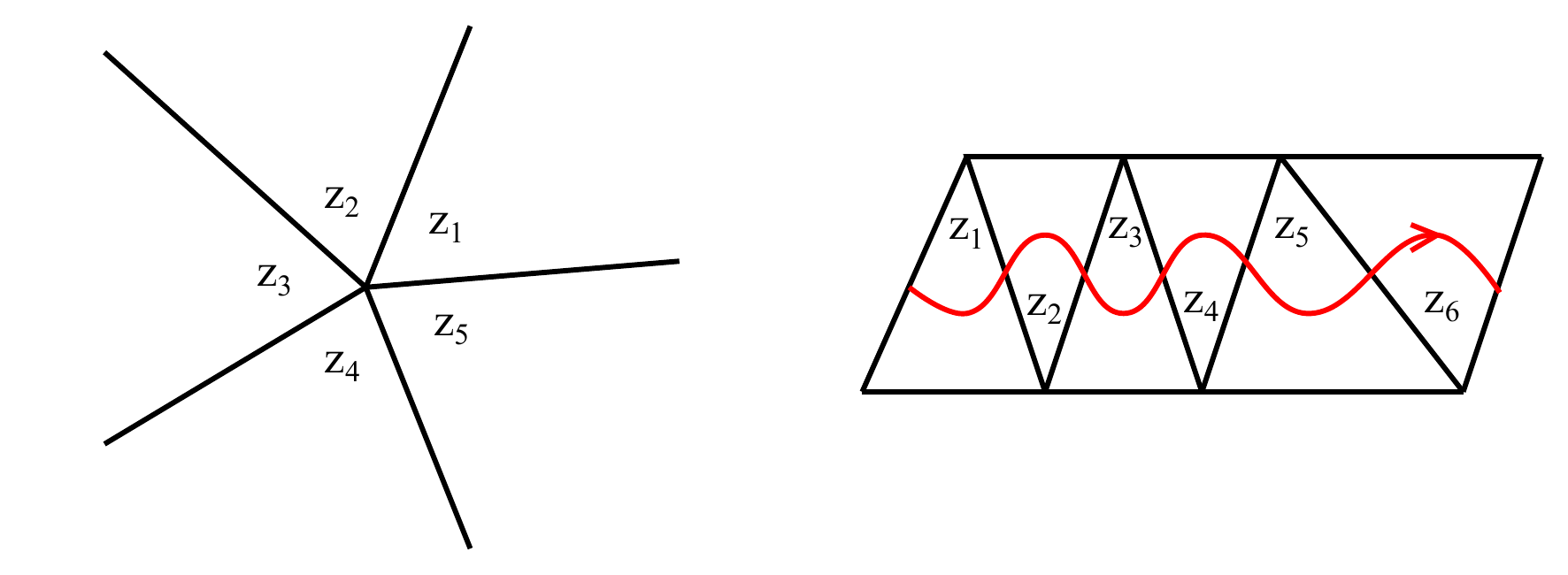}
    \caption{The first and second figures respectively show the combinatorics of the triangulation that is allowed and disallowed in Definition \ref{defgenFAMED5}(2). In the first figure, the edge imposes a linear equation $\Log z_1 + \Log z_2 + \dots + \Log z_5 = 2\pi i$ to the variables $\Log z_1,\dots, \Log z_N$. Note that the equation is independent of the choice of the logarithmic holonomy of the longitude $\xi$. In contrast, in the second figure, if the red curve represents the longitude of the knot on the cusp triangulation of the boundary torus, then the holonomy equation is given by $\Log z_1 + \dots + \Log z_6 = \xi$, which depends on the choice of $\xi$. }
    \label{alinear}
\end{figure}
\begin{remark}\label{12implies3}
    We expect that in Definition \ref{defgenFAMED5}, if (1) and (2) are satisfied, then (3) will be satisfied. See Lemma \ref{pretildexinter} for a result supporting this expectation.
\end{remark}
\begin{remark}
    For the case where $n=0$, condition (2) is automatically satisfied. For condition (3), since $\mathbf{E}=\mathbf{B}^{-1}$, we have $k_i = i$ for $i=1,2,\dots, N$. In this case, since $\mathbf{E}=\mathbf{E'}$, the vector $(C_1,\dots,C_{N})^{\!\top}$ is the last column of $-2\mathbf{E}$. It is proved in \cite[Lemma 4.1]{BAW} that the there exists $n_1\in \QQ$ and a representative in the homotopy class of $m+n_1l$ with logarithmic holonomy equal to 
$$C_1\Log z_{k_1} + \dots + C_{N} \Log z_{k_{N}} + i n_2\pi $$
for some $n_2\in \QQ$ (see Lemma \ref{pretildexinter} for a similar result under Definition \ref{defgenFAMED5}(1) and (2)). It is also verified in \cite[Lemma 5.1]{BAW} that $n_1=0$ for the triangulations of hyperbolic twist knot complements studied in \cite{BAGPN}. Since we expect that $n_1=0$ for all hyperbolic knot complements in $\SS^3$, we include condition (3) as a part of Definition \ref{defgenFAMED5}.
\end{remark}

We stress the fact that there exist ordered triangulations that are generalized FAMED with respect to $l$ but fail Definition \ref{defgenFAMED5}(2). Nevertheless, we propose the following conjecture about the generality of the conditions in Definition \ref{defgenFAMED5}. 
\begin{conjecture}\label{allgenFAMED}
    Every hyperbolic knot complement in $\SS^3$ admits a semi-geometric triangulation that is generalized FAMED with respect to $(l,m)$.
\end{conjecture}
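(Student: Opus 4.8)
\medskip
\noindent\textbf{Towards a proof of Conjecture \ref{allgenFAMED}.}
The natural plan is to produce the triangulation by hand from the Epstein--Penner data and then to reduce every clause of Definitions \ref{defgenFAMED} and \ref{defgenFAMED5} to a comparison of the ``face adjacency'' package $(\mathcal{E},\mathcal{X}_0,\dots,\mathcal{X}_3,\mathcal{A},\mathcal{B})$ with the Neumann--Zagier package $(\mathbf{G},\mathbf{G'},\mathbf{G''},\mathbf{A},\mathbf{B})$. First one takes the Epstein--Penner ideal polyhedral decomposition of $\SS^3\setminus K$, triangulates each cell, and inserts flat tetrahedra between triangulated polyhedral faces whose induced triangulations disagree, exactly as recalled in the introduction; after a further subdivision if necessary one chooses vertex orders making the resulting triangulation $X$ branched. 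None of these steps destroys the property that each tetrahedron is geometric or flat, so $X$ is semi-geometric. One then arranges, by choosing the subdivision appropriately, that $X$ also carries a strict angle structure: the complete metric provides a point of $\overline{\mathscr{A}_X}$, and one invokes known results ensuring non-emptiness of the angle polytope of such partially flat geometric triangulations to upgrade it to an interior point, giving clause (1) of Definition \ref{defgenFAMED}.

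The heart of the argument is clauses (2)--(4) of Definition \ref{defgenFAMED} and (2)--(3) of Definition \ref{defgenFAMED5}. The two packages encode one and the same linear system --- the first $N-1$ edge equations together with $\mathrm{H}^\C_{X,l}(\mathbf{z})=\xi$ --- in two coordinate systems: the $2N$ ``face'' variables on one side, and the $3N$ logarithmic shape parameters modulo $\Log z+\Log z'+\Log z''=i\pi$ on the other. By Lemma \ref{KKdelta} and Proposition \ref{Tpartiexpress2}, the kinematical kernel of $\mathscr{Z}_\hbar(X,\alpha)$ imposes the $2n$ constraints in (\ref{deltapart}) and its potential function is built from $\mathscr{G}$; by the review in Section \ref{NZD}, the same system reads as (\ref{introglu2}), the surviving $2n$ constraints being $(\mathbf{EA})_{2n}\BLog\mathbf{z}=\mathbf{E}_{2n}(i\boldsymbol{\nu}+\boldsymbol{\tilde{u}})$. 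I would make clauses (3) and (4) precise by writing down an explicit invertible change of basis between the face variables and the log-shape parameters implementing the passage from one presentation to the other: both then present the tangent space to the gluing-plus-longitude subvariety at the discrete faithful solution, so the two pairs of matrices in (3) and (4) are row equivalent (the case $n=0$ of the remark after Definition \ref{defgenFAMED}, where $\mathscr{G}=\mathbf{B}^{-1}\mathbf{A}$, is the transparent prototype). For Definition \ref{defgenFAMED5}(2) one must show the echelon-tail equations $(\mathbf{EA})_{2n}\BLog\mathbf{z}$ contain no $\xi$, i.e.\ come from genuine edges rather than from the longitude (Figure \ref{alinear}); in the Epstein--Penner construction this can be engineered by realizing $l$ on the cusp cross-section as a normal curve disjoint from the edges producing those equations. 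For Definition \ref{defgenFAMED5}(3), since $l$ is null-homologous the meridian $m$ generates $H_1(\partial N(K))/\langle l\rangle$, and the Neumann--Zagier symplectic relations identify the combination of $\Log z_{k_i}$ recorded by the last column of $-2\mathbf{E}'_{N-2n}$ with the logarithmic holonomy of $m+n_1 l$ for some $n_1\in\QQ$; clause (3) is the assertion $n_1=0$, which by Lemma \ref{pretildexinter} follows once (1) and (2) hold.

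The genuine obstacle is clause (2): the equality $\mathrm{nullity}(\mathbf{B})=2\,\mathrm{nullity}(\mathcal{A})$. The left-hand side equals $2$ by the symplectic theory of Neumann--Zagier for a one-cusped manifold \cite{NZ}, but nothing visible forces the face adjacency matrix $\mathcal{A}$ to have the matching nullity: its rank can drop for subtle combinatorial reasons and varies with the chosen branching. To close this gap one would want either an intrinsic bijection between $\ker\mathcal{A}$ and a space of flattening-type deformations, in the spirit of Neumann's work, or a local modification of the triangulation that repairs the nullity while keeping it semi-geometric; neither is presently available, which is exactly why the statement is proposed as a conjecture rather than proved. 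Modulo this ``nullity matching'' input --- for which supporting computations are announced in \cite{WC} --- the construction above would complete the argument.
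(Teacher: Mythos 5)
This statement is labeled as a \emph{conjecture} in the paper, and the paper offers no proof of it; the only supporting material it points to is numerical evidence announced in the companion preprint \cite{WC}. You correctly recognize this and openly flag clause (2) of Definition~\ref{defgenFAMED} as a genuine obstruction, so your proposal is honest about not being a proof. To that extent you and the paper are in agreement.

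That said, two points in your sketch need correction. First, the assertion that $\mathrm{nullity}(\mathbf{B})=2$ ``by the symplectic theory of Neumann--Zagier for a one-cusped manifold'' is false. The symplectic constraints force $\mathrm{rank}(\mathbf{A}\mid\mathbf{B})=N$, but the nullity of $\mathbf{B}$ alone depends on the triangulation and the chosen quad types; for Thurston's triangulation of the figure-eight complement (the paper's running example) $\mathbf{B}$ is invertible, i.e.\ $\mathrm{nullity}(\mathbf{B})=0$, which is exactly the FAMED case $n=0$. Second, you treat clauses (3) and (4) of Definition~\ref{defgenFAMED} and clause (3) of Definition~\ref{defgenFAMED5} as essentially automatic once one ``writes down an explicit invertible change of basis'' between the face variables and the log-shape parameters, but the paper does not claim this: these row-equivalences are precisely the nontrivial combinatorial content being conjectured (Conjectures~\ref{allgenFAMEDl} and~\ref{allgenFAMED}), and the paper notes explicitly that the ordinary FAMED condition already fails for some branched triangulations, and that Definition~\ref{defgenFAMED5}(2) can fail even when generalized FAMED with respect to $l$ holds. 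Remark~\ref{12implies3} only \emph{expects} that (3) of Definition~\ref{defgenFAMED5} follows from (1) and (2); Lemma~\ref{pretildexinter} gives partial evidence but does not pin down $n_1=0$. So even ``modulo clause (2)'' the remaining clauses would not be settled by your outline.
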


\begin{remark}
    The partition function in Teichm\"uller TQFT can also be defined for certain knot complements in manifolds other than $\SS^3$, such as hyperbolic once-punctured torus bundle. In those cases, Definition \ref{defgenFAMED} and \ref{defgenFAMED5} should be modified by replacing the longitude and the meridian with a suitable pair peripheral curves that generate the fundamental group of the boundary torus. For simplicity, in this paper, we only consider hyperbolic knot complements in the three-sphere.
\end{remark}

\begin{remark}
The conditions in Definition \ref{defgenFAMED} and \ref{defgenFAMED5} may look complicated. Nonetheless, the conditions can be described by elementary terms, be checked by using a computer, and are good enough to write a formal mathematical proof for our main results. Computational supporting evidence for Conjecture \ref{allgenFAMEDl} and \ref{allgenFAMED} will be provided in a subsequent preprint \cite{WC}.
\end{remark}

\subsection{Asymptotics aspects of Teichm\"uller TQFT}\label{sub:intro:expansion}
Our first main result concerns the asymptotics of the partition function in Teichm\"uller TQFT. Let $K$ be a hyperbolic knot in $\SS^3$. Let $X$ be an ordered ideal triangulation of $\SS^3 \setminus K$. 
Let $\alpha \in \mathscr{A}_X$ be an angle structure of $X$. For $\hbar>0$, let $\mathscr{Z}_{\hbar}(X, \alpha)$ be the Teichm\"uller TQFT partition function for $(X,\alpha)$ defined in \cite{AK} (see Section \ref{defAKTQFT} for a review.) Let $\mathrm{H}^\R_{X,l}(\alpha)$ be the angular holonomy associated to the curve $l$ and the angle structure $\alpha$ (see Section \ref{sub:angle:str}).
\begin{theorem}\label{mainthmZ}
Suppose $X$ is generalized FAMED with respect to $l$ (see Definition \ref{defgenFAMED}). Then we have
\begin{align*}
\limsup_{\hbar\to 0} 2\pi \hbar \log|\mathscr{Z}_{\hbar}(X, \alpha) |
\leq -\sup\{ \Vol(\alpha') \mid \alpha' \in \mathscr{A}_X, 
\mathrm{H}^\R_{X,l}(\alpha')= \mathrm{H}^\R_{X,l}(\alpha)
\},
\end{align*}
where $\Vol(\alpha')$ is the sum of hyperbolic volumes of the tetrahedra of $X$ with the angle structure $\alpha'$.
If we further assume that there exist shape parameters $\mathbf{z}$ with non-negative imaginary parts that satisfy Equation (\ref{introglu}) with $\xi = i\mathrm{H}^\R_{X,l}(\alpha)$, then we have
\begin{align*}
&\ \ |\mathscr{Z}_{\hbar}(X, \alpha) | \\
=&\ 
\left| C \cdot\exp\left( \frac{i}{\pi}R(\mathbf{z})\right) \cdot \frac{\exp\Big( -\frac{1}{2\pi {\hbar}}\Vol\left(\SS^3\setminus K; l,\mathrm{H}^\R_{X,l}(\alpha)\right)\Big)}{\sqrt{\pm \tau(\SS^3\setminus K, l, X, \mathbf{z})}}  \Big(1 + O_{{\hbar}\to 0^+}({\hbar})\Big) \right|,
\end{align*}
where 
\begin{itemize}
\item $\Vol\left (\SS^3\setminus K; l,\mathrm{H}^\R_{X,l}(\alpha)\right)=\Vol(\mathbf{z})$ is the volume of $\SS^3\setminus K$ with the (possibly incomplete) hyperbolic cone structure satisfying $\mathrm{H}^\C_{X,l}(\mathbf{z})=i \mathrm{H}^\R_{X,l}(\alpha)$, i.e. the sum of hyperbolic volumes of the tetrahedra of $X$ under the shape structure $\mathbf{z}$,
\item $\tau(\SS^3\setminus K, l, X, \mathbf{z})$ is the 1-loop invariant of $\SS^3 \setminus K$ associated to the shape parameters $\mathbf{z}$ and the peripheral curve $l$ (see Section \ref{1loopsection} for details), 
\item $R$ is some non-zero analytic function independent of $\hbar$ (see \ref{hvalue} for the definition), and
\item $C$ is some non-zero constant independent of $\hbar$.
\end{itemize}
\item In particular, we have
\begin{align*}
\lim_{\hbar\to 0} 2\pi \hbar \log|\mathscr{Z}_{\hbar}(X, \alpha) |
= -\Vol\left(\SS^3\setminus K; l, \mathrm{H}^\R_{X,l}(\alpha)\right ).
\end{align*}
\end{theorem}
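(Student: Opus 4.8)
The plan is to express the partition function $\mathscr{Z}_{\hbar}(X,\alpha)$ as a finite-dimensional oscillatory integral using the description of the Teichm\"uller TQFT kinematical kernel and dynamical content from \cite{AK}, and then apply a stationary phase / saddle point analysis. First I would use the face adjacency matrices $\mathcal{A},\mathcal{B},\mathcal{E}$ together with the matrices in (\ref{deltapart}) and $\mathscr{G}$ in (\ref{defmathscrG}) to write $\mathscr{Z}_{\hbar}(X,\alpha)$, after resolving the Dirac delta functions coming from the $2n$ linear constraints imposed by (\ref{deltapart}), as an integral over an $(N-2n)$-dimensional (or suitably reduced) real subspace of a function of the form $\exp\!\big(\tfrac{1}{2\pi\hbar}\Phi(\mathbf{x})\big)$ times lower-order factors, where $\Phi$ is built from dilogarithms $\Li$ of the integration variables and a quadratic form governed by $\mathscr{G}$. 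This is exactly the structure set up in Lemma \ref{KKdelta} and Proposition \ref{Tpartiexpress2}, and the angle structure $\alpha$ enters as the contour/boundary data fixing the angular holonomy $\mathrm{H}^\R_{X,l}(\alpha)$.

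The second step is to identify the critical points of $\Phi$. Here conditions (3) and (4) of Definition \ref{defgenFAMED} do the essential work: condition (3) guarantees that the linear constraints from the delta functions (i.e.\ the matrix in (\ref{deltapart})) are row-equivalent to $(\mathbf{EA})_{2n}$, so the reduced integration domain is precisely the solution subspace of (\ref{introglu2}); and condition (4), via the matrix identity spelled out in Remark \ref{rmk4}, forces the critical point equation $\nabla\Phi = 0$ to coincide with the gluing-plus-holonomy system (\ref{introglu}) with $\xi = i\mathrm{H}^\R_{X,l}(\alpha)$. Thus critical points of $\Phi$ correspond to shape parameters $\mathbf{z}$ solving (\ref{introglu}), and under the hypothesis that such $\mathbf{z}$ exists with non-negative imaginary parts, the relevant critical point is (a deformation of) the geometric solution. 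Evaluating $\mathrm{Re}\,\Phi$ at this point gives $-\Vol(\mathbf{z}) = -\Vol(\SS^3\setminus K; l,\mathrm{H}^\R_{X,l}(\alpha))$ by the standard dilogarithm–volume identity (Milnor/Neumann–Zagier), which already yields the $\limsup$ bound in the first display once one checks that $\mathrm{Re}\,\Phi$ on the real angle-structure locus is bounded by $-\sup\{\Vol(\alpha')\}$; the latter is the concavity of the volume functional on the space of angle structures with fixed angular holonomy.

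The third step is the saddle point expansion proper: deform the contour to pass through the critical point in the direction of steepest descent (justified by a Morse-theoretic / Picard–Lefschetz argument as in \cite{AK, BAW}, using semi-geometricity to control the flat tetrahedra and ensure the Hessian is non-degenerate up to the well-understood kernel), apply the multi-dimensional stationary phase lemma, and match terms. The Gaussian (one-loop) factor produced by the Hessian determinant must be identified with the $1$-loop invariant $\tau(\SS^3\setminus K, l, X, \mathbf{z})$; this is where the Neumann–Zagier symplectic data and the precise form of $\mathscr{G}$ get compared with the Dimofte–Garoufalidis definition reviewed in Section \ref{1loopsection}, producing the $\sqrt{\pm\tau}$ in the denominator, the analytic prefactor $\exp(\tfrac{i}{\pi}R(\mathbf{z}))$, and the constant $C$. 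The $\lim$ statement then follows immediately from the explicit formula since all factors other than the exponential are $\hbar$-independent (or $1+O(\hbar)$).

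The main obstacle I expect is the identification of the Hessian determinant with the $1$-loop invariant $\tau$, together with the contour deformation: one must show that the steepest-descent contour through the geometric critical point is homotopic to the original integration cycle without picking up contributions from other critical points or from the boundary at infinity, and that the degenerate directions of the Hessian (the $2n$-dimensional kernel corresponding to the delta-function constraints, and any further degeneracy from flat tetrahedra) are exactly those absorbed by the delta functions and the Neumann–Zagier reduction. Handling the flat tetrahedra — whose shape parameters lie on the boundary of the upper half plane — requires care: one works with the semi-geometric solution as a limit of genuinely geometric ones (a perturbation/continuity argument) and checks that the $1$-loop invariant and the volume extend continuously, which is precisely where the "semi-geometric" hypothesis, rather than full geometricity, is used.
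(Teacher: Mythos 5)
Your high-level reduction is the paper's: express $|\mathscr{Z}_\hbar(X,\alpha)|$ through Lemma~\ref{KKdelta} and Proposition~\ref{Tpartiexpress2}, identify critical points of the potential with solutions of (\ref{introglu2}) via Definition~\ref{defgenFAMED}(3)--(4) as in Proposition~\ref{critThurscorrespondence}, and then saddle. The $\limsup$ bound is also obtained roughly as you describe, using the contour-independence of $|\mathscr{Z}_\hbar|$ within a fixed angular holonomy class (Bochner--Martinelli argument in Proposition~\ref{Tpartiexpress2}) plus Proposition~\ref{dilogVolgen}(2), so that part is fine.

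Where your proposal genuinely diverges and has a gap is in the two places you flagged as the "main obstacle." For the flat tetrahedra, you propose treating the semi-geometric solution as a limit of honestly geometric ones and arguing by continuity. The paper does not do this, and the perturbation route is problematic: a flat tetrahedron means the critical point $\mathbf{y^c}$ sits on $\im y_k = -\pi$ or $0$, which is the boundary of the strip where $\Li(-e^y)$ (and hence the integrand, via Proposition~\ref{prop:quant:dilog}(3)) is defined, and there is in general no family of nearby solutions of (\ref{introglu}) with all shapes strictly in the upper half-plane to take a limit along; moreover even if one exists, you would need uniformity of the $O(\hbar)$ error in the perturbation parameter, which the saddle-point lemma does not give you. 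What the paper actually does (Sections~\ref{sectclassicaldilog}--\ref{sectquantumdilog}, following \cite{LMSWY}) is to analytically continue both $\Li$ and $\Phi_\B$ to $\CC$ minus two branch cuts, establish a uniform semi-classical estimate on that larger domain (Propositions~\ref{unifoutsidebox2}, \ref{semihbar}), and then deform the contour directly past the singularity using Lemma~\ref{singularbehaviour}, which shows that pushing the contour into $\mathscr{A}_X$ strictly decreases $\Re(i\,\mathrm{L})$ near $-i\pi$. This is a different and indispensable step, not a limit.

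The second gap is in "no contributions from other critical points." In the extended domain the potential is only concave, not strictly concave (Proposition~\ref{concavSgen}), so concavity alone cannot rule out another point on the deformed contour achieving the same $\Re S$. The paper's resolution is a genuine new ingredient: it constructs a gradient-flow deformation of $\tilde L^{\text{top}}$ (the vector field $\mathscr{V}$ in Proposition~\ref{constructZcont}) and then appeals to Francaviglia's volume rigidity theorem \cite[Thm.~1.2, 1.4]{SF}. Concretely, any competitor critical point with $\Re S = -\Vol$ would, via Proposition~\ref{dilogVolgen} and Proposition~\ref{critThurscorrespondence}, correspond to a representation whose volume equals $\Vol(\SS^3\setminus K)$, hence by rigidity is discrete and faithful, hence equals the given one. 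Your proposal names the need to rule out other saddles but offers no mechanism; "Morse-theoretic / Picard--Lefschetz" is not enough here because the potential is not Morse on the closed strip. If you want to fix the proposal, you should import the holomorphic continuation of the dilogarithms and the rigidity-plus-gradient-flow argument; everything else in your outline is consistent with the paper, including the identification of the Hessian with $\tau$ via Proposition~\ref{Hesstotor}.
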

\begin{remark}
We refer readers to see \cite[Section 1.5]{BAW} for a possible application of Theorem \ref{mainthmZ} to study the Casson conjecture.
\end{remark}
Theorem \ref{mainthmZ} generalizes \cite[Theorem 1.5]{BAW} by respectively replacing the FAMED condition and the geometricity of the triangulations with the generalized FAMED condition and semi-geometricity. We highlight several new ideas in the proof of Theorem \ref{mainthmZ}. First, in Lemma \ref{KKdelta}, we find a new formula to compute the kinematical kernel that generalizes the formula in \cite[Lemma 5.5]{BAGPN}. Together with the generalized FAMED condition, we can write the partition function and its potential function using the Neumann-Zagier matrices (Proposition \ref{Tpartiexpress2}). The generalized FAMED condition connects the partition function with the geometry of the triangulation by providing a correspondence between the critical point equations of the potential function and Equation (\ref{introglu2}) (see Proposition \ref{critThurscorrespondence}). For a semi-geometric triangulation, the critical point that corresponds to the solution of Equation (\ref{introglu2}) could lie outside the domain of definition of the integrand of the partition function. To overcome this problem, following the approach in \cite{LMSWY}, we holomorphically extend the domains of the classical dilogarithm function and the quantum dilogarithm function to $\CC$ with two branch cuts (see Section \ref{sectclassicaldilog}-\ref{sectquantumdilog}). We show in Proposition \ref{dilogVolgen} and \ref{Hesstotor} that in the extended domain, the critical value and the determinant of the Hessian matrix of the potential function give negative of the hyperbolic volume and the 1-loop invariant respectively. To apply the saddle point approximation, we need to construct an integration contour satisfying several technical assumptions. In \cite{BAW}, the construction of the contour is based on the strict concavity of the real part of the potential function on the domain of the integrand. However, in the extended domain, we no longer have the strict concavity. We overcome this problem by using the volume rigidity theorem \cite[Theorem 1.2 and 1.4]{SF} and the gradient flow (see Proposition \ref{constructZcont} and \ref{Zcont}). 

Next, in \cite{AK,AKnew,AKicm}, the \textbf{Andersen--Kashaev volume conjecture} predicts that there exists a Jones function $\mathfrak{J}_{X}$, which is an analogue of the Kashaev invariant of a knot $K\subset \SS^3$, such that the Andersen--Kashaev TQFT is roughly a Laplace transform of this Jones function, and such that this Jones function asymptotically yields the hyperbolic volume. For simplicity, we only consider the conjecture for hyperbolic knot complements in $\SS^3$. Note that our Jones function $\mathfrak{J}_X$ and that in \cite{AK, BAGPN} are related by a change of variable.
\begin{conjecture}\cite[Conjecture 1(1)\&(3)]{AK}  \label{conj:vol:BAW}
For every hyperbolic knot $K\subset \SS^3$, there exist an ideal triangulation $X$ of $\SS^3 \setminus K$ with non-empty space of angle structure $\mathcal{A}_X$, together with a Jones function $\mathfrak{J}_X\colon \R_{>0} \times \mathcal{U} \to \C$ (where $\mathcal{U}\subset \C$ is an open horizontal band in $\C$),
such that the following properties hold:
\begin{enumerate}
\item  For all angle structures $\alpha \in \mathcal{A}_{X}$ and all $\hbar>0$, we have:
\begin{equation*}
\left |\mathcal{Z}_{\hbar}(X,\alpha) \right |
= \left | 
\int_{\mathbb{R} +i\mu_X(\alpha)} 
\mathfrak{J}_{X}(\hbar,\mathrm{w})
e^{\frac{\mathrm{w}  \lambda_{X}(\alpha)}{4\pi \hbar}  } 
d\mathrm{w} \right |,
\end{equation*}
where $\mu_X(\alpha)$ and $\lambda_X(\alpha)$ are respectively the angular holonomies of the meridian and the preferred longitude with respect to the angle structure $\alpha$.
\item In the semi-classical limit $\hbar \to 0^+$, we retrieve the hyperbolic volume of $K$ as:
$$
\lim_{\hbar \to 0^+} 2\pi \hbar  \log \vert \mathfrak{J}_{X}(\hbar,0) \vert
= -\Vol(\SS^3 \setminus K).$$
\end{enumerate}
\end{conjecture}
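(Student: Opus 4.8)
The plan is to establish Conjecture \ref{conj:vol:BAW} for every hyperbolic knot $K\subset\SS^3$ whose complement carries a semi-geometric triangulation $X$ that is generalized FAMED with respect to $(l,m)$ (Definition \ref{defgenFAMED5}); this is the class of knots described by Conjecture \ref{allgenFAMED}. The two items are handled separately: first peel off the Jones function directly from the integral defining the partition function, then analyse that Jones function by the saddle point method, recycling the machinery developed for Theorem \ref{mainthmZ}.

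\emph{Step 1: extracting the Jones function.} Starting from the definition of $\mathscr{Z}_{\hbar}(X,\alpha)$ from \cite{AK} (reviewed in Section \ref{defAKTQFT}), I would rewrite its kinematical kernel using the new formula of Lemma \ref{KKdelta} and use the Dirac delta functions it produces to carry out $r=\operatorname{rank}(\mathcal{A})$ of the integrations explicitly, as in Proposition \ref{Tpartiexpress2}. This reduces $\mathscr{Z}_{\hbar}(X,\alpha)$ to an integral over $n+1$ remaining variables (with $n=\operatorname{nullity}(\mathcal{A})$), whose integrand is a product of quantum dilogarithms times the exponential of the potential built from $\mathscr{G}$ and the Neumann-Zagier data. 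The generalized FAMED condition with respect to $l$ matches the $2n$ linear constraints carried by the deltas (the matrix in (\ref{deltapart})) with the Neumann-Zagier constraints $(\mathbf{EA})_{2n}$ of (\ref{introglu2}). Now Definition \ref{defgenFAMED5}(2) enters: because the last column of $\mathbf{E}_{2n}$ vanishes, these $2n$ constraints are independent of $\xi$, hence of $\alpha$, so the solution subspace can be parametrised once and for all (Lemma \ref{NZpara}); and Definition \ref{defgenFAMED5}(3) enters by singling out one distinguished variable $\mathrm{w}$ whose geometric meaning is the logarithmic holonomy of the meridian. After the corresponding change of variables, the only dependence of the integrand on $\alpha$ collapses to the single factor $e^{\mathrm{w}\lambda_X(\alpha)/(4\pi\hbar)}$ coming from the angular holonomy of the longitude, while the angular holonomy $\mu_X(\alpha)$ of the meridian becomes precisely the height at which the $\mathrm{w}$-contour sits. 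I would then define $\mathfrak{J}_X(\hbar,\mathrm{w})$ to be the resulting $n$-dimensional integral over the remaining variables, and use the decay estimates for the quantum dilogarithm (Section \ref{sectquantumdilog}) to show it converges and is holomorphic on a non-empty horizontal band $\mathcal{U}\subset\CC$. Unwinding the change of variables then gives the identity in Conjecture \ref{conj:vol:BAW}(1).

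\emph{Step 2: semiclassical limit of $\mathfrak{J}_X(\hbar,0)$.} Here I would apply the saddle point method to the finite-dimensional integral $\mathfrak{J}_X(\hbar,0)$, as in the proof of Theorem \ref{thm:Jones:genFAMED}. Setting $\mathrm{w}=0$ picks out the complete hyperbolic structure on $\SS^3\setminus K$: by Proposition \ref{critThurscorrespondence} the relevant critical point of the potential corresponds to the solution of the gluing equations (\ref{introglu2}) with vanishing holonomy data ($\xi=0$). Propositions \ref{dilogVolgen} and \ref{Hesstotor}, read in the holomorphically extended domains of the classical and quantum dilogarithms (Sections \ref{sectclassicaldilog}--\ref{sectquantumdilog}), identify the critical value with $-\tfrac{1}{2\pi\hbar}\Vol(\SS^3\setminus K)$ and the Hessian determinant with the $1$-loop invariant, which is non-zero, so the critical point is non-degenerate. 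As in Theorem \ref{mainthmZ}, strict concavity of the real part of the potential is lost in the extended domain when $X$ is only semi-geometric, so I would build a steepest-descent contour through the critical point by invoking the volume rigidity theorem \cite[Theorems 1.2 and 1.4]{SF} together with a gradient-flow argument, in the spirit of Propositions \ref{constructZcont} and \ref{Zcont}. The saddle point approximation then yields $|\mathfrak{J}_X(\hbar,0)|=|C'\exp(-\tfrac{1}{2\pi\hbar}\Vol(\SS^3\setminus K))(1+O(\hbar))|$ for a non-zero $\hbar$-independent $C'$, and multiplying by $2\pi\hbar$, taking logarithms and letting $\hbar\to0^+$ gives Conjecture \ref{conj:vol:BAW}(2).

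\emph{Main obstacle.} The delicate part is Step 1: one must check that a single change of variables simultaneously confines the $\alpha$-dependence to the two angular holonomies $\mu_X(\alpha)$ and $\lambda_X(\alpha)$, leaves $\mathfrak{J}_X$ genuinely $\alpha$-independent, identifies the distinguished variable with the meridian holonomy, and produces a non-empty band $\mathcal{U}$ on which the remaining integral converges. This is exactly what conditions (2) and (3) of Definition \ref{defgenFAMED5} are designed to supply, but making it rigorous requires careful bookkeeping with the symplectic Neumann-Zagier relations and with the precise shape of the kinematical kernel from Lemma \ref{KKdelta}. A secondary difficulty, inherited from Theorem \ref{mainthmZ}, is the contour construction in Step 2 once the concavity available in the geometric case is gone.
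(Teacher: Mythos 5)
Your proposal mirrors the paper's own route very closely: Step 1 is Proposition \ref{pfexistJ}, where the deltas from Lemma \ref{KKdelta} cut the $\mathbf{t}$-integral down to the affine space parametrized by Lemma \ref{NZpara} and the change of variables $\mathscr{L}$, enabled by Definition \ref{defgenFAMED5}(2) and (3), isolates $\mathrm{w}$ as the meridian holonomy (Lemma \ref{tildexinter}) so that the $\alpha$-dependence collapses to $\mu_X(\alpha)$ as contour height and the factor $e^{\mathrm{w}\lambda_X(\alpha)/(4\pi\hbar)}$; Step 2 is the saddle-point analysis of Theorem \ref{thm:Jones:genFAMED} via Propositions \ref{constructJcont}, \ref{Jcont}, \ref{Jcont2}, \ref{JandNZ} and Corollary \ref{HessJtor}. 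The one slip is a dimension count: the $2n$ constraints carried by the deltas reduce the $N$-dimensional $\mathbf{t}$-integral to an $(N-2n)$-dimensional one (Propositions \ref{Tpartiexpress1}--\ref{Tpartiexpress2}), and $\mathfrak{J}_X$ is then an $(N-2n-1)$-dimensional integral, not ``$n+1$'' and ``$n$'' as you wrote --- note your count would give a $0$-dimensional Jones function in the $n=0$ (plain FAMED) case. This does not affect the logic, only the bookkeeping.
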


Given $K\subset \SS^3$, let $\nu(K)$ be an open tubular neighborhood of $K$ in $\SS^3$. Recall from \cite{NZ} that for any simple closed curve $\gamma \in \pi_1(\partial (\SS^3 \setminus \nu(K)))$, locally near the complete hyperbolic structure, the deformation space of hyperbolic structure of $\SS^3 \setminus K$ can be parametrized in a generically 2:1 way with one complex variable $w_\gamma$ 
such that, for any complex number $\xi \in \C$ close enough to $0$, the point $w_{\gamma}=\xi$ corresponds to the character with a representative  $\rho:\pi_1(\SS^3 \setminus K) \to \mathrm{PSL}_2(\CC)$ that sends $[\gamma] \in \pi_1(\partial (\SS^3 \setminus \nu(K)))$ to the matrix of the form
$$ \rho([\gamma]) = \pm \begin{pmatrix}
    e^{\frac{\xi}{2}} & * \\
    0 & e^{-\frac{\xi}{2}}
\end{pmatrix}.$$ 
 Besides, for the meridian $m$ and longitude $l$ of $K$, since $w_m$ and $w_l$ are both parameterizations of the same space, one can regard $w_l$ as a function in $w_m$ by considering the transition function. The Neumann-Zagier potential function associated to $(m,l)$ can be defined by using the antiderivative of $w_l$ as a function in $w_m$ (see \cite{NZ} and Section \ref{NZpotentintro} for a review). Our second result relates the asymptotics of Jones functions with the Neumann-Zagier potential functions as follows.

\begin{theorem}
\label{thm:Jones:genFAMED}
Let $K$ be a hyperbolic knot in $\SS^3$
and let $X$ be an ordered ideal triangulation of 
$\SS^3 \setminus K$. If $X$ is generalized FAMED with respect to $(l,m)$, then Conjecture \ref{conj:vol:BAW}(1) holds. If we further assume that $X$ is semi-geometric with shape parameters $\mathbf{z}$, then there exist a small neighborhood $\mathcal{O}\subset \CC$ around $0 \in \CC$ such that 
for any $\mathrm{w} \in \mathcal{O}$, 
we have 
\begin{align*}
|\mathfrak{J}_{X}(\hbar, \mathrm{w})|
= \left|\frac{C}{2\pi {\sqrt{\hbar}} } \cdot\exp\left(\frac{i}{\pi} R(\mathbf{z}_\mathrm{w})\right) \cdot \frac{\exp\left(\frac{i}{2\pi {\hbar}}\phi_{m,l}(\mathrm{w})\right)}{\sqrt{\pm \tau(\SS^3\setminus K, m, X, \mathbf{z}_\mathrm{w})}}  \Big(1 + O_{{\hbar}\to 0^+}(\hbar)\Big)\right|,
\end{align*} 
where 
\begin{itemize}  
\item $\mathbf{z}_\mathrm{w}$ is a continuous family of shape parameters satisfying the edge equations such that $\mathbf{z}_\mathrm{0}=\mathbf{z}$ and $\mathrm{H}_{X,m}^\CC(\mathbf{z}_\mathrm{w}) = \mathrm{w}$, where $\mathrm{H}_{X,m}^\CC(\mathbf{z}_\mathrm{w})$ is the logarithmic holonomy of $m$,
\item $\phi_{m,l}: \mathcal{O} \to \CC$ is the Neumann-Zagier potential function associated to $m,l$ (see Section \ref{NZpotentintro}), i.e. the holomorphic function satisfying
$$
\frac{\partial \phi_{m,l}}{\partial w_{m}} = \frac{w_l}{2} ,\quad \phi_{m,l}(0) = i(\Vol(\SS^3\setminus K) + i \CS(\SS^3\setminus K)),
$$ 
where $w_m, w_l$ are the logarithmic holonomies of $m$ and $l$ and $\CS(\SS^3\setminus K)$ is the Chern--Simons invariant of $\SS^3\setminus K$ respectively, 
\item $\tau(\SS^3\setminus K, m, X, \mathbf{z}_\mathrm{w})$ is the 1-loop invariant of $\SS^3\setminus K$ associated to the meridian $m$ and the shape parameters $\mathbf{z}_\mathrm{w}$  (see Section \ref{1loopsection} for details)
\item $R$ is some analytic function that is independent of $\hbar$ and is non-zero for all $\mathbf{z}_\mathrm{w}$ with $\mathrm{w} \in \mathcal{O}$ (see \ref{hvalue} for the definition), and
\item $C$ is a non-zero constant independent of $\hbar$ and $\mathrm{w}$.
\end{itemize}
In particular, we have
$$
 \lim_{\hbar\to 0} 2\pi \hbar \log|\mathfrak{J}_{X}(\hbar, 0)|
= - \Vol(M).
$$
\end{theorem}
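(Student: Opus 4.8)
The plan is to mimic the strategy of \cite{BAW} for the FAMED geometric case, but upgraded to accommodate the generalized FAMED condition and the semi-geometric hypothesis via the extended dilogarithms of Section \ref{sectclassicaldilog}--\ref{sectquantumdilog}. The first step is to establish Conjecture \ref{conj:vol:BAW}(1), i.e.\ to exhibit the Jones function $\mathfrak{J}_X$. Starting from the integral formula for $\mathscr{Z}_\hbar(X,\alpha)$ (Section \ref{defAKTQFT}), I would use the new kinematical kernel formula of Lemma \ref{KKdelta} together with Proposition \ref{Tpartiexpress2} to rewrite the partition function, using the Neumann-Zagier matrices, in a form where the $2n$ Dirac delta functions coming from the matrix in (\ref{deltapart}) reduce the integral onto the linear subspace parametrized (via Lemma \ref{NZpara}) by $(\mathbf{EB})_{N-2n}^{\!\top}$. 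Conditions (3) of Definition \ref{defgenFAMED} (equivalence of the delta constraints with $(\mathbf{EA})_{2n}$) and (2)--(3) of Definition \ref{defgenFAMED5} (the $\xi$-independence of that subspace, and the identification of the meridian's Neumann-Zagier data in the last column of $\mathbf{E}'_{N-2n}$) let me separate out the variable dual to the meridian holonomy; performing that one integration by Fourier/Laplace inversion isolates $\mathfrak{J}_X(\hbar,\mathrm{w})$ as the remaining integral, with $\mathrm{w}$ playing the role of $w_m$ and $\lambda_X(\alpha)$ that of $w_l$. This gives Conjecture \ref{conj:vol:BAW}(1) for any generalized FAMED (with respect to $(l,m)$) triangulation, geometric or not.

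Next, under the semi-geometricity hypothesis, I would analyze $\mathfrak{J}_X(\hbar,\mathrm{w})$ for $\mathrm{w}$ in a small neighborhood $\mathcal{O}$ of $0$ by the saddle point method. The critical point equations of the potential function correspond, via Proposition \ref{critThurscorrespondence} (which encodes Definition \ref{defgenFAMED}(4)), to the deformed Thurston gluing equations (\ref{introglu2}) with holonomy constraint $\mathrm{H}^\CC_{X,m}(\mathbf{z}_\mathrm{w})=\mathrm{w}$; by Mostow--Prasad rigidity and the local parametrization recalled before the theorem, for $\mathrm{w}$ near $0$ there is a unique such $\mathbf{z}_\mathrm{w}$ near the complete structure $\mathbf{z}$, depending holomorphically on $\mathrm{w}$. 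Proposition \ref{dilogVolgen} identifies the critical value of the potential (in the extended domain) with $\tfrac{i}{2\pi\hbar}$ times the relevant volume-plus-Chern--Simons combination; I would then check that, as a function of $\mathrm{w}$, this critical value has $\mathrm{w}$-derivative $\tfrac{i}{2\pi\hbar}\cdot\tfrac{w_l}{2}$, i.e.\ it is $\tfrac{i}{2\pi\hbar}\phi_{m,l}(\mathrm{w})$, with the correct value $i(\Vol+i\CS)$ at $\mathrm{w}=0$ — this is essentially the Neumann-Zagier computation of the derivative of the volume under Dehn-surgery deformation, transported through our explicit matrix identifications. Proposition \ref{Hesstotor} gives the Hessian determinant as $\pm\tau(\SS^3\setminus K,m,X,\mathbf{z}_\mathrm{w})$, producing the $1/\sqrt{\pm\tau}$ prefactor and the $1/(2\pi\sqrt{\hbar})$ from the Gaussian integration in $2n{+}1$ effective variables (and $R(\mathbf{z}_\mathrm{w})$, $C$ absorb the bounded analytic and constant factors as in Theorem \ref{mainthmZ}).

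The main obstacle, as in Theorem \ref{mainthmZ}, is the construction of an admissible steepest-descent contour: because we work in the holomorphically extended domain of the (quantum) dilogarithm, the real part of the potential is no longer strictly concave, so the contour cannot be produced by the simple convexity argument of \cite{BAW}. I would address this exactly as in the proof of Theorem \ref{mainthmZ} (Propositions \ref{constructZcont}--\ref{Zcont}): use the volume rigidity theorem \cite[Theorem 1.2 and 1.4]{SF} to guarantee that the extended potential has no spurious critical points with larger real part, and flow the original contour along the gradient of $\Re$ of the potential to push it into a neighborhood of the geometric critical point while controlling the tails; one must verify this works uniformly for $\mathrm{w}\in\mathcal{O}$ after possibly shrinking $\mathcal{O}$, which is where the continuity of $\mathbf{z}_\mathrm{w}$ and the openness of the relevant conditions are used. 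Once the contour is in place, the standard saddle point expansion with remainder $O(\hbar)$ applies, and setting $\mathrm{w}=0$ and taking $2\pi\hbar\log|\cdot|$ as $\hbar\to0^+$ kills every factor except $-\Re(i\phi_{m,l}(0))=-\Vol(\SS^3\setminus K)=-\Vol(M)$, giving the final limit.
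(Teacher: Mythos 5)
Your outline of the first half is essentially the paper's: the change of variables $\mathscr{L}$ splitting off the meridian coordinate $\mathrm{w}$ from the remaining $x_2,\dots,x_{N-2n}$ (Proposition~\ref{pfexistJ}, relying on Lemma~\ref{NZpara}, Lemma~\ref{tildexinter} and Definition~\ref{defgenFAMED5}(2)--(3)) is exactly what the paper does to prove Conjecture~\ref{conj:vol:BAW}(1). Your description of the critical value and Hessian is slightly imprecise but in the right spirit: the paper does not invoke Propositions~\ref{dilogVolgen} and~\ref{Hesstotor} directly at the Jones-function level, but instead passes through Proposition~\ref{JandNZ} (showing $\partial_\mathrm{w}$ of the critical value of $J$ equals $i w_l/2$, hence the critical value is $i\phi_{m,l}(\mathrm{w})$ up to an imaginary constant), Proposition~\ref{1loopJ} (Schur-complement relation between $\Hess\tilde S$ and $\Hess J$), and Corollary~\ref{HessJtor} (which also uses the change-of-curve formula for the 1-loop invariant to switch from $l$ to $m$). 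You anticipated all of this in spirit.

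However, there is a genuine gap in your handling of the steepest-descent contour for $\mathrm{w}\neq 0$. You propose to run the same volume-rigidity-plus-gradient-flow argument as in Propositions~\ref{constructZcont}--\ref{Zcont} and rely on ``continuity of $\mathbf{z}_\mathrm{w}$ and openness of the relevant conditions'' to make it work uniformly on a small $\mathcal{O}$. This fails: for $\mathrm{w}\neq 0$ the deformed shape parameters $\mathbf{z}_\mathrm{w}$ can leave the closed upper half plane (this is exactly the phenomenon the paper flags just before Proposition~\ref{Jcont2}), so $\varphi(\cdot)$ exits $\overline{\mathscr{U}_{\mathbf{y}}}$ and the concavity/convexity statement in Proposition~\ref{concavSxgen}, on which the gradient-flow and boundary-pushing constructions are built, no longer holds near the critical point. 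Volume rigidity also only controls the $\mathrm{w}=0$ critical point; it does not by itself rule out spurious maxima for nonzero $\mathrm{w}$. The paper's fix is a separate idea: construct the admissible contour $L$ only for $\mathrm{w}=0$ (Propositions~\ref{constructJcont}--\ref{Jcont}) and then, for $\mathrm{w}\in\mathcal{O}$, deform it using the Complex Morse Lemma (Lemma~\ref{OPFCML}) as in Proposition~\ref{Jcont2}: locally change coordinates so that $J(\mathrm{w},\cdot)$ is the critical value minus a sum of squares, transplant the $\mathrm{w}=0$ contour into these coordinates, flatten it to $\Re(Z_2,\dots,Z_{N-2n})$ near the origin, and patch back in the exterior region where strict inequality of $\Re J$ persists by continuity. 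Without this (or an equivalent replacement), your argument does not close.
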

\begin{remark}
We refer readers to see \cite[Section 1.3]{BAW} for a possible application of Theorem \ref{mainthmZ} to study the AJ conjecture in Teichm\"uller TQFT.
\end{remark}
\begin{corollary}\label{AKcor}
Suppose a hyperbolic knot $K\subset \SS^3$  admits a semi-geometric triangulation that is generalized FAMED with respect to $(l,m)$. Then 
    Conjecture \ref{conj:vol:BAW} holds for $K$.
\end{corollary}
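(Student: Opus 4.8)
The plan is to deduce Corollary~\ref{AKcor} directly from Theorem~\ref{thm:Jones:genFAMED}, which already packages both halves of the Andersen--Kashaev volume conjecture. First I would check that the hypothesis supplies the triangulation required by Conjecture~\ref{conj:vol:BAW}: by assumption $\SS^3\setminus K$ admits a semi-geometric triangulation $X$ that is generalized FAMED with respect to $(l,m)$, and Definition~\ref{defgenFAMED5}(1) together with Definition~\ref{defgenFAMED}(1) guarantees $\mathscr{A}_X\neq\emptyset$, so $X$ meets the standing assumptions of the conjecture.

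Next I would invoke the first assertion of Theorem~\ref{thm:Jones:genFAMED}: since $X$ is generalized FAMED with respect to $(l,m)$, there exists a Jones function $\mathfrak{J}_X\colon\R_{>0}\times\mathcal{U}\to\C$ on an open horizontal band $\mathcal{U}\subset\C$ satisfying
$$
\left|\mathcal{Z}_\hbar(X,\alpha)\right|
=\left|\int_{\R+i\mu_X(\alpha)}\mathfrak{J}_X(\hbar,\mathrm{w})\,e^{\frac{\mathrm{w}\lambda_X(\alpha)}{4\pi\hbar}}\,d\mathrm{w}\right|
$$
for all $\alpha\in\mathscr{A}_X$ and all $\hbar>0$; this is exactly Conjecture~\ref{conj:vol:BAW}(1). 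For Conjecture~\ref{conj:vol:BAW}(2) I would then use the extra hypothesis that $X$ is semi-geometric: the asymptotic expansion in Theorem~\ref{thm:Jones:genFAMED} applies at $\mathrm{w}=0$, and its final displayed consequence reads $\lim_{\hbar\to 0^+}2\pi\hbar\log|\mathfrak{J}_X(\hbar,0)|=-\Vol(\SS^3\setminus K)$, which is precisely Conjecture~\ref{conj:vol:BAW}(2). Combining the two statements yields the corollary.

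I do not expect any genuine obstacle in the corollary itself, since every ingredient has been absorbed into the proof of Theorem~\ref{thm:Jones:genFAMED}; the only points requiring a moment's care are bookkeeping ones: confirming that the angular holonomies $\mu_X(\alpha),\lambda_X(\alpha)$ in Conjecture~\ref{conj:vol:BAW} agree with the quantities $\mathrm{H}^\R_{X,m}(\alpha),\mathrm{H}^\R_{X,l}(\alpha)$ up to the normalization of the change of variable relating $\mathfrak{J}_X$ to the Jones function of \cite{AK}, that the band $\mathcal{U}$ produced by the theorem is nonempty, and that $\Vol(\SS^3\setminus K)$ coincides with $\Vol(M)$ in the theorem's notation. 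All of the real difficulty --- the saddle-point analysis, the holomorphic extension of the classical and quantum dilogarithms, the identification of the critical value with the Neumann--Zagier potential $\phi_{m,l}$, and the contour construction via volume rigidity and gradient flow --- lies in Theorem~\ref{thm:Jones:genFAMED} rather than here.
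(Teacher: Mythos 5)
Your proposal is correct and matches the paper's (implicit) argument: the corollary is stated in the paper without a separate proof precisely because it is an immediate consequence of Theorem~\ref{thm:Jones:genFAMED}, whose first assertion supplies Conjecture~\ref{conj:vol:BAW}(1) and whose asymptotic expansion at $\mathrm{w}=0$ supplies Conjecture~\ref{conj:vol:BAW}(2). Your bookkeeping remarks about angular holonomies and the identification $\Vol(M)=\Vol(\SS^3\setminus K)$ are appropriate but, as you observe, all the substance lives in the theorem.
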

In contrast to the result in \cite[Theorem 1.6]{BAW}, when $\mathrm{w} \neq 0$, Theorem \ref{thm:Jones:genFAMED} still holds even if some of the shape parameters have negative imaginary parts. The key idea in the proof is to first apply the volume rigidity theorem \cite[Theorem 1.2 and 1.4]{SF} and a gradient flow argument to construct a suitable integration contour for the case where $\mathrm{w}=0$. This allows us to apply the saddle point methods to obtain the asymptotics of the Jones functions and prove Theorem \ref{thm:Jones:genFAMED} for $\mathrm{w}=0$. When $\mathrm{w}\neq 0$, we apply Complex Morse Lemma (see Lemma \ref{OPFCML}) to modify the integration contour constructed above (see Proposition \ref{constructJcont}, \ref{Jcont} and \ref{Jcont2}) and verify that the modified contour satisfies the technical conditions for applying the saddle point approximation. 

It is known that every hyperbolic knot complement in $\SS^3$ has a semi-geometric triangulation that admits angle structures \cite{HRS}.  If such a triangulation satisfies Definition \ref{defgenFAMED5} with respect to some sutiable ordering, then Theorem \ref{thm:Jones:genFAMED} and Corollary \ref{AKcor} apply, and Conjecture \ref{conj:vol:BAW} follows. In this sense, the study of the asymptotics of the partition functions and the Jones functions is reduced into the combinatorial problems about understanding the generalized FAMED conditions described in Definition \ref{defgenFAMED} and \ref{defgenFAMED5}.

\section*{Acknowledgements}
I would like to thank Fathi Ben-Aribi, Feng Luo, Andrew Nietzke and Tian Yang for valuable discussions and suggestions. 

\section{Preliminaries}\label{sectprelim}

\subsection{Triangulations}

In this section, we follow the notations in \cite{AK, KaWB}. An ordered tetrahedron $T$ is a tetrahedron with a choice of an \textit{order} on the four vertices of $T$. We call them $0_T,1_T,2_T,3_T$ (or $0,1,2,3$ if the context makes it obvious). If we rotate $T$ such that $0$ is in the center and $1$ at the top, then there are two possible places for vertices $2$ and $3$. We call $T$ a \textit{positive} tetrahedron if they are as shown on the left figure in the top row of Figure \ref{fig:41:face:matrices}, and a \textit{negative} tetrahedron otherwise. We denote $\varepsilon(T) \in \{ \pm 1\}$ the corresponding \textit{sign} of $T$. We \textit{orient the edges} of $T$  {according} to the order on vertices, and we endow each edge with a parametrisation by $[0,1]$ respecting the orientation. Note that such a structure was called a \textit{branching} in \cite{BB} and equivalently described as an orientation of edges without $3$-cycles on faces. Up to isotopies fixing the $1$-skeleton pointwise, there is only one way of \textit{gluing} two triangular faces together while \textit{respecting the order of the vertices} and the edge parametrisations, and that is the only type of face gluing we consider in this paper.

An \textit{ordered triangulation} $X=(T_1,\ldots,T_N,\sim)$ consists of $N$ ordered tetrahedra $T_1, \ldots, T_N$ and an equivalence relation $\sim$ first defined on the faces by pairing and the only gluing that respects vertex order, and also induced on edges  {and} vertices by the combined identifications. We call $M_X$ the (pseudo-)$3$-manifold 
$ M_X = T_1 \sqcup \cdots \sqcup T_N / \sim$ obtained by quotient. Note that $M_X$ may fail to be a manifold only at  (the image by the quotient map of) a vertex  of the triangulation, whose regular  {neighbourhood} might not be a $3$-ball (but for instance a cone over a torus for exteriors of links).

We denote $X^{k}$ (for $k=0, \ldots, 3$) the set of $k$-cells of $X$ after identification by $\sim$. In this paper we always  {assume} that \textit{no face is left unpaired  by $\sim$}, thus $X^{2}$ is always of  {cardinality} $2N$. By a slight abuse of notation we also call $T_j$ the $3$-cell inside the tetrahedron $T_j$, so that $X^{3} = \{T_1, \ldots, T_N\}$. Elements of $X^{1}$ are usually represented by distinct types of arrows, which are drawn on the corresponding preimage edges, see Figure \ref{fig:41:face:matrices} for an example.

An \textit{ordered ideal triangulation} $X$ is an ordered triangulation such that all tetrahedra are ideal tetrahedra. In this case the quotient space minus its vertices $M_X \setminus X^0$ is an open manifold. In this case we will denote $M=M_X \setminus X^0$ and say that the open manifold $M$ admits the ideal triangulation $X$.


Finally, for $X$ a triangulation and $k=0,1,2,3,$ we define $x_k\colon X^3 \to X^2$ the  {function} such that $x_k(T)$ is the equivalence class of the face of $T$ opposed to its vertex $k$.

\begin{example}\label{ex:41}
Figure \ref{fig:41:face:matrices} displays the classical ideal triangulation of the complement of the figure-eight knot $M=S^3 \setminus 4_1$, with one positive and one negative tetrahedron. Here $X^3 =\{T_+, T_-\}$, $X^2 =\{\mathsf{A},\mathsf{B},\mathsf{C},\mathsf{D}\}$, $X^1 =\{ \sarrow , \darrow \}$ and $X^0$ is a singleton. 
\end{example}

\subsection{Angle structures}\label{sub:angle:str}

For a given triangulation $X=(T_1,\ldots,T_N,\sim)$ we denote $\mathcal{S}_X$ the set of \textit{shape structures on $X$}, defined as
$$
\mathcal{S}_X 
 = 
 \left \{ 
\alpha = \left (a_1,b_1,c_1,\ldots,
a_N,b_N,c_N\right ) \in (0,\pi)^{3N} \ \big | \
\forall k \in \{1,\ldots,N\}, \
a_k+b_k+c_k = \pi
\right \}.
$$
An angle $a_k$ (respectively $b_k,c_k$) represents the value of a dihedral angle on the edge $\overrightarrow{01}$ (respectively $\overrightarrow{02}$, $\overrightarrow{03}$) and its opposite edge in the tetrahedron $T_k$. If a particular shape structure $\alpha=(a_1,\ldots,c_N)\in \mathcal{S}_X$ is fixed, we define three associated functions $\alpha_j\colon X^3 \to (0,\pi)$ (for $j=1,2,3$) that send $T_k$ to the $j$-th element of $\{a_k,b_k,c_k\}$ for each $k \in \{1,\ldots,N\}$.

Let $(X,\alpha)$ be a triangulation with a shape structure as before. We denote $\omega_{X,\alpha}\colon X^1 \to \mathbb{R}$ the associated \textit{weight function}, which sends an edge $e\in X^1$ to the sum of angles $\alpha_j(T_k)$ corresponding to tetrahedral edges that are preimages of $e$ by $\sim$. 
For example, if we denote 
$\alpha=(a_+,b_+,c_+,a_-,b_-,c_-)$ a shape structure on the triangulation $X$ of Figure \ref{fig:41:face:matrices}, then $\omega_{X,\alpha}(\sarrow) =
2 a_+ + c_+ + 2 b_- + c_-.$ The space of angle structures on $X$ is defined by
$$
\mathcal{A}_X := \left \{
\alpha \in \mathcal{S}_X \ \big | \
\forall e \in X^1, \ \omega_{X,\alpha}(e)=2\pi
\right \}.
$$
Let $\sigma$ an oriented normal closed curve on the boundary torus $\partial( \SS^3 \setminus \nu(K))$. Truncating the tetrahedra $T_j$ at each vertex yields a triangulation of the boundary torus by triangles coming from vertices of $X$ (called the \textit{cusp triangulation}). If the curve $\sigma$ intersects these triangles transversely (without back-tracking), then $\sigma$ cuts off corners of each such encountered triangle. The \textit{angular holonomy} $\mathrm{H}^\R_{X,\sigma}(\alpha)$ of $\sigma$ is defined to be the signed sum of those angles, where the sign of an angle is equal to $+1$ if the cut corner lies on the left of $\sigma$ and is equal to $-1$ otherwise. As an example, in Figure \ref{alinear}, if we denote the red curve by $l$, then we have 
$ \mathrm{H}^{\RR}_{X,l}(\alpha) = a_1 - a_2 + a_3 - a_4 + a_5 - a_6$.

\subsection{Shape parameters}\label{sub:thurston}

To a shape structure $(a,b,c)$ on an ordered tetrahedron $T$, we can associate bijectively a \textit{complex shape structure} $z \in \R+i\R_{>0}$, as well as two companion complex numbers of positive imaginary part
$$z':=\frac{1}{1-z} \text{\ and \ } z'':=\frac{z-1}{z}.$$
Each of the $z, z', z''$ is associated to an edge, in a slightly different way according to $\varepsilon(T)$:
\begin{itemize}
\item In all cases, $z$ corresponds to the same two edges as the angle $a$.
\item If $\varepsilon(T)=1$, then $z'$ corresponds to $c$ and $z''$ to $b$.
\item If $\varepsilon(T)=-1$, then $z'$ corresponds to $b$ and $z''$ to $c$.
\end{itemize}
Another way of phrasing it is that $z, z', z''$ are always in a counterclockwise order around a vertex, whereas $a,b,c$ need to follow the specific vertex ordering of $T$.

In this article we will use the following definition of the complex logarithm:
\[
\Log(z) := \log\vert z \vert + i\arg(z)  \ \textrm{for} \ z \in \C^{*},
\]
where $\arg(z) \in (-\pi,\pi]$. 

We will actually allow some tetrahedra in the triangulation to be flat. For a shape parameter $z \in \CC\setminus\{0,1\}$ with $\im z \geq 0$, we let
$$y := \Log(z) - i \pi \in \R  - i [0,\pi],$$
 which lives in a horizontal strip of the complex plane. In particular, we have 
 $$z = -e^{y}, \ \ \Log(z) = -y + i\pi, \ \ \Log(z') = -\Log(1+e^{y}) \ \ \text{and} \ \ \Log(z'') = \Log(1+e^{-y}).$$

\subsection{The classical dilogarithm}\label{sectclassicaldilog}

For the dilogarithm function, we will use the definition:
\begin{align}\label{defLi}
\Li(z) := - \int_0^z \Log(1-u) \frac{du}{u} \ \ \ \textrm{for} \ z \in \C \setminus [1,\infty)
\end{align}
(see for example \cite{Za}).
For $z$ in the unit disk, $\Li(z)=\sum_{n\geq 1} n^{-2} z^n$.
We will use the following properties of the dilogarithm function, referring for example to \cite[Appendix A]{AH} for the proofs.

\begin{proposition}[Properties of $\Li$]\label{prop:dilog}
\

\begin{enumerate}
\item (inversion relation) $$ \forall z \in \C \setminus [1,\infty), \
 \Li\left (\frac{1}{z}\right ) = - \Li(z) - \frac{\pi^2}{6} - \frac{1}{2}\Log(-z)^2.
$$
\item (integral form) For all $y \in \R +i(-\pi,\pi)$,
$$ \frac{-i}{2 \pi} \Li(-e^y) =
\int_{v \in \R + i 0^+}
\dfrac{\exp\left (-i \frac{y v}{\pi}\right )}{4 v^2 \sinh(v)} \,  dv.
$$
 In the previous formula and in the remaining of the paper, $\R + i 0^+$ denotes a contour in $\C$ that is deformed from the horizontal line $\R \subset \C$ by avoiding $0$ via the upper half-plane (with a small half-circle for example).
\end{enumerate}
\end{proposition}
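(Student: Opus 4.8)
The plan is to establish each of the two identities by the classical two-step route: reduce it to an elementary computation by differentiation, and then propagate from a convenient subdomain to the full stated domain using the identity theorem.

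\emph{Part (1), the inversion relation.} I would work first on the connected open set $\Omega:=\C\setminus\R_{\geq0}$, on which each of $\Li(z)$, $\Li(1/z)$ and $\Log(-z)$ is analytic (for $z\in\Omega$ one has $z,1/z\notin[1,\infty)$ and $-z\notin\R_{\leq0}$). Put $F(z):=\Li(z)+\Li(1/z)+\tfrac{\pi^2}{6}+\tfrac12\Log(-z)^2$. Differentiating, with the help of (\ref{defLi}) and the chain rule, gives
$$F'(z)=\frac1z\Big(-\Log(1-z)+\Log\big(1-\tfrac{1}{z}\big)+\Log(-z)\Big).$$
Since $\big(1-\tfrac1z\big)(-z)/(1-z)=1$ identically, the bracket lies in $2\pi i\Z$; being continuous and integer-valued on the connected set $\Omega$ it is constant, and at $z=-1$ it equals $\Log2+\Log1-\Log2=0$. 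Hence $F'\equiv0$, so $F$ is constant on $\Omega$, and evaluating at $z=-1$ (using $\Li(-1)=-\tfrac{\pi^2}{12}$) gives $F(-1)=2\bigl(-\tfrac{\pi^2}{12}\bigr)+\tfrac{\pi^2}{6}+0=0$. This is the asserted identity on $\Omega$; its extension to the remaining points of $\C\setminus[1,\infty)$ (where $\Li(1/z)$ is read as the appropriate one-sided boundary value) follows by continuity.

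\emph{Part (2), the integral form.} Write $y=a+ib$ with $a\in\R$, $b\in(-\pi,\pi)$, and set $S:=\R+i(-\pi,\pi)$. First I record two analyticity facts. The left-hand side $\tfrac{-i}{2\pi}\Li(-e^y)$ is analytic on $S$, because $-e^y=e^{y+i\pi}\notin[1,\infty)$ there. The right-hand integral converges absolutely and locally uniformly on $S$: as $\Re v\to+\infty$ the integrand decays like $|v|^{-2}\exp\big((b/\pi-1)\Re v\big)$ and as $\Re v\to-\infty$ like $|v|^{-2}\exp\big((1+b/\pi)\Re v\big)$, both exponents having the favorable sign since $|b|<\pi$; the same bounds let one differentiate under the integral sign, so the right-hand side is analytic on $S$ as well. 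By the identity theorem it therefore suffices to prove the equality for $y\in(-\infty,0)\subset S$.

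For such $y$ (so $b=0$ and $a<0$) the factor $e^{-iyv/\pi}$ has modulus $e^{a\,\im v/\pi}\leq1$ on the closed upper half-plane, and I would close the contour $\R+i0^+$ with upper semicircles $|v|=R_k$ of radii avoiding the zeros of $\sinh$; the arc contributions vanish in the limit, the integrand being exponentially small away from the imaginary axis (through $1/|\sinh v|$) and near it (through $e^{-iyv/\pi}$, using $a<0$). The decisive point is that the prescription $\R+i0^+$ leaves the triple pole of $\tfrac{1}{4v^2\sinh v}$ at the origin \emph{below} the contour, hence unenclosed; the only enclosed poles are the simple poles $v=i\pi n$, $n\geq1$, with
$$\Res_{v=i\pi n}\frac{\exp(-iyv/\pi)}{4v^2\sinh v}=\frac{e^{ny}}{4(i\pi n)^2\cosh(i\pi n)}=-\frac{(-1)^n e^{ny}}{4\pi^2 n^2}.$$
Summing, and using $|e^y|<1$ to justify the rearrangement, $\int_{\R+i0^+}=2\pi i\sum_{n\geq1}\Big(-\tfrac{(-1)^n e^{ny}}{4\pi^2 n^2}\Big)=\tfrac{-i}{2\pi}\sum_{n\geq1}\tfrac{(-e^y)^n}{n^2}=\tfrac{-i}{2\pi}\Li(-e^y)$, which gives the identity on $(-\infty,0)$ and hence on all of $S$. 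The one genuinely delicate point is this contour manipulation: discarding the arcs requires combining the decay of $1/\sinh v$ near the real axis with that of $e^{-iyv/\pi}$ near the imaginary axis while choosing radii that miss poles, and — more importantly — one must respect the $i0^+$ prescription so that the triple pole at the origin is omitted from the residue sum. This placement is not cosmetic: taking the contour below the origin would add its residue and corrupt the right-hand side by a spurious polynomial-in-$y$ term of the same $\Log^2$ flavor as the correction terms in Part (1). Everything else is routine.
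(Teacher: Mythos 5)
The paper does not supply its own proof of this proposition; it delegates to \cite[Appendix A]{AH}. Your argument is correct and is the standard textbook derivation of both facts: Part (1) by differentiating the auxiliary function $F$ on the connected domain $\C\setminus\R_{\geq0}$, observing $F'\equiv 0$ via the multiplicative identity $(1-\tfrac1z)(-z)=1-z$, and pinning the constant at $z=-1$; Part (2) by analytic continuation from $y\in(-\infty,0)$ together with closing the $\R+i0^+$ contour in the upper half-plane and summing the residues at $v=i\pi n$, $n\geq1$. I checked your residue computation, the exponent bookkeeping $(b/\pi-1)\Re v$ and $(1+b/\pi)\Re v$ governing absolute convergence, and the observation that the $i0^+$ prescription excludes the triple pole at the origin from the enclosed set; all of it is sound.

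One small remark on Part (1): the stated domain $\C\setminus[1,\infty)$ is slightly larger than where both sides are genuinely analytic — for $z\in(0,1)$ the argument $1/z$ lands on the branch cut $[1,\infty)$ of $\Li$, and $-z$ lands on the cut of $\Log$. You do flag this and invoke one-sided boundary values, which is fine, but it would be cleaner to state the identity on $\C\setminus\R_{\geq0}$, where your computation actually lives, and treat the extension to $(0,1)$ as a separate remark about the convention $\arg\in(-\pi,\pi]$ and the corresponding one-sided limit of $\Li$ on its cut. This does not affect the validity of the proof.
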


Consider the function $\Li(-e^{y})$ defined on $\RR + i (-\pi,\pi)$. Following \cite{LMSWY}, we extend the function analytically to a function $\mathrm{L}(y)$ defined on $\CC \setminus ( i(-\infty,-\pi) \cup i(\pi,\infty))$ as follows. Note that our function $\mathrm{L}$ is different from the function $L$ in \cite[Equation (2.7)]{LMSWY}. 

First, since $\Li$ is analytic inside the unit disk, for $y$ with $\Re y < 0$, we can extend the definition of $\Li(-e^{y})$ periodically to $\RR_{<0} + i \RR$ periodically with period $2\pi i$ by 
$$ \mathrm{L}(y+2\pi i) = \mathrm{L}(y)$$
for all $y \in \RR_{<0} + i \RR$. In particular, for any $y$ in this region, the derivative is also $2\pi i$ periodic. 

Next, for $y$ with $\Re y>0$, note that $-e^{y}$ approach to the branch cut $[1,\infty)$ of $\Li(z)$ when $y$ approach to $\RR_{>0}-i\pi$ or $\RR_{>0} + i\pi$. One can extend the function analytically by changing the branch cut of the logarithm in (\ref{defLi}) continuously. Besides, by Proposition \ref{prop:dilog} (1), for all $y \in \RR + i(-\pi,\pi)$, we have
$$
\Li(-e^{y}) = - \Li(-e^{-y}) - \frac{\pi^2}{6} - \frac{y^2}{2}.
$$
In particular, for any $y= a + ib$ with $a>0$ and $b\in (-\pi,\pi)$, we have
$$
\Li(-e^{a\pm bi}) = - \Li(-e^{-a\mp bi}) - \frac{\pi^2}{6} - \frac{(a\pm bi)^2}{2},
$$
which implies that
\begin{align}\label{intoout}
    \Li(-e^{a + bi}) - \Li(-e^{a - bi})
= - \left( \Li(-e^{-a- bi}) - \Li(-e^{-a + bi}) \right)
- 2abi. 
\end{align}
By continuity, if we take $b\to \pi$, since $-a<0$, by peridocitiy we have
$$
\lim_{b\to \pi}  \left( \Li(-e^{-a- bi}) - \Li(-e^{-a + bi}) \right)
= \mathrm{L}(-a - \pi i) - \mathrm{L}(-a + \pi i)
= 0.
$$
Hence, by taking $b\to \pi$ in (\ref{intoout}), we have
$$
\mathrm{L}(a+\pi i) = \mathrm{L}(a-\pi i) - 2\pi i a 
$$
for all $a>0$. By analyticity, for every $y\in \CC$ with $\Re y>0$, $\mathrm{L}(y)$ satisfies the functional equation
\begin{align}\label{dilogFE}
    \mathrm{L}(y+2\pi i)  = \mathrm{L}(y) - 2\pi i (y+\pi i). 
\end{align}
In this region, we have
\begin{align}\label{L'period}
    \mathrm{L}'(y + 2\pi i) = \mathrm{L}'(y)  - 2\pi i .
\end{align}
Since 
$$\frac{d}{dy} \Li(-e^{y}) = - \Log (1+e^{y})$$
for $y \in \RR_{>0} + i(-\pi, \pi)$, $\mathrm{L}'(y)$ coincides with the analytic continuation of $-\Log (1+e^{y})$. Finally, from (\ref{L'period}), for any positive integer $n\geq 2$, the higher derivative of $L$ satisfies
\begin{align*}
    \mathrm{L}^{(n)}(y+2\pi i) = \mathrm{L}^{(n)}(y).
\end{align*}

The function $\mathrm{L}(z)$ is continuous at $z= \pm i\pi$ but not holomorphic. The next lemma describes the behavior of the function near this singularity.
\begin{lemma}\label{singularbehaviour} Let $y= a +i b$.
    We have
    $$ \lim_{\substack{y \to -i\pi\\ \im y > -\pi}}
    \frac{\partial}{\partial b} (\mathrm{Re} ( i \Li(-e^{y}))) = -\infty.$$
    In particular, near $y=-i\pi$, $\mathrm{Re} (i \Li(e^{y}))$ decreases when $b$ increases.
\end{lemma}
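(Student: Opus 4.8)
The plan is to compute the partial derivative directly near $y = -i\pi$ using the explicit antiderivative of $\mathrm{L}(y)$ on the relevant region and then extract the divergent term. Write $y = a + ib$ with $a \in \RR$ and $b \to -\pi^+$. For $a < 0$, the function $\mathrm{L}(y)$ coincides with $\Li(-e^y)$ (extended periodically), and for $a > 0$ it is the analytic continuation governed by the functional equation (\ref{dilogFE}); in either case, $\mathrm{L}'(y)$ coincides with the analytic continuation of $-\Log(1 + e^y)$. So first I would reduce to understanding $-\Log(1 + e^y)$ as $y \to -i\pi$, i.e. as $e^y \to -1$, equivalently $1 + e^y \to 0$. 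This is exactly where $\Log$ has its singularity, so the derivative of $\mathrm{L}$ blows up logarithmically.

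The key computation: since $\partial b$ acts as $i \partial y$ (Cauchy--Riemann, away from the singular locus), we have
\begin{align*}
\frac{\partial}{\partial b}\bigl(\Re(i \mathrm{L}(y))\bigr)
= \Re\Bigl(i \cdot i \cdot \frac{\partial}{\partial y}\mathrm{L}(y)\Bigr)
= -\Re\bigl(\mathrm{L}'(y)\bigr)
= \Re\bigl(\Log(1 + e^y)\bigr)
= \log|1 + e^y|.
\end{align*}
Now I would expand $1 + e^y$ near $y = -i\pi$: writing $y = -i\pi + \eta$ with $\eta = a + i(b+\pi) \to 0$ and $\eta$ in the region $\im y > -\pi$ (i.e. $\im \eta > 0$), we get $e^y = -e^\eta = -(1 + \eta + O(\eta^2))$, hence $1 + e^y = -\eta + O(\eta^2)$, so $|1 + e^y| = |\eta|(1 + O(|\eta|))$ and therefore
\begin{align*}
\frac{\partial}{\partial b}\bigl(\Re(i \mathrm{L}(y))\bigr) = \log|\eta| + O(|\eta|) \longrightarrow -\infty
\end{align*}
as $y \to -i\pi$ within $\im y > -\pi$. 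This gives both assertions of the lemma at once: the limit of the partial derivative is $-\infty$, and in particular it is negative in a punctured neighborhood, so $\Re(i\mathrm{L}(y))$ (equivalently $\Re(i\Li(-e^y))$ in the region $a<0$) is strictly decreasing in $b$ there.

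The one point requiring care — and the main (mild) obstacle — is justifying that $\partial_b = i\partial_y$ applies right up to the approach to $y = -i\pi$, since $\mathrm{L}$ fails to be holomorphic on the imaginary axis segments $i(-\infty,-\pi)$ and $i(\pi,\infty)$, and $y = -i\pi$ is the endpoint of one of them. However, for any $y$ with $\im y > -\pi$ and $y$ close to $-i\pi$, one is in the open region where $\mathrm{L}$ is genuinely holomorphic (the extension by (\ref{dilogFE}) for $\Re y > 0$ and by periodicity for $\Re y < 0$ both produce holomorphic functions there, and they agree with the analytic continuation of $\Li(-e^y)$ across $\Re y = 0$ for $\im y \in (-\pi,\pi)$), so the Cauchy--Riemann computation is valid at each such $y$; only the boundary point itself is excluded, which is exactly the regime of the limit. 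I would also double-check the sign bookkeeping in $\mathrm{L}'(y) = -\Log(1+e^y)$ against the stated conventions ($\Log$ with argument in $(-\pi,\pi]$, and $y = \Log(z) - i\pi$ so $z = -e^y$) to make sure the final sign of the divergence is indeed $-\infty$ and not $+\infty$.
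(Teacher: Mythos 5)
Your proof is correct and is essentially the paper's argument: both use the Cauchy--Riemann relation to convert $\partial_b \Re(i\Li(-e^y))$ into $\log|1+e^y|$ (the paper phrases it as $-\im\,\partial_y(i\Li(-e^y)) = \im(i\Log(1+e^y))$, which is the same quantity) and then let $y\to -i\pi$. Your added Taylor expansion of $1+e^y$ and the remark about where $\mathrm{L}$ is holomorphic are harmless elaborations rather than a different route.
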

\begin{proof}
    By Cauchy-Riemann equation, we have
    \begin{align*}
         \frac{\partial}{\partial b} (\mathrm{Re} (i \Li(-e^{y})) )
         = -\im\frac{\partial}{\partial y} (i \Li(-e^{y}))
         = \im \left( i\log(1+e^{y}) \right)
         = \log|1+e^{y}|
         \to -\infty.
    \end{align*}
    as $y \to -i\pi$. This completes the proof.
\end{proof}

\subsection{The Bloch--Wigner function}

The \emph{Bloch--Wigner function} $D:\C \rightarrow \R $ is defined by
\[
D(z) := \Im(\Li(z)) + \arg(1-z)\log \vert z \vert \quad \text{ if $z\in \C \smallsetminus \R$, and $0$ otherwise.}
\]
$D$ is continuous on $\C$, and real-analytic on $\mathbb{C} \backslash \{0,1\}$ (see \cite[Section 3]{Za} for details). 
\begin{proposition}
Let $T$ be an ideal tetrahedron in $\mathbb{H}^3$ with complex shape structure $z$. Then, its volume is given by
\[
\Vol(T)= D(z) = D \left( \frac{z-1}{z} \right) = D \left( \frac{1}{1-z} \right).
\]
\end{proposition}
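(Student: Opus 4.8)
The statement to prove is the classical fact that the hyperbolic volume of an ideal tetrahedron with shape parameter $z$ equals $D(z)$, and that $D$ is invariant under the shape-parameter cycle $z \mapsto \frac{z-1}{z} \mapsto \frac{1}{1-z}$.

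\begin{proof}[Proof sketch]
The plan is to first establish the cyclic symmetry $D(z) = D\left(\frac{z-1}{z}\right) = D\left(\frac{1}{1-z}\right)$ purely formally, and then identify $D(z)$ with the volume by a one-variable integration argument.

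For the symmetry, I would work from the definition $D(z) = \im(\Li(z)) + \arg(1-z)\log|z|$ together with the inversion relation of Proposition \ref{prop:dilog}(1) and the five-term-type functional equations of the dilogarithm. In fact it is cleaner to use the well-known representation $D(z) = \im\bigl(\Li(z) + \log|z|\,\Log(1-z)\bigr)$ valid off the real axis, so that $D$ is manifestly the imaginary part of a sum whose differential is $-\log|z|\,d\arg(1-z) + \arg(1-z)\,d\log|z|$ after cancellation; one checks that this differential is invariant (up to exact terms that do not affect the imaginary part of a closed expression) under the substitution $z \mapsto 1 - 1/z$. Concretely, substituting $w = \frac{z-1}{z}$ and using $\Log(1-w) = -\Log z + (\text{correction})$ reduces the claim to the inversion relation already quoted; the real parts drop out upon taking $\im$. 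The equality $D(z) = D(1/(1-z))$ then follows by applying the first identity twice, since $\frac{1}{1-z}$ is the image of $\frac{z-1}{z}$ under the same map.

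For the volume formula, I would recall that an ideal tetrahedron $T_z$ in $\mathbb{H}^3$ with vertices $0, 1, \infty, z$ has volume equal to the Lobachevsky function evaluated at the dihedral angles, $\Vol(T_z) = \Lambda(\arg z) + \Lambda(\arg\frac{z-1}{z}) + \Lambda(\arg\frac{1}{1-z})$ where $\Lambda(\theta) = -\int_0^\theta \log|2\sin t|\,dt$. The link to $D$ is the classical identity $D(e^{i\theta}) = \Lambda(\theta)$ combined with the fact that $D$ is a smooth function on $\mathbb{C}\setminus\{0,1\}$ whose value is determined by its behavior along with the three-term relation; alternatively, one computes $d\Vol(T_z)$ as a function of $z$ by the Schläfli formula, $d\Vol = -\frac12\sum \ell_i\, d\theta_i$, and checks directly that $dD(z)$ is given by the same expression (here the length terms $\ell_i$ vanish for ideal tetrahedra and the formula becomes $d\Vol(T_z) = -\log|z|\,d\arg z + \dots$, matching $dD$). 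Since both $\Vol(T_z)$ and $D(z)$ are continuous on the upper half-plane, vanish as $z$ degenerates to a flat tetrahedron, and have equal differentials on the (connected) interior, they agree.

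The main obstacle is the careful bookkeeping of branch cuts: $\Li$ and $\Log$ each carry cuts, $\arg(1-z)$ jumps across $[1,\infty)$, and the inversion relation of Proposition \ref{prop:dilog}(1) is only stated on $\mathbb{C}\setminus[1,\infty)$, so verifying that the combination defining $D$ is genuinely continuous across these loci — and that the functional equations propagate correctly to give the cyclic symmetry on all of $\mathbb{C}$ — requires attention. Once continuity of $D$ on $\mathbb{C}$ is granted (as asserted in the statement, following \cite{Za}), the differential-matching argument for the volume is routine and the rest is formal.
\end{proof}
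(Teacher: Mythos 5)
The paper itself does not prove this proposition; it is stated as a classical fact with a pointer to \cite[Section~3]{Za}, so there is no in-paper argument to compare against. Your sketch is a standard route to it: prove the cyclic symmetry of $D$ from dilogarithm functional equations, then identify $D(z)$ with $\Vol(T_z)$ by matching differentials and boundary values on the (connected) upper half-plane. Two points in the sketch are not quite right as stated and would need repair in a full write-up. First, the identity $D(z)=D\left(\frac{z-1}{z}\right)$ does not reduce to the inversion relation alone: setting $w=1-1/z$ gives $1-w=1/z$, so you need to control $\Li(1-1/z)$, which requires the reflection relation $\Li(z)+\Li(1-z)=\frac{\pi^2}{6}-\Log z\,\Log(1-z)$ in addition to the inversion relation (equivalently, an appeal to the five-term relation or to the same differential-matching argument you use for the volume). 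Second, the Schl\"afli formula $d\Vol=-\frac12\sum\ell_i\,d\theta_i$ does not "become" anything useful for ideal tetrahedra because the edge lengths $\ell_i$ are infinite, not zero; the correct starting point is Lobachevsky's formula $\Vol(T_z)=\Lambda(\alpha)+\Lambda(\beta)+\Lambda(\gamma)$ with $\Lambda(\theta)=-\int_0^\theta\log|2\sin t|\,dt$ and $\alpha,\beta,\gamma$ the dihedral angles, from which $d\Vol=-\sum\log|2\sin\theta_i|\,d\theta_i$, and one verifies this equals $dD$ along the shape family. With those corrections your argument is complete and is the standard one.
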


The next lemma relates the Bloch--Wigner function with the analytically continued dilogarithm function.
\begin{lemma}\label{tildeLitoD}
For any $y \in \CC \setminus ( i(-\infty,-\pi) \cup i(\pi,\infty))$,
    \begin{align*}
        \im \left(\mathrm{L}(y)\right) - \left(\im \mathrm{L}'(y)\right)\log\left|-e^{y}\right|
        = D(-e^{y}).
    \end{align*}
\end{lemma}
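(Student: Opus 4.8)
The plan is to prove the identity by reducing to the classical relation between the Bloch--Wigner function and the ordinary dilogarithm on the strip $\RR + i(-\pi,\pi)$, and then extending to the full domain $\CC \setminus (i(-\infty,-\pi)\cup i(\pi,\infty))$ by checking compatibility with the periodicity/functional equations for $\mathrm{L}$ established in Section \ref{sectclassicaldilog}. First I would recall that, by the definition of the Bloch--Wigner function and the standard identity $D(z) = \im(\Li(z)) + \arg(1-z)\log|z|$, for $y \in \RR + i(-\pi,\pi)$ we have $\mathrm{L}(y) = \Li(-e^{y})$ and $\mathrm{L}'(y) = -\Log(1+e^{y})$, so that $\im\mathrm{L}'(y) = -\arg(1+e^{y}) = \arg\big(1-(-e^{y})\big)$, while $\log|-e^{y}| = \log|z|$ with $z = -e^{y}$. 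Substituting $z = -e^{y}$ into the definition of $D$ then gives $D(-e^{y}) = \im(\Li(-e^{y})) + \arg(1+e^{y})\log|-e^{y}| = \im\mathrm{L}(y) - (\im\mathrm{L}'(y))\log|-e^{y}|$, which is exactly the claimed identity on the strip.

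Next I would handle the region $\Re y < 0$. There, by construction $\mathrm{L}$ is the $2\pi i$-periodic extension, $\mathrm{L}(y+2\pi i) = \mathrm{L}(y)$, and hence $\mathrm{L}'(y+2\pi i) = \mathrm{L}'(y)$; also $|-e^{y+2\pi i}| = |-e^{y}|$ and $-e^{y+2\pi i} = -e^{y}$, so $D(-e^{y+2\pi i}) = D(-e^{y})$. Thus both sides of the identity are $2\pi i$-periodic in $y$ on $\Re y < 0$, and since the identity already holds on the fundamental strip $\RR_{<0} + i(-\pi,\pi)$, it holds on all of $\RR_{<0} + i\RR$. For the region $\Re y > 0$ I would argue similarly, but now using the functional equations (\ref{dilogFE}) and (\ref{L'period}): $\mathrm{L}(y+2\pi i) = \mathrm{L}(y) - 2\pi i(y+\pi i)$ and $\mathrm{L}'(y+2\pi i) = \mathrm{L}'(y) - 2\pi i$. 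Writing $y = a + ib$, the left-hand side picks up, under $b \mapsto b + 2\pi$, the increment $\im\big(-2\pi i(y+\pi i)\big) - \big(\im(-2\pi i)\big)\log|-e^{y}| = -2\pi a - (-2\pi)\log(e^{a}) = -2\pi a + 2\pi a = 0$, since $\log|-e^{y}| = \Re y = a$. Hence the left-hand side is again $2\pi i$-periodic on $\Re y > 0$; the right-hand side $D(-e^{y})$ is trivially $2\pi i$-periodic, so the identity on $\RR_{>0} + i(-\pi,\pi)$ propagates to all of $\RR_{>0} + i\RR$.

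Finally I would glue the three regions. The left- and right-hand sides are each continuous on $\CC \setminus (i(-\infty,-\pi)\cup i(\pi,\infty))$ — the left-hand side because $\mathrm{L}$ and $\mathrm{L}'$ are continuous there (with $\mathrm{L}$ continuous but not holomorphic at $\pm i\pi$, as noted before Lemma \ref{singularbehaviour}), and the right-hand side because $D$ is continuous on $\CC$ and $y \mapsto -e^{y}$ is entire — so the identity, having been verified on the open half-planes $\Re y < 0$ and $\Re y > 0$, extends by continuity across the imaginary axis, and in particular at the points $y = \pm i\pi$ and along the segment $i[-\pi,\pi]$. I expect the main obstacle to be bookkeeping rather than conceptual: one must be careful that the branch conventions for $\arg$ in the definition of $D$, for $\Log$ in the formula $\mathrm{L}'(y) = -\Log(1+e^{y})$ on the strip, and for the analytic continuation of $-\Log(1+e^{y})$ into $\Re y > 0$ all match up so that $\im\mathrm{L}'(y) = \arg(1+e^{y})$ continues to hold (up to the correct integer multiple of $2\pi$) after crossing the lines $\im y = \pm\pi$; tracking this shift is precisely what the functional equation (\ref{L'period}) encodes, and the periodicity computation above is designed to absorb it.
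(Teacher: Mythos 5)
Your proof is correct and follows essentially the same route as the paper's: verify the identity on the fundamental strip $\RR + i(-\pi,\pi)$ using $\mathrm{L}(y)=\Li(-e^y)$, $\mathrm{L}'(y)=-\Log(1+e^y)$ and the definition of $D$, establish $2\pi i$-periodicity of both sides separately on $\Re y<0$ (by periodicity of $\mathrm{L}$) and on $\Re y>0$ (by the functional equations (\ref{dilogFE}) and (\ref{L'period})), then glue by continuity across $i[-\pi,\pi]$. One small slip: the displayed chain $\im\mathrm{L}'(y) = -\arg(1+e^{y}) = \arg(1-(-e^{y}))$ has the wrong sign in the second equality (since $\arg(1-(-e^y)) = +\arg(1+e^y)$), but your subsequent substitution into $D(-e^y)$ uses the correct sign, so the conclusion stands.
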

\begin{proof}
    First, we claim that on $\CC \setminus i\RR$, the left hand side of the equation is $2\pi i$ periodic. For $y$ with $\Re y<0$, this follows from the fact that $\mathrm{L}(y) = \mathrm{L}(y+2\pi i)$ and $\im \mathrm{L}'(y+2\pi i) = \im \mathrm{L}'(y)$. 
    For $y$ with $\Re y >0$, from (\ref{dilogFE}), since $\mathrm{L}(y+2\pi i) = \mathrm{L}(y) - 2\pi i (y+\pi i)$, we have
    $$ \im \left(\mathrm{L}(y+2\pi i)\right) = \im \left(\mathrm{L}(y)\right) - 2\pi \Re y = \im \left(\mathrm{L}(y)\right) - 2\pi\log\left|-e^{y}\right|.$$
    Besides,
    $$ \im\mathrm{L}'(y+2\pi i) = \im\left(\mathrm{L}'(y) - 2\pi i\right) = \im\mathrm{L}'(y) - 2\pi. $$
    Thus, we have
    \begin{align*}
     &\ \im \left(\mathrm{L}(y+2\pi i)\right) - \left(\im\mathrm{L}'(y+2\pi i)\right)\log\left|-e^{-(y+2\pi i)}\right| \\
    =&\  \left(\im \left(\mathrm{L}(y)\right) - 2\pi\log\left|-e^{y}\right|\right)
    - \left(\im\mathrm{L}'(y) - 2\pi \right)\log\left|-e^{y}\right|\\
    =&\ \im \left(\mathrm{L}(y)\right) - \left(\im\mathrm{L}'(y)\right)\log\left|-e^{y}\right|.
    \end{align*}
    This proves the claim. Note that for $y\in \RR + i (-\pi, \pi)$, we have
    \begin{align*}
        \im \left(\mathrm{L}(y)\right) -\left(\im\mathrm{L}'(y)\right)\log\left|-e^{y}\right|  
        = \im \left(\Li(-e^{y})\right) + \arg\left(1+e^{y}\right)\log|-e^{y}| 
        = D(-e^{y}).
    \end{align*}
    By continuity, the above equation still holds for $y\in \RR + i [-\pi, \pi]$. By periodicity on $\CC \setminus i\RR$, the equation holds for all $y\in \CC \setminus ( i(-\infty,-\pi) \cup i(\pi,\infty))$. This completes the proof.
\end{proof}

\subsection{Faddeev's quantum dilogarithm}\label{sectquantumdilog}
Given $\hbar >0$ and $\B >0$ such that $$(\B+\B^{-1}) \sqrt{\hbar} = 1,$$
 the \emph{Faddeev's quantum dilogarithm} $\Phi_\B$ is the holomorphic function on $\R + i \left (\frac{-1}{2 \sqrt{\hbar}}, \frac{1}{2 \sqrt{\hbar}}\right )$ given by
$$
\Phi_\B(z) = \exp\left (
\frac{1}{4} \int_{w \in \R + i 0^+}
\dfrac{e^{-2 i z w} dw}{\sinh(\B w) \sinh({\B}^{-1}w) w}
\right ) \ \ \ \ \text{for} \ z \in \R + i \left (\frac{-1}{2 \sqrt{\hbar}}, \frac{1}{2 \sqrt{\hbar}}\right ),
$$
where $\R + i 0^+$ denotes a contour in $\C$ that is deformed from the horizontal line $\R \subset \C$ by avoiding $0$ by above. 
The quantum dilogarithm function can be extended to a meromorphic function for $z\in \C$ via the functional equation 
\begin{align}\label{qdilogFE}
    \Phi_\B\left (z-i \frac{\B^{\pm  1}}{2}\right )= \left (1+e^{2\pi \B^{\pm 1} z}\right )
\Phi_\B\left (z + i \frac{\B^{\pm 1}}{2}\right ).
\end{align}
In particular, we have
$$
\Phi_\B\left(\frac{z}{2\pi \B} \right) 
= \left(1+e^{\frac{z+\pi i}{\B^2}}\right) \Phi_\B\left(\frac{z+2\pi i}{2\pi \B} \right) .
$$

Note that $\Phi_\B$ depends only on $\hbar = (\B+\B^{-1})^{-2}$. Furthermore, as a consequence of the functional equation, the poles of $\Phi_\B$  lie on $ i  [1/(2 \sqrt{\hbar}), \infty ) $ and the zeroes lie symmetrically on $i  (-\infty, -1/(2 \sqrt{\hbar}) ]$. We now list several useful properties of Faddeev's quantum dilogarithm. We refer to \cite[Appendix A]{AK} for these properties (and several more), and to \cite[Lemma 3]{AH} for an alternate proof of the semi-classical limit property.

\begin{proposition}[Properties of $\Phi_\B$]\label{prop:quant:dilog}\text{}\\
\begin{enumerate}
\item (inversion relation) For any $\B \in \R_{>0}$ and any  $z \in \R + i \left (\frac{-1}{2 \sqrt{\hbar}}, \frac{1}{2 \sqrt{\hbar}}\right )$, 
$$\Phi_\B(z) \Phi_\B(-z) = e^{i\frac{\pi}{12}(\B^2 + \B^{-2})} e^{i \pi z^2}.$$
\item (unitarity) For any $\B \in \R_{>0}$ and any  $z \in \R + i \left (\frac{-1}{2 \sqrt{\hbar}}, \frac{1}{2 \sqrt{\hbar}}\right )$, 
$$\overline{\Phi_\B(z)} = \frac{1}{\Phi_\B(\overline{z})}.$$
\item (semi-classical limit in $\B$) For any $z \in \R + i \left (-\pi,\pi \right )$,
$$\Phi_\B\left (\frac{z}{2 \pi \B}\right ) = \exp\left (\frac{-i}{2 \pi \B^2} \Li (- e^z)\right ) \left ( 1 + O_{\B \to 0^+}(\B^2)\right ).$$
\item (behavior at infinity) For any  $\B \in \R_{>0}$,
\begin{align*}
 \Phi_\B(z) \ \ \underset{\Re(z)\to -\infty}{\sim} & \ \ 1, \\
 \Phi_\B(z) \ \  \underset{\Re(z)\to \infty}{\sim} & \ \ e^{i\frac{\pi}{12}(\B^2 + \B^{-2})} e^{i \pi z^2}.
\end{align*}
In particular, for any  $\B \in \R_{>0}$ and any $d \in \left (\frac{-1}{2 \sqrt{\hbar}}, \frac{1}{2 \sqrt{\hbar}}\right )$,
\begin{align*}
|\Phi_\B(x+id) | \ \ \underset{\R \ni x \to -\infty}{\sim} & \ \ 1, \\
|\Phi_\B(x+id) | \ \  \underset{\R \ni x \to +\infty}{\sim} & \ \ e^{-2 \pi x d}.
\end{align*}
\end{enumerate}
\end{proposition}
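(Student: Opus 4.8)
The plan is to derive the four items directly from the integral representation
$\Phi_\B(z) = \exp\!\left(\tfrac14\int_{\R+i0^+}\tfrac{e^{-2izw}\,dw}{w\sinh(\B w)\sinh(\B^{-1}w)}\right)$
on the strip $\R + i\!\left(\tfrac{-1}{2\sqrt{\hbar}},\tfrac{1}{2\sqrt{\hbar}}\right)$, using contour and oscillation estimates for (1), (2), (4) and a rescaling of the contour combined with Proposition \ref{prop:dilog}(2) for (3).

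For (1), I would write $\Phi_\B(z)\Phi_\B(-z)$ as the exponential of $\tfrac14$ times the sum of the two integrals with numerators $e^{-2izw}$ and $e^{2izw}$; substituting $w\mapsto -w$ in the second one (reversing the orientation of the contour and using that $\sinh$ is odd) recasts it as $-\int_{\R-i0^+}\tfrac{e^{-2izw}\,dw}{w\sinh(\B w)\sinh(\B^{-1}w)}$, so the combined exponent is $\tfrac14\big(\int_{\R+i0^+}-\int_{\R-i0^+}\big)$ of a function whose only pole enclosed between the two contours is the triple pole at $w=0$; by the residue theorem this equals $\tfrac14\cdot(-2\pi i)\,\Res_{w=0}\tfrac{e^{-2izw}}{w\sinh(\B w)\sinh(\B^{-1}w)}$. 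Expanding $w\sinh(\B w)\sinh(\B^{-1}w) = w^3\big(1 + \tfrac{(\B^2+\B^{-2})w^2}{6}+O(w^4)\big)$ and $e^{-2izw}=1-2izw-2z^2w^2+O(w^3)$ gives residue $-2z^2-\tfrac{\B^2+\B^{-2}}{6}$, and multiplying by $\tfrac14(-2\pi i)$ yields exactly $i\pi z^2 + \tfrac{i\pi}{12}(\B^2+\B^{-2})$, which is (1). For (2), complex conjugation of the integral reflects the contour to $\R-i0^+$ and, since $\B\in\R$, replaces $e^{-2izw}$ by $e^{2i\bar z w}$; a further substitution $w\mapsto -w$ brings the contour back to $\R+i0^+$ at the cost of an overall minus sign and turns the numerator into $e^{-2i\bar z w}$, giving $\overline{\Phi_\B(z)} = \exp\!\big(-\tfrac14\int_{\R+i0^+}\tfrac{e^{-2i\bar z w}\,dw}{w\sinh(\B w)\sinh(\B^{-1}w)}\big) = 1/\Phi_\B(\bar z)$.

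For (3), I would replace $z$ by $\tfrac{z}{2\pi\B}$ and rescale $w = \B v$, so the exponent becomes $\tfrac14\int_{\R+i0^+}\tfrac{e^{-izv/\pi}\,dv}{v\,\sinh(\B^2 v)\sinh(v)}$. Using $\tfrac{1}{\sinh(\B^2 v)} = \tfrac{1}{\B^2 v}\big(1 - \tfrac{\B^4 v^2}{6}+\cdots\big)$ isolates the leading term $\tfrac{1}{\B^2}\int_{\R+i0^+}\tfrac{e^{-izv/\pi}\,dv}{4v^2\sinh(v)}$, which by Proposition \ref{prop:dilog}(2) with $y=z$ equals $\tfrac{-i}{2\pi\B^2}\Li(-e^z)$; exponentiating then produces the factor $\big(1+O(\B^2)\big)$ in front of $\exp\big(\tfrac{-i}{2\pi\B^2}\Li(-e^z)\big)$. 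This is the step I expect to be the main obstacle: one must show that replacing $\tfrac{1}{\sinh(\B^2 v)}$ by $\tfrac{1}{\B^2 v}$ costs only $O(\B^2)$ uniformly along the contour, which is delicate because the expansion is accurate only for $\B^2|v|\ll 1$. The remedy is that for $|v|\gtrsim\B^{-2}$ the rapid growth of $\sinh(\B^2 v)\sinh(v)$ makes the tail negligible, and the extra zeros $i\pi k/\B^2$ of $\sinh(\B^2 v)$ escape to $i\infty$ and never obstruct the contour; making this precise is exactly \cite[Lemma 3]{AH}, which I would invoke directly or reproduce via its splitting-and-tail-estimate argument.

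For (4), as $\Re z \to -\infty$ the factor $e^{-2izw}$ oscillates rapidly on the real parts of the contour and decays on the small arc around $0$ (where $\Im w>0$), so a Riemann--Lebesgue / dominated convergence argument forces the exponent to $0$ and hence $\Phi_\B(z)\to 1$; since $\tfrac{1}{w\sinh(\B w)\sinh(\B^{-1}w)}$ decays like $e^{-|w|/\sqrt{\hbar}}$ at infinity, the required bound is uniform for $\Im z$ in any compact subinterval of $\big(\tfrac{-1}{2\sqrt{\hbar}},\tfrac{1}{2\sqrt{\hbar}}\big)$. The case $\Re z\to +\infty$ then follows from (1), since $\Phi_\B(z) = e^{i\pi z^2}e^{\frac{i\pi}{12}(\B^2+\B^{-2})}/\Phi_\B(-z)$ and $\Phi_\B(-z)\to 1$. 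Finally, for $z = x+id$ with $d\in\big(\tfrac{-1}{2\sqrt{\hbar}},\tfrac{1}{2\sqrt{\hbar}}\big)$ fixed, $|e^{i\pi(x+id)^2}| = e^{-2\pi xd}$ while $|e^{\frac{i\pi}{12}(\B^2+\B^{-2})}| = 1$, which yields $|\Phi_\B(x+id)|\to 1$ as $x\to -\infty$ and $|\Phi_\B(x+id)|\sim e^{-2\pi xd}$ as $x\to +\infty$, completing (4).
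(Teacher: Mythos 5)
Your proof is correct. Note that the paper itself does not give a proof of this proposition; it only cites \cite[Appendix~A]{AK} for all of the properties and \cite[Lemma~3]{AH} for an alternate treatment of the semi-classical limit, so yours is a genuine from-scratch argument rather than a re-derivation of something written out in the paper. The contour bookkeeping in (1) is right: after $w\mapsto -w$ the second exponent becomes $-\int_{\R-i0^+}$ of the same integrand, and the difference $\int_{\R+i0^+}-\int_{\R-i0^+}$ is a \emph{clockwise} loop around the triple pole at $w=0$, giving exponent
$\tfrac14\cdot(-2\pi i)\operatorname{Res}_{w=0}\tfrac{e^{-2izw}}{w\sinh(\B w)\sinh(\B^{-1}w)} = i\pi z^2 + \tfrac{i\pi}{12}(\B^2+\B^{-2})$,
exactly matching the stated inversion relation; the conjugation-and-reflection argument for (2) and the Riemann--Lebesgue/decay argument for (4) (with the exponential bound $e^{-(\B+\B^{-1})|w|}$ on the real parts of the contour, valid on the stated strip) are likewise sound, and deducing the $\Re z\to+\infty$ behavior from (1) is the standard move.

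For (3) you correctly identify the only genuinely delicate point: the naive Taylor replacement $1/\sinh(\B^2 v)\rightsquigarrow 1/(\B^2 v)$ is not a uniform $O(\B^2)$ approximation along the whole contour, and one must show the tail $|v|\gtrsim\B^{-2}$ contributes only $O(\B^2)$ via the exponential decay of $1/\big(\sinh(\B^2 v)\sinh(v)\big)$. You defer this to \cite[Lemma~3]{AH}, which is precisely the reference the paper itself uses for this item, so the logical structure matches the paper's. The one thing to flag is that as written your proof of (3) is not self-contained: if the intent were to avoid citing \cite{AH}, you would need to actually carry out the splitting of the contour into $|v|\le\B^{-1}$ and $|v|>\B^{-1}$ (say), bound the Taylor remainder on the near region, and bound the tail by the exponential decay — but invoking \cite[Lemma~3]{AH} directly, as you do, is perfectly acceptable and is what the paper does too.
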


Following the idea in \cite{LMSWY}, we relate the analytic continuation of the quantum and classical dilogarithm functions as follows. For $z_1,z_2\in \CC,$ let 
$$||z_1 - z_2 ||_\infty = \max\{ |\Re(z_1-z_2)|, |\im(z_1-z_2)|\}$$
be the sup-norm of $z_1-z_2$. Given $\delta>0$, let $L_\delta$ be the $\delta$-neighborhood of $i(-\infty,-\pi) \cup i(\pi,\infty)$ in $\CC$. 

The following lemma gives a uniform estimate of the difference between quantum and classical dilogarithm functions. The proof can be found in \cite[Lemma 2.2 and Lemma 2.3]{LMSWY}. 
\begin{lemma}\label{unifoutsidebox}
For any $\delta>0$, there exists $B_\delta >0$ such that for all $\B \in (0,1)$ and for all $z \in (\RR + i [-\pi,\pi])  \setminus L_{\delta}$, we have
\begin{align*}
    \left|
\Log \left(
\Phi_{\B}\left(\frac{z}{2\pi \B}\right)
\right)
-
\left(
\frac{-i}{2\pi \B^2} \mathrm{L}(z)
\right)
    \right|
    \leq B_\delta \B^2.
\end{align*}
\end{lemma}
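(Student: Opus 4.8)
The plan is to estimate the \emph{error term}
\[
E_{\B}(z)\;:=\;\Log\!\Bigl(\Phi_{\B}\bigl(\tfrac{z}{2\pi\B}\bigr)\Bigr)+\frac{i}{2\pi\B^{2}}\,\mathrm{L}(z)
\]
and to show $|E_{\B}(z)|\le B_{\delta}\B^{2}$ for all $\B\in(0,1)$ and all $z\in(\RR+i[-\pi,\pi])\setminus L_{\delta}$. First I would set up an integral formula for $E_{\B}$ on the open strip $\RR+i(-\pi,\pi)$: for $\B\in(0,1)$ the point $\tfrac{z}{2\pi\B}$ lies in the domain $\RR+i\bigl(-\tfrac1{2\sqrt{\hbar}},\tfrac1{2\sqrt{\hbar}}\bigr)$ of the integral formula for $\Phi_{\B}$ (since $\tfrac{\pi\B}{\sqrt{\hbar}}=\pi(1+\B^{2})>\pi\ge|\im z|$), and after the substitution $w=\B v$ this formula becomes $\tfrac14\int_{\RR+i0^{+}}\tfrac{e^{-izv/\pi}}{\sinh(\B^{2}v)\sinh(v)\,v}\,dv$; combining it with the integral form of $\Li$ from Proposition~\ref{prop:dilog}(2) (with $y=z$) and with $\mathrm{L}(z)=\Li(-e^{z})$ on the strip gives
\[
E_{\B}(z)\;=\;\frac14\int_{\RR+i0^{+}}\frac{e^{-izv/\pi}}{v\sinh v}\,g(\B^{2}v)\,dv,\qquad g(u):=\frac{1}{\sinh u}-\frac1u ,
\]
an identity valid on $\RR+i(-\pi,\pi)$ and extending to $\im z=\pm\pi$ by continuity. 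Using the unitarity relation (Proposition~\ref{prop:quant:dilog}(2)) together with $\overline{\mathrm{L}(y)}=\mathrm{L}(\bar y)$ one gets $|E_{\B}(\bar z)|=|E_{\B}(z)|$, and the two inversion relations (Propositions~\ref{prop:quant:dilog}(1) and~\ref{prop:dilog}(1), in the form of the functional equations of Section~\ref{sectclassicaldilog}--\ref{sectquantumdilog}) change $E_{\B}$ by only the $O(\B^{2})$ quantity $\tfrac{i\pi}{12}\B^{2}$; hence it suffices to treat $z=x+iy$ with $x\ge0$ and $y\in[0,\pi]$.

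Next I would exploit the elementary properties of $g$: it is odd, holomorphic near $0$ with $g(u)=-u/6+O(u^{3})$, satisfies $|g(u)|\le C_{0}\min(|u|,1)$ on $\RR$, and has bounded derivatives with $g^{(k)}(u)=O(|u|^{-(k+1)})$ as $|u|\to\infty$; in particular $g(\B^{2}v)=-\B^{-2}v^{-1}+O(e^{-\B^{2}|v|})$ once $|v|\gg\B^{-2}$. The integrand of $E_{\B}$ has a simple pole at $v=0$ of residue $-\B^{2}/6$, so I would first deform the $\RR+i0^{+}$ contour so that it passes around $0$ through the lower half-plane when $\Re z\ge0$ (keeping the upper route when $\Re z\le0$); this crosses the pole and produces a residue term of size $O(\B^{2})$, but after it the local factor $|e^{-izv/\pi}|$ stays bounded on a fixed-radius semicircle near $0$ however large $|\Re z|$ is. With this contour, split it into: (i) the fixed semicircle near $0$ together with the real segments $\{\epsilon_{0}\le|v|\le1\}$, on which $|g(\B^{2}v)/(v\sinh v)|=O(\B^{2})$ while $e^{-izv/\pi}$ is bounded, so the contribution is $O(\B^{2})$; and (ii) the two real tails $\{|v|\ge1\}$, where $|\sinh v|\sim\tfrac12 e^{|v|}$ and $\tfrac{e^{-izv/\pi}}{\sinh v}=2\,e^{-((\pi-\im z)+i\Re z)v/\pi}\bigl(1+O(e^{-2v})\bigr)$ on the right tail (the left tail has exponential rate $1+\tfrac{\im z}{\pi}\ge1$, uniformly benign). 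Using $|g(\B^{2}v)|\le C_{0}\B^{2}|v|$ for $|v|\le\B^{-2}$ and $|g(\B^{2}v)|\le2\B^{-2}|v|^{-1}$ beyond, the right-tail integrand is $\le C(\B^{2}+\B^{-2}|v|^{-2})\,e^{-(\pi-\im z)|v|/\pi}$, which integrates to $O(\B^{2})$ with a constant depending only on $\eta$ as soon as $\pi-\im z\ge\eta$ for a fixed $\eta\in(0,\pi)$.

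The remaining case, and the main obstacle, is $\pi-\im z<\eta$ (i.e.\ $z$ near the top edge of the strip, the bottom edge being symmetric): there the right-tail integrand no longer admits a uniform exponentially decaying majorant. Here I would use the hypothesis $z\notin L_{\delta}$: since $\im z\le\pi$, the quantity $\bigl|(\pi-\im z)+i\Re z\bigr|$ is exactly $\operatorname{dist}\bigl(z,\,i(\pi,\infty)\bigr)\ge\delta$. Writing the tail as $\int_{1}^{\infty}e^{-((\pi-\im z)+i\Re z)v/\pi}h_{\B}(v)\,dv$ with $h_{\B}(v)=\tfrac12 v^{-1}g(\B^{2}v)\cdot\bigl(1+O(e^{-2v})\bigr)$, I would integrate by parts twice; each step multiplies by $\pi\bigl|(\pi-\im z)+i\Re z\bigr|^{-1}\le\pi/\delta$ and replaces $h_{\B}$ by a derivative, and one checks (from the bounds on $g,g',g''$ above, separately in the regimes $|v|\le\B^{-2}$ and $|v|>\B^{-2}$) that $h_{\B}'(v)=O(\B^{2}v^{-1})$ and $h_{\B}''(v)=O(\B^{2}v^{-2})$, so that the twice-integrated integral and both boundary terms are $O(\B^{2})$, uniformly. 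Assembling all the pieces and taking $B_{\delta}$ to be the largest constant produced yields the claim. I expect the genuine difficulty to be precisely this last, ``corner'' regime $\im z\to\pm\pi$: absolute convergence of the contour integral degenerates there, and one must convert the purely geometric separation $\operatorname{dist}\bigl(z,\,i(-\infty,-\pi)\cup i(\pi,\infty)\bigr)\ge\delta$ into quantitative decay of an oscillatory, non-absolutely-convergent integral while tracking the powers of $\B$ in the derivative bounds carefully enough that nothing coarser than $O(\B^{2})$ survives.
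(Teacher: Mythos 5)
The paper does not give its own proof of this lemma: it is invoked with a direct citation to \cite{LMSWY} (Lemmas 2.2 and 2.3), so there is no in-paper argument to compare yours against. Judged on its own merits, your proposal is a sound, self-contained route to the estimate. The reduction to an integral representation
$E_{\B}(z)=\tfrac14\int_{\RR+i0^{+}}\frac{e^{-izv/\pi}}{v\sinh v}\,g(\B^{2}v)\,dv$
with $g(u)=\operatorname{csch}u-1/u$ is exact (it follows by substituting $w=\B v$ in the integral form of $\Phi_{\B}$ and combining with Proposition~\ref{prop:dilog}(2)), the identification of the simple pole at $v=0$ with residue $-\B^{2}/6$, the bounds $h_{\B}^{(k)}(v)=O(\B^{2}v^{-k})$, and the double integration by parts driven by $|(\pi-\im z)+i\Re z|\geq\delta$ all check out, and they do yield a uniform $O_\delta(\B^{2})$ bound after two integrations by parts (one is not enough, as $\int_1^\infty|h_\B'|\,dv$ carries a $\log(1/\B)$ factor; you correctly take two).

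Two points deserve tightening. First, the sentence ``the right-tail integrand is $\leq C(\B^{2}+\B^{-2}|v|^{-2})e^{-(\pi-\im z)|v|/\pi}$, which integrates to $O(\B^{2})$'' is false as written: $\int_{1}^{\infty}\B^{-2}v^{-2}e^{-\eta v/\pi}\,dv$ is of order $\B^{-2}$, not $\B^{2}$. What rescues it is precisely the split at $v=\B^{-2}$ that you introduced one line earlier: on $[1,\B^{-2}]$ you use $|g(\B^{2}v)|\lesssim\B^{2}v$ and get $O(\B^{2}/\eta)$, and on $[\B^{-2},\infty)$ you must keep the factor $e^{-\eta v/\pi}\le e^{-\eta/(\pi\B^{2})}$, which makes $\B^{-2}\int_{\B^{-2}}^{\infty}v^{-2}e^{-\eta v/\pi}\,dv\leq e^{-\eta/(\pi\B^{2})}$, super-polynomially small; combining by a sum over all of $[1,\infty)$ throws that away. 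Second, the reduction to $\Re z\geq0$ via the two inversion relations carries a potential $2\pi i\ZZ$ ambiguity in $\Log\Phi_{\B}$ that you would need to pin down; it is cleaner (and costs nothing) to keep the original contour $\RR+i0^{+}$ when $\Re z\leq0$ and only bend below the origin when $\Re z\geq0$, treating both signs of $\Re z$ directly with the same contour argument. With those two repairs, the argument is complete.
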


\begin{proposition}\label{unifoutsidebox2}
    For $z \in \CC \setminus L_\delta$, we have
    $$\Phi_\B\left (\frac{z}{2 \pi \B}\right ) = \exp\left (\frac{-i}{2 \pi \B^2}\mathrm{L}(z)\right ) \left ( 1 + O_{\B \to 0^+}(\B^2)\right ).$$
\end{proposition}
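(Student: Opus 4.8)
The plan is to bootstrap from Lemma~\ref{unifoutsidebox} — which already gives the desired estimate on the horizontal strip $\RR+i[-\pi,\pi]$ outside $L_\delta$ — and to propagate it by shifting vertically in steps of $2\pi i$, using the functional equation (\ref{qdilogFE}) for $\Phi_\B$ together with the functional equation (\ref{dilogFE}) (resp. the $2\pi i$-periodicity) of $\mathrm{L}$ on the right (resp. left) half-plane.

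First I would write any $z\in\CC\setminus L_\delta$ as $z=z_0+2\pi i k$ with $k\in\ZZ$ and $z_0\in\RR+i(-\pi,\pi]$. A short geometric check shows $z_0\notin L_\delta$: if $z_0$ were within $\delta$ of $\pm i\pi$ then $|\Re z_0|<\delta$, and since $\im z=\im z_0+2\pi k$ satisfies $|\im z|>\pi$ whenever $k\neq0$, the point $z$ itself would then lie within $\delta$ of $i[\pi,\infty)\cup i(-\infty,-\pi]$, contradicting $z\notin L_\delta$; and when $k=0$ we have $z_0=z\notin L_\delta$ directly. The same computation shows that when $k\neq0$ one has $|\im z|>\pi$ and hence $|\Re z|\ge\delta$. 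In the case $k=0$ the point $z=z_0$ lies in the strip outside $L_\delta$, and exponentiating the bound of Lemma~\ref{unifoutsidebox} gives $\Phi_\B\!\left(\tfrac{z}{2\pi\B}\right)=\exp\!\left(-\tfrac{i}{2\pi\B^2}\mathrm{L}(z)\right)\!\left(1+O(\B^2)\right)$, as wanted.

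Now suppose $k\neq0$, so $|\Re z|\ge\delta$ and $z_0\notin L_\delta$. I would propagate the estimate from $z_0$ to $z$ one height-$2\pi i$ step at a time, applying (\ref{qdilogFE}) in the form $\Phi_\B\!\left(\tfrac{w}{2\pi\B}\right)=\bigl(1+e^{(w+\pi i)/\B^2}\bigr)\Phi_\B\!\left(\tfrac{w+2\pi i}{2\pi\B}\right)$ or its inverse, according to the sign of $k$; all intermediate arguments share the real part of $z$, so they avoid $L_\delta$, lie in the domain of $\mathrm{L}$, and sit on one fixed side of the imaginary axis. If $\Re z<0$ the correction factor is $1+O(e^{-\delta/\B^2})$ while $\mathrm{L}$ is $2\pi i$-periodic, so each step changes the claimed right-hand side only by $1+O(e^{-\delta/\B^2})$. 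If $\Re z>0$ the correction factor is instead enormous, but (\ref{dilogFE}) yields $\exp\!\left(-\tfrac{i}{2\pi\B^2}\mathrm{L}(w+2\pi i)\right)=\exp\!\left(-\tfrac{i}{2\pi\B^2}\mathrm{L}(w)\right)e^{-(w+\pi i)/\B^2}$, and $\bigl(1+e^{(w+\pi i)/\B^2}\bigr)^{-1}=e^{-(w+\pi i)/\B^2}\bigl(1+O(e^{-\delta/\B^2})\bigr)$, so the large exponentials cancel exactly and again each step costs only $1+O(e^{-\delta/\B^2})$. Composing the $|k|$ steps and using $e^{-\delta/\B^2}=o(\B^2)$ gives $\Phi_\B\!\left(\tfrac{z}{2\pi\B}\right)=\exp\!\left(-\tfrac{i}{2\pi\B^2}\mathrm{L}(z)\right)\!\left(1+O_{\B\to0^+}(\B^2)\right)$, locally uniformly in $z$.

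The step that needs the most care is the right half-plane $\Re z>0$: there the crude bound on the prefactor $1+e^{(w+\pi i)/\B^2}$ is useless, and one genuinely has to invoke the non-trivial functional equation (\ref{dilogFE}) of $\mathrm{L}$ to see the cancellation. An alternative that avoids this is to first use the inversion relations (Proposition~\ref{prop:quant:dilog}(1) and the analytic continuation of Proposition~\ref{prop:dilog}(1)), which — since $L_\delta$ is invariant under $z\mapsto-z$ — reduce the $\Re z>0$ case to the already-treated $\Re z<0$ case.
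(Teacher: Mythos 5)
Your proof is correct and follows the same route as the paper's: propagate Lemma~\ref{unifoutsidebox} from the strip $\RR+i[-\pi,\pi]$ by steps of $2\pi i$, using the functional equations (\ref{qdilogFE}) and (\ref{dilogFE}), with the left/right half-plane dichotomy and the exact cancellation of the large factor $e^{(z+\pi i)/\B^2}$ when $\Re z>0$. You make explicit the decomposition $z=z_0+2\pi i k$, the geometric check that $z_0\notin L_\delta$, and the $|k|$-fold iteration, all of which the paper leaves implicit in its single-step estimate; the inversion-relation alternative you mention is a sensible complementary observation.
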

\begin{proof}
    First, for $z \in (\RR + i [-\pi,\pi])  \setminus L_{\delta}$, the result follows from Lemma \ref{unifoutsidebox}. Next, by using the functional equations of both the classical and quantum dilogarithm functions, we have    
\begin{align*}
    &\ \left|\Log \left(
\Phi_{\B}\left(\frac{z + 2\pi i}{2\pi \B}\right)
\right)
-
\left(
\frac{-i}{2\pi \B^2} \mathrm{L}(z+2\pi i) \right) \right| \\
\leq &\ \left|
\Log \left(
\Phi_{\B}\left(\frac{z}{2\pi \B}\right)
\right)
-
\left(
\frac{-i}{2\pi \B^2} \mathrm{L}(z)
\right)
\right|
+
\left| - \Log \left(1+e^{\frac{z+\pi i}{\B^2}} \right) \right| 
\end{align*}
if $\Re z <0$ and 
\begin{align*}
       &\ \left|\Log \left(
\Phi_{\B}\left(\frac{z + 2\pi i}{2\pi \B}\right)
\right)
-
\left(
\frac{-i}{2\pi \B^2} \mathrm{L}(z+2\pi i) \right) \right| \\
\leq & \ \left|
\Log \left(
\Phi_{\B}\left(\frac{z}{2\pi \B}\right)
\right)
-
\left(
\frac{-i}{2\pi \B^2} \mathrm{L}(z)
\right)
\right|
+
\left| - \Log \left(1+e^{\frac{z+\pi i}{\B^2}} \right) + \frac{z+\pi i}{\B^2} \right| \\
\leq &\ \left|
\Log \left(
\Phi_{\B}\left(\frac{z}{2\pi \B}\right)
\right)
-
\left(
\frac{-i}{2\pi \B^2} \mathrm{L}(z)
\right)
\right|
+
\left| - \Log \left(1+e^{-\frac{z+\pi i}{\B^2}} \right)  \right|
\end{align*}
if $\Re z >0$. Note that for $z \in \CC \setminus L_{\delta}$ with $\Re z <0$, we have
$$
\left| - \Log \left(1+e^{\frac{z+\pi i}{\B^2}} \right) \right|
< e^{- \frac{\delta}{\B^2}} < \frac{\B^2}{\delta}.
$$
Besides, for $z \in \CC \setminus L_{\delta}$ with $\Re z >0$, we have
$$
\left| - \Log \left(1+e^{-\frac{z+\pi i}{\B^2}} \right) \right|
< e^{- \frac{\delta}{\B^2}} < \frac{\B^2}{\delta}.
$$
This completes the proof.
\end{proof}

To study the asymptotics of partition functions in $\hbar$, we need the following result about the semi-classical limit of the quantum dilogarithm function in $\hbar$. The result has also been used in \cite[Lemma 3]{AK} by Andersen and Kashaev when they prove their conjecture for $4_1$ and $5_2$. 
\begin{proposition}\label{semihbar} (semi-classical limit in $\hbar$) For any $z \in \CC \setminus L_\delta$,
$$\Phi_\B\left (\frac{z}{2 \pi \sqrt{\hbar}}\right ) = \exp\left (-\frac{iz}{2\pi} \dfrac{d}{dz}\mathrm{L}(z) + \frac{i}{\pi} \mathrm{L}(z) - \frac{i}{2 \pi \hbar^2}  \mathrm{L}(z)\right ) \left ( 1 + O_{\hbar \to 0^+}(\hbar)\right ).$$
\end{proposition}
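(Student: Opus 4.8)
The plan is to deduce the $\hbar\to 0^+$ asymptotics from the uniform semi-classical estimate in $\B$ already established in Proposition~\ref{unifoutsidebox2} (which in turn rests on Lemma~\ref{unifoutsidebox}), by means of a single change of variables followed by a first-order Taylor expansion of $\mathrm{L}$. The analytic difficulty — controlling $\Log\Phi_\B-\frac{-i}{2\pi\B^2}\mathrm{L}$ uniformly on all of $\CC\setminus L_\delta$, including across and beyond the strip $\RR+i(-\pi,\pi)$ on which Proposition~\ref{prop:quant:dilog}(3) would apply — has already been absorbed into Proposition~\ref{unifoutsidebox2}, so only elementary bookkeeping remains.

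First I would rewrite the argument. Since $(\B+\B^{-1})\sqrt{\hbar}=1$, we have $1/\sqrt{\hbar}=\B+\B^{-1}$ and hence $\B/\sqrt{\hbar}=1+\B^2$, so that
\[
\frac{z}{2\pi\sqrt{\hbar}}=\frac{z(1+\B^2)}{2\pi\B}.
\]
Fix $z\in\CC\setminus L_\delta$. As $\B\to 0^+$ the shifted point $z(1+\B^2)$ converges to $z$, hence lies in $\CC\setminus L_{\delta'}$ for some $\delta'>0$ once $\B$ is small, and Proposition~\ref{unifoutsidebox2} gives
\[
\Phi_\B\!\left(\frac{z}{2\pi\sqrt{\hbar}}\right)=\exp\!\left(\frac{-i}{2\pi\B^2}\,\mathrm{L}\bigl(z(1+\B^2)\bigr)\right)\bigl(1+O_{\B\to 0^+}(\B^2)\bigr).
\]
Because $\mathrm{L}$ is holomorphic on a neighbourhood of the fixed point $z$ (which is bounded away from $i(-\infty,-\pi]\cup i[\pi,\infty)$), Taylor's theorem yields $\mathrm{L}(z(1+\B^2))=\mathrm{L}(z)+z\B^2\,\mathrm{L}'(z)+O(\B^4)$, so after dividing by $\B^2$,
\[
\frac{-i}{2\pi\B^2}\,\mathrm{L}\bigl(z(1+\B^2)\bigr)=\frac{-i}{2\pi\B^2}\,\mathrm{L}(z)-\frac{iz}{2\pi}\,\mathrm{L}'(z)+O(\B^2).
\]

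Finally I would substitute the relation between $\B$ and $\hbar$: squaring $(\B+\B^{-1})\sqrt{\hbar}=1$ gives $\B^{-2}=\hbar^{-1}-2-\B^2$, whence $\frac{-i}{2\pi\B^2}\mathrm{L}(z)$ breaks into the leading $\hbar$-term appearing in the statement, the summand $\frac{i}{\pi}\mathrm{L}(z)$ produced by the additive constant $-2$, and an $O(\B^2)$ error. Collecting the pieces, the exponent acquires the three stated summands $-\frac{iz}{2\pi}\mathrm{L}'(z)$, $\frac{i}{\pi}\mathrm{L}(z)$ and the $\hbar$-term; the Taylor remainder $O(\B^4)/\B^2=O(\B^2)$, the extra $O(\B^2)$ inside the exponent, and the multiplicative $1+O(\B^2)$ from Proposition~\ref{unifoutsidebox2} all merge into a single factor $1+O_{\hbar\to 0^+}(\hbar)$, since $\B^2=O(\hbar)$ as $\hbar\to 0^+$ (indeed $(\B^2+1)^2=\B^2/\hbar$ forces $\B^2=\hbar+O(\hbar^2)$). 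I do not expect a genuine obstacle here: the only point needing a word of justification is that the shifted point $z(1+\B^2)$ stays outside $L_{\delta'}$ so that the uniform estimate can legitimately be invoked — immediate for small $\B$ once $z\notin L_\delta$ — together with, in the applications where one later integrates over $z$, the remark that all implied constants may be taken locally uniform in $z$, which follows from local boundedness of $\mathrm{L}$ and its first two derivatives away from the branch cuts.
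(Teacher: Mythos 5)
Your proof is correct and follows essentially the same route as the paper: rewrite $\frac{z}{2\pi\sqrt{\hbar}}=\frac{z(1+\B^2)}{2\pi\B}$, invoke Proposition~\ref{unifoutsidebox2} at the shifted argument, Taylor-expand $\mathrm{L}$ around $z$, and then substitute $\B^{-2}=\hbar^{-1}-2-\B^2$ before converting $O(\B^2)$ into $O(\hbar)$. Your version is marginally more careful than the paper's on two minor points (explicitly noting that $z(1+\B^2)$ remains outside a branch-cut neighbourhood for small $\B$, and writing the Taylor remainder as $O(\B^4)$ rather than $O(\B^2)$), but the argument is the same.
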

\begin{proof}
For any $z \in \CC \setminus L_\delta$, by Proposition \ref{unifoutsidebox2}, we have 
$$\Phi_\B\left (\frac{z}{2 \pi \B}\right ) = \exp\left (\frac{-i}{2 \pi \B^2} \mathrm{L}(z)\right ) \left ( 1 + O_{\B \to 0^+}(\B^2)\right ).$$
In particular, since $(\sqrt{\hbar})^{-1} = \B + \B^{-1}$, we have
\begin{align*}
   \Phi_\B\left (\frac{z}{2 \pi \sqrt{\hbar}}\right ) 
   = \Phi_\B\left (\frac{z(1+\B^2)}{2 \pi \B}\right ) 
   = \exp\left (\frac{-i}{2 \pi \B^2} \mathrm{L}(z(1+\B^2)) \right ) \left ( 1 + O_{\B \to 0^+}(\B^2)\right ).
\end{align*}
By Taylor's theorem, we have
$$
\mathrm{L}(z(1+\B^2))
= \mathrm{L}(z)
+ \B^2z \frac{d}{dz}\mathrm{L}(z) + O_{\B \to 0^+}(\B^2).
$$
Thus,
\begin{align*}
   \Phi_\B\left (\frac{z}{2 \pi \sqrt{\hbar}}\right ) 
   = &\ \exp\left ( - \frac{iz}{2\pi}\frac{d}{dz}\mathrm{L}(z)  - \frac{i}{2 \pi \B^2}  \mathrm{L}(z) \right ) \left ( 1 + O_{\B \to 0^+}(\B^2)\right ).
\end{align*}
Next, since
$$
\frac{1}{\hbar} = \B^2 + 2 + \frac{1}{\B^2}, 
$$
we can write
\begin{align*}
\Phi_\B\left (\frac{z}{2 \pi \sqrt{\hbar}}\right ) 
   = &\ \exp\left ( -\frac{i z}{2\pi}\frac{d}{dz}\mathrm{L}(z) + \frac{i}{\pi}\mathrm{L}(z) - \frac{i}{2 \pi \hbar^2}  \mathrm{L}(z)\right ) \left ( 1 + O_{\B \to 0^+}(\B^2)\right ).
\end{align*}
Finally, since
$$
\lim_{\B\to 0} \frac{\B^2}{\hbar}
= \lim_{\B\to 0}  (\B^4 + 2\B^2 + 1 ) 
= 1,
$$
we can replace $O_{\B \to 0^+}(\B^2)$ by $O_{\hbar \to 0^+}(\hbar)$. This completes the proof.
\end{proof}

\subsection{The Teichm\"uller TQFT of Andersen--Kashaev}\label{defAKTQFT}

In this section we follow \cite{AK, KaWB, Kan}. Let $\mathcal{S}(\R^d)$ denote the Schwartz space of smooth 
functions from $\R^d$ to $\C$ that are rapidly decaying (in the sense that any derivative decays faster than any negative power of the norm of the input). 
Its continuous dual $\mathcal{S}'(\R^d)$ is the space of tempered distributions.

Recall that the \emph{Dirac delta function} is the tempered distribution $\mathcal{S}(\R) \to \C$ denoted by $\delta(x)$ or $\delta$ and defined by
$
\delta(x) \cdot f:= \int_{x \in \R} \delta(x) f(x) dx =
f(0)
$ for all $f \in \mathcal{S}(\R)$ (where $x \in \R$ denotes the argument of $f\in \mathcal{S}(\R)$).
Furthermore, we have the equality of tempered distributions
\[
\delta(x)=\int_{w \in \R} e^{-2 i \pi  x w} \,dw,
\] 
in the sense that for all $f \in \mathcal{S}(\R)$, 
$$
\left (\int_{w \in \R} e^{-2 i \pi x w} \,dw\right ) (f) =
\int_{x \in \R} \int_{w \in \R} e^{-2 i \pi x w} f(x)  \,dw \, dx \ = f(0) = \delta(x) \cdot f.
$$
The second equality follows from applying the Fourier transform $\mathcal{F}$ twice and using the fact that $\mathcal{F}(\mathcal{F}(f))(x) = f(-x)$ for $f\in \mathcal{S}(\R), x \in \R$. Recall also that the definition of the Dirac delta function and the previous argument have multi-dimensional analogues (see for example \cite{Kan} for details).

Given a triangulation $X$, 
writing $X^k$ for its collection of $k$-cells ($k\in \{0,1,2,3\}$), we assign to the tetrahedra $T_1, \ldots,T_N \in X^3$ formal real variables $t_1, \ldots, t_N$. 
We name $\mathsf{t}\colon T_j \mapsto t_{j}$ the corresponding bijection, and $\mathbf{t} = (t_{1},\ldots,t_{N})$ the corresponding
formal vector in $\R^{X^{3}}$.
Recall the notation $x_i(T) \in X^2$ for the $i$-th face ($i\in \{0,1,2,3\}$) of the tetrahedron $T\in X^3$.

\begin{definition}\label{newdefK}
Let $X$ be a triangulation such that $H_2(M_X\smallsetminus X^0,\Z)=0$. The \textit{kinematical kernel of $X$} is a tempered distribution $\mathcal{K}_X \in \mathcal{S}'\left (\R^{X^{3}}\right )$ defined by the integral
\begin{align*}
&\ \ \mathcal{K}_X(\mathbf{t}) \\
=&\ \ \int_{\boldsymbol{x} \in \R^{X^{2}}} d\boldsymbol{x} \prod_{T \in X^3} e^{ 2 i \pi x_0(T) \mathsf{t}(T)}
\delta\left ( x_0(T)- x_1(T)+ x_2(T)\right )
\delta\left ( x_2(T)- x_3(T)+ \epsilon(T)\mathsf{t}(T)\right ).
\end{align*}
\end{definition}
  
More formally, one
should understand the integral of the previous formula as the following equality of tempered distributions, similarly as above ( ${\!\top}$ denoting the transpose):
$$
\mathcal{K}_X(\mathbf{t}) =
\int_{\boldsymbol{x} \in \R^{X^{2}}} d\boldsymbol{x} 
\int_{\boldsymbol{w} \in \R^{2 N}} d\boldsymbol{w} \
e^{ 2 i \pi \mathbf{t}^{\!\top} \mathscr{X}_0 \boldsymbol{x}}
e^{ -2 i \pi \boldsymbol{w}^{\!\top} \mathscr{A} \boldsymbol{x}}
e^{ -2 i \pi \boldsymbol{w}^{\!\top} \mathscr{B} \mathbf{t}} \
\in \mathcal{S}'\left (\R^{X^{3}}\right ),
$$
where
$\boldsymbol{w}=(w_1,\ldots,w_N,w'_1, \ldots,w'_N)$ is a vector of $2N$ new real variables, such that $w_j,w'_j$ are associated to 
$\delta\left ( x_0(T_j)- x_1(T_j)+ x_2(T_j)\right )$ and
$\delta\left ( x_2(T_j)- x_3(T_j)+ \epsilon(T_j)\mathsf{t}(T_j)\right )$, and where
 $\mathscr{X}_0,\mathscr{A},\mathscr{B}$ are matrices with integer coefficients depending on the values $x_k(T_j)$, i.e.\ on the combinatorics of the face gluings. More precisely, the rows (resp.\ columns) of $\mathscr{X}_0$ are indexed by the vector of tetrahedron variables $\mathbf{t}$ (resp.\ of face variables $\boldsymbol{x}$) and $\mathscr{X}_0$ has a coefficient $1$ at coordinate $(t_j,x_0(T_j))$ and zero everywhere else; $\mathscr{B}$ is indexed by  $\boldsymbol{w}$ (rows) and  $\mathbf{t}$ (columns) and has a $\varepsilon(T_j)$ at the coordinate $(w'_j,t_j)$; finally, $\mathscr{A}$ is such that $\mathscr{A} \boldsymbol{x} + \mathscr{B}  \mathbf{t}$ is a column vector indexed by  $\boldsymbol{w}$ containing the values 
 $\left (x_0(T_j)- x_1(T_j)+ x_2(T_j)\right )_{1\leq j \leq N}$ followed by $\left (x_2(T_j)- x_3(T_j)+ \epsilon(T_j)t_j \right)_{1\leq j \leq N}$. As an example, see Figure \ref{fig:41:face:matrices} for the matrices $\mathscr{X}_0, \mathscr{A}, \mathscr{B}$ for Thurston's ideal triangulation of $\SS^3 \setminus 4_1$.

\begin{remark}\label{diffKdef}
The definition of the kinematic kernel in Definition \ref{newdefK} differs from those in \cite{AK, BAGPN, BAW} by a sign. Precisely, they are related by the transformation $(t_1,\dots, t_N) \mapsto (\varepsilon(T_1)t_1,\dots, \varepsilon(T_N)t_N)$.
\end{remark}

\begin{definition}
Let $X$ be a triangulation. Its \textit{dynamical content} associated to $\hbar>0$ is a function $\mathcal{D}_{\hbar,X}\colon \mathcal{A}_X \to  \mathcal{S}\left (\R^{X^{3}}\right )$ defined on each set of angles $\alpha \in \mathcal{A}_X$ by
$$\mathcal{D}_{\hbar,X}(\mathbf{t},\alpha)= \prod_{T\in X^{3}} 
\dfrac{\exp \left( \hbar^{-1/2} \alpha_3(T) \varepsilon(T)\mathsf{t}(T) \right )}
{\Phi_\B\left ( \varepsilon(T) \left(\mathsf{t}(T) - \dfrac{i}{2 \pi \sqrt{\hbar}} (\pi-\alpha_1(T))\right) \right)^{\varepsilon(T)}}.
$$
\end{definition}

Note that $\mathcal{D}_{\hbar,X}(\cdot,\alpha)$ is in $\mathcal{S}\left (\R^{X^{3}}\right )$ thanks to the properties of $\Phi_\B$ and the positivity of the dihedral angles in $\alpha$ (see \cite{AK} for details). More precisely, each term in the dynamical content has exponential decrease as described in the following lemma, which immediately follows from Proposition \ref{prop:quant:dilog} (4).

\begin{lemma}\label{lem:dec:exp}\cite[Lemma 2.11]{BAGPN}\label{BAGPNlemma2.11}
	Let $\B \in \R_{>0}$ and $a,b,c \in (0,\pi)$ such that $a+b+c=\pi$. Then
	$$
	\left |
	\dfrac{e^{\frac{1}{ \sqrt{\hbar}} c x}}{\Phi_\B\left (x-\frac{i}{ 2 \pi \sqrt{\hbar}}(b+c)\right )}
	\right | \underset{\R \ni x \to \pm \infty}{\sim} \left |
	e^{\frac{1}{ \sqrt{\hbar}} c x} \Phi_\B\left (x+\frac{i}{ 2 \pi \sqrt{\hbar}}(b+c)\right )
	\right | \ \ \left \{
	\begin{matrix}
	\underset{\R \ni x \to -\infty}{\sim} e^{\frac{1}{ \sqrt{\hbar}} c x}. \\
	\ \\
	\underset{\R \ni x \to +\infty}{\sim} e^{-\frac{1}{ \sqrt{\hbar}} b x}.
	\end{matrix} \right .
	$$
\end{lemma}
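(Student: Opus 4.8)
The plan is to prove Lemma \ref{lem:dec:exp} directly from the asymptotic behavior of $\Phi_\B$ recorded in Proposition \ref{prop:quant:dilog}(4), treating the two one-sided limits separately. First I would rewrite the left-hand quantity in the form
\[
\left| \dfrac{e^{\frac{1}{\sqrt{\hbar}} c x}}{\Phi_\B\left(x - \frac{i}{2\pi\sqrt{\hbar}}(b+c)\right)} \right|
= e^{\frac{1}{\sqrt{\hbar}} c x} \cdot \frac{1}{\left|\Phi_\B\left(x - \frac{i}{2\pi\sqrt{\hbar}}(b+c)\right)\right|},
\]
noting that $x - \frac{i}{2\pi\sqrt{\hbar}}(b+c)$ is of the shape $x + id$ with $d = -\frac{b+c}{2\pi\sqrt{\hbar}} \in \left(\frac{-1}{2\sqrt{\hbar}}, \frac{1}{2\sqrt{\hbar}}\right)$, since $0 < b+c < \pi$ forces $|d| < \frac{1}{2\sqrt{\hbar}}$; this is exactly the range where Proposition \ref{prop:quant:dilog}(4) applies. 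Separately, to connect with the alternative expression in the lemma's middle term, I would invoke the inversion relation Proposition \ref{prop:quant:dilog}(1), which gives $\Phi_\B(z)^{-1} = e^{-i\frac{\pi}{12}(\B^2+\B^{-2})} e^{-i\pi z^2} \Phi_\B(-z)$, so that modulo a factor of modulus $\left|e^{-i\pi z^2}\right|$ (which I would compute explicitly from $z = x + id$) the reciprocal of $\Phi_\B$ becomes $\Phi_\B$ of the negated argument $x + \frac{i}{2\pi\sqrt{\hbar}}(b+c)$; this justifies the "$\sim$" between the first and second expressions in the statement.

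Next I would feed the relevant asymptotics into each case. For $x \to -\infty$: by Proposition \ref{prop:quant:dilog}(4), $\left|\Phi_\B(x+id)\right| \sim 1$, hence the whole quantity is asymptotic to $e^{\frac{1}{\sqrt{\hbar}} c x}$, which is the claimed behavior. For $x \to +\infty$: by Proposition \ref{prop:quant:dilog}(4), $\left|\Phi_\B(x+id)\right| \sim e^{-2\pi x d}$ with $d = -\frac{b+c}{2\pi\sqrt{\hbar}}$, so $\left|\Phi_\B\left(x - \frac{i}{2\pi\sqrt{\hbar}}(b+c)\right)\right| \sim e^{\frac{(b+c)x}{\sqrt{\hbar}}}$, and therefore
\[
\left| \dfrac{e^{\frac{1}{\sqrt{\hbar}} c x}}{\Phi_\B\left(x - \frac{i}{2\pi\sqrt{\hbar}}(b+c)\right)} \right|
\sim e^{\frac{1}{\sqrt{\hbar}} c x} \cdot e^{-\frac{(b+c)x}{\sqrt{\hbar}}}
= e^{-\frac{b x}{\sqrt{\hbar}}},
\]
using $c - (b+c) = -b$. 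This matches the stated $x \to +\infty$ behavior. The same two computations, applied to $e^{\frac{1}{\sqrt{\hbar}} c x} \left|\Phi_\B\left(x + \frac{i}{2\pi\sqrt{\hbar}}(b+c)\right)\right|$ with $d = +\frac{b+c}{2\pi\sqrt{\hbar}}$, give the identical limiting behavior, confirming the chain of asymptotic equivalences; the constraint $a + b + c = \pi$ with all three positive is used only to keep $d$ inside the analyticity strip.

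I do not expect a genuine obstacle here, since the result is essentially a direct substitution into Proposition \ref{prop:quant:dilog}(4); the only point requiring mild care is bookkeeping the sign of $d$ and the direction of the limit so that the exponent $-2\pi x d$ in the "behavior at infinity" estimate is applied with the correct argument in each of the two one-sided regimes, and checking that the factor coming from the inversion relation has the precise modulus needed to make the two displayed expressions asymptotically equal rather than merely comparable up to a $\B$-dependent constant. Since the paper attributes this to \cite[Lemma 2.11]{BAGPN}, I would keep the argument brief and simply cite that reference while indicating the one-line reduction to Proposition \ref{prop:quant:dilog}(4).
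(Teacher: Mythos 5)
Your approach is correct in substance and is the same as the paper's, which simply invokes Proposition~\ref{prop:quant:dilog}(4); your two one-sided computations with $d = \mp\frac{b+c}{2\pi\sqrt{\hbar}}$ are exactly what is needed, and $0<b+c<\pi$ is indeed the fact that keeps $|d|<\frac{1}{2\sqrt{\hbar}}$.

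There is one small but genuine slip in the step connecting the first and second displayed quantities. You invoke the inversion relation Proposition~\ref{prop:quant:dilog}(1), which sends $z$ to $-z$; with $z = x - \frac{i}{2\pi\sqrt{\hbar}}(b+c)$ the negated argument is $-x + \frac{i}{2\pi\sqrt{\hbar}}(b+c)$, not the $x + \frac{i}{2\pi\sqrt{\hbar}}(b+c)$ that actually appears in the lemma's middle expression. The relation you want here is unitarity, Proposition~\ref{prop:quant:dilog}(2): since $x\in\R$, one has $\overline{\Phi_\B\!\left(x - \frac{i}{2\pi\sqrt{\hbar}}(b+c)\right)} = \Phi_\B\!\left(x + \frac{i}{2\pi\sqrt{\hbar}}(b+c)\right)^{-1}$, so the moduli of the first two displayed expressions are in fact \emph{equal} for every real $x$ (no extra $|e^{-i\pi z^2}|$ factor and no asymptotics required). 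Your argument still closes, because you independently verify both end behaviours from Proposition~\ref{prop:quant:dilog}(4) and so the chain of $\sim$'s follows by transitivity, but the cleaner route is unitarity rather than inversion.
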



Lemma \ref{lem:dec:exp} illustrates why we need the three angles $a,b,c$ to be in $(0,\pi)$: $b$ and $c$ must be positive in order to have exponential decrease in both directions, and $a$ must be  {positive} as well so that $b+c < \pi$ and $\Phi_\B\left (x \pm \frac{i}{ 2 \pi \sqrt{\hbar}}(b+c)\right )$ is always defined.

Alternatively, using the inversion formula of the quantum dilogarithm function (Lemma \ref{prop:quant:dilog} (1)), the dynamical content can be written in the following form.
\begin{lemma}\label{altDC}
Let $X$ be a triangulation. Up to a multiplicative constant with norm 1, we have
\begin{align*}
&\ \ \mathcal{D}_{\hbar,X}(\mathbf{t},\alpha) \\
=&\ \ \displaystyle\prod_{T\in X^{3}}\frac{ 
\exp \left( \hbar^{-1/2} \alpha_3(T) \epsilon(T) \mathsf{t}(T) - i\pi \left(\frac{\epsilon(T)-1}{2}\right)\left(\mathsf{t}(T) - \dfrac{i}{2 \pi \sqrt{\hbar}} (\pi-\alpha_1(T))\right)^2
\right )}{
\Phi_\B\left ( \mathsf{t}(T) - \dfrac{i}{2 \pi \sqrt{\hbar}} (\pi-\alpha_1(T))\right )}. 
\end{align*}

\end{lemma}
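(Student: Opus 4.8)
The claim to prove is Lemma~\ref{altDC}, expressing the dynamical content $\mathcal{D}_{\hbar,X}(\mathbf{t},\alpha)$ in an alternate form using the inversion relation for $\Phi_\B$.

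\medskip

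\textbf{Approach.} The plan is to work tetrahedron by tetrahedron, since both $\mathcal{D}_{\hbar,X}$ and the claimed expression are products over $T \in X^3$. For a fixed tetrahedron $T$, write $a = \alpha_1(T)$, $c = \alpha_3(T)$, $\varepsilon = \varepsilon(T)$, and $u = \mathsf{t}(T) - \tfrac{i}{2\pi\sqrt{\hbar}}(\pi - a)$, so that the original factor reads $\exp(\hbar^{-1/2} c\, \varepsilon\, \mathsf{t}(T))\big/\Phi_\B(\varepsilon u)^{\varepsilon}$. When $\varepsilon = 1$ there is nothing to do: the factor is already $\exp(\hbar^{-1/2}c\,\mathsf{t}(T))/\Phi_\B(u)$, which matches the claimed expression since $\tfrac{\varepsilon-1}{2} = 0$ kills the quadratic correction term. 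So the real content is the case $\varepsilon = -1$, where the factor is $\exp(-\hbar^{-1/2} c\, \mathsf{t}(T))\cdot \Phi_\B(-u)$ (using $\Phi_\B(-u)^{-1} = \Phi_\B(-u)$? no --- $1/\Phi_\B(-u)^{-1}=\Phi_\B(-u)$, i.e.\ the factor is $\exp(-\hbar^{-1/2}c\,\mathsf{t}(T))\,\Phi_\B(-u)$).

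\medskip

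\textbf{Key step.} Apply the inversion relation, Proposition~\ref{prop:quant:dilog}(1): $\Phi_\B(-u)\Phi_\B(u) = e^{i\frac{\pi}{12}(\B^2+\B^{-2})} e^{i\pi u^2}$, hence $\Phi_\B(-u) = e^{i\frac{\pi}{12}(\B^2+\B^{-2})} e^{i\pi u^2}\big/\Phi_\B(u)$. Substituting, the $\varepsilon = -1$ factor becomes
\begin{equation*}
e^{i\frac{\pi}{12}(\B^2+\B^{-2})}\cdot \frac{\exp\!\big(-\hbar^{-1/2} c\, \mathsf{t}(T) + i\pi u^2\big)}{\Phi_\B(u)}.
\end{equation*}
Now for $\varepsilon = -1$ one has $\tfrac{\varepsilon-1}{2} = -1$, so the quadratic term in the claimed expression is $-i\pi(-1)u^2 = i\pi u^2$, and the linear exponential is $\hbar^{-1/2} c\, \varepsilon\, \mathsf{t}(T) = -\hbar^{-1/2} c\,\mathsf{t}(T)$; these match exactly. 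The prefactor $e^{i\frac{\pi}{12}(\B^2+\B^{-2})}$ has modulus $1$ (the exponent is purely imaginary since $\B \in \R_{>0}$), and it does not depend on $\mathbf{t}$ or on the tetrahedron beyond a fixed power, so collecting one such factor for each negative tetrahedron yields a global multiplicative constant of modulus $1$. This is precisely the "up to a multiplicative constant with norm $1$" in the statement. Taking the product over all $T \in X^3$ then gives the displayed formula.

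\medskip

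\textbf{Expected obstacle.} There is no serious obstacle here --- this is a bookkeeping lemma. The only points requiring minor care are: (i) confirming the sign conventions, namely that $\big(\tfrac{\varepsilon-1}{2}\big)$ produces $0$ for $\varepsilon=+1$ and $-1$ for $\varepsilon=-1$ so that the quadratic term is present exactly for negative tetrahedra with the correct sign; (ii) verifying that $u = \mathsf{t}(T) - \tfrac{i}{2\pi\sqrt{\hbar}}(\pi-a)$ lies in the strip $\R + i\big(\tfrac{-1}{2\sqrt{\hbar}},\tfrac{1}{2\sqrt{\hbar}}\big)$ where the inversion relation applies --- this holds because $a \in (0,\pi)$ forces $\pi - a \in (0,\pi)$, so the imaginary part of $u$ lies in $\big(\tfrac{-1}{2\sqrt\hbar},0\big)$, well inside the strip; and (iii) tracking that the constant $e^{i\frac{\pi}{12}(\B^2+\B^{-2})}$ is indeed $\mathbf{t}$-independent so it can be pulled out. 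With these checks in place the proof is a one-line application of the inversion relation followed by matching exponents.
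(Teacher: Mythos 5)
Your proof is correct and takes essentially the same approach as the paper: apply the inversion relation from Proposition~\ref{prop:quant:dilog}(1) to the negative-sign tetrahedra and observe that the resulting prefactor $e^{i\frac{\pi}{12}(\B^2+\B^{-2})}$ has modulus $1$. The paper's proof is a one-sentence version of exactly what you spell out.
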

\begin{proof}
The result follows immediately by using Lemma \ref{prop:quant:dilog} (1) and the fact that $|e^{i\frac{\pi}{12}(\B^2 + \B^{-2})}|=1$ for all $\B\in \RR_{>0}$. 
\end{proof}

Now, for $X$ a triangulation such that $H_2(M_X\setminus X_0,\Z)=0$, $\hbar>0$ and $\alpha \in \mathcal{A}_X$ an angle structure,  the associated \textit{partition function of the Teichm\"uller TQFT} is the complex number:
$$\mathcal{Z}_{\hbar}(X,\alpha)= \int_{\mathbf{t} \in \R^{X^3}}  \mathcal{K}_X(\mathbf{t}) \mathcal{D}_{\hbar,X}(\mathbf{t},\alpha) d\mathbf{t} \  \in \C. $$

Andersen and Kashaev proved in \cite{AK} that the  {modulus} $\left |\mathcal{Z}_{\hbar}(X,\alpha) \right | \in \R_{>0}$ is invariant under certain Pachner moves with positive angles. In particular, we can use the formula in Lemma \ref{altDC} to compute the modulus of the partition functions. Together with Remark \ref{diffKdef}, the real number $|\mathcal{Z}_{\hbar}(X,\alpha)  |$ is this paper coincides with those defined in \cite{BAGPN, BAW} using slightly different definitions.

\subsection{Neumann-Zagier datum and gluing equations}\label{NZD}
Let $K \subset \SS^3$ be a hyperbolic knot. Choose a simple closed curve $\gamma \in \pi_1(\partial(\SS^3\setminus\nu(K)))$. Let $X = \{T_1,\dots,T_N\}$ be an ideal triangulation of $\SS^3 \setminus K$ and let $X^1 = \{E_1,\dots, E_N\}$ be the set of edges. For each $T_i$, we choose a quad type (i.e. a pair of opposite edges) and assign a shape parameter $z_i\in \CC\setminus \{0,1\}$ to the edges. For $z_i \in \CC\setminus \{0,1\}$, we define $z_i ' = \frac{1}{1-z_i}$, $z_i'' = 1-\frac{1}{z_i}$. Recall that for each ideal tetrahedron, opposite edges share the same shape parameters. By \cite{NZ}, there exists $N-1$ linearly independent edge equations, in the sense that if these $N-$ edge equations are satisfied, the remaining edge equation will automatically be satisfied. Without loss of generality we assume that $\{E_1,\dots,E_{N-1}\}$ is a set of linearly independent edges. For each edge $E_i$, we let $E_{i,j}$ be the numbers of edges with shape parameter $z_j$ that is incident to $E_i$. We define $E_{i,j}'$ and $E_{i,j}''$ be respectively the corresponding counting with respect to $z_j'$ and $z_j''$. The gluing variety $\mathcal{V}_{X}$ is the affine variety in $(z_1,z_1',z_1'', \dots, z_N, z_N', z_N'') \in \CC^{3N}$ defined by the zero sets of the polynomials
\begin{align*}
p_i = z_i(1-z_i'')- 1,\quad p_i' = z_i'(1-z_i) -1,\quad p_i'' = z_i''(1-z_i') -1 
\end{align*}
for $i=1,\dots, N$ and the polynomials
\begin{align*}
\prod_{j=1}^N z_j^{E_{ij}} (z_j')^{E_{ij}'} (z_j'')^{E_{ij}''}  -1
\end{align*}
for $i=1,\dots, N-1$. By using the equations $p_i=p_i'=p_i'' =0$ for $i=1,\dots,N$, for simplicity we will use $(z_1,\dots, z_N) \in (\CC\setminus \{0,1\})^N$ to represent a point in $\mathcal{V}_{X}$. Besides, we let $C_{i,j}$ be the numbers of edges with shape $z_j$ on the left hand side of $\gamma$ minus the numbers of edges with shape $z_j$ on the right hand side of $\gamma$. We define $C_{i,j}'$ and $C_{i,j}''$ be respectively the corresponding counting with respect to $z_j'$ and $z_j''$. Given $(z_1,\dots, z_n) \in \mathcal{V}_{X}$, the logarithmic holonomy of $\gamma$ is given by
$$
\mathrm{H}^\C_{X,\gamma}(\mathbf{z}) = \sum_{j=1}^N \left( C_{j} \Log(z_i) + C_{j}' \Log(z_i') + C_{j}'' \Log (z_i'') \right).
$$
There is a well-defined map
$$ \mathcal{P}_{X} : \mathcal{V}_{X} \to \chi(\SS^3 \setminus K)$$
from the gluing variety of $X$ to the $\mathrm{PSL}_2(\CC)$-character variety of $\SS^3 \setminus K$ that sends $\mathbf{z}=(z_1,\dots, z_n)  \in \mathcal{V}_{X}$ to the character $[\rho_{\mathbf{z}}]$ of the pseudo-developing map $\rho_{\mathbf{z}}$ with 
$$
\rho_{\mathbf{z}}([\gamma]) = \pm 
\begin{pmatrix}
\exp\left(\frac{\mathrm{H}^\C_{X,\gamma}(\mathbf{z})}{2}\right) & * \\
0 & \exp\left(-\frac{\mathrm{H}^\C_{X,\gamma}(\mathbf{z})}{2}\right)  
\end{pmatrix}$$
up to conjugation.

We define three $N \times N$ matrices $\mathbf{G},\mathbf{G'},\mathbf{G''} \in M_{N\times N}(\ZZ)$ by
\begin{align*}
\mathbf{G}=
\begin{pmatrix}
E_{1,1} & E_{1,2} & \dots & E_{1,N} \\
\vdots & \vdots &  & \vdots \\
E_{N-1,1} & E_{N-1,2} & \dots & E_{N-1,N} \\
C_{1} & C_{2} & \dots & C_{N} \\
\end{pmatrix},\qquad
\mathbf{G'}=
\begin{pmatrix}
E_{1,1}' & E_{1,2}' & \dots & E_{1,N}' \\
\vdots & \vdots &  & \vdots \\
E_{N-1,1}' & E_{N-1,2}' & \dots & E_{N-1,N}' \\
C_{1}' & C_{2}' & \dots & C_{N}' \\  
\end{pmatrix}
\end{align*}
and
\begin{align*}
\mathbf{G''}=
\begin{pmatrix}
E_{1,1}'' & E_{1,2}'' & \dots & E_{1,N}'' \\
\vdots & \vdots  &  & \vdots \\
E_{N-1,1}'' & E_{N-1,2}'' & \dots & E_{N-1,N}'' \\
C_{1}'' & C_{2}'' & \dots & C_{N}'' \\
\end{pmatrix}.
\end{align*}
Given $\xi\in \CC$, the edges equations and holonomy equation can be written in the form
$$
\mathbf{G} \BLog \mathbf{z} + \mathbf{G'} \BLog \mathbf{z'} + \mathbf{G''} \BLog \mathbf{z''}
= (
    2\pi i,
    \dots,
    2\pi i,
    \xi
)^{\!\top}.
$$
By using the equation $\Log z + \Log z' + \Log z'' = \pi i$, the equation can be rewritten as
$$\mathbf{A} \BLog \mathbf{z} + \mathbf{B} \BLog \mathbf{z''} 
= i \boldsymbol{\nu} + \tilde{\boldsymbol u},$$
where $\mathbf{A} = \mathbf{G-G'}, \mathbf{B} = \mathbf{G''-G'}$, $\boldsymbol{\nu} \in \pi \ZZ^N$ and $\tilde{\boldsymbol u} = (0,\dots,0,\xi)^{\!\top}$.

\subsection{Combinatorial flattening}
\begin{definition}\label{CF}
For a simple closed curve $\gamma \in \pi_1(\partial (\SS^3 \setminus \nu(K)) )$, a combinatorial flattening with respect to $\gamma$ consists of three vectors 
\begin{align*}
\mathbf{f} = (f_1,\dots,f_N),\quad
\mathbf{f'} = (f_1',\dots,f_N'),\quad
\mathbf{f}'' = (f_1'',\dots,f_N'') \in \mathbb{Z}^N
\end{align*}
such that 
\begin{itemize}
\item for $i=1,\dots, N$, we have $f_i + f_i' + f_i'' = 1$ and
\item the $i$-th entry of the vector 
$$\mathbf{G}\cdot \mathbf{f}^{\!\top} + \mathbf{G'} \cdot {\mathbf{f}'}^{\!\top} + \mathbf{G''} \cdot {\mathbf{f}''}^{\!\top} $$ is equal to $2$ for $i=1,\dots, N-1$ and is equal to $0$ for $i=N$.
\end{itemize}
\end{definition}

We have the following stronger version of combinatorial flattening, which requires the last condition in Definition \ref{CF} to be satisfied for all simple closed curves.
\begin{definition}\label{SCF}
A strong combinatorial flattening consists of three vectors 
\begin{align*}
\mathbf{f} = (f_1,\dots,f_N),\quad
\mathbf{f'} = (f_1',\dots,f_N'),\quad
\mathbf{f}'' = (f_1'',\dots,f_N'') \in \mathbb{Z}^N
\end{align*}such that 
\begin{itemize}
\item for $i=1,\dots, N$, we have $f_i + f_i' + f_i'' = 1$ and
\item for {\textbf {\textit any}} simple closed curve $\gamma $, the $i$-th entry of the vector 
$$\mathbf{G}\cdot \mathbf{f}^{\!\top} + \mathbf{G'} \cdot {\mathbf{f}'}^{\!\top} + \mathbf{G''} \cdot {\mathbf{f}''}^{\!\top} $$  is equal to $2$ for $i=1,\dots, N-1$ and is equal to $0$ for $i=N$.
\end{itemize}
\end{definition}
\begin{remark}
By \cite[Lemma 6.1]{N}, a strong combinatorial flattening exists for any ideal triangulation.
\end{remark}

\subsection{1-loop invariant and torsion as rational functions on the gluing variety}\label{1loopsection}
\begin{definition}\label{defnrhoregular}
Let $\rho : \pi_1(M) \to \mathrm{PSL}(2;\CC)$ be a representation. An ideal triangulation $X$ of $M$ is $\rho$-regular if there exists $\mathbf{z} \in \mathcal{V}(X)$ such that $\mathcal{P}_{X}(\mathbf z) = [\rho]$.
\end{definition}
\begin{definition}[\cite{DG}] 
Let $X$ be an ideal triangulation of $M$ and $\mathbf{z}$ a shape structure on $X$.
Then the 1-loop invariant of $(M,\boldsymbol\gamma,X, \mathbf{z})$ is defined by
$$\tau(M, \boldsymbol\gamma, X, \mathbf{z}) = 
\pm \frac{1}{2} \mathrm{det}\Big( \mathbf{A} \Delta_{\mathbf{z}''} + \mathbf{B} \Delta_{\mathbf{z}}^{-1}\Big) \prod_{i=1}^N \Big(z_i^{f_i''} z_i''^{-f_i}\Big)
,$$
where $\mathbf{A}=\mathbf{G} - \mathbf{G'}, \mathbf{B} = \mathbf{G''} - \mathbf{G}$ and $(\mathbf{f},\mathbf{f}',\mathbf{f}'')$ is a strong combinatorial flattening, 
$$\Delta_{\mathbf{z}} 
= \begin{pmatrix}
z_1 & 0 & 0 & \dots & 0 \\
0 & z_2 & 0 & \dots & 0 \\
\vdots & \vdots & \vdots & \vdots & \vdots \\
0 & 0 & 0 & 0 & z_N
\end{pmatrix}
\quad \text{and} \quad
\Delta_{\mathbf{z}''} 
= \begin{pmatrix}
z_1'' & 0 & 0 & \dots & 0 \\
0 & z_2'' & 0 & \dots & 0 \\
\vdots & \vdots & \vdots & \vdots & \vdots \\
0 & 0 & 0 & 0 & z_N''
\end{pmatrix}.$$
\end{definition}

The 1-loop conjecture proposed by Dimofte and Garoufalidis suggests that the 1-loop invariant coincides with the adjoint twisted Reidemeister torsion $\mathbb T_{(\SS^3\setminus K,\boldsymbol\gamma)}([\rho_{\mathbf{z}}])$ defined in \cite{P}. We used the following formulation of the conjecture from \cite{PW}.
\begin{conjecture}\label{1loopconjstatement}
Let $K\subset \SS^3 $ be a hyperbolic knot. Let $\gamma$ be a simple closed curves on $\partial (\nu(K))$. Let $\rho_0$ be the unique discrete faithful representation corresponding to the complete hyperbolic structure of $\SS^3\setminus K$. Let $X$ be a $\rho_0$-regular ideal triangulation of $M$ with $\mathcal{P}_{X}(\mathbf z_0) = [\rho_0]$. Let $\mathcal{V}_0(X)$ be the irreducible component of $\mathcal{V}(X)$ containing $\mathbf{z_0}$. For any $\mathbf{z} \in \mathcal{V}_0(X)$ with ${\mathcal{P}}_{X}(\mathbf{z}) = [\rho_{\mathbf{z}}]$, we have
$$
\tau(\SS^3\setminus K, \boldsymbol\gamma, X, \mathbf z) 
= \pm \mathbb T_{(\SS^3\setminus K,\boldsymbol\gamma)}([\rho_{\mathbf{z}}]).
$$
\end{conjecture}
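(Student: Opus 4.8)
The plan is to prove the identity by realizing $\mathbb T_{(\SS^3\setminus K,\gamma)}([\rho_{\mathbf z}])$ as the Reidemeister torsion of an explicit twisted chain complex attached to the truncated triangulation, and then matching that torsion with the $1$-loop formula through the symplectic properties of the Neumann--Zagier datum.

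First I would set up the geometric side. For $\mathbf z\in\mathcal V_0(X)$ with holonomy $\rho_{\mathbf z}$, truncate the ideal tetrahedra of $X$ to obtain a CW decomposition of $\SS^3\setminus\nu(K)$, and form the twisted chain complex with coefficients in the adjoint representation $\mathrm{Ad}\,\rho_{\mathbf z}$ acting on $\mathrm{sl}(2;\CC)$, relative to the boundary torus. Following Porti \cite{P}, once the basis of the boundary homology determined by $\gamma$ is fixed and the relative complex is checked to be acyclic --- which holds near the discrete faithful point $\mathbf z_0$ because $\rho_0$ is irreducible with non-degenerate boundary holonomy, and persists on a Zariski-open subset of $\mathcal V_0(X)$ --- the adjoint torsion equals, up to sign, the torsion of this complex taken with respect to its natural simplicial bases.

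Next I would compute that torsion combinatorially, following the scheme of Dimofte--Garoufalidis \cite{DG}. Along each edge of $X$ one chooses a basis of $\mathrm{sl}(2;\CC)$ adapted to the eigenspaces of $\mathrm{Ad}\,\rho_{\mathbf z}$ of the edge holonomy; with respect to such bases the boundary maps of the complex become block matrices whose non-trivial blocks are recorded by the incidence matrices $\mathbf G,\mathbf G',\mathbf G''$ and by diagonal matrices in the shapes $z_i,z_i',z_i''$. Expanding the alternating product of minors should make the torsion collapse, up to a monomial in the $z_i$ and $z_i''$, to $\pm\tfrac12\det(\mathbf A\Delta_{\mathbf z''}+\mathbf B\Delta_{\mathbf z}^{-1})$; the exponents of that monomial are pinned down by the requirement that the answer be independent of the branch choices for the logarithmic holonomies, which is precisely the role played by the strong combinatorial flattening of Definition \ref{SCF}. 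Here the symplectic relations of the Neumann--Zagier datum (\cite{NZ}: $\mathbf A\mathbf B^{\!\top}$ symmetric and $(\mathbf A\mid\mathbf B)$ of full rank) are what make the cancellations go through and what identify the surviving determinant with the $2\times 2$-block determinant in the definition of $\tau$. Finally, since both $\tau(\SS^3\setminus K,\gamma,X,\mathbf z)$ and $\mathbb T_{(\SS^3\setminus K,\gamma)}([\rho_{\mathbf z}])$ are rational functions on $\mathcal V_0(X)$ and the steps above give equality on a Zariski-open subset, equality holds on all of $\mathcal V_0(X)$; independence of the triangulation is automatic on the $\mathbb T$-side, and if desired it can be re-derived on the $\tau$-side from the covariance of $\tau$ under the 2--3 Pachner moves that connect any two ideal triangulations.

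The hard part will be the combinatorial torsion computation: producing a clean, fully general dictionary between the twisted simplicial complex of the truncated triangulation --- with all of its boundary-torus bookkeeping and branch-of-logarithm subtleties --- and the purely combinatorial Neumann--Zagier complex, so that the overall sign and the monomial flattening factor come out exactly right for \emph{every} hyperbolic knot and \emph{every} $\mathbf z\in\mathcal V_0(X)$. Establishing acyclicity of the relative complex over the entire distinguished component rather than only near $\rho_0$, and controlling the Pachner-move step when intermediate triangulations contain flat or negatively oriented tetrahedra, are precisely the points where a uniform argument is still missing --- which is why the statement is phrased here as a conjecture rather than proved.
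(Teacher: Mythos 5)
The statement you were asked about is explicitly a \emph{conjecture} in the paper (Conjecture~\ref{1loopconjstatement}), attributed to Dimofte--Garoufalidis in the formulation of \cite{PW}; the paper does not prove it and does not claim to. It is recorded only so that Proposition~\ref{Hesstotor} can identify the Hessian of the potential function with the $1$-loop invariant, at which point the asymptotic expansion of the partition function is stated in terms of $\tau$, with the torsion interpretation left conditional on the conjecture. So there is no proof in the paper to compare your proposal against, and you yourself correctly note in your final paragraph that the statement remains conjectural.

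As a research sketch, what you wrote is a reasonable summary of the standard programme: set up the $\mathrm{Ad}\,\rho$-twisted chain complex of the truncated triangulation relative to the boundary as in Porti \cite{P}, compute its torsion via the Neumann--Zagier incidence data following \cite{DG}, and use the symplectic relations of $(\mathbf A\mid\mathbf B)$ together with the strong combinatorial flattening of Definition~\ref{SCF} to reduce the alternating product of minors to $\pm\tfrac12\det(\mathbf A\Delta_{\mathbf z''}+\mathbf B\Delta_{\mathbf z}^{-1})$ times the monomial factor. The difficulties you flag --- acyclicity on the whole distinguished component rather than just near $\rho_0$, the exact sign and monomial bookkeeping, and behaviour under $2$--$3$ moves through non-geometric tetrahedra --- are indeed the known obstructions, and are precisely why this is still a conjecture rather than a theorem. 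If your submission was meant to be a proof of Conjecture~\ref{1loopconjstatement}, it is not one; if it was meant to be an honest outline of a strategy together with an identification of the open points, then it is sound and the self-assessment in your last paragraph is accurate.
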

In Proposition \ref{Hesstotor}, we will show that the 1-loop invariant naturally shows up in the asymptotic expansion formula of the partition functions of the Teichm\"{u}ller TQFT invariants.

\subsection{Neumann-Zagier potential function}\label{NZpotentintro}
Let $K \subset \SS^3$ be a hyperbolic knot. Recall from \cite{NZ} that for any simple closed curve $\gamma \in \pi_1(\partial (\nu(K)))$, locally near the complete hyperbolic structure, the deformation space of hyperbolic structure of $M$ can be parametrized in a generically 2:1 way with one complex variable $w_\gamma$ 
such that, for any complex number $w_\gamma \in \C$ close enough to $0$, 
the point $w_\gamma$ corresponds to the hyperbolic structure where the complex logarithmic holonomy of $\gamma$ is equal to $w_\gamma$ (if $X$ is an ideal triangulation of $M$, this means the structure such that
$\mathrm{H}^\C_{X,\gamma}(\mathbf{z})=w_\gamma$).


Furthermore, given a pair of simple closed curves $(\gamma_1, \gamma_2)$ such that the elements $[\gamma_1], [\gamma_2]$ generate $\pi_1(\partial (\SS^3 \setminus \nu(K)))$, the transition map 
$\Psi_{\gamma_1,\gamma_2}$
from $w_{\gamma_1}$ to 
$w_{\gamma_2}
=\Psi_{\gamma_1,\gamma_2}
(w_{\gamma_1})$
is a locally biholomorphic map around 0 that sends 0 to 0. 
For example, when $\mathbf{z} \in \mathcal{V}_{X}$ is  sufficiently close to the complete one, we have $\Psi_{\gamma_1,\gamma_2}(H^\C_{X,\gamma_1}(\mathbf{z}))=\mathrm{H}^\C_{X,\gamma_2}(\mathbf{z})$.

In particular, one can consider the holomorphic function $\phi_{\gamma_1,\gamma_2}$ defined locally on a simply connected neighborhood around 0 by
$$
\phi_{\gamma_1,\gamma_2}(w_{\gamma_1})
=
i\Big(\Vol(M) + i\CS(M)\Big) + \frac{1}{2}
\int_0^{w_{\gamma_1}}
\Psi_{\gamma_1,\gamma_2}
(t)
dt
,
$$
where $w_{\gamma_2}=\Psi_{\gamma_1,\gamma_2}(t)$ is regarded as a function in a complex variable $t$, the integral is along any contour from $0$ to $w_{\gamma_1}$, and $\Vol(\SS^3\setminus K)$ and $\CS(\SS^3\setminus K)$ are the hyperbolic volume and the Chern-Simons invariant of $\SS^3\setminus K$ respectively. 
Note that the holomorphic function $\phi_{\gamma_1,\gamma_2}$ satisfies the properties that
\begin{align}\label{NZprop}
\phi_{\gamma_1,\gamma_2}(0) =  i\Big(\Vol(\SS^3\setminus K) + i\CS(\SS^3\setminus K)\Big) \quad \text{and} \quad \frac{d \phi_{\gamma_1,\gamma_2}\big(w_{\gamma_1}\big)}{d\big(w_{\gamma_1}\big)} = \frac{
\Psi_{\gamma_1,\gamma_2}
(w_{\gamma_1})}{2},
\end{align}
which uniquely characterizes the function.

\subsection{Complex Morse Lemma and Saddle point approximation}
The following version of complex Morse Lemma can be found in \cite[Lemma A.3]{WY2}.
\begin{lemma}\label{OPFCML}(Complex Morse Lemma) Let $D_{\mathbf z}$ be a region in $\mathbb C^n,$ let $D_{\mathbf a}$ be a region in $\mathbb R^k,$ and let $f: D_{\mathbf z} \times D_{\mathbf a} \to \mathbb C$ be a complex valued function that is holomorphic in $\mathbf z\in D_{\mathbf z}$ and smooth in $\mathbf a\in D_{\mathbf a}.$ For $\mathbf a\in D_{\mathbf a},$ let $f^{\mathbf a}: D_{\mathbf z}\to\mathbb C$ be the function defined by $f^{\mathbf a}(\mathbf z)=f(\mathbf z,\mathbf a).$ Suppose for each $\mathbf a\in D_{\mathbf a},$ $f^{\mathbf a}$ has a non-degenerate critical point $c_{\mathbf a}$ which smoothly depends on $\mathbf a.$ Then for each $\mathbf a_0\in D_{\mathbf a},$ there exists an open set $V \subset \mathbb C^n$ containing $\mathbf 0,$ an open set $A \subset D_{\mathbf a}$ containing $\mathbf a_0,$ and a smooth function $\psi: V \times A \to   D_{\mathbf z}$ such that, if we denote $\psi^{\mathbf a}(\mathbf Z) = \psi(\mathbf Z, \mathbf a),$ then for each $\mathbf a\in D_{\mathbf a},$ $\mathbf z = \psi^{\mathbf a}(\mathbf Z)$ is a holomorphic change of variable on $V$ such that 
$$\psi^{\mathbf a}(\mathbf 0) = \mathbf c_{\mathbf a},$$
$$ f^{\mathbf a}(\psi^{\mathbf a}(\mathbf Z)) = f^{\mathbf a}(\mathbf c_{\mathbf a}) - Z_1^2 - \dots - Z_n^2.$$
\end{lemma}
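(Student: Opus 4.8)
The plan is to follow the classical parametrized Morse lemma, carrying the holomorphic dependence on $\mathbf z$ and the smooth dependence on $\mathbf a$ through every step. First I would reduce to a normalized situation: replacing $\mathbf z$ by $\mathbf z-\mathbf c_{\mathbf a}$ and $f$ by $g(\mathbf z,\mathbf a):=f(\mathbf z+\mathbf c_{\mathbf a},\mathbf a)-f(\mathbf c_{\mathbf a},\mathbf a)$ yields a function still holomorphic in $\mathbf z$ and smooth in $\mathbf a$ (because $\mathbf c_{\mathbf a}$ and $f(\mathbf c_{\mathbf a},\mathbf a)$ are smooth in $\mathbf a$ and constant in $\mathbf z$), with $g^{\mathbf a}(\mathbf 0)=0$, $\nabla_{\mathbf z}g^{\mathbf a}(\mathbf 0)=0$, and $\Hess_{\mathbf z}g^{\mathbf a}(\mathbf 0)$ non-degenerate for every $\mathbf a$. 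It then suffices to produce, for $\mathbf a$ near $\mathbf a_0$, a local biholomorphism $\mathbf Z\mapsto\eta^{\mathbf a}(\mathbf Z)$ depending smoothly on $\mathbf a$ with $\eta^{\mathbf a}(\mathbf 0)=\mathbf 0$ and $g^{\mathbf a}(\eta^{\mathbf a}(\mathbf Z))=-Z_1^2-\dots-Z_n^2$; then $\psi^{\mathbf a}(\mathbf Z):=\mathbf c_{\mathbf a}+\eta^{\mathbf a}(\mathbf Z)$ does the job.

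Next I would apply Hadamard's lemma in the $\mathbf z$-variables with integral remainder: since $g^{\mathbf a}$ vanishes to second order at $\mathbf 0$, one has
\[
g(\mathbf z,\mathbf a)=\sum_{i,j}z_iz_j\,h_{ij}(\mathbf z,\mathbf a),\qquad h_{ij}(\mathbf z,\mathbf a)=\int_0^1(1-t)\,(\partial_{z_i}\partial_{z_j}g)(t\mathbf z,\mathbf a)\,dt,
\]
and, after symmetrizing in $(i,j)$, the matrix-valued function $H=(h_{ij})$ is symmetric, holomorphic in $\mathbf z$, smooth in $\mathbf a$, with $H(\mathbf 0,\mathbf a)=\tfrac12\Hess_{\mathbf z}g^{\mathbf a}(\mathbf 0)$ invertible. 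I would then run the usual diagonalization induction: suppose at the $r$-th stage $g=w_1^2+\dots+w_{r-1}^2+\sum_{i,j\geq r}w_iw_j\,H_{ij}(\mathbf w,\mathbf a)$ in suitable coordinates $\mathbf w$, with the lower-right $(n-r+1)\times(n-r+1)$ block of $H(\mathbf 0,\cdot)$ invertible. A linear change of the coordinates $w_r,\dots,w_n$ with constant coefficients — chosen to work at $\mathbf a_0$, hence by continuity for all $\mathbf a$ near $\mathbf a_0$ — arranges $H_{rr}(\mathbf 0,\mathbf a)\neq 0$, so $H_{rr}(\mathbf w,\mathbf a)\neq 0$ near $\mathbf w=\mathbf 0$. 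Choosing a holomorphic-in-$\mathbf w$, smooth-in-$\mathbf a$ branch of $\sqrt{H_{rr}}$ near $\sqrt{H_{rr}(\mathbf 0,\mathbf a_0)}$ and completing the square through $w_r\mapsto\sqrt{H_{rr}}\big(w_r+\sum_{j>r}w_j H_{rj}/H_{rr}\big)$ (other coordinates unchanged) converts the $r$-th diagonal term into $+w_r^2$ and leaves a quadratic form in $w_{r+1},\dots,w_n$ whose coefficient matrix is the Schur complement, again invertible at $\mathbf 0$. This change has triangular Jacobian at $\mathbf 0$ with nonzero entry $\sqrt{H_{rr}(\mathbf 0,\mathbf a)}$, so by the holomorphic inverse function theorem with a smooth parameter it is a local biholomorphism depending smoothly on $\mathbf a$, and the induction proceeds. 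After $n$ steps $g=w_1^2+\dots+w_n^2$ in the final coordinates, and setting $Z_k=i\,w_k$ gives $g=-Z_1^2-\dots-Z_n^2$. Composing all of these changes and inverting (again by the parametrized holomorphic inverse function theorem) produces $\eta^{\mathbf a}$.

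Finally I would assemble the regularity claims: each elementary change of variables is holomorphic in its source variable and smooth in $\mathbf a$; these properties are stable under composition and under the inverse function theorem (the inverse of a map whose differential in the complex variable is $\mathbb C$-linear is again holomorphic in that variable, and the smooth parameter is merely carried along), so $\eta^{\mathbf a}$ and hence $\psi^{\mathbf a}$ have the required regularity, while $\psi^{\mathbf a}(\mathbf 0)=\mathbf c_{\mathbf a}$ and $f^{\mathbf a}(\psi^{\mathbf a}(\mathbf Z))=f^{\mathbf a}(\mathbf c_{\mathbf a})-Z_1^2-\dots-Z_n^2$ hold by construction. The main obstacle is bookkeeping rather than any single hard step: one must check at each stage that the square-root branch and the preliminary linear changes can be chosen so as to preserve simultaneously holomorphy in $\mathbf z$ and smoothness in $\mathbf a$ — this is precisely why the statement is only local in $\mathbf a$ around an arbitrary $\mathbf a_0$ — and that the inverse function theorem is legitimately applied in this mixed holomorphic/smooth category, which comes down to verifying that the Jacobian in the $\mathbf z$-directions remains invertible throughout. (This is \cite[Lemma A.3]{WY2}, whose proof follows these lines.)
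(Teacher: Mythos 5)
The paper does not prove this lemma: it is stated with a citation to \cite[Lemma A.3]{WY2} and no proof is given here, so there is no in-paper argument to compare against. Your proposal is a correct and complete proof along the standard lines of the parametrized Morse lemma, and you yourself note at the end that it follows the same route as the cited source. The key points are all handled correctly: Hadamard's lemma with integral remainder produces $h_{ij}$ that is already symmetric (the phrase ``after symmetrizing in $(i,j)$'' is redundant, since $\partial_{z_i}\partial_{z_j}=\partial_{z_j}\partial_{z_i}$), holomorphic in $\mathbf z$ and smooth in $\mathbf a$; the preliminary constant linear change to ensure $H_{rr}(\mathbf 0,\mathbf a_0)\neq 0$ exists because a nonsingular complex symmetric matrix always has a nonzero diagonal entry after a congruence, and continuity in $\mathbf a$ gives the same change near $\mathbf a_0$ (this is exactly why the conclusion is only local in $\mathbf a$); the branch of $\sqrt{H_{rr}}$ is chosen locally near a nonzero value; the Jacobian of each completion-of-square is triangular with nonzero diagonal, so the parametrized holomorphic inverse function theorem applies; and the final sign is fixed by $Z_k=iw_k$. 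No gap.
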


The following version of saddle point approximation is a special case of \cite[Proposition 5.1]{WY2}. 
\begin{proposition}(Saddle point approximation)\label{saddle}
Let $D$ be a region in $\mathbb C^{N}$. Let $f(\mathbf z)$ and $g(\mathbf z)$ be complex valued functions on $D$  which are holomorphic in $\mathbf z$. For each positive $r,$ let $f_r(\mathbf z)$ be a complex valued function on $D$ holomorphic in $\mathbf z$ of the form
$$ f_r(\mathbf z) = f(\mathbf z) + \frac{\upsilon_r(\mathbf z)}{r^2}.$$
Let $S$ be an embedded $n$-dimensional disk and $\mathbf c$ be a critical point of $f$ on $S$. If for each $r>0$
\begin{enumerate}[(1)]
\item $\mathrm{Re}f(\mathbf c) > \mathrm{Re}f(\mathbf z)$ for all $\mathbf z \in S\setminus \{\mathbf c\},$
\item the domain $\{\mathbf z\in D\ |\ \mathrm{Re} f(\mathbf z) < \mathrm{Re} f(\mathbf c)\}$ deformation retracts to $S\setminus\{\mathbf c\},$
\item $g(\mathbf c)\neq 0$,
\item $|\upsilon_r(\mathbf z)|$ is bounded from above by a constant independent of $r$ on $D,$ and
\item  the Hessian matrix $\mathrm{Hess}(f)$ of $f$ at $\mathbf c$ is non-singular,
\end{enumerate}
then as $r\to \infty$,
\begin{equation*}
\begin{split}
 \int_{S} g(\mathbf z) e^{rf_r(\mathbf z)} d\mathbf z= \Big(\frac{2\pi}{r}\Big)^{\frac{{N}}{2}}\frac{g(\mathbf c)}{\sqrt{(-1)^{{N}}\det\mathrm{Hess}(f)(\mathbf c)}} e^{rf(\mathbf c)} \Big( 1 + O \Big( \frac{1}{r} \Big) \Big).
 \end{split}
 \end{equation*}
In particular, we have
$$
\lim_{r \to \infty} \frac{1}{r}\log\left|\int_{S} g(\mathbf z) e^{rf_r(\mathbf z)} d\mathbf z \right|
= \Re f(\mathbf{c}).
$$
\end{proposition}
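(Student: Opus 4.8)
The plan is to prove Proposition \ref{saddle} by the standard multidimensional method of steepest descent: localize the integral to a neighbourhood of the critical point $\mathbf c$, bring $f$ into Morse normal form there, deform the contour $S$ onto the associated steepest-descent cell (Lefschetz thimble), and then evaluate the resulting Gaussian integral. Throughout, $S$ is a real $N$-dimensional cycle inside $D\subset\CC^N$. The first step is localization: by hypothesis (1), $\Re f$ attains a strict maximum on $S$ at $\mathbf c$, so for any neighbourhood $U\ni\mathbf c$ there is $\epsilon_U>0$ with $\Re f\le\Re f(\mathbf c)-\epsilon_U$ on $S\setminus U$ (after truncating $S$ to a bounded piece if needed, the tail being exponentially small by integrability and (1)). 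Together with the uniform bound on $\upsilon_r$ from (4), this gives $|g\,e^{rf_r}|\le C\,e^{r(\Re f(\mathbf c)-\epsilon_U)}$ on $S\setminus U$, whence $\int_S g\,e^{rf_r}\,d\mathbf z=\int_{S\cap U}g\,e^{rf_r}\,d\mathbf z+O\big(e^{r(\Re f(\mathbf c)-\epsilon_U)}\big)$, an error negligible against the claimed main term of size $r^{-N/2}e^{r\Re f(\mathbf c)}$.

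Next I would apply Lemma \ref{OPFCML} with a one-point parameter space: since $\mathbf c$ is a non-degenerate critical point of the holomorphic function $f$ by (5), there is a biholomorphism $\psi$ from a neighbourhood of $\mathbf 0\in\CC^N$ onto $U$ with $\psi(\mathbf 0)=\mathbf c$ and $f(\psi(\mathbf Z))=f(\mathbf c)-Z_1^2-\cdots-Z_N^2$. Pulling the integral back through $\psi$ and using that $\upsilon_r(\psi(\mathbf Z))/r\to 0$ uniformly by (4), the integrand becomes $g(\psi(\mathbf Z))\,J_\psi(\mathbf Z)\,e^{rf(\mathbf c)-r(Z_1^2+\cdots+Z_N^2)}\big(1+O(1/r)\big)$, where $J_\psi=\det D\psi$. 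The key geometric input is then that $g\,e^{rf_r}\,d\mathbf z$ is a holomorphic $N$-form, hence $d$-closed, so the integral is a homology invariant in $D$; and hypotheses (1)--(2), asserting that $S\setminus\{\mathbf c\}$ is a deformation retract of $\{\mathbf z\in D:\Re f(\mathbf z)<\Re f(\mathbf c)\}$, pin down $H_N$ of that sublevel set and force $S$ to be homologous, relative to its behaviour at infinity, to the Lefschetz thimble $\mathcal J_{\mathbf c}$ (the descending $\Re f$-gradient cell out of $\mathbf c$), with the homology realized inside $\{\Re f\le\Re f(\mathbf c)\}$. In the Morse chart $\mathcal J_{\mathbf c}$ is a neighbourhood of $\mathbf 0$ in the real slice $\mathbf Z\in\R^N$, on which $\Re f=\Re f(\mathbf c)-\sum_j x_j^2$; so I would replace $\psi^{-1}(S\cap U)$ first by $\{\mathbf Z\in\R^N:|\mathbf Z|<\rho\}$ and then by all of $\R^N$, up to exponentially small errors as before.

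Finally I would perform the Gaussian integral: Taylor-expanding $\mathbf Z\mapsto g(\psi(\mathbf Z))J_\psi(\mathbf Z)e^{\upsilon_r(\psi(\mathbf Z))/r}$ about $\mathbf 0$, the odd-order terms integrate to zero against $e^{-r\sum_j x_j^2}$, and the standard Laplace/Watson estimate gives $(\pi/r)^{N/2}\,g(\mathbf c)\,J_\psi(\mathbf 0)\,(1+O(1/r))$, using $\int_{\R^N}e^{-r\sum_j x_j^2}\,d\mathbf Z=(\pi/r)^{N/2}$ and $g(\mathbf c)J_\psi(\mathbf 0)\ne 0$ from (3) and (5). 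Differentiating $f(\psi(\mathbf Z))=f(\mathbf c)-\sum_j Z_j^2$ twice at $\mathbf 0$ yields $D\psi(\mathbf 0)^{\!\top}\,\Hess(f)(\mathbf c)\,D\psi(\mathbf 0)=-2I_N$, hence $J_\psi(\mathbf 0)^2=(-2)^N/\det\Hess(f)(\mathbf c)$ and $(\pi/r)^{N/2}J_\psi(\mathbf 0)=\pm(2\pi/r)^{N/2}\big((-1)^N\det\Hess(f)(\mathbf c)\big)^{-1/2}$, which is precisely the asserted leading coefficient; taking $\tfrac1r\log|\cdot|$ and letting $r\to\infty$ gives the limit formula.

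The step I expect to be the main obstacle is the contour deformation: turning the global topological hypothesis (2) into an honest contour homology between $S$ and the thimble $\mathcal J_{\mathbf c}$ that stays inside $\{\Re f\le\Re f(\mathbf c)\}\subset D$, and matching it up cleanly with the local Morse chart of the second step — including orienting the real slice $\R^N$ in that chart, which is exactly what fixes the sign ambiguity of the square root. Once $S$ has been moved onto the thimble, the remaining analysis is the routine multivariable Laplace method, and conditions (3)--(4) are precisely what make the leading term nonzero and the $O(1/r)$ remainder uniform.
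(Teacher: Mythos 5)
The paper does not prove Proposition \ref{saddle}; it simply cites it as a special case of \cite[Proposition 5.1]{WY2}, so there is no in-paper argument to compare against. Your proposal is the standard steepest-descent template (localize near $\mathbf c$, pass to the Morse chart $\psi$, deform onto the real slice, evaluate the Gaussian, and identify $J_\psi(\mathbf 0)$ via $D\psi(\mathbf 0)^{\!\top}\Hess(f)(\mathbf c)D\psi(\mathbf 0)=-2I_N$), and steps 1, 2, 4, 5 of your outline are correct and match what one would expect from \cite{WY2}. In particular the computation $(\pi/r)^{N/2}J_\psi(\mathbf 0)=\pm(2\pi/r)^{N/2}\big((-1)^N\det\Hess(f)(\mathbf c)\big)^{-1/2}$ and the handling of the $r$-dependent perturbation $\upsilon_r$ (observing $e^{\upsilon_r/r}=1+O(1/r)$ uniformly from (4)) are both correct.

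The one genuine gap — which you yourself flag as ``the main obstacle'' — is that the contour-deformation step is asserted rather than argued. Saying that (1)--(2) ``force $S$ to be homologous to the Lefschetz thimble'' conflates several things: you need (i) a homotopy of $\psi^{-1}(S\cap U)$ onto the real slice $\R^N$ that stays inside $\{\Re f\le\Re f(\mathbf c)\}$ except at $\mathbf 0$, so that the ``side'' terms from Stokes are $O(e^{r(\Re f(\mathbf c)-\delta)})$; and (ii) that the resulting chain in $\R^N$ has degree $\pm1$ over a neighbourhood of $\mathbf 0$ rather than degree $0$. For (i) the concrete retraction $\rho_t(x+iy)=x+i(1-t)y$ in Morse coordinates works, since it preserves $\{\sum x_j^2>\sum y_j^2\}$ and fixes $\mathbf 0$; for (ii) you must actually use hypothesis (2): the retraction of the sublevel set onto $S\setminus\{\mathbf c\}\simeq S^{N-1}$ identifies $H_{N-1}$ of the sublevel set with $\ZZ$ generated by a small boundary sphere of $S$, and the local Morse picture shows $\partial(\mathcal J_{\mathbf c}\cap B)$ represents the same generator up to sign, which is what rules out degree $0$. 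Without that argument, ``homologous to the thimble'' is not established; with it, the rest of your outline goes through. A secondary, minor caveat is the truncation at infinity: hypothesis (1) alone does not give a uniform gap $\Re f\le\Re f(\mathbf c)-\epsilon$ on the tails of $S$, and your parenthetical appeal to ``integrability'' is doing real work there that should be stated as an assumption or verified in the application.
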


\section{Computation of the partition functions}

Recall that kinematical kernel is given by
$$
\mathcal{K}_X(\mathbf{t}) =
\int_{\boldsymbol{x} \in \R^{X^{2}}} d\boldsymbol{x} 
\int_{\boldsymbol{w} \in \R^{2 N}} d\boldsymbol{w} \
e^{ 2 i \pi \mathbf{t}^{\!\top} \mathcal{X}_0 \boldsymbol{x}}
e^{ -2 i \pi \boldsymbol{w}^{\!\top} \mathcal{A} \boldsymbol{x}}
e^{ -2 i \pi \boldsymbol{w}^{\!\top} \mathcal{B} \mathbf{t}} 
.$$

Let rank$(\mathcal{A}) = r \leq 2N$ and $n = 2N - r$. Let $\mathcal{E}_1, \mathcal{E}_2$ be product of elementary matrices such that 
$$
\mathcal{E}_1 \mathcal{A} \mathcal{E}_2 = 
\begin{pmatrix}
\rm{Id}_r & O_1 \\
O_2 & O_3
\end{pmatrix},
$$
where $O_1 \in M_{r\times n}(\ZZ),O_2\in M_{n\times r}(\ZZ),O_3 \in M_{n\times n}(\ZZ)$ are zero matrices of the corresponding sizes respectively. We write 
$$\mathscr{X}_0\mathcal{E}_2 = ( (\mathscr{X}_0\mathcal{E}_2)_r (\mathscr{X}_0\mathcal{E}_2)_n) \quad \text{and} \quad \mathcal{E}_1\mathcal{B} = \begin{pmatrix} (\mathcal{E}_1\mathcal{B})_r \\ (\mathcal{E}_1\mathcal{B})_n \end{pmatrix}, $$
where $(\mathscr{X}_0\mathcal{E}_2)_r \in M_{N \times r}(\QQ), (\mathscr{X}_0\mathcal{E}_2)_r \in M_{N \times n}(\QQ)$, $(\mathcal{E}_1\mathcal{B})_r \in M_{r \times N}(\QQ)$ and $(\mathcal{E}_1\mathcal{B})_n \in M_{n\times N}(\QQ)$ respectively. 

\begin{lemma}\label{KKdelta} The kinematical kernel in Definition \ref{newdefK} is given by
    \begin{align*}
\mathcal{K}_X(\mathbf{t}) 
&= \det(\mathcal{E}_1\mathcal{E}_2) \delta\Big( ((\mathscr{X}_0\mathcal{E}_2)_n)^{\!\top}\mathbf{t}  \Big) 
\delta\Big((\mathcal{E}_1\mathcal{B})_n \mathbf{t}\Big) 
e^{- i\pi \mathbf{t}^{\!\top} Q \mathbf{t}},
\end{align*}
where
$$Q= \big( (\mathscr{X}_0\mathcal{E}_2)_r (\mathcal{E}_1\mathcal{B})_r \big) + \big((\mathscr{X}_0\mathcal{E}_2)_r(\mathcal{E}_1\mathcal{B})_r\big)^{\!\top} = \mathscr{G} - \frac{\mathcal{E}+\mathrm{Id}_N}{2}  \in M_{N\times N}(\QQ)$$
with the matrix $\mathscr{G}$ defined in (\ref{defmathscrG}).
\end{lemma}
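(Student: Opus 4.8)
The plan is to evaluate the Gaussian-type integral defining $\mathcal{K}_X$ by performing the change of variables dictated by $\mathcal{E}_1,\mathcal{E}_2$ and then carrying out the $\boldsymbol{x}$- and $\boldsymbol{w}$-integrations in turn. First I would substitute $\boldsymbol{x} = \mathcal{E}_2 \boldsymbol{y}$ and $\boldsymbol{w} = \mathcal{E}_1^{\!\top}\boldsymbol{v}$ (both invertible linear changes, contributing the Jacobian factors whose product is $\det(\mathcal{E}_1\mathcal{E}_2)^{\pm 1}$; since $\mathcal{E}_1,\mathcal{E}_2$ are products of elementary matrices one tracks the sign/absolute-value convention here). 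Under this substitution $\boldsymbol{w}^{\!\top}\mathcal{A}\boldsymbol{x} = \boldsymbol{v}^{\!\top}(\mathcal{E}_1\mathcal{A}\mathcal{E}_2)\boldsymbol{y}$, and by \eqref{defmathcale} the matrix $\mathcal{E}_1\mathcal{A}\mathcal{E}_2$ is block-diagonal with an $I_r$ block and zeros. Writing $\boldsymbol{y}=(\boldsymbol{y}_r,\boldsymbol{y}_n)$ and $\boldsymbol{v}=(\boldsymbol{v}_r,\boldsymbol{v}_n)$ accordingly, the phase $-2i\pi\,\boldsymbol{v}^{\!\top}(\mathcal{E}_1\mathcal{A}\mathcal{E}_2)\boldsymbol{y}$ becomes simply $-2i\pi\,\boldsymbol{v}_r^{\!\top}\boldsymbol{y}_r$, so $\boldsymbol{y}_n$ and $\boldsymbol{v}_n$ decouple from the $\mathcal{A}$-term.

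Next I would track the other two exponential factors through the same substitution: $\mathbf{t}^{\!\top}\mathcal{X}_0\boldsymbol{x} = \mathbf{t}^{\!\top}(\mathcal{X}_0\mathcal{E}_2)\boldsymbol{y} = \mathbf{t}^{\!\top}(\mathcal{X}_0\mathcal{E}_2)_r\,\boldsymbol{y}_r + \mathbf{t}^{\!\top}(\mathcal{X}_0\mathcal{E}_2)_n\,\boldsymbol{y}_n$, and $\boldsymbol{w}^{\!\top}\mathcal{B}\mathbf{t} = \boldsymbol{v}^{\!\top}(\mathcal{E}_1\mathcal{B})\mathbf{t} = \boldsymbol{v}_r^{\!\top}(\mathcal{E}_1\mathcal{B})_r\mathbf{t} + \boldsymbol{v}_n^{\!\top}(\mathcal{E}_1\mathcal{B})_n\mathbf{t}$. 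Now integrate in stages. The $\boldsymbol{v}_n$-integral sees only the factor $e^{-2i\pi\,\boldsymbol{v}_n^{\!\top}(\mathcal{E}_1\mathcal{B})_n\mathbf{t}}$ and produces $\delta\!\big((\mathcal{E}_1\mathcal{B})_n\mathbf{t}\big)$ via the distributional identity $\int e^{-2i\pi x w}\,dw = \delta(x)$ recalled in Section~\ref{defAKTQFT}. The $\boldsymbol{y}_n$-integral sees $e^{2i\pi\,\mathbf{t}^{\!\top}(\mathcal{X}_0\mathcal{E}_2)_n\boldsymbol{y}_n}$ and gives $\delta\!\big(((\mathcal{X}_0\mathcal{E}_2)_n)^{\!\top}\mathbf{t}\big)$. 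The remaining $\boldsymbol{v}_r$-integral is $\int e^{-2i\pi\,\boldsymbol{v}_r^{\!\top}(\boldsymbol{y}_r + (\mathcal{E}_1\mathcal{B})_r\mathbf{t})}\,d\boldsymbol{v}_r = \delta\!\big(\boldsymbol{y}_r + (\mathcal{E}_1\mathcal{B})_r\mathbf{t}\big)$, which is then used to do the $\boldsymbol{y}_r$-integral, setting $\boldsymbol{y}_r = -(\mathcal{E}_1\mathcal{B})_r\mathbf{t}$ in the surviving factor $e^{2i\pi\,\mathbf{t}^{\!\top}(\mathcal{X}_0\mathcal{E}_2)_r\boldsymbol{y}_r}$. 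This yields $e^{-2i\pi\,\mathbf{t}^{\!\top}(\mathcal{X}_0\mathcal{E}_2)_r(\mathcal{E}_1\mathcal{B})_r\mathbf{t}}$; symmetrizing the quadratic form in the exponent (replacing $M$ by $\tfrac12(M+M^{\!\top})$ inside $\mathbf{t}^{\!\top}M\mathbf{t}$) rewrites this as $e^{-i\pi\,\mathbf{t}^{\!\top}Q\mathbf{t}}$ with $Q = (\mathcal{X}_0\mathcal{E}_2)_r(\mathcal{E}_1\mathcal{B})_r + \big((\mathcal{X}_0\mathcal{E}_2)_r(\mathcal{E}_1\mathcal{B})_r\big)^{\!\top}$. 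Finally, comparing with the definition \eqref{defmathscrG} of $\mathscr{G}$ gives $Q = \mathscr{G} - \tfrac{\mathcal{E}+\mathrm{Id}_N}{2}$.

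The delicate points, rather than genuine obstacles, are two. First, the manipulations are all at the level of tempered distributions on $\R^{X^3}$, not of convergent integrals, so one must justify the order of integration and the iterated use of $\int e^{-2i\pi xw}dw = \delta(x)$ — this is standard (as in \cite{Kan} and as used in \cite{AK,BAGPN,BAW}) but should be invoked carefully, in particular that the product of the two delta distributions in $\mathbf{t}$ together with $e^{-i\pi\mathbf{t}^{\!\top}Q\mathbf{t}}$ is a well-defined tempered distribution (the hypothesis $H_2(M_X\setminus X^0,\Z)=0$ from Definition \ref{newdefK} guarantees this, exactly as in the cited references). Second, one must be bookkeeping-careful with the sign of $\det(\mathcal{E}_1\mathcal{E}_2)$ versus $|\det(\mathcal{E}_1\mathcal{E}_2)|$ coming from the two linear changes of variables and the orientation conventions for the delta factors; since the final statement only records $\det(\mathcal{E}_1\mathcal{E}_2)$ (and the paper elsewhere only uses $|\mathcal{Z}_\hbar|$), this amounts to checking that the Jacobians combine to exactly $\det(\mathcal{E}_1)\det(\mathcal{E}_2) = \det(\mathcal{E}_1\mathcal{E}_2)$, which follows because each elementary factor contributes its determinant. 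The main conceptual step — the one that makes everything collapse — is the observation that $\mathcal{E}_1\mathcal{A}\mathcal{E}_2$ is block-diagonal, which cleanly separates the "kernel directions" (producing the two delta functions encoding the $2n$ linear constraints of \eqref{deltapart}) from the "rank directions" (producing the Gaussian phase $Q$).
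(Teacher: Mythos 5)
Your proposal is correct and follows essentially the same route as the paper's own proof: the same change of variables $\boldsymbol{x}\mapsto\mathcal{E}_2\boldsymbol{x}$, $\boldsymbol{w}\mapsto\mathcal{E}_1^{\!\top}\boldsymbol{w}$, the same block decomposition of $\mathcal{E}_1\mathcal{A}\mathcal{E}_2$ that makes the $n$-directions produce the two delta factors and the $r$-directions collapse via double Fourier transform to the Gaussian phase, and the same final symmetrization of $(\mathcal{X}_0\mathcal{E}_2)_r(\mathcal{E}_1\mathcal{B})_r$ into $Q$. Your remarks on the distributional justification and the $\det$ vs.\ $|\det|$ bookkeeping are sound caveats but not points of divergence from the paper's argument.
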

\begin{proof}
We apply the change of variables that replace $\boldsymbol{x}$ and $\boldsymbol{w}$ by $\mathcal{E}_2\boldsymbol{x}$ and $\mathcal{E}_1^T\boldsymbol{w}$ respectively. Then the kinematical kernel becomes 
\begin{align*}
\mathcal{K}_X(\mathbf{t}) 
&= \det(\mathcal{E}_1\mathcal{E}_2)\int_{\boldsymbol{x} \in \R^{X^{2}}} d\boldsymbol{x} 
\int_{\boldsymbol{w} \in \R^{2 N}} d\boldsymbol{w} 
e^{ 2 i \pi \mathbf{t}^{\!\top} \mathscr{X}_0\mathcal{E}_2 \boldsymbol{x}}
e^{ -2 i \pi \boldsymbol{w}^{\!\top} \mathcal{E}_1\mathcal{A}\mathcal{E}_2 \boldsymbol{x}}
e^{ -2 i \pi \boldsymbol{w}^{\!\top} \mathcal{E}_1\mathcal{B} \mathbf{t}}.
\end{align*}
Let $n = 2N-r$ be the nullity of $A$. Write $\boldsymbol{x}=(\boldsymbol{x_r}, \boldsymbol{x_n})$ and $\boldsymbol{w}=(\boldsymbol{w_r}, \boldsymbol{w_n})$, where $\boldsymbol{x_r}, \boldsymbol{w_r} \in \RR^r$ and $\boldsymbol{x_n}, \boldsymbol{w_n} \in \RR^n$ respectively. 
Then 
\begin{align*}
\mathcal{K}_X(\mathbf{t}) 
&= \det(\mathcal{E}_1\mathcal{E}_2) \int_{\boldsymbol{x_n}} d\boldsymbol{x_n} 
\int_{\boldsymbol{w_n}} d\mathbf{w_n}
e^{ 2 i \pi \mathbf{t}^{\!\top} (\mathscr{X}_0\mathcal{E}_2)_n \boldsymbol{x_n}}
e^{ -2 i \pi \mathbf{w_n}^{\!\top} (\mathcal{E}_1\mathcal{B})_n \mathbf{t}} \\
&\quad \times \int_{\boldsymbol{x_r}} d\boldsymbol{x_r} 
\int_{\boldsymbol{w_r}} d\boldsymbol{w_r}
e^{ 2 i \pi \mathbf{t}^{\!\top} (\mathscr{X}_0\mathcal{E}_2)_r \boldsymbol{x_r}}
e^{ -2 i \pi \boldsymbol{w_r}^{\!\top}  \boldsymbol{x_r}}
e^{ -2 i \pi \boldsymbol{w_r}^{\!\top} (\mathcal{E}_1\mathcal{B})_r \mathbf{t}}\\
&= \det(\mathcal{E}_1\mathcal{E}_2) \delta\Big( ((\mathscr{X}_0\mathcal{E}_2)_n)^{\!\top}\mathbf{t}  \Big) 
\delta\Big((\mathcal{E}_1\mathcal{B})_n \mathbf{t}\Big) 
e^{-2i\pi \mathbf{t}^{\!\top} (\mathscr{X}_0\mathcal{E}_2)_r (\mathcal{E}_1\mathcal{B})_r \mathbf{t}},
\end{align*}
where the last equality follows by applying Fourier transformation twice. By symmetrizing the matrix $(\mathscr{X}_0\mathcal{E}_2)_r (\mathcal{E}_1\mathcal{B})_r $ and using (\ref{defmathscrG}), we get the desired result.
\end{proof}
\begin{remark}
When $n=0$, we can take $\mathcal{E}_1 = \mathcal{A}^{-1}$ and $\mathcal{E}_2 = \rm{Id}_N$. Then Lemma \ref{KKdelta} recovers \cite[Lemma 2.9]{BAGPN}. 
\end{remark}

Let $\alpha =(a_1,b_1,c_1,\dots, a_N, b_N, c_N) \in \mathcal{A}_X$. Define
$C(\alpha)=
(\varepsilon(T_1)c_1,\dots, \varepsilon(T_N)c_N)^{\!\top}$ and $\mathscr{W}(\alpha) = Q (\boldsymbol{\pi - a}) + C(\alpha),$ where the first term in $\mathscr{W}(\alpha)$ is the product of the matrix $Q \in M_{N\times N}(\QQ)$ and the vector $\boldsymbol{\pi - a} = (\pi-a_1,\dots, \pi-a_N)^{\!\top} \in M_{N\times 1}(\RR)$.

\begin{proposition}\label{Tpartiexpress1}
For any ordered ideal triangulation $X$ with $\alpha \in \mathcal{A}_X$, the modulus of the partition function of $(X,\alpha)$ is given by
\begin{align*}
|\mathcal{Z}_{\hbar}(X,\alpha) |
=  \left| \det(\mathcal{E}_1\mathcal{E}_2)
\left(\frac{1}{2\pi \sqrt{\hbar}}\right)^{N-2n}
\int_{\mathscr{S_{\alpha} }} 
F(\mathbf{y}) d\mathbf{y} \right|,
\end{align*}
where
\begin{align*}
 F_\hbar(\mathbf{y})
= 
\frac{\exp\left(\frac{1}{2\pi \hbar} \left( - \frac{i}{2} \mathbf{y}^{\!\top} Q \mathbf{y} + \mathbf{y}^{\!\top} \mathscr{W}(\alpha) - \frac{i}{2} \sum_{k=1}^N \left(\frac{\varepsilon(T_k)-1}{2}\right) y_k^2 \right) \right)}{\prod_{k=1}^N \Phi_\B\left(\frac{y_k}{2\pi \sqrt{\hbar}}\right) }
\end{align*}
is a holomorphic function in $\mathbf{y}$ defined on
 
$$
\mathscr{Y}_{\alpha} = \prod_{k=1}^N \big(\RR - i(\pi-a_k)\big) 
$$
and $\mathscr{S_{\alpha} }\subset{\mathscr{Y}_{\alpha}}$ is the linear subspace defined by 
$$
 (\mathscr{X}_0\mathcal{E}_2)_n^{\!\top} 
 \left(\mathbf{y} + i  (\boldsymbol{\pi - a})    \right)
 =
(\mathcal{E}_1\mathcal{B})_n 
 \left(\mathbf{y} + i (\boldsymbol{\pi - a})    \right)
 = 0. $$
\end{proposition}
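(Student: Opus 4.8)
The plan is to substitute into the definition of $\mathcal{Z}_{\hbar}(X,\alpha)$ the explicit form of the kinematical kernel from Lemma \ref{KKdelta} together with the alternative form of the dynamical content from Lemma \ref{altDC} (this is harmless since $|\mathcal{Z}_{\hbar}(X,\alpha)|$ is unchanged under multiplication by a constant of modulus $1$), and then to perform the single affine change of variables $t_k = \tfrac{y_k + i(\pi-a_k)}{2\pi\sqrt{\hbar}}$, i.e.\ $y_k = 2\pi\sqrt{\hbar}\,t_k - i(\pi-a_k)$, which rescales by $2\pi\sqrt{\hbar}$ and translates $\mathbf{t}\in\R^{X^3}$ into the domain $\mathscr{Y}_{\alpha}$ where $\Phi_\B$ is holomorphic. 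With these two inputs, $\int_{\mathbf{t}\in\R^{X^3}}\mathcal{K}_X(\mathbf{t})\mathcal{D}_{\hbar,X}(\mathbf{t},\alpha)\,d\mathbf{t}$ equals $\det(\mathcal{E}_1\mathcal{E}_2)$ times the integral, against the product of Dirac deltas $\delta\big((\mathscr{X}_0\mathcal{E}_2)_n^{\!\top}\mathbf{t}\big)\,\delta\big((\mathcal{E}_1\mathcal{B})_n\mathbf{t}\big)$, of $e^{-i\pi\mathbf{t}^{\!\top}Q\mathbf{t}}\prod_{k=1}^N\Phi_\B\!\big(t_k-\tfrac{i}{2\pi\sqrt{\hbar}}(\pi-a_k)\big)^{-1}$ times the exponential of $\sum_k\big(\hbar^{-1/2}c_k\varepsilon(T_k)t_k - i\pi\big(\tfrac{\varepsilon(T_k)-1}{2}\big)\big(t_k-\tfrac{i}{2\pi\sqrt{\hbar}}(\pi-a_k)\big)^2\big)$; integrating out the Dirac deltas is permissible because the dynamical content is Schwartz (see Lemma \ref{lem:dec:exp}), so this reduces to an absolutely convergent integral over the real linear subspace cut out by $(\mathscr{X}_0\mathcal{E}_2)_n^{\!\top}\mathbf{t} = (\mathcal{E}_1\mathcal{B})_n\mathbf{t} = 0$.

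After the substitution there are three bookkeeping points. First, the change of variables maps $\R^{X^3}$ onto $\mathscr{Y}_{\alpha}$ and the subspace above onto $\mathscr{S}_{\alpha}$; the Jacobian contributes $(2\pi\sqrt{\hbar})^{-N}$, while each of the $2n$ one-dimensional Dirac deltas rescales by $2\pi\sqrt{\hbar}$, so the two groups of deltas together contribute $(2\pi\sqrt{\hbar})^{2n}$ and the net prefactor is $(2\pi\sqrt{\hbar})^{-(N-2n)}$. Second, since $t_k-\tfrac{i}{2\pi\sqrt{\hbar}}(\pi-a_k) = \tfrac{y_k}{2\pi\sqrt{\hbar}}$, the quantum dilogarithm factors become $\prod_k\Phi_\B\!\big(\tfrac{y_k}{2\pi\sqrt{\hbar}}\big)$, and because $\im\big(\tfrac{y_k}{2\pi\sqrt{\hbar}}\big) = -\tfrac{\pi-a_k}{2\pi\sqrt{\hbar}}\in\big(-\tfrac{1}{2\sqrt{\hbar}},0\big)$ for $\mathbf{y}\in\mathscr{Y}_{\alpha}$ (as $0<a_k<\pi$), each argument lies in the strip where $\Phi_\B$ is holomorphic and nonvanishing, so the quotient $F_\hbar$ is holomorphic on $\mathscr{Y}_{\alpha}$. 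Third, writing $\mathbf{t}=\tfrac{1}{2\pi\sqrt{\hbar}}(\mathbf{y}+i(\boldsymbol{\pi-a}))$ and using $Q=Q^{\!\top}$, the exponent expands as $\tfrac{1}{2\pi\hbar}\big(-\tfrac{i}{2}\mathbf{y}^{\!\top}Q\mathbf{y} + \mathbf{y}^{\!\top}(Q(\boldsymbol{\pi-a})+C(\alpha)) - \tfrac{i}{2}\sum_k(\tfrac{\varepsilon(T_k)-1}{2})y_k^2\big)$ plus the $\mathbf{y}$-independent terms $\tfrac{i}{4\pi\hbar}(\boldsymbol{\pi-a})^{\!\top}Q(\boldsymbol{\pi-a})$ and $\tfrac{i}{2\pi\hbar}C(\alpha)^{\!\top}(\boldsymbol{\pi-a})$, both of which are purely imaginary and hence only contribute a factor of modulus $1$. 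Since $Q(\boldsymbol{\pi-a})+C(\alpha)=\mathscr{W}(\alpha)$ by definition, the integrand is exactly $F_\hbar(\mathbf{y})$, and collecting $\det(\mathcal{E}_1\mathcal{E}_2)$, the prefactor $(2\pi\sqrt{\hbar})^{-(N-2n)}$ and the integral over $\mathscr{S}_{\alpha}$ produces the asserted formula.

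The computation is otherwise routine, and I expect the only genuinely delicate parts to be the correct accounting of the rescaling factors coming from the $2n$ Dirac deltas --- this is precisely what replaces the naive power $N$ by $N-2n$ in the prefactor --- and the verification that every $\mathbf{y}$-independent contribution to the exponent is purely imaginary, so that it drops out upon passing to $|\mathcal{Z}_{\hbar}(X,\alpha)|$.
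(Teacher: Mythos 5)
Your proposal is correct and follows essentially the same route as the paper's proof: invoke Lemma \ref{KKdelta} and Lemma \ref{altDC}, then pass from $\mathbf{t}$ to $\mathbf{y}$ via the affine substitution $\mathbf{y} = 2\pi\sqrt{\hbar}\,\mathbf{t} - i(\boldsymbol{\pi-a})$ (the paper performs this as a translation followed by a rescaling, you do it in one step), and track how the quadratic form, the linear term $\mathscr{W}(\alpha)$, and the Dirac deltas transform. Your accounting of the $(2\pi\sqrt{\hbar})^{2n}$ rescaling from the deltas against the $(2\pi\sqrt{\hbar})^{-N}$ Jacobian, and your explicit check that the $\mathbf{y}$-independent pieces of the exponent are purely imaginary, are details the paper leaves implicit, but the argument is the same.
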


\begin{proof}
We apply the change of variables $\tilde{\mathbf{y}} = \mathbf{t} - \frac{i}{2\pi\sqrt{\hbar}}(\boldsymbol{\pi - a} )$. Denote
$$
\tilde{\mathscr{Y}}_{\hbar, \alpha} = \prod_{k=1}^N \left(\RR - \frac{i}{2\pi\sqrt{\hbar}}(\pi-a_k)\right) .
$$
By Lemma \ref{altDC} and \ref{KKdelta}, we have
\begin{align*}
&\ \ \left|\mathcal{Z}_{\hbar}(X,\alpha)\right|\\
=&\ \ \left|\int_{\mathbf{t} \in \R^{X^3}}  \mathcal{K}_X(\mathbf{t}) \mathcal{D}_{\hbar,X}(\mathbf{t},\alpha) d\mathbf{t} \right| \\
=&\ \ \left|\int_{\mathbf{\tilde y} \in \tilde{\mathscr{Y}}_{\hbar, \alpha}}  \mathcal{K}_X\left( \tilde{\mathbf{y}} + \frac{i}{2\pi\sqrt{\hbar}}(\boldsymbol{\pi - a})\right) \mathcal{D}_{\hbar,X}\left( \tilde{\mathbf{y}} + \frac{i}{2\pi\sqrt{\hbar}}(\boldsymbol{\pi - a}) \right) d\mathbf{\tilde y} \right| \\
=&\ \  |\det(\mathcal{E}_1\mathcal{E}_2) | \times \\
&\ \ \Bigg|\int_{\mathbf{y} \in \tilde{\mathscr{Y}}_{\hbar, \alpha} } \delta\left( (\mathscr{X}_0\mathcal{E}_2)_n^{\!\top} \left( \tilde{\mathbf{y}} + \frac{i}{2\pi\sqrt{\hbar}} (\boldsymbol{\pi - a} ) \right)  \right) 
\delta\left((\mathcal{E}_1\mathcal{B})_n \left( \tilde{\mathbf{y}} + \frac{i}{2\pi\sqrt{\hbar}} (\boldsymbol{\pi - a}) \right)\right) \\
& \qquad\qquad 
\exp\left(- i\pi \tilde{\mathbf{y}}^{\!\top} Q \tilde{\mathbf{y}} + \frac{1}{\sqrt{\hbar}} \mathscr{W}(\alpha)^{\!\top}  \tilde{\mathbf{y}}  - i\pi \sum_{k=1}^N \left(\frac{\epsilon(T_k)-1}{2}\right) \tilde{\mathbf{y}_k}^2 \right) \left(\prod_{i=1}^N \Phi_\B(\tilde{y_i})\right)^{-1}d\mathbf{\tilde y} \Bigg|  ,
\end{align*}
where $\mathscr{W}(\alpha)= Q(\boldsymbol{\pi - a}) + C(\alpha)$. By applying the change of variable $\mathbf{y} = 2\pi \sqrt{\hbar}\tilde{\mathbf{y}}$, we get the desired result.
\end{proof}

For generalized FAMED ideal triangulations, the following lemma provides a geometric interpretation of the linear affine  $\mathscr{S_{\alpha} }$ in Proposition \ref{Tpartiexpress1}.
\begin{lemma}\label{lincorrespond}
Assume that the ideal triangulation $X$ is generalized FAMED with respect to $l$ (see Definition \ref{defgenFAMED}. Then via the bijection 
$$y_k = \Log z_k - i \pi$$ 
defined in Section \ref{sub:thurston}, each point on $\mathscr{S_{\alpha} }$ corresponds to a solution of the equations 
$$\mathbf{EA}_{2n} (\text{\bf Log } \mathbf{z}) = i\mathbf{E}_{2n}(\boldsymbol{\nu+u}).$$
In particular, the affine space $\mathscr{S_{\alpha}}$ only depends on $\lambda_X(\alpha)$. Moreover, if $X$ satisfies Definition \ref{defgenFAMED5}(2), then the affine space $\mathscr{S_{\alpha}}$ is independent of the choices of angle structures.
\end{lemma}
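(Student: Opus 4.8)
The plan is to use the substitution $y_k=\Log z_k-i\pi$ to rewrite the two linear conditions cutting out $\mathscr{S}_\alpha$ as conditions on $\BLog\mathbf{z}$, and then to match them with the Neumann--Zagier system $(\mathbf{EA})_{2n}\BLog\mathbf{z}=i\mathbf{E}_{2n}(\boldsymbol{\nu}+\boldsymbol{u})$ by combining Definition \ref{defgenFAMED}(3) (which controls the homogeneous part) with the angle-structure equations satisfied by $\alpha$ (which pin down the affine part). Throughout I write $\mathbf{a}=(a_1,\dots,a_N)^{\!\top}$, $M=\begin{pmatrix}(\mathcal{X}_0\mathcal{E}_2)_n^{\!\top}\\ (\mathcal{E}_1\mathcal{B})_n\end{pmatrix}$, and $\boldsymbol{u}=(0,\dots,0,\lambda_X(\alpha))^{\!\top}$, so that $\mathbf{y}+i(\boldsymbol{\pi-a})=\BLog\mathbf{z}-i\mathbf{a}$. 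First I would note that a point $\mathbf{y}\in\mathscr{Y}_\alpha$ has $\im(y_k)=a_k-\pi$, hence corresponds under $y_k=\Log z_k-i\pi$ to shape parameters with $\arg z_k=a_k$ (and conversely), so the two defining equations of $\mathscr{S}_\alpha$ become $M(\BLog\mathbf{z}-i\mathbf{a})=0$.

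Next I would remove $M$. By Definition \ref{defgenFAMED}(3) there is an invertible $P$ with $(\mathbf{EA})_{2n}=PM$, so $M(\BLog\mathbf{z}-i\mathbf{a})=0$ is equivalent to $(\mathbf{EA})_{2n}\BLog\mathbf{z}=i(\mathbf{EA})_{2n}\mathbf{a}$. It then remains to identify $(\mathbf{EA})_{2n}\mathbf{a}$ with $\mathbf{E}_{2n}(\boldsymbol{\nu}+\boldsymbol{u})$, and this is where the angle structure enters: taking imaginary parts of Equation (\ref{introglu}) with $\xi=i\lambda_X(\alpha)$, and using $\Log z+\Log z'+\Log z''=i\pi$ together with the $\varepsilon(T_k)$-dependent identification of $\{\arg z_k',\arg z_k''\}$ with $\{b_k,c_k\}$ from Section \ref{sub:thurston}, recovers exactly the statement that all edge weights of $\alpha$ equal $2\pi$ and that $\mathrm{H}^\R_{X,l}(\alpha)=\lambda_X(\alpha)$; equivalently
\[
\mathbf{A}\mathbf{a}+\mathbf{B}\,(\arg\mathbf{z}'')=\boldsymbol{\nu}+\boldsymbol{u}
\]
for every $\alpha\in\mathscr{A}_X$, with $\boldsymbol{\nu}\in\pi\Z^N$ a combinatorial constant (independent of $\alpha$, after absorbing the constants $\pi\mathbf{G}'\mathbf{1}$) and $\boldsymbol{u}$ as above. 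Multiplying on the left by the bottom block $\mathbf{E}_{2n}$ of $\mathbf{E}$ and using $\mathbf{E}_{2n}\mathbf{B}=\mathbf{O}$ gives $(\mathbf{EA})_{2n}\mathbf{a}=\mathbf{E}_{2n}(\boldsymbol{\nu}+\boldsymbol{u})$, hence $(\mathbf{EA})_{2n}\BLog\mathbf{z}=i\mathbf{E}_{2n}(\boldsymbol{\nu}+\boldsymbol{u})$ as claimed; running the steps backwards (multiplying by $P^{-1}$) shows the correspondence is a bijection between $\mathscr{S}_\alpha$ and the set of solutions $\mathbf{z}$ of this equation having $\arg z_k=a_k$ for all $k$.

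For the last two assertions I would simply read off the dependence from this equation. Its coefficient matrix $(\mathbf{EA})_{2n}$ is fixed and its right-hand side $i\mathbf{E}_{2n}(\boldsymbol{\nu}+\boldsymbol{u})$ depends on $\alpha$ only through $\boldsymbol{u}=(0,\dots,0,\lambda_X(\alpha))^{\!\top}$; thus $\mathscr{S}_\alpha$ --- up to a real translation inside $\ker_\R M$, which does not change it as an integration contour, and using the $\xi$-independent parametrization of Lemma \ref{NZpara} --- depends on $\alpha$ only through $\lambda_X(\alpha)$. If in addition $X$ satisfies Definition \ref{defgenFAMED5}(2), the last column of $\mathbf{E}_{2n}$ vanishes, so $\mathbf{E}_{2n}\boldsymbol{u}=\lambda_X(\alpha)\cdot(\text{last column of }\mathbf{E}_{2n})=\mathbf{0}$ and the right-hand side reduces to $i\mathbf{E}_{2n}\boldsymbol{\nu}$, which is independent of $\alpha$ altogether.

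The main obstacle is the middle step: verifying that the imaginary part of the Neumann--Zagier gluing-and-holonomy system (\ref{introglu}) reproduces precisely the angle-structure equations of $\alpha$. This requires the sign bookkeeping that swaps the roles of $z_k',z_k''$ (hence of $b_k,c_k$) according to $\varepsilon(T_k)$, a careful check that the branch vector $\boldsymbol{\nu}\in\pi\Z^N$ can be normalized independently of $\alpha$, and the block identity $\mathbf{E}_{2n}\mathbf{B}=\mathbf{O}$; once these are in place, everything else is formal linear algebra driven by Definition \ref{defgenFAMED}(3) and, for the refinement, Definition \ref{defgenFAMED5}(2).
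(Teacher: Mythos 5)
Your proof is correct and follows essentially the same route as the paper: rewrite the two linear conditions defining $\mathscr{S}_\alpha$ under $y_k=\Log z_k-i\pi$ as $M(\BLog\mathbf{z}-i\mathbf{a})=0$, invoke Definition~\ref{defgenFAMED}(3) to replace $M$ by $(\mathbf{EA})_{2n}$, then use the real Neumann--Zagier equation satisfied by the angle structure together with the vanishing bottom block of $\mathbf{EB}$ to identify $(\mathbf{EA})_{2n}\mathbf{a}=\mathbf{E}_{2n}(\boldsymbol\nu+\boldsymbol u)$, from which the $\lambda_X(\alpha)$-dependence and the Definition~\ref{defgenFAMED5}(2) refinement are read off. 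The extra bookkeeping you flag around $\boldsymbol\nu$ and the $\varepsilon(T_k)$-dependent assignment of $b_k,c_k$ is the content the paper compresses into ``from the discussion in Section~\ref{sub:thurston}.''
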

\begin{proof}
    Recall that the defining equations of $\mathscr{S_{\alpha} }$ are given by 
    $$
 ((\mathscr{X}_0\mathcal{E}_2)_n)^{\!\top} 
 \left( \mathbf{y} + i  (\boldsymbol{\pi - a})    \right)
 =
(\mathcal{E}_1\mathcal{B})_n 
 \left( \mathbf{y} + i (\boldsymbol{\pi - a})    \right)
 = 0, $$
which under the bijection $y_k = \Log z_k - i \pi$ can be written as
$$
\begin{pmatrix}
((\mathscr{X}_0\mathcal{E}_2)_n)^{\!\top} \\
(\mathcal{E}_1\mathcal{B})_n
\end{pmatrix}
\BLog \mathbf{z}
= 
i
\begin{pmatrix}
((\mathscr{X}_0\mathcal{E}_2)_n)^{\!\top} \\
(\mathcal{E}_1B)_n
\end{pmatrix}
(\boldsymbol{a}).
$$
By Definition \ref{defgenFAMED}(3), this system of equations is equivalent to 
$$(\mathbf{EA})_{2n} (\BLog \mathbf{z}) = i(\mathbf{EA})_{2n}(\boldsymbol{a}).$$
Since $\alpha \in \mathcal{A}_X$, from the discussion in Section \ref{sub:thurston}, we have
\begin{align*}
   \mathbf{A}
\begin{pmatrix}
\boldsymbol{a_+}\\
\boldsymbol{a_-}
\end{pmatrix}
+
\mathbf{B}
\begin{pmatrix}
\boldsymbol{b_+}\\
\boldsymbol{c_-}
\end{pmatrix} &=  \boldsymbol\nu + \boldsymbol u.
\end{align*}
By multiplying both sides by $\mathbf{E}$, we have
\begin{align*}
    \mathbf{E}\mathbf{A}
\begin{pmatrix}
\boldsymbol{a_+}\\
\boldsymbol{a_-}
\end{pmatrix}
+
\mathbf{E}\mathbf{B}
\begin{pmatrix}
\boldsymbol{b_+}\\
\boldsymbol{c_-}
\end{pmatrix} &=  
\mathbf{E}(\boldsymbol\nu + \boldsymbol u).
\end{align*}
In particular, the last $2n$ equations give
$(\mathbf{EA})_{2n}(\boldsymbol{a}) = \mathbf{E}_{2n}(\boldsymbol{\nu+u})$. This proves the first two claims. Moreover, if $X$ satisfies Definition \ref{defgenFAMED5}(2), then we have 
$(\mathbf{EA})_{2n}(\boldsymbol{a}) = \mathbf{E}_{2n}\boldsymbol{\nu}$. This proves the last claim.
\end{proof}

The next lemma tells us how to parameterize the affine subspace $\mathscr{S_{\alpha} }$ in Proposition \ref{Tpartiexpress1}. Up to renumbering, assume that the pivot positions of $\mathbf{(EB)}^{\!\top}_{N-2n}$ are $1,2,\dots, N-2n$. 
\begin{lemma}\label{preNZpara}
    $\ker (\mathbf{EA})_{2n} = \im( (\mathbf{EB})_{N-2n}^{\!\top})$, $\mathrm{rank}((\mathbf{EA})_{2n}) = 2n$ and $\mathrm{rank}((\mathbf{EB})_{N-2n}^{\!\top}) = N-2n$.
\end{lemma}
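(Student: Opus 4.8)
The plan is to deduce all three assertions from two facts: that $\mathbf{E}$ is a product of elementary matrices and hence invertible over $\QQ$, and the Neumann--Zagier symplectic property of $(\mathbf{A},\mathbf{B})$, namely $\mathrm{rank}(\mathbf{A}\mid\mathbf{B})=N$ together with the symmetry of $\mathbf{A}\mathbf{B}^{\!\top}$ (equivalently, the row span of $(\mathbf{A}\mid\mathbf{B})$ is a Lagrangian subspace of $\QQ^{2N}$ for the standard symplectic form), as established in \cite{NZ}. Here $2n=\mathrm{nullity}(\mathbf{B})$ is the quantity fixed in the set-up preceding the lemma.

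First I would dispose of the two rank assertions. Since $\mathbf{E}$ is invertible, $\mathrm{rank}(\mathbf{EB})=\mathrm{rank}(\mathbf{B})=N-\mathrm{nullity}(\mathbf{B})=N-2n$; because the bottom $2n$ rows of $\mathbf{EB}$ are zero by construction, this forces $\mathrm{rank}\big((\mathbf{EB})_{N-2n}\big)=N-2n$, and transposing gives $\mathrm{rank}\big((\mathbf{EB})_{N-2n}^{\!\top}\big)=N-2n$. For the other rank, $\mathrm{rank}(\mathbf{EB}\mid\mathbf{EA})=\mathrm{rank}\big(\mathbf{E}(\mathbf{B}\mid\mathbf{A})\big)=\mathrm{rank}(\mathbf{B}\mid\mathbf{A})=N$; since $(\mathbf{EB}\mid\mathbf{EA})$ is in reduced row echelon form and has rank equal to its number of rows, it has no zero row, so each of its bottom $2n$ rows $\big(\mathbf{O}\mid(\mathbf{EA})_{2n}\big)$ carries a leading entry, necessarily inside the $\mathbf{EA}$-block. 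As the nonzero rows of an echelon matrix are linearly independent, these rows, and hence the rows of $(\mathbf{EA})_{2n}$, are linearly independent, so $\mathrm{rank}\big((\mathbf{EA})_{2n}\big)=2n$.

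It then remains to identify $\ker\big((\mathbf{EA})_{2n}\big)$ with $\im\big((\mathbf{EB})_{N-2n}^{\!\top}\big)$. By rank--nullity together with the two rank computations above, both are subspaces of $\QQ^N$ of dimension $N-2n$, so it suffices to prove the inclusion $\im\big((\mathbf{EB})_{N-2n}^{\!\top}\big)\subseteq\ker\big((\mathbf{EA})_{2n}\big)$, i.e.\ that $(\mathbf{EA})_{2n}(\mathbf{EB})_{N-2n}^{\!\top}=\mathbf{O}$. Here I would invoke the symplectic property: left multiplication by the invertible matrix $\mathbf{E}$ does not change the row span, so the row span of $(\mathbf{EA}\mid\mathbf{EB})=\mathbf{E}(\mathbf{A}\mid\mathbf{B})$ is still Lagrangian, which means $(\mathbf{EA})(\mathbf{EB})^{\!\top}=(\mathbf{EB})(\mathbf{EA})^{\!\top}$. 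Splitting both sides into the blocks given by the first $N-2n$ and the last $2n$ rows, and using $(\mathbf{EB})_{2n}=\mathbf{O}$, the lower-left $2n\times(N-2n)$ block of the right-hand side vanishes, leaving $(\mathbf{EA})_{2n}(\mathbf{EB})_{N-2n}^{\!\top}=\mathbf{O}$. Thus every column of $(\mathbf{EB})_{N-2n}^{\!\top}$ lies in $\ker\big((\mathbf{EA})_{2n}\big)$, and the two subspaces coincide by the dimension count. The renumbering that puts the pivots of $(\mathbf{EB})_{N-2n}^{\!\top}$ at positions $1,\dots,N-2n$ plays no role in this argument; it is only a bookkeeping normalization used later in Lemma~\ref{NZpara}.

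The only genuinely non-formal ingredient is the symplectic identity $\mathbf{A}\mathbf{B}^{\!\top}=\mathbf{B}\mathbf{A}^{\!\top}$ for Neumann--Zagier matrices assembled from the edge equations together with a single peripheral curve; I would cite it from \cite{NZ} (it is precisely the ``symplectic properties of the Neumann--Zagier matrices'' referred to earlier) and verify that the normalization $\mathbf{A}=\mathbf{G}-\mathbf{G}'$, $\mathbf{B}=\mathbf{G}''-\mathbf{G}'$ used here is the one for which it holds. Everything else is elementary linear algebra with reduced row echelon forms, so I expect no real obstacle beyond getting these conventions aligned.
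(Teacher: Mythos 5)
Your proof is correct and follows essentially the same route as the paper: both establish $(\mathbf{EA})_{2n}(\mathbf{EB})_{N-2n}^{\!\top}=\mathbf{O}$ via the Neumann--Zagier symplectic property (you phrase it as the matrix identity $\mathbf{A}\mathbf{B}^{\!\top}=\mathbf{B}\mathbf{A}^{\!\top}$, the paper as vanishing of $\omega$ on rows of $(\mathbf{EA}\mid\mathbf{EB})$, which are the same thing), both get the rank statements from $\mathrm{rank}(\mathbf{B}\mid\mathbf{A})=N$ together with the RREF block structure and $\mathrm{nullity}(\mathbf{B})=2n$, and both finish with a rank--nullity dimension count. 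Your ordering (ranks first, then the vanishing product) and your cleaner derivation of $\mathrm{rank}((\mathbf{EB})_{N-2n})$ directly from invertibility of $\mathbf{E}$ are only cosmetic differences.
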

\begin{proof}
    It is known that the Neumann-Zagier symplectic form $\omega$ restricted on the row spaces of $(\mathbf{A} | \mathbf{B})$ is zero \cite{NZ}. Since the matrix $\mathbf{E}$ corresponds to a finite sequence of elementary row operations, the row spaces of $(\mathbf{EA} | \mathbf{EB})$ is the same as that of $(\mathbf{A} | \mathbf{B})$. Furthermore, for any two rows $r = (r_A | r_B)$ and $s = (s_A | s_B)$ of $(\mathbf{EA} | \mathbf{EB})$ with $r_A, r_B, s_A, s_B \in M_{1\times N}(\QQ)$, we have
$$
\omega( r,s) 
= r_A \cdot s_B - r_B \cdot s_A,
$$
where $\cdot$ denotes the usual dot product of $\CC^N$. In particular, if $r = (r_A | 0)$ is a row of $(\mathbf{EA}_{2n} | \mathbf{O})$ and $s = (s_A | s_B)$ is a row of $( (\mathbf{EA})_{N-2n} | (\mathbf{EB})_{N-2n})$, then 
$$ r_A\cdot s_B = \omega(r,s) = 0. $$
As a result, we have
\begin{align}\label{NZmultizero}
    \mathbf{(EA)_{2n}}
 (\mathbf{(EB)}^{\!\top}_{N-2n}) = 0.
\end{align}
Recall that $\mathbf{E}$ is a product of elementary matrix such that
$$
(\mathbf{EB}| \mathbf{EA})
=
\begin{pmatrix}
 \mathbf{ (EB) }_{N-2n} & (\mathbf{EA})_{N-2n} \\
 \mathbf{O} & (\mathbf{EA})_{2n}
\end{pmatrix}
$$
is in the reduced row echelon form. Thus, we have rank$(\mathbf{EB}|\mathbf{EA})=$ rank$(\mathbf{B}|\mathbf{A}) =N$ \cite{NZ}. This implies that $\text{rank}((\mathbf{EA})_{2n}) = 2n$ and 
 $\text{rank}\left(\mathbf{(EB)}^{\!\top}_{N-2n}\right) 
= \text{rank}\left(\mathbf{(EB)}_{N-2n} \right)
= N-2n.$ Since nullity$((\mathbf{EA})_{2n})$ = $N -$ rank$((\mathbf{EA})_{2n})$ $= N-2n =$ rank $\left(\mathbf{(EB)}_{N-2n}^{\!\top} \right)$, together with (\ref{NZmultizero}), we have the desired result.
\end{proof}

Let $\mathscr{A}_X^l(\lambda_X(\alpha))$ be the subset of $\mathscr{A}_X$ defined by 
$$
\mathscr{A}_X^l(\lambda_X(\alpha))
= \{ \alpha' \in \mathscr{A}_X \mid  \mathrm{H}^\RR_{X,l}(\alpha') = \lambda_X(\alpha) \}.
$$
\begin{lemma}\label{NZpara}
Let $\alpha^0 = (a_1^0, \dots, c_N^0) \in \mathscr{A}_X^l(\lambda_X(\alpha))$ be a fixed angle structure in $\mathscr{A}_X^l(\lambda_X(\alpha))$. Let $\boldsymbol{a^0}=(a_1^0,a_2^0,\dots,a_N^0)$ be the $a$-angles of $\alpha^0$.
    The affine space in $\CC^{N}$ defined by 
   $$
 ((\mathscr{X}_0\mathcal{E}_2)_n)^{\!\top} 
 \left( \mathbf{y} + i  (\boldsymbol{\pi - a})    \right)
 =
(\mathcal{E}_1\mathcal{B})_n 
 \left( \mathbf{y} + i (\boldsymbol{\pi - a})    \right)
 = 0 $$
can be parameterized by the affine map $\varphi$ given by
$$
\boldsymbol{x} = (x_1,\dots,x_{N-2n}) \in \CC^{N-2n} \mapsto  \mathbf{y} = (y_1,\dots, y_N) =   (\varphi_1(\boldsymbol{x}),\dots, \varphi_N(\boldsymbol{x})) = \varphi(\boldsymbol{x}), 
$$
where
$$\varphi(\boldsymbol{x}) = (\mathbf{(EB)}^{\!\top}_{N-2n}) \boldsymbol{x} - i(\boldsymbol{\pi - a^0}).
$$
Furthermore, we have
$$
\varphi\left( \RR^{N-2n} + i \mathbf{v}_\alpha \right) =  \prod_{k=1}^N \big(\RR - i(\pi-a_k)\big) ,
$$
where 
$$\mathbf{v}_\alpha = (a_1 - a_1^0, \dots, a_{N-2n} - a_{N-2n}^0) .$$
Moreover, if $X$ satisfies Definition \ref{defgenFAMED5}(2), then $\alpha^0$ can be any angle structure in $\mathcal{A}_X$. 
\end{lemma}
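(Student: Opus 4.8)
The plan is to transport the whole problem into the Neumann--Zagier coordinates using Lemma \ref{lincorrespond} and then read off the parameterization from Lemma \ref{preNZpara}.

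First I would apply Lemma \ref{lincorrespond}. Under the change of variables $y_k = \Log z_k - i\pi$ (so that $\BLog\mathbf{z} = \mathbf{y} + i\pi(1,\dots,1)^{\!\top}$), the affine space cut out by $(\mathscr{X}_0\mathcal{E}_2)_n^{\!\top}(\mathbf{y}+i(\boldsymbol{\pi - a})) = (\mathcal{E}_1\mathcal{B})_n(\mathbf{y}+i(\boldsymbol{\pi - a})) = 0$ is carried to the complex affine subspace of $\CC^N$ solving $(\mathbf{EA})_{2n}\BLog\mathbf{z} = i\mathbf{E}_{2n}(\boldsymbol\nu + \boldsymbol u)$. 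Since $\alpha^0\in\mathscr{A}_X^l(\lambda_X(\alpha))$, i.e. $\alpha^0$ is an angle structure with the same longitudinal angular holonomy as $\alpha$, the computation inside the proof of Lemma \ref{lincorrespond} gives $(\mathbf{EA})_{2n}\boldsymbol{a^0} = \mathbf{E}_{2n}(\boldsymbol\nu + \boldsymbol u)$ as well; as $(\mathbf{EA})_{2n}$ has rational entries, $\BLog\mathbf{z} = i\boldsymbol{a^0}$ is therefore a particular solution. Next I would invoke Lemma \ref{preNZpara}: the associated homogeneous space is $\ker(\mathbf{EA})_{2n}$, which equals the column space of $(\mathbf{EB})_{N-2n}^{\!\top}$, and $(\mathbf{EB})_{N-2n}^{\!\top}$ has full rank $N-2n$, so $\boldsymbol{x}\mapsto i\boldsymbol{a^0} + (\mathbf{EB})_{N-2n}^{\!\top}\boldsymbol{x}$ is a bijection from $\CC^{N-2n}$ onto this affine subspace. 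Translating back through $\mathbf{y} = \BLog\mathbf{z} - i\pi(1,\dots,1)^{\!\top}$ turns this into $\mathbf{y} = (\mathbf{EB})_{N-2n}^{\!\top}\boldsymbol{x} - i(\boldsymbol{\pi - a^0}) = \varphi(\boldsymbol{x})$, which is the first assertion.

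For the image statement I would split into real and imaginary parts. Since $(\mathbf{EB})_{N-2n}^{\!\top}$ is rational, $\Re\varphi(\boldsymbol{x}) = (\mathbf{EB})_{N-2n}^{\!\top}\Re\boldsymbol{x}$ and $\im\varphi(\boldsymbol{x}) = (\mathbf{EB})_{N-2n}^{\!\top}\im\boldsymbol{x} - (\boldsymbol{\pi - a^0})$, so $\varphi(\boldsymbol{x})$ lies in $\mathscr{Y}_\alpha = \prod_{k=1}^N(\RR - i(\pi - a_k))$ exactly when $(\mathbf{EB})_{N-2n}^{\!\top}\im\boldsymbol{x} = \boldsymbol{a} - \boldsymbol{a^0}$. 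Because $\alpha$ and $\alpha^0$ are both angle structures with the same longitudinal angular holonomy, $(\mathbf{EA})_{2n}(\boldsymbol{a} - \boldsymbol{a^0}) = 0$, so $\boldsymbol{a} - \boldsymbol{a^0}$ lies in $\ker(\mathbf{EA})_{2n}$, i.e. in the column space of $(\mathbf{EB})_{N-2n}^{\!\top}$, and there is a unique $\boldsymbol{w}$ with $(\mathbf{EB})_{N-2n}^{\!\top}\boldsymbol{w} = \boldsymbol{a} - \boldsymbol{a^0}$. Here the running normalization enters: with the pivot positions of $(\mathbf{EB})_{N-2n}^{\!\top}$ renumbered to $1,\dots,N-2n$, the reduced row echelon matrix $(\mathbf{EB})_{N-2n}$ has the form $(\mathrm{Id}_{N-2n}\mid P)$, hence $(\mathbf{EB})_{N-2n}^{\!\top}$ has $\mathrm{Id}_{N-2n}$ as its top $(N-2n)\times(N-2n)$ block, so the first $N-2n$ coordinates of $(\mathbf{EB})_{N-2n}^{\!\top}\boldsymbol{w}$ are just $\boldsymbol{w}$. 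Reading off the first $N-2n$ coordinates of $\boldsymbol{a} - \boldsymbol{a^0}$ then forces $\boldsymbol{w} = (a_1 - a_1^0,\dots,a_{N-2n} - a_{N-2n}^0) = \mathbf{v}_\alpha$. Combined with the injectivity of $(\mathbf{EB})_{N-2n}^{\!\top}$, this shows $\varphi(\boldsymbol{x})\in\mathscr{Y}_\alpha$ iff $\im\boldsymbol{x} = \mathbf{v}_\alpha$, i.e. iff $\boldsymbol{x}\in\RR^{N-2n}+i\mathbf{v}_\alpha$; so $\varphi(\RR^{N-2n}+i\mathbf{v}_\alpha)$ is exactly the set of points of the affine space with imaginary part $-(\boldsymbol{\pi - a})$, namely the subspace $\mathscr{S}_\alpha$ of Proposition \ref{Tpartiexpress1} (which is literally all of $\prod_{k=1}^N(\RR - i(\pi - a_k))$ precisely when $n=0$).

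Finally, for the last sentence, I would note that if $X$ satisfies Definition \ref{defgenFAMED5}(2) then Lemma \ref{lincorrespond} already gives $(\mathbf{EA})_{2n}\boldsymbol{a} = \mathbf{E}_{2n}\boldsymbol\nu$ for \emph{every} angle structure; hence for an arbitrary $\alpha^0\in\mathcal{A}_X$ we still have $(\mathbf{EA})_{2n}\boldsymbol{a^0} = \mathbf{E}_{2n}\boldsymbol\nu$ (so $i\boldsymbol{a^0}$ solves the same affine system) and $(\mathbf{EA})_{2n}(\boldsymbol{a} - \boldsymbol{a^0}) = 0$, and the two preceding paragraphs go through verbatim. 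I expect the only genuinely delicate point to be the identification $\boldsymbol{w} = \mathbf{v}_\alpha$ in the image computation: everything else is linear algebra already packaged in Lemmas \ref{lincorrespond} and \ref{preNZpara}, but recognizing that the vector $\mathbf{v}_\alpha$ appearing in the statement is forced to be the truncation of $\boldsymbol{a} - \boldsymbol{a^0}$ to its first $N-2n$ coordinates — which is exactly what the pivot normalization of $(\mathbf{EB})_{N-2n}^{\!\top}$ guarantees — is the step that makes the second half of the lemma work.
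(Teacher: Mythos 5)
Your proof is correct and follows essentially the same route as the paper: reduce the delta conditions to $(\mathbf{EA})_{2n}$ via Definition \ref{defgenFAMED}(3), parameterize $\ker(\mathbf{EA})_{2n}$ by $(\mathbf{EB})_{N-2n}^{\!\top}$ using Lemma \ref{preNZpara}, identify $\mathbf{v}_\alpha$ from the pivot normalization, and use Definition \ref{defgenFAMED5}(2) for the last claim. You are slightly more careful than the paper on one point: the paper only solves the imaginary-part condition in coordinates $1,\dots,N-2n$, whereas you also check the remaining $2n$ coordinates by noting that $\boldsymbol{a}-\boldsymbol{a^0}$ lies in $\ker(\mathbf{EA})_{2n}=\im\big((\mathbf{EB})_{N-2n}^{\!\top}\big)$; and you correctly observe that the displayed equality $\varphi(\RR^{N-2n}+i\mathbf{v}_\alpha)=\prod_{k=1}^N(\RR-i(\pi-a_k))$ is literally true only when $n=0$ and should be read as equality with the affine slice $\mathscr{S}_\alpha$ of Proposition \ref{Tpartiexpress1} in general.
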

\begin{proof}
By Definition \ref{defgenFAMED}(3), the equation
   $$
 ((\mathscr{X}_0\mathcal{E}_2)_n)^{\!\top} 
 \left( \mathbf{y} + i  (\boldsymbol{\pi - a})    \right)
 =
(\mathcal{E}_1\mathcal{B})_n 
 \left( \mathbf{y} + i (\boldsymbol{\pi - a})    \right)
 = 0 $$
 is equivalent to 
 $$
 (\mathbf{EA})_{2n}\left( \mathbf{y} + i  (\boldsymbol{\pi - a})    \right) = 0.
 $$
 By Lemma \ref{preNZpara}, for any $\boldsymbol{x}\in \CC^{N-2n}$, 
\begin{align*}
    (\mathbf{EA})_{2n}\left( \varphi(\boldsymbol{x}) + i  (\boldsymbol{\pi - a})    \right) 
    = i(\mathbf{EA})_{2n}(\boldsymbol{a^0 - a}) = 0,
\end{align*}
where the last equality follows from the fact that $(\mathbf{EA})_{2n}(\boldsymbol{a^0}) = (\mathbf{EA})_{2n}(\boldsymbol{a}) = \mathbf{E}_{2n}(\boldsymbol{\nu+u})$. This shows that the image of the affine map lies in the affine subspace. By Lemma \ref{preNZpara}, since 
$ \text{rank}\left(\mathbf{(EB)}^{\!\top}_{N-2n}\right) = N-2n,$ we prove the first claim. For the second claim, since the pivot positions of $\mathbf{(EB)}_{N-2n}$ are $1,2,\dots, N-2n$, for $k=1,\dots, N-2n$, by solving the equation
$$  i v_k - i (\pi -a^0_k) =  - i (\pi -a_k) ,$$
we have $v_k = a_k -a^0_k $. Finally, the last claim follows from the observation that by Definition \ref{defgenFAMED5} (2),  
$$(\mathbf{EA})_{2n}(\boldsymbol{a^0}) = (\mathbf{EA})_{2n}(\boldsymbol{a}) = \mathbf{E}_{2n}\boldsymbol{\nu}$$
for any angle structure $\alpha^0 \in \mathcal{A}_X$.
\end{proof}

The following lemma relates $\mathscr{W}(\alpha)$ with the angular holonomy of the longitude.
\begin{lemma}\label{computeW}
Let $X$ be a generalized FAMED ideal triangulation. We have
\begin{align*}
-\mathbf{(EB)}_{N-2n}
\left(
\mathscr{W}(\alpha) \right)
=
\mathbf{E_{N-2n}'(\boldsymbol \nu +\boldsymbol{u})}
- 
\mathbf{(EB)}_{N-2n}\mathscr{G}
\boldsymbol{\pi}.
\end{align*}
In particular, this quantity only depends on $\lambda_X(\alpha)$. 
\end{lemma}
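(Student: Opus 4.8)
The plan is to unwind the definition of $\mathscr{W}(\alpha) = Q(\boldsymbol{\pi-a}) + C(\alpha)$ and express the left-hand side in terms of the Neumann--Zagier matrices by using the two row-equivalence conditions in Definition \ref{defgenFAMED}(4) together with the angle-structure identities established in the proof of Lemma \ref{lincorrespond}. First I would recall that $Q = \mathscr{G} - \frac{\mathcal{E}+\mathrm{Id}_N}{2}$ (Lemma \ref{KKdelta}), so
\begin{align*}
-\mathbf{(EB)}_{N-2n}\,\mathscr{W}(\alpha)
= -\mathbf{(EB)}_{N-2n}\,\mathscr{G}(\boldsymbol{\pi-a})
+ \mathbf{(EB)}_{N-2n}\tfrac{\mathcal{E}+\mathrm{Id}_N}{2}(\boldsymbol{\pi-a})
- \mathbf{(EB)}_{N-2n}\,C(\alpha).
\end{align*}
The key input is Remark \ref{rmk4} / Definition \ref{defnE'}: the matrix $\mathbf{E'}$ was constructed precisely so that $\mathbf{(EB)}_{N-2n}\,\mathscr{G} = (\mathbf{EA})_{N-2n}$ (this is the second block-column of the row-equivalence in Remark \ref{rmk4}, after left-multiplication by the unipotent matrix whose top-right block is $\mathbf{\tilde E}$, and it also explains why $\mathbf{E'}_{N-2n} = \mathbf{E}_{N-2n} + \mathbf{\tilde E}\,\mathbf{E}_{2n}$ while $\mathbf{E'}_{2n}=\mathbf{E}_{2n}$). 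Substituting, the first term becomes $-(\mathbf{EA})_{N-2n}(\boldsymbol{\pi-a}) = (\mathbf{EA})_{N-2n}\,\boldsymbol{a} - (\mathbf{EA})_{N-2n}\,\boldsymbol{\pi}$.

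Next I would handle the $\boldsymbol{a}$-dependent part. From the proof of Lemma \ref{lincorrespond} we have $\mathbf{E}(\mathbf{A}\boldsymbol{a} + \mathbf{B}\mathbf{b}^{\#}) = \mathbf{E}(\boldsymbol{\nu}+\boldsymbol{u})$, where $\mathbf{b}^{\#}$ denotes the vector built from the $b_+$ and $c_-$ angles as in that proof; reading off the first $N-2n$ rows gives $(\mathbf{EA})_{N-2n}\boldsymbol{a} + \mathbf{(EB)}_{N-2n}\mathbf{b}^{\#} = \mathbf{E}_{N-2n}(\boldsymbol{\nu}+\boldsymbol{u})$, hence $(\mathbf{EA})_{N-2n}\boldsymbol{a} = \mathbf{E}_{N-2n}(\boldsymbol{\nu}+\boldsymbol{u}) - \mathbf{(EB)}_{N-2n}\mathbf{b}^{\#}$. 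Meanwhile the remaining two terms of the displayed expansion should reassemble into $\mathbf{(EB)}_{N-2n}$ applied to a combination of angle vectors: using $a_k+b_k+c_k=\pi$ tetrahedron-by-tetrahedron and the explicit sign bookkeeping in the definitions of $C(\alpha)=(\varepsilon(T_k)c_k)^{\!\top}$ and $\mathcal{E}=\mathrm{diag}(\varepsilon(T_k))$, one checks that
$\tfrac{\mathcal{E}+\mathrm{Id}_N}{2}(\boldsymbol{\pi-a}) - C(\alpha)$ equals, componentwise, the vector whose $k$-th entry is $-b_k$ when $\varepsilon(T_k)=+1$ and $-c_k$ when $\varepsilon(T_k)=-1$, i.e. exactly $-\mathbf{b}^{\#}$. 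Therefore $\mathbf{(EB)}_{N-2n}$ applied to this vector cancels the $-\mathbf{(EB)}_{N-2n}\mathbf{b}^{\#}$ contribution from the previous step, leaving $\mathbf{E}_{N-2n}(\boldsymbol{\nu}+\boldsymbol{u}) - (\mathbf{EA})_{N-2n}\boldsymbol{\pi}$. Finally, replacing $(\mathbf{EA})_{N-2n}$ by $\mathbf{(EB)}_{N-2n}\mathscr{G}$ again and $\mathbf{E}_{N-2n}(\boldsymbol{\nu}+\boldsymbol{u})$ by $\mathbf{E'}_{N-2n}(\boldsymbol{\nu}+\boldsymbol{u})$ — these differ only by $\mathbf{\tilde E}\,\mathbf{E}_{2n}(\boldsymbol{\nu}+\boldsymbol{u})$, which I would absorb by noting (as in Lemma \ref{lincorrespond}) that $\mathbf{E}_{2n}(\boldsymbol{\nu}+\boldsymbol{u}) = (\mathbf{EA})_{2n}\boldsymbol{a}$ depends only on the angular data entering $\lambda_X(\alpha)$, or more cleanly, redo the cancellation directly with $\mathbf{E'}$ in place of $\mathbf{E}$ throughout since $\mathbf{(EB)}_{N-2n}$ and the unipotent change only mix the top block — yields exactly $\mathbf{E'}_{N-2n}(\boldsymbol{\nu}+\boldsymbol{u}) - \mathbf{(EB)}_{N-2n}\mathscr{G}\,\boldsymbol{\pi}$.

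For the last sentence: $\boldsymbol{\nu} \in \pi\mathbb{Z}^N$ is fixed and $\boldsymbol{u} = (0,\dots,0,\lambda)^{\!\top}$ where the only free parameter is determined by the angular holonomy of the longitude, so $\mathbf{E'}_{N-2n}(\boldsymbol{\nu}+\boldsymbol{u})$, and hence the whole right-hand side, depends on $\alpha$ only through $\lambda_X(\alpha)$; one should double-check that the integer vector $\boldsymbol{\nu}$ itself is forced (up to the action that does not change the affine constraint), which is the content of the normalization in Equation (\ref{introglu}). The main obstacle I anticipate is the bookkeeping in the middle step: correctly matching the signs $\varepsilon(T_k)$ in $C(\alpha)$, $\mathcal{E}$, and in the identification of which of $\{b_k,c_k\}$ appears in $\mathbf{b}^{\#}$ — this is exactly the $\varepsilon$-dependent assignment of $z',z''$ to $b,c$ from Section \ref{sub:thurston}, and getting it consistent with the conventions of Lemma \ref{lincorrespond} is where an error is most likely to creep in. Everything else is linear algebra that is forced once the row-equivalence conditions of Definition \ref{defgenFAMED}(4) are invoked.
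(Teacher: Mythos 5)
Your plan is sound and essentially reproduces the paper's argument: the paper expands $-\mathscr{W}(\alpha) = \bigl(\mathscr{G}\boldsymbol{a} + \begin{pmatrix}\boldsymbol{b_+}\\\boldsymbol{c_-}\end{pmatrix}\bigr) - \mathscr{G}\boldsymbol{\pi}$ from $Q=\mathscr{G}-\tfrac{\mathcal{E}+\mathrm{Id}_N}{2}$ and $a_k+b_k+c_k=\pi$, then applies $\mathbf{(EB)}_{N-2n}$ and identifies the bracket with $\mathbf{E}'_{N-2n}(\boldsymbol{\nu}+\boldsymbol{u})$ via the angle-structure gluing equations together with Definition~\ref{defgenFAMED}(4) and Remark~\ref{rmk4}. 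You run the same computation after applying $\mathbf{(EB)}_{N-2n}$ first, which is fine, but two of your intermediate assertions are false as stated.

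First, you write that Remark~\ref{rmk4} yields $\mathbf{(EB)}_{N-2n}\mathscr{G}=(\mathbf{EA})_{N-2n}$. Reading off the top-right block of the row equivalence there actually gives $\mathbf{(EB)}_{N-2n}\mathscr{G}=(\mathbf{EA})_{N-2n}+\tilde{\mathbf{E}}(\mathbf{EA})_{2n}$, and the $\tilde{\mathbf{E}}$-term is not an error to be absorbed later: it is exactly what promotes $\mathbf{E}_{N-2n}$ to $\mathbf{E}'_{N-2n}=\mathbf{E}_{N-2n}+\tilde{\mathbf{E}}\mathbf{E}_{2n}$ once you use $(\mathbf{EA})_{2n}\boldsymbol{a}=\mathbf{E}_{2n}(\boldsymbol{\nu}+\boldsymbol{u})$. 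Dropping it at the start and then trying to ``replace $(\mathbf{EA})_{N-2n}$ by $\mathbf{(EB)}_{N-2n}\mathscr{G}$ again'' at the end is not reversible, so the proof does not close as literally written; the clean route is to use the identity $\mathbf{(EB)}_{N-2n}\mathscr{G}\,\boldsymbol{a}+\mathbf{(EB)}_{N-2n}\begin{pmatrix}\boldsymbol{b_+}\\\boldsymbol{c_-}\end{pmatrix}=\mathbf{E}'_{N-2n}(\boldsymbol{\nu}+\boldsymbol{u})$ directly (your parenthetical ``redo with $\mathbf{E'}$'' is the right idea, and is what the paper does). Second, the sign in your componentwise check is flipped: for $\varepsilon(T_k)=+1$ the $k$-th entry of $\tfrac{\mathcal{E}+\mathrm{Id}_N}{2}(\boldsymbol{\pi-a})-C(\alpha)$ is $(\pi-a_k)-c_k=b_k$, and for $\varepsilon(T_k)=-1$ it is $0-(-c_k)=c_k$, so that vector equals $+\begin{pmatrix}\boldsymbol{b_+}\\\boldsymbol{c_-}\end{pmatrix}$, not its negative. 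The cancellation you then invoke requires this positive sign; with $-\begin{pmatrix}\boldsymbol{b_+}\\\boldsymbol{c_-}\end{pmatrix}$ the two terms would add rather than cancel, contradicting your own conclusion.
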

\begin{proof}
Recall that for positively (resp. negatively) ordered tetrahedron, we have $a = {\Arg(z)}$ and $b = {\Arg(z'')}$ (resp. $a = {\Arg(z)}$ and $c = {\Arg(z'')}$). Besides, recall that
$$\mathscr{G} = 
Q + \frac{\mathcal{E}+\mathrm{Id}_N}{2} 
=
Q  
+
\begin{pmatrix}
\mathbf{1} & \mathbf{0} \\
\mathbf{0} & \mathbf{0}
\end{pmatrix},
$$
where $Q = (\mathscr{X}_0\mathcal{E}_2)_r (\mathcal{E}_1\mathscr{B})_r  + \big((\mathscr{X}_0\mathcal{E}_2)_r (\mathcal{E}_1\mathscr{B})_r\big)^{\!\top}$.
Note that 
\begin{align*}
-\mathscr{W}(\alpha)
&= 
Q
\begin{pmatrix}
\boldsymbol{a_+} - \boldsymbol\pi \\
\boldsymbol{a_-} - \boldsymbol\pi
\end{pmatrix}
+
\begin{pmatrix}
\boldsymbol{a_+} + \boldsymbol{b_+} - \boldsymbol{\pi}\\
\boldsymbol{c_-}
\end{pmatrix}
\\
&= 
\left(
\mathscr{G}
-
\begin{pmatrix}
\mathbf{1} & \mathbf{0} \\
\mathbf{0} & \mathbf{0}
\end{pmatrix}
\right)
\begin{pmatrix}
\boldsymbol{a_+} - \boldsymbol\pi \\
\boldsymbol{a_-} - \boldsymbol\pi
\end{pmatrix}
+
\begin{pmatrix}
\boldsymbol{a_+} + \boldsymbol{b_+} - \boldsymbol{\pi}\\
\boldsymbol{c_-}
\end{pmatrix}
\\
&=
\left[
\mathscr{G}
\begin{pmatrix}
\boldsymbol{a_+}\\
\boldsymbol{a_-}
\end{pmatrix}
+
\begin{pmatrix}
\boldsymbol{b_+}\\
\boldsymbol{c_-}
\end{pmatrix}
\right]
-
\mathscr{G}
\begin{pmatrix}
\boldsymbol{\pi}\\
\boldsymbol{\pi}
\end{pmatrix}.
\end{align*}
Besides, by the definitions of $\mathbf{E}, \mathbf{A}$ and $\mathbf{B}$, we have
\begin{align*}
   \mathbf{(EA)}_{N-2n}
\begin{pmatrix}
\boldsymbol{a_+}\\
\boldsymbol{a_-}
\end{pmatrix}
+
\mathbf{(EB)}_{N-2n}\begin{pmatrix}
\boldsymbol{b_+}\\
\boldsymbol{c_-}
\end{pmatrix} &= ( \mathbf{E}( \boldsymbol\nu + \boldsymbol u) )_{N-2n} \\
    (\mathbf{EA})_{2n} \begin{pmatrix}
\boldsymbol{a_+}\\
\boldsymbol{a_-}
\end{pmatrix} &= ( \mathbf{E}(\boldsymbol\nu + \boldsymbol u) )_{2n},
\end{align*}
where $( \mathbf{E}( \boldsymbol\nu + \boldsymbol u) )_{N-2n}$ and $( \mathbf{E}(\boldsymbol\nu + \boldsymbol u) )_{2n}$ are the first $N-2n$ and last $2n$ entries of $\mathbf{E}(\boldsymbol\nu + \boldsymbol u) $, respectively. 
By Definition \ref{defgenFAMED} (4), Remark \ref{rmk4} and Equation (\ref{defnE'}), 
\begin{align*}
   \mathbf{(EB)}_{N-2n}\mathscr{G}
\begin{pmatrix}
\boldsymbol{a_+}\\
\boldsymbol{a_-}
\end{pmatrix}
+
\mathbf{(EB)}_{N-2n}\begin{pmatrix}
\boldsymbol{b_+}\\
\boldsymbol{c_-}
\end{pmatrix} &= \mathbf{E}_{N-2n}'( \boldsymbol\nu + \boldsymbol u)  .
\end{align*}
As a result, 
\begin{align*}
\ -\mathbf{(EB)}_{N-2n}
\left( 
\mathscr{W}(\alpha) \right) 
=&  \ 
\left[
\mathbf{(EB)}_{N-2n}\mathscr{G}
\begin{pmatrix}
\boldsymbol{a_+}\\
\boldsymbol{a_-}
\end{pmatrix}
+
\mathbf{(EB)}_{N-2n}\begin{pmatrix}
\boldsymbol{b_+}\\
\boldsymbol{c_-}
\end{pmatrix}
\right]
-
\mathbf{(EB)}_{N-2n}\mathscr{G}
\begin{pmatrix}
\boldsymbol{\pi}\\
\boldsymbol{\pi}
\end{pmatrix}
\\
=&\ \  
(\mathbf{E'(\boldsymbol \nu +\boldsymbol{u})})_{N-2n}
- 
\mathbf{(EB)}_{N-2n}\mathscr{G}
\begin{pmatrix}
\boldsymbol{\pi}\\
\boldsymbol{\pi}
\end{pmatrix}
.
\end{align*}
This completes the proof.
\end{proof}

\begin{proposition}\label{Tpartiexpress2}
Let $X$ be a generalized FAMED ideal triangulation. 
Let $\alpha \in \mathscr{A}_{X}$ be an angle structure given by $\alpha = (a_1,b_1,c_1,\dots,a_N, b_N, c_N)$. For all $\hbar>0$, we have 
\begin{align*}
\ \ \left|\mathcal{Z}_{\hbar}(X,\alpha)\right| = &\left(\frac{1}{2\pi \sqrt{\hbar}}\right)^{N-2n} D_1\\
\ \  \Bigg|\int_{\boldsymbol{x} \in \RR^{N-2n} + i\mathbf{v}_\alpha } 
&\exp\left(\frac{1}{2\pi \hbar} \left( - \frac{i}{2} \varphi(\boldsymbol{x})^{\!\top} Q \varphi(\boldsymbol{x})  - \frac{i}{2} \sum_{k=1}^N \left(\frac{\varepsilon(T_k)-1}{2}\right) \varphi_k(\boldsymbol{x})^2 \right) \right)   \\
& \exp\left(\frac{1}{2\pi\hbar}(- \boldsymbol{x}^{\!\top}  (\mathbf{E_{N-2n}'(\boldsymbol \nu +\boldsymbol{u})- 
\mathbf{(EB)}_{N-2n}\mathscr{G}
\boldsymbol{\pi}) )}\right) \\
& \left(\prod_{k=1}^N \Phi_\B\left(\frac{\varphi_k(\boldsymbol{x})}{2\pi \sqrt{\hbar}}\right)\right)^{-1}  d\boldsymbol{x}\Bigg| 
\end{align*}
where $\varphi(\boldsymbol{x})$ is the linear map defined in Lemma \ref{NZpara} and $D_1$ is some constant independent of $\hbar$ and $\alpha$.
Furthermore, $|\mathscr{Z}_{\hbar}(X, \alpha)|$ depends only on $\lambda_X(\alpha)$.
\end{proposition}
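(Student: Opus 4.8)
The plan is to feed the parametrisation of Lemma~\ref{NZpara} into the integral of Proposition~\ref{Tpartiexpress1} and then identify the linear term with the help of Lemma~\ref{computeW}. Fix once and for all an auxiliary angle structure $\alpha^0 \in \mathscr{A}_X^l(\lambda_X(\alpha))$ and the associated affine map $\varphi(\boldsymbol{x}) = (\mathbf{(EB)}^{\!\top}_{N-2n})\boldsymbol{x} - i(\boldsymbol{\pi - a^0})$. By Lemma~\ref{NZpara}, $\varphi$ restricts to a bijection from $\RR^{N-2n} + i\mathbf{v}_\alpha$, with $\mathbf{v}_\alpha = (a_1 - a_1^0,\dots,a_{N-2n} - a_{N-2n}^0)$, onto the affine space $\mathscr{S}_\alpha$ of Proposition~\ref{Tpartiexpress1}; performing the change of variables $\mathbf{y} = \varphi(\boldsymbol{x})$ (evaluating the Dirac deltas along this parametrisation) contributes only a positive constant, independent of $\hbar$ and of $\alpha$, which together with $|\det(\mathcal{E}_1\mathcal{E}_2)|$ is absorbed into $D_1$. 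Under this substitution the terms $-\tfrac{i}{2}\mathbf{y}^{\!\top}Q\mathbf{y}$, $-\tfrac{i}{2}\sum_k\big(\tfrac{\varepsilon(T_k)-1}{2}\big)y_k^2$ and $\prod_k\Phi_\B\big(\tfrac{y_k}{2\pi\sqrt{\hbar}}\big)^{-1}$ in $F_\hbar$ become verbatim the corresponding terms in $\varphi(\boldsymbol{x})$ displayed in the statement, so everything is accounted for except the linear term.

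For the linear term I would expand $\varphi(\boldsymbol{x})^{\!\top}\mathscr{W}(\alpha) = \boldsymbol{x}^{\!\top}\mathbf{(EB)}_{N-2n}\mathscr{W}(\alpha) - i(\boldsymbol{\pi - a^0})^{\!\top}\mathscr{W}(\alpha)$ and apply Lemma~\ref{computeW}, which gives $\mathbf{(EB)}_{N-2n}\mathscr{W}(\alpha) = -\big(\mathbf{E}'_{N-2n}(\boldsymbol\nu + \boldsymbol u) - \mathbf{(EB)}_{N-2n}\mathscr{G}\boldsymbol\pi\big)$; pairing with $\boldsymbol{x}$ this is precisely the vector appearing in the statement. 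The residual summand $-i(\boldsymbol{\pi - a^0})^{\!\top}\mathscr{W}(\alpha)$ does not depend on $\boldsymbol{x}$, and since $\mathscr{W}(\alpha) = Q(\boldsymbol{\pi - a}) + C(\alpha)$ is a real vector ($Q$ has rational entries and all angles are real), it is purely imaginary, so $\exp\!\big(\tfrac{1}{2\pi\hbar}(-i(\boldsymbol{\pi - a^0})^{\!\top}\mathscr{W}(\alpha))\big)$ has modulus $1$ and disappears under $|\cdot|$. Collecting terms gives the claimed formula.

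It remains to show that $|\mathscr{Z}_\hbar(X,\alpha)|$ depends on $\alpha$ only through $\lambda_X(\alpha)$. Scanning the formula: the prefactor $D_1(2\pi\sqrt{\hbar})^{-(N-2n)}$ and the matrices $Q$, $\mathscr{G}$, $\mathbf{(EB)}_{N-2n}$, $\mathbf{E}'_{N-2n}$ are $\alpha$-independent; $\boldsymbol\nu \in \pi\ZZ^N$ is constant on the connected polytope $\mathscr{A}_X$ (being locally constant and $\ZZ$-valued), while $\boldsymbol u$ records the longitudinal holonomy and hence depends on $\alpha$ only through $\lambda_X(\alpha)$; and $\alpha^0$, hence $\varphi$, is chosen once per value of $\lambda_X(\alpha)$. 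Thus the only remaining $\alpha$-dependence is the imaginary offset $\mathbf{v}_\alpha$ of the contour $\RR^{N-2n} + i\mathbf{v}_\alpha$. To remove it I would deform the contour: for any $\alpha' \in \mathscr{A}_X^l(\lambda_X(\alpha))$ and $\boldsymbol{x} \in \RR^{N-2n} + i\mathbf{v}_{\alpha'}$ one has $\varphi_k(\boldsymbol{x}) \in \RR - i(\pi - a_k')$ with $\pi - a_k' \in (0,\pi)$, so each $\Phi_\B\big(\varphi_k(\boldsymbol{x})/(2\pi\sqrt{\hbar})\big)$ lies in the open strip $\RR + i\big(-\tfrac{1}{2\sqrt{\hbar}},\tfrac{1}{2\sqrt{\hbar}}\big)$ where $\Phi_\B$ is holomorphic and nonvanishing, and the integrand (which up to the discarded phase is the composition of $\varphi$ with the restriction to $\mathscr{S}_\alpha$ of $\mathcal{K}_X\mathcal{D}_{\hbar,X}$) decays exponentially in every real direction, uniformly along the straight-line segment of angle structures joining $\alpha'$ to $\alpha^0$ inside the convex set $\mathscr{A}_X^l(\lambda_X(\alpha))$, by the exponential estimates of Lemma~\ref{lem:dec:exp} (equivalently Proposition~\ref{prop:quant:dilog}(4)). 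Cauchy's theorem then identifies the integral over $\RR^{N-2n} + i\mathbf{v}_\alpha$ with the one over $\RR^{N-2n} + i\mathbf{v}_{\alpha^0} = \RR^{N-2n}$, which is manifestly independent of $\alpha$ within the fibre; hence $|\mathscr{Z}_\hbar(X,\alpha)|$ is a function of $\lambda_X(\alpha)$ alone.

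The step I expect to be the main obstacle is this last contour deformation: one must verify that the integrand remains holomorphic throughout the region swept by the homotopy of contours — which reduces to keeping $0 < \pi - a_k' < \pi$ along the path, i.e.\ to convexity of $\mathscr{A}_X^l(\lambda_X(\alpha))$ — and that the lateral contributions vanish, for which the uniform exponential decay coming from the positivity of all dihedral angles (Lemma~\ref{lem:dec:exp}) is exactly what is required; the rest is bookkeeping of constants into $D_1$.
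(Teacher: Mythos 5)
Your proposal is correct and follows essentially the same route as the paper: you feed the parametrisation of Lemma~\ref{NZpara} into Proposition~\ref{Tpartiexpress1}, invoke Lemma~\ref{computeW} to rewrite the linear term, observe that the residual constant $-i(\boldsymbol{\pi-a^0})^{\!\top}\mathscr{W}(\alpha)$ is purely imaginary and so disappears under the modulus, and finish the $\lambda_X(\alpha)$-only dependence by deforming the contour over the convex set $\mathscr{A}_X^l(\lambda_X(\alpha))$ using exponential decay of the dynamical content. The paper's own proof does the same thing, invoking the Bochner--Martinelli formula where you cite Cauchy's theorem and being somewhat more explicit that the image of $\mathrm{proj}_a$ has full dimension $N-2n$, but the argument is the same.
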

\begin{proof}
For the first claim, by Proposition \ref{Tpartiexpress1} and Lemma \ref{NZpara}, we have 
\begin{align*}
&\ \ \left|\mathcal{Z}_{\hbar}(X,\alpha)\right| = \left(\frac{1}{2\pi \sqrt{\hbar}}\right)^{N-2n}|\det(\mathcal{E}_1\mathcal{E}_2)|\\
&\ \  \left|\int_{\boldsymbol{x} \in \RR^{N-2n} + i\mathbf{v}_\alpha } 
\frac{\exp\left(\frac{1}{2\pi \hbar} \left( - \frac{i}{2} \varphi(\boldsymbol{x})^{\!\top} Q \varphi(\boldsymbol{x}) + \varphi(\boldsymbol{x})^{\!\top} \mathscr{W}(\alpha) - \frac{i}{2} \sum_{k=1}^N \left(\frac{\varepsilon(T_k)-1}{2}\right) \varphi_k(\boldsymbol{x})^2 \right) \right)}{\prod_{k=1}^N \Phi_\B\left(\frac{\varphi_k(\boldsymbol{x})}{2\pi \sqrt{\hbar}}\right) } d\boldsymbol{x} \right| .
\end{align*}
By Lemma \ref{NZpara} and \ref{computeW}, we have
\begin{align*}
    \left| \exp\left( \frac{1}{2\pi\hbar}( \varphi(\boldsymbol{x})^{\!\top}\mathscr{W}(\alpha)  \right) \right|
    &= \left| \exp\left( \frac{1}{2\pi\hbar}(  \boldsymbol{x}^{\!\top} (\mathbf{EB})_{N-2n}\mathscr{W}(\alpha)  \right) \right|\\
    &= \left| \exp\left( \frac{1}{2\pi\hbar}( - \boldsymbol{x}^{\!\top}  (\mathbf{E_{N-2n}'(\boldsymbol \nu +\boldsymbol{u})}
- 
\mathbf{(EB)}_{N-2n}\mathscr{G}
\boldsymbol{\pi}))  \right) \right|.
\end{align*}
As a result, 
\begin{align*}
\ \ \left|\mathcal{Z}_{\hbar}(X,\alpha)\right| = &\left(\frac{1}{2\pi \sqrt{\hbar}}\right)^{N-2n} D_1\\
\ \  \Bigg|\int_{\boldsymbol{x} \in \RR^{N-2n} + i\mathbf{v}_\alpha } 
&\exp\left(\frac{1}{2\pi \hbar} \left( - \frac{i}{2} \varphi(\boldsymbol{x})^{\!\top} Q \varphi(\boldsymbol{x})  - \frac{i}{2} \sum_{k=1}^N \left(\frac{\varepsilon(T_k)-1}{2}\right) \varphi_k(\boldsymbol{x})^2 \right) \right)  \\
& \exp\left(\frac{1}{2\pi\hbar}(- \boldsymbol{x}^{\!\top}  (\mathbf{E_{N-2n}'(\boldsymbol \nu +\boldsymbol{u})- 
\mathbf{(EB)}_{N-2n}\mathscr{G}
\boldsymbol{\pi}) )}\right) \\
& \left(\prod_{k=1}^N \Phi_\B\left(\frac{\varphi_k(\boldsymbol{x})}{2\pi \sqrt{\hbar}}\right)\right)^{-1}  d\boldsymbol{x}\Bigg| 
\end{align*}
for some constant $D_1$ independent of $\hbar$ and $\alpha$. 

For the second claim, note that although the integrand only depends on $\lambda_X(\alpha)$, a priori the integration multi-contour depends on $\alpha$. We will show that we can deform the integration multi-contour without changing the integral as follows.
Consider the subspace of angle structure
$\mathscr{A}^l_{X}(\lambda_X(\alpha))=  \{ \alpha \in \mathscr{A}_X \mid \mathrm{H}^\R_{X,l}(\alpha) = \lambda_X(\alpha) \}$. Let $\alpha_1,\alpha_2 \in \mathscr{A}^l_{X}(\lambda_X(\alpha))$. Since $\mathscr{A}_X$ is convex, the affine subspace $\mathscr{A}^l_{X}(\lambda_X(\alpha))$, which is a restriction of a convex set on a affine subspace, is also convex. Consider the projection map $\mathrm{proj_a}: \mathscr{A}^l_{X}(\lambda_X(\alpha)) \to \RR^{N-2n}$ defined by $\mathrm{proj}_a(\alpha) = (a_1,\dots, a_{N-2n})$. Since $\mathrm{proj}_a$ is an affine map and $\mathscr{A}^l_{X}(\lambda_X(\alpha))$ is convex, the image $\mathrm{proj_a}(\mathscr{A}^l_{X}(\lambda_X(\alpha)) )$ is also convex. Moreover, for any $(a_1,\dots, a_{N-2n})$ sufficiently close to $\mathrm{proj_a}(\alpha_1)$, we can find an angle structure $\alpha \in \mathscr{A}^l_{X}(\lambda_X(\alpha))$ such that $\mathrm{proj}_a(\alpha) = (a_1,\dots, a_{N-2n})$. This shows that the image $\mathrm{proj_a}(\mathscr{A}^l_{X}(\lambda_X(\alpha)) )$ has dimension $N-2n$. Altogether, for any $\alpha' \in \mathscr{A}^l_{X}(\lambda_X(\alpha))$ with $\mathrm{proj}_a(\alpha')=(a_1', \dots, a_{N-2n}')$ with $a_k' \in [\min\{a^1_k, a^2_k\}, \max\{a^1_k,a^2_k\}]$, the formula for $|\mathscr{Z}_{\hbar}(X, \alpha')|$ holds. In particular, this implies the exponential decay properties of the integrand at infinity and the absolute convergence of the integral for all such $(a_1', \dots, a_{N-2n}')$. As a result, by Bochner-Martinelli formula (see \cite{Kr}), we have
$|\mathscr{Z}_{\hbar}(X, \alpha_1)| = |\mathscr{Z}_{\hbar}(X, \alpha_2)|$
for all $\alpha_1, \alpha_2 \in \mathscr{A}^l_X(\lambda_X(\alpha))$. This completes the proof. 
\end{proof}

\subsection{Potential function and its properties}\label{pofunc}
Consider the potential function 
\begin{align*}
    S\left (\boldsymbol{x}; \lambda_X(\alpha)\right ) 
    =&\ -\frac{i}{2} \varphi(\boldsymbol{x})^{\!\top}
Q \varphi(\boldsymbol{x})
 - \frac{i}{2}\sum_{k=1}^N \left(\frac{\varepsilon(T_k)-1}{2}\right) \varphi_k(\boldsymbol{x})^2
 + i \sum_{k=1}^N \mathrm{L}(\varphi_k(\boldsymbol{x})) \\
&- \boldsymbol{x}^{\!\top}  (\mathbf{E_{N-2n}'(\boldsymbol \nu +\boldsymbol{u})- 
\mathbf{(EB)}_{N-2n}\mathscr{G}
\boldsymbol{\pi}}),
\end{align*}
where $\boldsymbol u
= (0,\dots,0, \lambda_X(\alpha))^{\!\top}$.
The function $S$ plays an important role in the asymptotics of the partition function. More generally, we can complexified the parameter $\lambda_X(\alpha)$ and consider the potential function $\tilde{S}$ defined by
\begin{align*}
    \tilde{S}\left (\boldsymbol{x}; \xi\right ) 
    =&\ -\frac{i}{2} \varphi(\boldsymbol{x})^{\!\top}
Q \varphi(\boldsymbol{x})
 - \frac{i}{2}\sum_{k=1}^N \left(\frac{\varepsilon(T_k)-1}{2}\right) \varphi_k(\boldsymbol{x})^2
 + i \sum_{k=1}^N \mathrm{L}(\varphi_k(\boldsymbol{x})) \\
&\ - \boldsymbol{x}^{\!\top}  (\mathbf{E_{N-2n}'(\boldsymbol \nu -i\boldsymbol{\tilde u})- 
\mathbf{(EB)}_{N-2n}\mathscr{G}
\boldsymbol{\pi}}),
\end{align*}
where $\boldsymbol{\tilde u}
= (0,\dots,0,
\xi)$ with $\xi \in \C$.
The functions $S$ and $\tilde{S}$ are related by 
$$
S\left(\boldsymbol{x};\lambda_X(\alpha)\right)
= \tilde{S}\left (\boldsymbol{x}; i\lambda_X(\alpha) \right ) .
$$
We relate the critical point equations of the potential function with the gluing equations as follows. 
\begin{proposition}\label{critThurscorrespondence}
Assume that the ideal triangulation $X$ is generalized FAMED. 
\begin{enumerate}
    \item For each fixed $\xi \in \CC$, the bijection 
\begin{align}\label{yzrelate}
    y_k = \Log z_k - i \pi
\end{align}
defined in Section \ref{sub:thurston} gives a correspondence between a solution of the gluing equations
\begin{align*}
\mathbf{A} \BLog \mathbf{z}
+ \mathbf{B}
\BLog \mathbf{z}''
=
i \boldsymbol \nu + \tilde{\boldsymbol{u}}.
\end{align*}
with non-negative imaginary parts and a solution of the critical point equation 
$$\nabla_{\boldsymbol{x}} \tilde{S}(\boldsymbol{x}; \xi) = 0.$$
     \item Given an angle structure $\alpha' \in \mathcal{A}_{X}^l(\lambda_X(\alpha))$, let $\mathbf{z}_{\alpha'}$ be the corresponding shape parameters. Under the notations in Lemma \ref{NZpara}, let $\boldsymbol{x}_{\alpha'} \in \RR^{N-2n} + i \mathbf{v}_{\alpha'}$ be the point such that $\mathbf{y}_{\alpha'}= \varphi(\boldsymbol{x}_{\alpha'})$ and $\mathbf{z}_{\alpha'}$ are related by the bijection (\ref{yzrelate}). Then  $\boldsymbol{x}_{\alpha'}$ is a critical point of  $\Re S\left(\boldsymbol{x}; \lambda_X(\alpha)\right)$ on the horizontal plane $\RR^{N-2n} + i \mathbf{v}_{\alpha'}$.
\end{enumerate}
\end{proposition}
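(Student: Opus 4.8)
The plan is to reduce everything to a single computation of the gradient of $\tilde S(\boldsymbol x;\xi)$ in the variables $\boldsymbol x$, and then to translate this gradient, via the parametrization $\varphi$ and the bijection $y_k = \Log z_k - i\pi$, into the Neumann–Zagier gluing system $\mathbf A\BLog\mathbf z + \mathbf B\BLog\mathbf z'' = i\boldsymbol\nu + \tilde{\boldsymbol u}$. For part (1), I would first record that $\varphi$ is the affine map $\varphi(\boldsymbol x) = (\mathbf{(EB)}^{\!\top}_{N-2n})\boldsymbol x - i(\boldsymbol\pi - \boldsymbol a^0)$ from Lemma~\ref{NZpara}, so that $\nabla_{\boldsymbol x}$ acting on any function of the form $H(\varphi(\boldsymbol x))$ produces $\mathbf{(EB)}_{N-2n}\,(\nabla_{\mathbf y}H)$. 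Differentiating $\tilde S$ term by term, the Hessian-type terms contribute $-i\,Q\,\varphi(\boldsymbol x) - i\,\tfrac{\mathcal E - \mathrm{Id}_N}{2}\,\varphi(\boldsymbol x) = -i\,\mathscr G\,\varphi(\boldsymbol x) + i\,\mathcal E_{+}\varphi(\boldsymbol x)$ after collecting (here using $Q = \mathscr G - \tfrac{\mathcal E + \mathrm{Id}_N}{2}$ from Lemma~\ref{KKdelta}), the dilogarithm term contributes $i\sum_k \mathrm L'(\varphi_k)\,\mathbf e_k = i\,\BLog\mathbf z''$ (since $\mathrm L'(y) = \mathrm{analytic\ continuation\ of} -\Log(1+e^y)$ and $\Log(z''_k) = \Log(1+e^{-y_k})$ only on the appropriate branch — see the remark below on the branch cuts), and the linear term contributes $-(\mathbf E'_{N-2n}(\boldsymbol\nu - i\boldsymbol{\tilde u}) - \mathbf{(EB)}_{N-2n}\mathscr G\boldsymbol\pi)$. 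Setting the $\boldsymbol x$-gradient to zero and using $\varphi(\boldsymbol x) = \BLog\mathbf z - i\boldsymbol\pi$ (equivalently $\mathbf y = \BLog\mathbf z - i\boldsymbol\pi$), the combination $-i\,\mathbf{(EB)}_{N-2n}\mathscr G\,(\BLog\mathbf z - i\boldsymbol\pi) + \text{(other terms)}$ reorganizes, after adding back $\mathbf{(EB)}_{N-2n}\mathscr G\boldsymbol\pi$, exactly into $\mathbf{(EB)}_{N-2n}\big(\mathscr G\,\BLog\mathbf z + \BLog\mathbf z''\big) = \mathbf E'_{N-2n}(\boldsymbol\nu - i\boldsymbol{\tilde u})$, which is precisely the first $N-2n$ rows of Equation~(\ref{introglu2}) after the identification $\mathbf{(EB)}_{N-2n}\mathscr G = (\mathbf{EA})_{N-2n}$ up to the row operations of Remark~\ref{rmk4} (this is Definition~\ref{defgenFAMED}(4)) and the relabeling $i\boldsymbol\nu + \tilde{\boldsymbol u} = i(\boldsymbol\nu - i\boldsymbol{\tilde u})$. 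The remaining $2n$ equations of~(\ref{introglu2}) are automatically satisfied: that is the content of Lemma~\ref{lincorrespond}, since the point lies on $\mathscr S_\alpha$ by construction (the parametrization $\varphi$ has image exactly $\mathscr S_\alpha$). Conversely, a solution $\mathbf z$ of the gluing system with $\im z_k \geq 0$ defines $\mathbf y = \BLog\mathbf z - i\boldsymbol\pi \in \prod_k(\RR - i[0,\pi])$, which lies on $\mathscr S_\alpha$ by Lemma~\ref{lincorrespond} and hence equals $\varphi(\boldsymbol x)$ for a unique $\boldsymbol x$ (using $\mathrm{rank}(\mathbf{(EB)}^{\!\top}_{N-2n}) = N-2n$ from Lemma~\ref{preNZpara}), and running the same computation backwards shows $\nabla_{\boldsymbol x}\tilde S(\boldsymbol x;\xi) = 0$.

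For part (2), I would specialize $\xi = i\lambda_X(\alpha)$, so $\tilde S(\boldsymbol x; i\lambda_X(\alpha)) = S(\boldsymbol x; \lambda_X(\alpha))$. Given $\alpha' \in \mathcal A_X^l(\lambda_X(\alpha))$ with shape parameters $\mathbf z_{\alpha'}$, the point $\boldsymbol x_{\alpha'}$ with $\varphi(\boldsymbol x_{\alpha'}) = \mathbf y_{\alpha'} = \BLog\mathbf z_{\alpha'} - i\boldsymbol\pi$ lies on $\RR^{N-2n} + i\mathbf v_{\alpha'}$ by the last part of Lemma~\ref{NZpara}. The shape parameters of an angle structure automatically satisfy the gluing system~(\ref{introglu}) with $\xi = i\lambda_X(\alpha)$ (the imaginary parts of the $\Log z_k$ are the $a$-angles, and the angle-structure edge equations plus the angular-holonomy condition on $l$ are exactly~(\ref{introglu2}) with $i\boldsymbol\nu$ on the right), so by part (1), $\nabla_{\boldsymbol x}\tilde S(\boldsymbol x_{\alpha'}; i\lambda_X(\alpha)) = 0$, i.e. $\boldsymbol x_{\alpha'}$ is a complex critical point of $S(\cdot; \lambda_X(\alpha))$. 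Since $S$ is holomorphic in $\boldsymbol x$, a complex critical point restricted to any real affine subspace of the correct (real) dimension through that point is in particular a critical point of $\Re S$ there — more precisely, $\nabla_{\boldsymbol x}S = 0$ implies that the differential of $\Re S$ vanishes in every real direction, hence on the tangent space of the horizontal plane $\RR^{N-2n} + i\mathbf v_{\alpha'}$. This gives the claim.

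The main obstacle, and the step requiring genuine care rather than bookkeeping, is the identification of $\mathrm L'(\varphi_k(\boldsymbol x))$ with $\Log(z_k'')$ and of the quadratic/linear recombination with the Neumann–Zagier matrices when some tetrahedra are flat and the critical point lies outside $\prod_k(\RR - i(0,\pi))$, i.e. on or beyond a branch cut of the analytically continued dilogarithm $\mathrm L$. On the interior strip $\RR + i(-\pi,\pi)$ one has $\mathrm L'(y) = -\Log(1+e^y)$ cleanly, but the functional equations~(\ref{dilogFE})–(\ref{L'period}) introduce shifts by $-2\pi i(y+\pi i)$ in $\mathrm L$ and by $-2\pi i$ in $\mathrm L'$ as $\im y$ crosses $\pm\pi$; these shifts must be absorbed into the integer vectors $\boldsymbol\nu$ (which live in $\pi\ZZ^N$ precisely to allow this) and into the quadratic term. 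Concretely, I expect to argue that crossing a branch cut changes a flattening/log-branch choice in a way compatible with the freedom in $\boldsymbol\nu$, so that the critical point equation in the extended domain still reads as the gluing system for the correct branch of $\Log z''$; the semi-geometricity hypothesis (every tetrahedron geometric or flat) is what guarantees the relevant shape parameters stay in $\CC\setminus\{0,1\}$ with $\im z_k \geq 0$, keeping us within the domain $\CC\setminus(i(-\infty,-\pi)\cup i(\pi,\infty))$ of $\mathrm L$. Once this branch analysis is pinned down, the rest is the linear-algebra recombination already packaged in Definition~\ref{defgenFAMED}(3)--(4), Remark~\ref{rmk4}, Lemma~\ref{computeW}, and Lemma~\ref{preNZpara}.
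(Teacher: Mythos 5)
Your computation for part~(1) follows the paper's route exactly: chain rule through the affine map $\varphi$, the identity $Q = \mathscr G - \tfrac{\mathcal E + \mathrm{Id}_N}{2}$, collecting terms, translating $y_k = \Log z_k - i\pi$ and $\Log(1+e^{-y_k}) = \Log z_k''$, and invoking Definition~\ref{defgenFAMED}(4) with Remark~\ref{rmk4} together with Lemma~\ref{lincorrespond} for the remaining $2n$ equations. Modulo the unclear notation $\mathcal E_+$ (the collected coefficient should be $-i\mathscr G + i\,\mathrm{Id}_N$) and the slip where you write $\mathbf E'_{N-2n}(\boldsymbol\nu - i\boldsymbol{\tilde u})$ on the right of the critical point equation rather than $\mathbf E'_{N-2n}(i\boldsymbol\nu + \boldsymbol{\tilde u})$ before patching it with a ``relabeling,'' this is essentially the paper's argument and is fine.

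Part~(2) contains a genuine error. You argue that for $\alpha'\in\mathcal A_X^l(\lambda_X(\alpha))$, the shape parameters $\mathbf z_{\alpha'}$ associated to the angle structure satisfy the \emph{complex} gluing system~(\ref{introglu}), so that $\boldsymbol x_{\alpha'}$ is a full holomorphic critical point of $\tilde S$, and you then derive criticality of $\Re S$ on the horizontal plane as a corollary. But an angle structure only forces the \emph{angular} (real-valued) gluing conditions to hold: the edge weight condition $\omega_{X,\alpha'}(e)=2\pi$ and the angular holonomy $\mathrm H^{\RR}_{X,l}(\alpha')=\lambda_X(\alpha)$ constrain $\arg z_k = a_k$, not $\log|z_k|$. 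The complex equations $\mathbf A\,\BLog\mathbf z + \mathbf B\,\BLog\mathbf z'' = i\boldsymbol\nu + \tilde{\boldsymbol u}$ are $N$ complex constraints with a discrete solution set for fixed $\xi$, whereas $\mathcal A_X^l(\lambda_X(\alpha))$ is an open convex set of positive dimension; generic angle structures therefore do not yield complex critical points, and the premise of your part~(2) argument fails. The correct argument (which is what the paper does) is to take the real part of the gradient formula from part~(1): since $\xi = i\lambda_X(\alpha)$ makes $\boldsymbol\nu - i\boldsymbol{\tilde u}$ real, $\Re\nabla_{\boldsymbol x} S$ depends only on $\mathbf{Arg}\,\mathbf z = \boldsymbol a$ and $\mathbf{Arg}\,\mathbf z''$, and the angle-structure equations applied to $(\mathbf{EB})_{N-2n}$ and Definition~\ref{defgenFAMED}(4) give precisely $\Re\nabla_{\boldsymbol x}S(\boldsymbol x_{\alpha'}) = 0$. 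This is a strictly weaker conclusion than $\nabla_{\boldsymbol x}S(\boldsymbol x_{\alpha'}) = 0$ — criticality of $\Re S$ on the horizontal plane only, not criticality of $S$ on $\CC^{N-2n}$ — and that distinction is the whole content of part~(2).
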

\begin{proof}
Note that $Q$ is symmetric and the Jacobian of $\varphi$ is $\mathbf{EB}_{N-2n}$. Recall the notations in Lemma \ref{NZpara} that $\mathbf{y} = (y_1,\dots,y_N) = (\varphi_1(\boldsymbol{x}),\dots,\varphi_N(\boldsymbol{x}))$.  By chain rule, we have
\begin{align*}
\nabla_{\boldsymbol{x}} \tilde S (\boldsymbol{x}; \xi)
&= (\mathbf{EB})_{N-2n}\left[- i Q 
\begin{pmatrix}
\mathbf{y_+} \\
 \mathbf{y_-}
\end{pmatrix}
+ i \begin{pmatrix}
    \mathbf{0} & \mathbf{0} \\
    \mathbf{0} & \mathbf{1}
\end{pmatrix}
\begin{pmatrix}
\mathbf{y_+} \\
 \mathbf{y_-}
\end{pmatrix}
- i 
\begin{pmatrix}
 \mathbf{\BLog(1+e^{y_+})} \\
\mathbf{ \BLog(1+e^{y_-})}
\end{pmatrix} \right] \\
& \qquad -(\mathbf{E}_{N-2n}'(\boldsymbol \nu -i\boldsymbol{\tilde u})- 
\mathbf{(EB)}_{N-2n}\mathscr{G}
\boldsymbol{\pi}) \\
&= (\mathbf{EB})_{N-2n}[- i \mathscr{G} 
 \mathbf{y}
+ i 
 \mathbf{y}
- i 
\mathbf{ \BLog(1+e^{y})}]
 -[\mathbf{E}_{N-2n}'(\boldsymbol \nu -i\boldsymbol{\tilde u})- 
\mathbf{(EB)}_{N-2n}\mathscr{G}
\boldsymbol{\pi}]
\\
&= -i(\mathbf{EB})_{N-2n}\left[\mathscr{G} 
(\mathbf{y} + \boldsymbol{i\pi} )
+ 
\mathbf{ {\BLog}(1+e^{-y})}
 \right]
-\mathbf{E}_{N-2n}'(\boldsymbol \nu -i\boldsymbol{\tilde u})
.
\end{align*}
Recall that $y_k = \Log z_k - i \pi$ and $\Log(1+e^{-y_k}) = \Log(z_k'')$ for $k=1,\dots, N$. 
Thus, 
\begin{align}\label{nablatildeS}
\nabla_{\boldsymbol{x}} \tilde S (\boldsymbol{x}; \xi)
&= - i[(\mathbf{EB})_{N-2n}\mathscr{G}  \BLog \mathbf{z} + (\mathbf{EB})_{N-2n} \BLog(\mathbf{z}'')] 
-\mathbf{E}_{N-2n}'(\boldsymbol \nu -i\boldsymbol{\tilde u}).
\end{align}
In particular, $\nabla_{\boldsymbol{x}} \tilde{S}(\boldsymbol{x}; \xi) =0$ can be written as
\begin{align}\label{gluingeq}
   (\mathbf{EB})_{N-2n}\mathscr{G}
{\BLog}(\mathbf{z})
+
\mathbf{(EB)}_{N-2n}
{\BLog}(\mathbf{z}'')
&= \mathbf{E}_{N-2n}'( i\boldsymbol\nu + \tilde{\boldsymbol u}) .
\end{align}
By Remark \ref{rmk4}, Equation (\ref{gluingeq}) is equivalent to 
$$
(\mathbf{EA})_{N-2n}
{\BLog}(\mathbf{z})
+
\mathbf{(EB)}_{N-2n}
{\BLog}(\mathbf{z}'')
= \mathbf{E}_{N-2n}( i\boldsymbol\nu + \tilde{\boldsymbol u}).
$$
As a result, a solution of the gluing equations give a solution of the critical point equation. This proves the first claim. 

For the second claim, from Equation (\ref{nablatildeS}), we have
\begin{align}
&\ \nabla_{\Re \boldsymbol{x}} \Re S (\boldsymbol{x};\lambda_X(\alpha)) \notag\\
=&\  
\Re\nabla_{\boldsymbol{x}} S (\boldsymbol{x};\lambda_X(\alpha)) \notag\\
=&\    (\mathbf{EB})_{N-2n}\mathscr{G}
\cdot \mathbf{Arg}(\mathbf{z})
+
(\mathbf{EB})_{N-2n}
\cdot \mathbf{Arg}(\mathbf{z}'')
-\mathbf{E}_{N-2n}'(\boldsymbol \nu + \boldsymbol{u})
\label{Recrit1},
\end{align}
where $\boldsymbol u
= (0,\dots,0,
\mathrm{H}^\R_{X,l}(\alpha))^{\!\top}$.
Besides, since $\alpha' \in \mathcal{A}_{X}^l(\lambda_X(\alpha))$, we have
\begin{align*}
   \mathbf{A}
\begin{pmatrix}
\boldsymbol{a_+}\\
\boldsymbol{a_-}
\end{pmatrix}
+
\mathbf{B}
\begin{pmatrix}
\boldsymbol{b_+}\\
\boldsymbol{c_-}
\end{pmatrix} &=  \boldsymbol\nu + \boldsymbol u,
\end{align*}
which implies that
\begin{align}\label{Recrit2}
        (\mathbf{EB})_{N-2n}\mathscr{G}
\begin{pmatrix}
\boldsymbol{a_+}\\
\boldsymbol{a_-}
\end{pmatrix}
+
(\mathbf{EB})_{N-2n}
\begin{pmatrix}
\boldsymbol{b_+}\\
\boldsymbol{c_-}
\end{pmatrix} &=  \mathbf{E'}(\boldsymbol\nu + \boldsymbol u).
\end{align}
From (\ref{Recrit1}) and (\ref{Recrit2}), we can see that the shape parameters $\mathbf{z}_{\alpha'}$ coming from angle structure $\alpha' \in \mathcal{A}_{X}^l(\lambda_X(\alpha))$ give a solution to the equation $\nabla_{\Re \boldsymbol{x}} \Re S (\boldsymbol{x};\lambda_X(\alpha))=0$. This completes the proof.
\end{proof}

We first study the critical value of $S$.
\begin{proposition}\label{dilogVolgen}
Write $x_l = h_l + i d_l \in \CC$ for $l=1,\dots,N-2n$.
Under the assumption and the correspondence described in Proposition \ref{critThurscorrespondence}, we have 
\begin{align*}
\mathrm{Re}S(\boldsymbol{x}; \lambda_X(\alpha)) 
= - \sum_{l}^N D(z_l) - \sum_{l=1}^{N-2n} h_l \frac{\partial}{\partial h_l} \mathrm{Re}S(\mathbf{y}; \lambda_X(\alpha)) ,
\end{align*}
where $D(z)$ is the Bloch-Wigner dilogarithm function given by
$$ D(z) 
= \mathrm{Im} \mathrm{Li}_2(z) + \log|z| \mathrm{Arg}(1-z).$$
In particular, 
\begin{enumerate}
    \item if $\mathbf{x^c}$ be the critical point of $S$ described in Proposition \ref{critThurscorrespondence}, then we have
$$
\mathrm{Re}S(\mathbf{x^c};  \lambda_X(\alpha)) =  -\Vol\Big(\SS^3 \setminus K, \mathrm{H}^\CC_{X,l}(\mathbf{z^c}) = i \lambda_X(\alpha)\Big),
$$
where $\Vol(\SS^3 \setminus K, \mathrm{H}^\CC_{X,l}(\mathbf{z^c}) = i \lambda_X(\alpha))$ is the hyperbolic volume of $\SS^3\backslash K$ with (possibly incomplete) hyperbolic structure satisfying $\mathrm{H}^\CC_{X,l}(\mathbf{z^c}) = i \lambda_X(\alpha)$;
    \item for every angle structure $\alpha'\in \mathcal{A}_{X}^l(\lambda_X(\alpha))$, if $\boldsymbol{x}_{\alpha'}$ is the corresponding point described in Proposition \ref{critThurscorrespondence}(2), we have
$$
\mathrm{Re}S(\boldsymbol{x}_{\alpha'}; \lambda_X(\alpha)) 
= - \Vol(\alpha'),
$$
where $\Vol(\alpha')$ is the volume of the angle structure $\alpha'$.
\end{enumerate}
\end{proposition}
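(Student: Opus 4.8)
The plan is to compute $\Re S(\boldsymbol{x};\lambda_X(\alpha))$ directly by combining the chain-rule computation of $\nabla_{\boldsymbol x}S$ from the proof of Proposition \ref{critThurscorrespondence} with Lemma \ref{tildeLitoD}, which converts the analytically continued dilogarithm $\mathrm{L}$ into the Bloch--Wigner function $D$. Writing $\mathbf{y} = \varphi(\boldsymbol x)$, the potential $S$ is a sum of a quadratic form in $\mathbf y$, the linear term $-\boldsymbol x^{\!\top}(\cdots)$, and $i\sum_k \mathrm{L}(y_k)$; the key point is that the quadratic and linear parts are homogeneous in a way that lets us extract them via an Euler-type identity, while the dilogarithm part is governed by Lemma \ref{tildeLitoD}. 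Concretely, I would first show the pointwise identity
\begin{align*}
\Re S(\boldsymbol x;\lambda_X(\alpha)) = -\sum_{l=1}^N D(z_l) - \sum_{l=1}^{N-2n} h_l\,\frac{\partial}{\partial h_l}\Re S(\mathbf y;\lambda_X(\alpha)),
\end{align*}
by splitting $S$ into its homogeneous pieces and tracking real and imaginary parts; the cross term $\varphi(\boldsymbol x)^{\!\top}\mathscr{W}$-type contributions cancel against the linear term because of Lemma \ref{computeW}, and the quadratic piece $-\frac i2 \varphi^{\!\top}Q\varphi$ is degree-two homogeneous so Euler's relation produces the $h_l\partial_{h_l}$ terms, leaving $i\sum_k \mathrm{L}(y_k)$ whose real part, via $y_k = \Log z_k - i\pi$ and Lemma \ref{tildeLitoD}, is exactly $-\sum_k D(z_k)$ up to the derivative terms that are absorbed into the sum $\sum_l h_l\partial_{h_l}\Re S$.

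For part (1): at the critical point $\mathbf x^c$ we have $\nabla_{\boldsymbol x}S(\mathbf x^c;\lambda_X(\alpha)) = 0$, hence in particular $\partial_{h_l}\Re S = \Re\partial_{x_l}S = 0$ for each $l$, so the correction sum $\sum_l h_l\partial_{h_l}\Re S$ vanishes and we get $\Re S(\mathbf x^c;\lambda_X(\alpha)) = -\sum_l D(z_l^c)$. By the proposition in Section \ref{sectquantumdilog}... rather, by the Bloch--Wigner volume formula ($\Vol(T) = D(z)$ for an ideal tetrahedron with shape $z$), this sum is the total volume of $\SS^3\setminus K$ under the (possibly incomplete) hyperbolic structure with shapes $\mathbf z^c$, which by Proposition \ref{critThurscorrespondence}(1) is precisely the structure satisfying $\mathrm{H}^\CC_{X,l}(\mathbf z^c) = i\lambda_X(\alpha)$. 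This gives the claimed equality. Note that here I use $D(z) = D(z') = D(z'')$ so that the value is independent of the quad choices, and that $D$ is the correct volume even for flat tetrahedra (where it is $0$), which is what makes the semi-geometric case work.

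For part (2): given $\alpha' \in \mathcal{A}_X^l(\lambda_X(\alpha))$ with corresponding shapes $\mathbf z_{\alpha'}$ and point $\boldsymbol x_{\alpha'} \in \RR^{N-2n} + i\mathbf v_{\alpha'}$, Proposition \ref{critThurscorrespondence}(2) tells us $\boldsymbol x_{\alpha'}$ is a critical point of $\Re S$ restricted to the horizontal plane $\RR^{N-2n} + i\mathbf v_{\alpha'}$, i.e. $\partial_{h_l}\Re S(\boldsymbol x_{\alpha'};\lambda_X(\alpha)) = 0$ for all $l$. So again the correction sum vanishes and $\Re S(\boldsymbol x_{\alpha'};\lambda_X(\alpha)) = -\sum_l D(z_{l,\alpha'})$. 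Since $z_{l,\alpha'}$ has argument equal to the $a$-angle of the $l$-th tetrahedron in $\alpha'$, and $D(z) \geq $ relates to the hyperbolic volume of the tetrahedron with those dihedral angles (with $D(z)$ being exactly $\Vol$ of the associated ideal tetrahedron), we get $-\sum_l D(z_{l,\alpha'}) = -\Vol(\alpha')$ by definition of the volume functional on angle structures.

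The main obstacle I anticipate is establishing the pointwise identity cleanly — in particular verifying that the non-homogeneous linear term $-\boldsymbol x^{\!\top}(\mathbf E_{N-2n}'(\boldsymbol\nu + \boldsymbol u) - \mathbf{(EB)}_{N-2n}\mathscr G\boldsymbol\pi)$ interacts correctly with the real part of $i\sum_k\mathrm{L}(y_k)$ via Lemma \ref{tildeLitoD}. Lemma \ref{tildeLitoD} gives $\im\mathrm{L}(y) - (\im\mathrm{L}'(y))\log|-e^y| = D(-e^y)$, and the term $(\im\mathrm{L}'(y_k))\log|z_k|$ must be shown to be exactly what the $h_l\partial_{h_l}$ derivatives and the linear-term contribution produce; this is a bookkeeping computation using $\Log z_k = -y_k + i\pi$, $\log|z_k| = \Re(-y_k) = -\Re y_k$, and the expression $\nabla_{\boldsymbol x}\Re S$ from (\ref{Recrit1}), and care is needed because $\mathrm{L}$ is not holomorphic across $i\RR$ so the Cauchy--Riemann relations used implicitly must be checked to hold on the relevant region (which they do, by the functional equations in Section \ref{sectclassicaldilog}). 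Once the identity is in hand, parts (1) and (2) are immediate from the vanishing of the relevant partial derivatives at the respective critical points.
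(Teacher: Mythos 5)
Your overall strategy matches the paper's: use Lemma \ref{tildeLitoD} together with the Cauchy--Riemann relation to rewrite each $\Re\bigl(i\,\mathrm{L}(\varphi_k(\boldsymbol{x}))\bigr)$ as $-D(z_k)$ plus a term built from $\sum_l h_l\,\partial_{h_l}\Re\bigl(i\,\mathrm{L}(\varphi_k)\bigr)$, observe that the non-dilogarithm parts of $S$ contribute only $\mathbf{h}$-linear terms, and conclude parts (1) and (2) from the vanishing of $\partial_{h_l}\Re S$ at the relevant critical points. Your treatment of (1) and (2) is correct, and your remarks that $D(z)=D(z')=D(z'')$ and that $D$ vanishes for flat tetrahedra are exactly the right observations for the semi-geometric setting.

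However, the step that handles the non-dilogarithm terms is wrong as stated. You claim $-\frac{i}{2}\varphi^{\!\top}Q\varphi$ is "degree-two homogeneous" and invoke the degree-two Euler relation. This fails on two counts. First, $\varphi$ is affine, not linear: $\varphi(\boldsymbol{x}) = (\mathbf{EB})_{N-2n}^{\!\top}\boldsymbol{x} - i(\boldsymbol{\pi - a^0})$, so $\varphi^{\!\top}Q\varphi$ is not homogeneous in $\boldsymbol{x}$. Second, and more to the point, the Euler identity you need is in the variables $h_l = \Re x_l$, not in $x_l$; since $(\mathbf{EB})_{N-2n}^{\!\top}$ and $Q$ are real and the affine shift is purely imaginary, the purely $\mathbf{h}$-quadratic block $(M\mathbf{h})^{\!\top}Q(M\mathbf{h})$ acquires vanishing real part after multiplication by $-i/2$, and one gets $\Re\bigl(-\tfrac{i}{2}\varphi^{\!\top}Q\varphi\bigr) = (M\mathbf{h})^{\!\top}Q\bigl(M\mathbf{d} - (\boldsymbol{\pi - a^0})\bigr)$, which is homogeneous of degree \emph{one} in $\mathbf{h}$. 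The same holds for the $-\frac{i}{2}\sum_k(\tfrac{\varepsilon_k-1}{2})\varphi_k^2$ piece and for the linear piece $-\boldsymbol{x}^{\!\top}(\cdots)$. So the correct Euler relation for the non-dilogarithm part is a degree-one relation, not a degree-two one; using the degree-two relation would introduce a spurious factor of $2$. Similarly, the appeal to Lemma \ref{computeW} to ``cancel the cross term $\varphi^{\!\top}\mathscr{W}$ against the linear term'' is misplaced: in the expression for $S$ in Section \ref{pofunc} the $\mathscr{W}$ term has already been rewritten as the explicit linear term via Lemma \ref{computeW}, so there is nothing to cancel — that linear term is simply another degree-one contribution in $\mathbf{h}$. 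Once you replace the degree-two Euler claim with the degree-one observation (all non-dilogarithm real parts are linear and homogeneous in $\mathbf{h}$) and drop the $\mathscr{W}$-cancellation comment, your argument coincides with the paper's.
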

\begin{proof}
Let $(\mathbf{EB})_{N-2n}^{\!\top} = (m_{kl})$ for some $m_{kl}\in \RR$, where $k=1,\dots,N$ and $l=1,\dots, N-2n$. 
For $l=1,\dots, N-2n$, by using the Cauchy-Riemann equation, we have
\begin{align*}
\frac{\partial}{\partial h_l} \mathrm{Re}\left(i \mathrm{L}(\varphi_k(\boldsymbol{x}))\right)  
=&\ \ \frac{\partial}{\partial h_l} \mathrm{Re}\left(i \mathrm{L}\left(\sum_{l=1}^{N-2n} m_{kl} x_l + i(\pi-a_k)\right)\right) \\
=&\ \  \mathrm{Re} \left( \frac{d}{dx_l} \left(i \mathrm{L}\left(\sum_{l=1}^{N-2n} m_{kl} x_l + i(\pi-a_k)\right)\right) \right) \\
=&\ \  m_{kl} \im  \frac{d\mathrm{L}}{dx}\left(\sum_{l=1}^{N-2n} m_{kl} x_l+i(\pi-a_k)\right).
\end{align*}
By Lemma \ref{tildeLitoD}, 
\begin{align*}
&\ \ \mathrm{Re}\left(i \mathrm{L}\left(\sum_{l=1}^{N-2n} m_{kl} x_l+ i(\pi-a_k)\right)\right)  \\
=& \ 
-\mathrm{Im} \left(\mathrm{L}\left(\sum_{l=1}^{N-2n} m_{kl} x_l+ i(\pi-a_k)\right) \right)\\
=& \  - D\left(-e^{\sum_{l=1}^{N-2n} m_{kl} x_l+i(\pi-a_k)}\right) - \left(\sum_{l=1}^{N-2n} m_{kl} h_l \right) \im \frac{d\mathrm{L}}{dx}\left(\sum_{l=1}^{N-2n} m_{kl} x_l+i(\pi-a_k)\right) \\
=& \  - D\left(-e^{\sum_{j=1}^{N-2n} a_{ij} x_l+i(\pi-a_k)}\right) - \sum_{l=1}^{N-2n}  h_l \left(m_{kl} \im \frac{d\mathrm{L}}{dx}\left(\sum_{l=1}^{N-2n} m_{kl}x_l+i(\pi-a_k)\right) \right) \\
=& \  - D\left(-e^{\varphi_k(\boldsymbol{x})}\right) - \sum_{l=1}^{N-2n}  h_l \frac{\partial}{\partial h_l} \mathrm{Re}\left(i \mathrm{L}(\varphi_k(\boldsymbol{x}))\right) .
\end{align*}
Note that the real part of all the remaining non-dilogarithm terms are linear in $\{h_l \mid l=1,\dots, N-2n\}$.
The result then follows from summing up the equations.
\end{proof}

Let $\mathscr{U_{\mathbf{y}}}$ and $\overline{\mathscr{U_{\mathbf{y}}}}$ be the products of horizontal bands defined by
$$
\mathscr{U}_\mathbf{y} = \prod_{k=1}^N (\RR + i (-\pi,0)) \text{ and } \overline{\mathscr{U_\mathbf{y}}} = \prod_{k=1}^N (\RR + i [-\pi,0]) .
$$

\begin{proposition}\label{concavSgen}
Let $\alpha \in \mathscr{A}_{X}$. Given $\mathbf{v} \in \RR^{N-2n}$, consider the horizontal plane $\RR^{N-2n}+i\mathbf{v}$. If $\varphi(\RR^{N-2n}+i\mathbf{v}) \subset \mathscr{U}_{\mathbf{y}}$ (resp. $\varphi(\RR^{N-2n}+i\mathbf{v}) \subset \overline{\mathscr{U}_{\mathbf{y}}}$), then the real part of $\tilde{S}(\boldsymbol{x}; \lambda_X(\alpha))$ is strictly concave (resp. concave) in the variables $(\Re x_1, \dots, \Re x_{N-2n})$ and strictly convex (resp. convex) in the variables $(\mathrm{Im}\text{ } x_1, \dots, \mathrm{Im}\text{ } x_{N-2n})$.
\end{proposition}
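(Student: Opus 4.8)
The plan is to exploit that $\tilde S(\,\cdot\,;\xi)$ is holomorphic in $\boldsymbol x$: it is assembled from the affine map $\varphi$, the constant matrices $Q$ and $\mathcal E$, and $\mathrm{L}$, which is holomorphic on the strip $\RR+i(-\pi,\pi)$ (there $\mathrm{L}=\Li(-e^{\bullet})$) and continuous on $\CC\setminus(i(-\infty,-\pi)\cup i(\pi,\infty))$. Hence $\tilde S$ is holomorphic in $\boldsymbol x$ on $\{\boldsymbol x:\varphi(\boldsymbol x)\in\mathscr{U}_{\mathbf{y}}\}$ and continuous on the closure. First I would reduce both assertions to a single statement about the complex Hessian $\Hess_{\boldsymbol x}\tilde S:=(\partial^2\tilde S/\partial x_j\partial x_k)_{j,k}$: for a holomorphic $g$ in variables $x_k=h_k+id_k$, writing $\boldsymbol h=(h_k)$, $\boldsymbol d=(d_k)$, one has $\Hess_{\boldsymbol h}(\Re g)=\Re(\Hess_{\boldsymbol x}g)$ and $\Hess_{\boldsymbol d}(\Re g)=-\Re(\Hess_{\boldsymbol x}g)$, so that ``$\Re\tilde S$ strictly concave in $(\Re x_k)$'' and ``$\Re\tilde S$ strictly convex in $(\Im x_k)$'' are both equivalent to ``$\Re(\Hess_{\boldsymbol x}\tilde S)$ negative definite'' (and similarly for the non-strict versions); note the Hessian does not depend on $\xi$, only the linear part of $\tilde S$ does.

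Second, I would compute $\Hess_{\boldsymbol x}\tilde S$ by the chain rule through $\varphi(\boldsymbol x)=(\mathbf{EB})_{N-2n}^{\!\top}\boldsymbol x-i(\boldsymbol{\pi-a^0})$, whose Jacobian $M:=(\mathbf{EB})_{N-2n}^{\!\top}$ is a constant real matrix. Using that $Q$ is symmetric, that $\tfrac12(\mathcal E-\mathrm{Id}_N)=\mathrm{diag}\big(\tfrac{\varepsilon(T_k)-1}{2}\big)$, and that $\mathrm{L}'(y)=-\Log(1+e^y)$ (the analytic continuation recorded just before Lemma~\ref{tildeLitoD}), one obtains
\begin{equation*}
\Hess_{\boldsymbol x}\tilde S(\boldsymbol x)=i\,M^{\!\top}\Big(\mathrm{diag}\big(\mathrm{L}''(\varphi_k(\boldsymbol x))\big)_{k}-Q-\tfrac12(\mathcal E-\mathrm{Id}_N)\Big)M,
\end{equation*}
the linear term contributing nothing. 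The crucial point is that $M$, $Q$, $\mathcal E$ are real, so $iM^{\!\top}\big(Q+\tfrac12(\mathcal E-\mathrm{Id}_N)\big)M$ is purely imaginary and disappears upon taking real parts; hence
\begin{equation*}
\Re\big(\Hess_{\boldsymbol x}\tilde S(\boldsymbol x)\big)=-\,M^{\!\top}\,\mathrm{diag}\big(\im\,\mathrm{L}''(\varphi_k(\boldsymbol x))\big)_{k}\,M,
\end{equation*}
so that all of the curvature comes from the dilogarithm terms.

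Third, I would determine the sign of $\im\,\mathrm{L}''$ on the strip and conclude by a rank count. Writing $z=-e^{y}$, one has $\mathrm{L}''(y)=-e^{y}/(1+e^{y})=z/(1-z)$; for $y\in\RR+i(-\pi,0)$ the point $z$ lies in the open upper half-plane, and the Möbius map $z\mapsto z/(1-z)$ has real coefficients, fixes $0$, and has positive derivative there, hence preserves the upper half-plane, giving $\im\,\mathrm{L}''(y)>0$, and $\im\,\mathrm{L}''(y)\ge 0$ on $(\RR+i[-\pi,0])\setminus\{-i\pi\}$. By Lemma~\ref{preNZpara}, $M$ has full column rank $N-2n$, so $M^{\!\top}\mathrm{diag}(d_k)M$ is positive definite when all $d_k>0$ and positive semidefinite when all $d_k\ge 0$. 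Therefore $\Re(\Hess_{\boldsymbol x}\tilde S)$ is negative definite at every point of $\RR^{N-2n}+i\mathbf v$ when $\varphi(\RR^{N-2n}+i\mathbf v)\subset\mathscr{U}_{\mathbf{y}}$, which proves the strict statement.

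For the closed-strip case one gets only negative semidefiniteness, and only away from the finitely many affine hyperplanes where some $\varphi_k(\boldsymbol x)$ equals $\pm i\pi$; there $\mathrm{L}$ is merely continuous and not holomorphic (cf.\ Lemma~\ref{singularbehaviour}), so I would propagate concavity across that locus either by approximating from the interior of $\mathscr{U}_{\mathbf{y}}$, or more explicitly: since $Q$ and $\mathcal E$ are real, Lemma~\ref{tildeLitoD} shows that on a horizontal plane $\Re\tilde S$ equals $\sum_k\big(-\im\,\mathrm{L}(\varphi_k(\boldsymbol x))\big)$ plus a function affine in $\Re\boldsymbol x$ (resp.\ in $\Im\boldsymbol x$), so concavity reduces to the one-variable concavity of $t\mapsto-\im\,\mathrm{L}(t+i\omega)$ for fixed $\omega\in[-\pi,0]$, which survives the singular value $\omega=-\pi$ because the inversion relation yields $-\im\,\mathrm{L}(t-i\pi)=-\pi\max(0,t)$. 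This handling of the non-holomorphic locus is the only genuinely delicate step; the rest — the chain-rule computation of the Hessian, the cancellation of the real matrices, the half-plane argument for $\im\,\mathrm{L}''$, and the positivity of $M^{\!\top}DM$ for full-column-rank $M$ — is routine.
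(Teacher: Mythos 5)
Your argument is correct and follows essentially the same strategy as the paper's proof: reduce to showing $\Re(\Hess_{\boldsymbol x}\tilde S)$ is negative (semi)definite via the Cauchy--Riemann identity $\Hess_{\Re\boldsymbol x}(\Re g)=\Re(\Hess_{\boldsymbol x}g)$, observe that only the dilogarithm terms survive taking real parts (because $Q$, $\mathcal E$ and $M=(\mathbf{EB})_{N-2n}^{\!\top}$ are real, so the quadratic terms contribute a purely imaginary matrix), analyze the sign of $\im\,\mathrm{L}''$ on the strip, and invoke harmonicity for the convexity in $\Im\boldsymbol x$. You spell out two points more carefully than the paper. First, you replace the paper's implicit step ``a strictly concave function restricted to an affine subspace is strictly concave'' with the equivalent but more explicit rank count: $M^{\!\top}\mathrm{diag}(d_k)M$ is positive definite when all $d_k>0$ and $M$ has full column rank, which is Lemma~\ref{preNZpara}. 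Second --- and this is a genuine refinement --- you flag that on $\overline{\mathscr{U_\mathbf{y}}}$ the Hessian identity cannot be applied along the hyperplanes where some $\varphi_k(\boldsymbol x)=-i\pi$, since $\mathrm{L}$ is merely continuous there (not even $C^1$, cf.\ Lemma~\ref{singularbehaviour}); the paper's proof silently skips this, whereas you correctly propagate concavity across that locus either by approximation from the interior or via the explicit formula $-\im\,\mathrm{L}(t-i\pi)=-\pi\max(0,t)$. One small inaccuracy worth fixing: the invocation of Lemma~\ref{tildeLitoD} in your last paragraph is not the right reference --- the fact that on a horizontal plane $\Re\tilde S$ equals $\sum_k\bigl(-\im\,\mathrm{L}(\varphi_k(\boldsymbol x))\bigr)$ plus a function affine in $\Re\boldsymbol x$ follows directly from the computation $\Re\bigl(-\tfrac{i}{2}\boldsymbol x^{\!\top}P\boldsymbol x\bigr)=(\Re\boldsymbol x)^{\!\top}P(\Im\boldsymbol x)$ for real symmetric $P$, not from that lemma.
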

\begin{proof}
Note that the hessian of the real part of $S$ is the same as the real part of the holomorphic hessian of $S$. In particular, we have
\begin{align*}
\big(\mathrm{Hess}(\Re (S) \big) (\mathbf{h} + i \mathbf{d};\lambda_X(\alpha))
= \left(\mathrm{Hess}\left(\Re \left( i \sum_{k=1}^N \mathrm{L}({\varphi_k(\boldsymbol{x})})\right) \right)\right)(\mathbf{h} + i \mathbf{d},\lambda_X(\alpha)).
\end{align*}
By a direct computation, for any $\mathbf{y} = \Re \mathbf{y} + i \im \mathbf{y}$ with $ \Re \mathbf{y} = (\Re y_1,\dots,  \Re y_N), \im \mathbf{y} = (\im y_1,\dots,\im y_N) \in \RR^{N}$, we have 
\begin{align*}
\mathrm{Hess}\left(\Re \left(i \sum_{k=1}^N \mathrm{L}(y_k)\right) \right) 
=\Delta
\end{align*}
where $\Delta$ is the $N \times N$ diagonal matrix with entries
$$ - \mathrm{Im} \bigg( \frac{1}{1+e^{-\Re y_k - i \im y_k}}\bigg).$$ 
The conditions that $\im y_k \in (-\pi,0)$ and $\im y_k \in [-\pi,0]$ respectively imply that the matrix above has negative and non-positive entries for all $\Re \mathbf{y} \in \RR^{N}$ respectively. Since the real part of $\tilde{S}$ is the restriction of the above function on an affine subspace, the real part of $\tilde{S}$ is strictly concave in $(\Re x_1,\dots, \Re x_{N-2n})$ on $\mathscr{U}_{\boldsymbol{x}}$ and concave in $(\Re x_1,\dots, \Re x_{N-2n})$ respectively. The second claim follows from the fact that the real part of a holomorphic function is harmonic. 
\end{proof}

Next, we study some properties of $\tilde{S}$. 
\begin{proposition}\label{Hesstotor}
At the critical point $\mathbf{x^c}$ of $S$ described in Proposition \ref{critThurscorrespondence}, we have 
\begin{align*}
\left|\frac{1}{\sqrt{\pm \det \Hess (\tilde{S}(\mathbf{x^c};\xi))}} \right|
&=  \frac{D_2}{\left|\sqrt{\pm \left(\prod_{i=1}^N z_i^{-f_i''} z_i''^{f_i - 1} \right)\tau(\SS^3 \setminus K, l, \mathbf{z^c}, X)}\right|},
\end{align*}
where $D_2 $ is some non-zero constant independent of $\mathbf{x^c}$.  
In particular, the critical point $\mathbf{x^c}$ is nondegenerate.
\end{proposition}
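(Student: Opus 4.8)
The plan is to compute $\Hess\bigl(\tilde S(\mathbf x^{c};\xi)\bigr)$ by the chain rule and then, by purely elementary matrix bookkeeping, convert the resulting $(N-2n)\times(N-2n)$ determinant into the $N\times N$ Neumann--Zagier determinant $\det\bigl(\mathbf A\Delta_{\mathbf z''}+\mathbf B\Delta_{\mathbf z}^{-1}\bigr)$ defining the $1$-loop invariant. First, starting from the gradient formula $\nabla_{\boldsymbol x}\tilde S = -i\bigl[(\mathbf{EB})_{N-2n}\mathscr G\,\BLog\mathbf z + (\mathbf{EB})_{N-2n}\BLog\mathbf z''\bigr] - \mathbf E'_{N-2n}(\boldsymbol\nu - i\boldsymbol{\tilde u})$ obtained in the proof of Proposition~\ref{critThurscorrespondence}, I differentiate once more. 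The Jacobian of $\varphi$ is $M:=(\mathbf{EB})_{N-2n}^{\!\top}$, and along the bijection $y_k=\Log z_k-i\pi$ one has $\frac{d}{d\boldsymbol x}\BLog\mathbf z = M$ and $\frac{d}{d\boldsymbol x}\BLog\mathbf z'' = -\Delta_{\mathbf z'}M$ (where $\Delta_{\mathbf z'}=\mathrm{diag}(z_1',\dots,z_N')$), since $\frac{d}{dy}\BLog(1+e^{-y}) = -z'$. Hence, all quantities being evaluated at the shape parameters $\mathbf z^{c}$ attached to $\mathbf x^{c}$ by Proposition~\ref{critThurscorrespondence},
\[
\Hess\bigl(\tilde S(\mathbf x^{c};\xi)\bigr) \;=\; -i\,(\mathbf{EB})_{N-2n}\,\bigl(\mathscr G - \Delta_{\mathbf z'}\bigr)\,(\mathbf{EB})_{N-2n}^{\!\top}.
\]

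Next I remove $\mathscr G$ using the generalized FAMED hypothesis. By Remark~\ref{rmk4} there is a matrix $\mathbf{\tilde E}$ with $(\mathbf{EB})_{N-2n}\mathscr G = (\mathbf{EA})_{N-2n} + \mathbf{\tilde E}(\mathbf{EA})_{2n}$; multiplying on the right by $(\mathbf{EB})_{N-2n}^{\!\top}$ and invoking the identity $(\mathbf{EA})_{2n}(\mathbf{EB})_{N-2n}^{\!\top}=0$ from Lemma~\ref{preNZpara} kills the correction term, so with $P:=(\mathbf{EA})_{N-2n}-(\mathbf{EB})_{N-2n}\Delta_{\mathbf z'}\in M_{N-2n,N}(\CC)$ we get $\det\Hess\bigl(\tilde S(\mathbf x^{c};\xi)\bigr) = (-i)^{N-2n}\det\bigl(P\,(\mathbf{EB})_{N-2n}^{\!\top}\bigr)$. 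To pass to an $N\times N$ determinant, I fix once and for all an $N\times 2n$ matrix $M'$ with $[\,M\mid M'\,]$ invertible (possible since the columns of $M$ span $\ker(\mathbf{EA})_{2n}$, a codimension-$2n$ subspace by Lemma~\ref{preNZpara}). Expanding $\det\!\left(\left(\begin{smallmatrix}P\\ (\mathbf{EA})_{2n}\end{smallmatrix}\right)[\,M\mid M'\,]\right)$ in block form and using $(\mathbf{EA})_{2n}M=0$, together with the observation $\left(\begin{smallmatrix}P\\ (\mathbf{EA})_{2n}\end{smallmatrix}\right)=\mathbf E(\mathbf A-\mathbf B\Delta_{\mathbf z'})$ (because $\mathbf{EB}$ has vanishing last $2n$ rows), I obtain
\[
\det\Hess\bigl(\tilde S(\mathbf x^{c};\xi)\bigr) \;=\; D_0\cdot\det\bigl(\mathbf A-\mathbf B\Delta_{\mathbf z'}\bigr),\qquad D_0:=\frac{(-i)^{N-2n}\det\mathbf E\cdot\det[\,M\mid M'\,]}{\det\bigl((\mathbf{EA})_{2n}M'\bigr)},
\]
a nonzero constant depending only on the triangulation (the denominator is nonzero because $(\mathbf{EA})_{2n}$ is injective on any complement of its kernel).

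It remains to match $\det(\mathbf A-\mathbf B\Delta_{\mathbf z'})$ with the $1$-loop invariant (reading the $1$-loop formula with the Neumann--Zagier conventions of Section~\ref{NZD}, which is harmless since by Dimofte--Garoufalidis the invariant is independent of the quad/elimination choice). From $z_i'z_i''=-1/z_i$ one has the matrix identity $\mathbf A\Delta_{\mathbf z''}+\mathbf B\Delta_{\mathbf z}^{-1}=(\mathbf A-\mathbf B\Delta_{\mathbf z'})\Delta_{\mathbf z''}$, hence $\det(\mathbf A-\mathbf B\Delta_{\mathbf z'})=\bigl(\prod_i z_i''^{-1}\bigr)\det(\mathbf A\Delta_{\mathbf z''}+\mathbf B\Delta_{\mathbf z}^{-1})$; inserting the definition of $\tau(\SS^3\setminus K,l,X,\mathbf z^{c})$ gives $\det(\mathbf A-\mathbf B\Delta_{\mathbf z'})=\pm 2\bigl(\prod_i z_i^{-f_i''}z_i''^{f_i-1}\bigr)\tau(\SS^3\setminus K,l,X,\mathbf z^{c})$. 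Taking absolute values of reciprocal square roots then yields the asserted identity with $D_2=\bigl|(\pm 2D_0)^{-1/2}\bigr|$, independent of $\mathbf x^{c}$. Nondegeneracy of $\mathbf x^{c}$ is then equivalent to $\tau(\SS^3\setminus K,l,X,\mathbf z^{c})\neq 0$, i.e.\ $\det(\mathbf A-\mathbf B\Delta_{\mathbf z'})\neq 0$; I would deduce this from the non-vanishing of the adjoint Reidemeister torsion at the (possibly incomplete) hyperbolic structure cut out by $\mathbf z^{c}$, or equivalently from the rigidity description of the deformation variety as smooth near that structure.

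The routine parts are the two determinant identities of the last paragraph. The delicate point, which I expect to be the main obstacle, is the reduction of the second paragraph: one must verify that every ``correction'' determinant picked up in passing from the $(N-2n)\times(N-2n)$ Hessian to the full $N\times N$ Neumann--Zagier determinant is independent of the shape parameters (so it can be absorbed into $D_2$), and the nondegeneracy claim does not follow from these formal manipulations alone and requires the geometric input on torsion non-vanishing noted above.
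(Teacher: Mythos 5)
Your proof is correct and follows essentially the same route as the paper: compute $\Hess\tilde S$ by the chain rule through $\varphi$ to get $-i(\mathbf{EB})_{N-2n}(\mathscr G-\Delta_{\mathbf z'})(\mathbf{EB})_{N-2n}^{\!\top}$, eliminate $\mathscr G$ via Remark~\ref{rmk4} together with $(\mathbf{EA})_{2n}(\mathbf{EB})_{N-2n}^{\!\top}=0$, augment $(\mathbf{EB})_{N-2n}^{\!\top}$ by a complementary block (the paper's specific choice $H$ with $(\mathbf{EA})_{2n}H=\mathrm{Id}_{2n}$ is one instance of your generic $M'$) to promote the $(N-2n)\times(N-2n)$ determinant to $\det\mathbf E(\mathbf A-\mathbf B\Delta_{\mathbf z'})$, and then factor out $\Delta_{\mathbf z''}$ to recover the $1$-loop invariant. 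For nondegeneracy the paper cites \cite[Corollary 1.4]{PW} for nonvanishing of the $1$-loop invariant directly; your first suggestion (equating with adjoint Reidemeister torsion) would implicitly invoke the $1$-loop conjecture, so your alternative appeal to smoothness of the deformation variety is the right resolution and is essentially what the cited result encodes.
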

\begin{proof}
We first compute the 1-loop invariant as follows. Note that 
\begin{align*}
\det (\mathbf{A} \Delta_{z''} + \mathbf{B} \Delta_{z}^{-1})
=&\ \  (\det \mathbf{E})^{-1} \left( \prod_{k=1}^N z_k'' \right) \det \left(\mathbf{EA}  - \mathbf{EB} \Delta_{1-z}^{-1}\right)\\
=&\ \ (\det \mathbf{E})^{-1} 
\left( \prod_{k=1}^N z_k'' \right)
\det
\begin{pmatrix}
    (\mathbf{EA})_{N-2n} - (\mathbf{EB})_{N-2n} \Delta_{1-z}^{-1} \\
    (\mathbf{EA})_{2n}
\end{pmatrix}
\end{align*}
Let $k_1,\dots, k_{2n} \in \{1,\dots, N\}$ be the position of the pivots of $(\mathbf{EA})_{2n}$. For $l=1,\dots,2n$, let $e_{k_l} \in \RR^N$ be the column vector with 1 at the $k_{l}$ entry and 0 elsewhere. Let $H \in M_{N\times 2n}(\RR)$ be the matrix with columns $e_{k_1},\dots,e_{k_{2n}}$. Consider the $N\times N$ matrix $ ( (\mathbf{EB})_{N-2n}^{\!\top}\ | \ H)$. Note that
$$
(\mathbf{EA})_{2n} ( (\mathbf{EB})_{N-2n}^{\!\top} \ |\  H)
= ((\mathbf{EA})_{2n} (\mathbf{EB})_{N-2n}^{\!\top} \ |\ (\mathbf{EA})_{2n}  H)
= (\mathbf{O} \ | \ \mathrm{Id}_{2n} ),
$$
where the last equality follows from (\ref{NZmultizero}) and $\mathbf{O}$ is the $2n \times (N-2n)$ zero matrix. We claim that the matrix $ ( (\mathbf{EB})_{N-2n}^{\!\top}\ | \ H)$ is invertible. To see this, let $v_1,\dots, v_{N-2n}$ be the columns of $(\mathbf{EB})_{N-2n}^{\!\top}$. Suppose there exists $p_1,\dots,p_N \in \CC$ such that
$$
p_1 v_1 + \dots + p_{N-2n} v_{N-2n} + p_{N-2n+1} e_{k_1} + \dots + p_{N} e_{k_{2n}} = 0.
$$
Then we have 
$$
(\mathbf{EA})_{2n}(p_1 v_1 + \dots + p_{N-2n} v_{N-2n} + p_{N-2n+1} e_{k_1} + \dots + p_{N} e_{k_{2n}})
= 
\begin{pmatrix}
    p_{N-2n+1} \\ \vdots \\ p_{N}
\end{pmatrix}
= 0,
$$
which implies that $p_{N-2n+1}=\dots=p_N=0$. 
Since $(\mathbf{EB})_{N-2n}^{\!\top}$ has rank $N-2n$, by the linear independence of $v_1,\dots, v_{N-2n}$ we have 
$$p_1=\dots=p_{N-2n}=0.$$ 
Thus, the matrix $( (\mathbf{EB})_{N-2n}^{\!\top} \ |\  H)$ is invertible. 
Let $G = \det( (\mathbf{EB})_{N-2n}^{\!\top} \ |\  H) \in \QQ \setminus\{0\}$. Then we have
\begin{align*}
    &\ \ \det
\begin{pmatrix}
    (\mathbf{EA})_{N-2n} - (\mathbf{EB})_{N-2n} \Delta_{1-z}^{-1} \\
    (\mathbf{EA})_{2n}
\end{pmatrix} \\
=&\ \ \pm G^{-1} \det \left[
\begin{pmatrix}
    (\mathbf{EA})_{N-2n} - (\mathbf{EB})_{N-2n} \Delta_{1-z}^{-1} \\
    (\mathbf{EA})_{2n}
\end{pmatrix} 
( (\mathbf{EB})_{N-2n}^{\!\top} \ |\  H) \right]
\\
=&\ \ \pm G^{-1} \det
\begin{pmatrix}
    (\mathbf{EA})_{N-2n}(\mathbf{EB})_{N-2n}^{\!\top}  -  (\mathbf{EB})_{N-2n} \Delta_{1-z}^{-1} (\mathbf{EB})_{N-2n}^{\!\top}
\end{pmatrix}.
\end{align*}

Next, by chain rule, the Hessian of $\tilde{S}$ is given by
\begin{align*}
\Hess(\tilde{S}) (\mathbf{x^c};\xi)
=&\  i (\mathbf{EB})_{N-2n}\left( -Q - \frac{\mathcal{E}-\rm{Id}_N}{2}
- 
\Delta_{\frac{e^y}{1+e^y}} \right)
(\mathbf{EB})_{N-2n}^{\!\top} \\
= &\  i (\mathbf{EB})_{N-2n}\left(  -\mathscr{G} 
+
\Delta_{1-\frac{e^y}{1+e^y}} \right)
(\mathbf{EB})_{N-2n}^{\!\top} \\
=& -i(\mathbf{EB})_{N-2n}\left( 
\mathscr{G}
- 
\Delta_{1+e^{y}}^{-1} \right)
(\mathbf{EB})_{N-2n}^{\!\top} .
\end{align*}
Under the correspondence $z = -e^{y}$, by Definition \ref{defgenFAMED} (4) and Remark \ref{rmk4}, we have
$$
\Hess(\tilde{S}) (\mathbf{x^c};\xi)
= -i \left[ (\mathbf{EA})_{N-2n}(\mathbf{EB})_{N-2n}^{\!\top}  - (\mathbf{EB})_{N-2n} 
\Delta_{1-z}^{-1}
(\mathbf{EB})_{N-2n}^{\!\top} \right],
$$
which implies
\begin{align*}
\det (\mathbf{A} \Delta_{z''} + \mathbf{B} \Delta_{z}^{-1})
= \pm  G^{-1} (\det \mathbf{E})^{-1} 
\left( \prod_{k=1}^N z_k'' \right) i^{N-2n} \det \Hess(\tilde{S}(\mathbf{x^c})).
\end{align*}
Besides, recall that the 1-loop invariant is defined by 
$$
\tau(\SS^3 \setminus K, l, \mathbf{z}, X)
=
\frac{\det (\mathbf{A} \Delta_{z''} + \mathbf{B} \Delta_{z}^{-1})}{\prod_{i=1}^N z_i^{-f_i''} z_i''^{f_i} }.
$$
Altogether, we have
$$
\Hess(\tilde{S}) (\mathbf{x^c}; \xi)
= \pm i^{N-2n} G(\det \mathbf{E}) \left( \prod_{k=1}^N z_k'' \right)^{-1} \left( \prod_{i=1}^N z_i^{-f_i''} z_i''^{f_i} \right)
\tau(\SS^3 \setminus K, l, \mathbf{z}, X)
$$
Finally, by \cite[Corollary 1.4]{PW}, the 1-loop invariant $\tau(\SS^3 \setminus K, l, \mathbf{z}, X)$ is non-zero. This completes the proof.
\end{proof}

\subsection{Asymptotic expansion formula of the Teichm\"{u}ller TQFT partition function}\label{sub:proofs:thm15}

Suppose $X$ be a semi-geometric triangulation with shape parameters $\mathbf{z^c}$. Let $\alpha \in \mathcal{A}_X$. By Proposition \ref{critThurscorrespondence}, $\mathbf{z^c}$ corresponds to a critical point $\boldsymbol{x^c}$ of $S(\boldsymbol{x};\lambda_X(\alpha))$. Note that under the bijection $y_k = \Log z_k - i\pi$, the case where $\im y^c_k = -\pi$ corresponds to $z_k \in \RR_{>0}$. Since $X$ is semi-geometric, we know that $z^c_k \neq 1$ and therefore $\Re y^c_k \neq 0$. Thus, by choosing sufficiently small $\delta$, we can assume that whenever $\im y^c_k = -\pi$ for some $k=1,\dots, N$, then $|\Re y^c_k| > \delta $. For the prescribed angle structure $\alpha$, by the exponentially decaying property at infinity, we choose $\kappa > 0$ such that 
\begin{align}\label{smallbdyZ}
\Re S(\boldsymbol{x}; \lambda_X(\alpha)) < \Re S(\boldsymbol{x^c}; \lambda_X(\alpha)) = -\Vol\Big(\SS^3 \setminus K, \mathrm{H}^\CC_{X,l} (\mathbf{z^c}) = i \lambda_X(\alpha)\Big)
\end{align}
for all $\boldsymbol{x} \in (\RR^{N-2n}+ i \mathbf{v}_\alpha) \setminus ( (-\kappa,\kappa)^{N-2n} + i \mathbf{v}_\alpha)$, where the last equality follows from Proposition \ref{dilogVolgen}(1). Let $\boldsymbol{x^c} = (x^c_1,\dots,x^c_{N-2n})$ and 
$$
\tilde L^{\text{top}}
= \prod_{k=1}^{N-2n} ([-\kappa,\kappa] + i \im x^c_k)
.$$

We first construct a multi-contour satisfying in Proposition \ref{constructZcont} and \ref{Zcont} (see Figure \ref{Zcontfigure}) that satisfies the assumptions to applying the saddle point method (Proposition \ref{saddle}).

\begin{figure}
    \centering
    \includegraphics[width=0.53\linewidth]{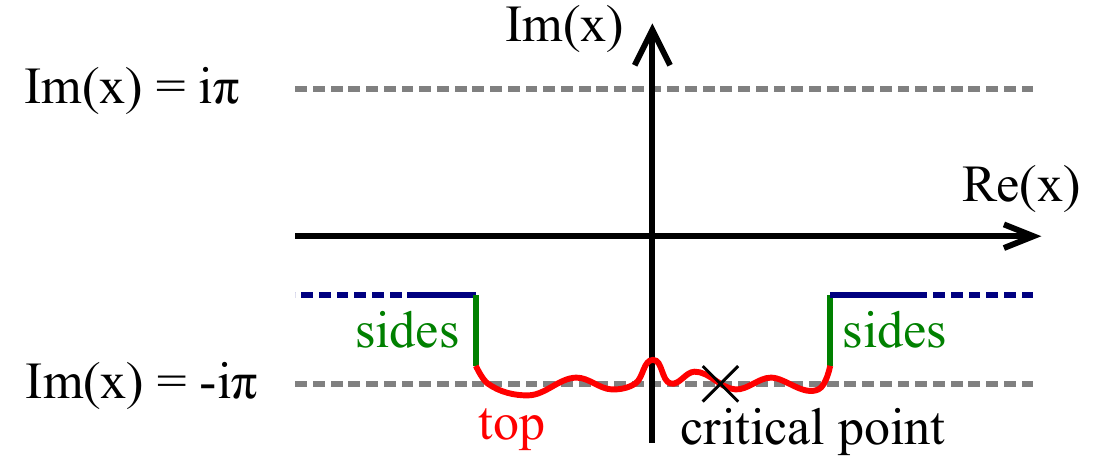}
    \caption{This figure shows a schematic picture of the contours constructed in Proposition \ref{constructZcont}, \ref{Zcont} and used in the proof of Theorem \ref{mainthmZ}. In Proposition \ref{constructZcont}, we first deform the horizontal contour $\tilde L^{\text{top}}$ by pushing it upward around the singularity $-i\pi$ and the boundary points. Then we further deform the contour by following the flow generated by $\mathscr{V}$ that decreases the value of $\Re S(\boldsymbol{x}; \lambda_X(\alpha))$. The critical point is stationary in this process and the resulting red contour is $L^\text{top}$. The blue contour represents $(\RR^{N-2n}+ i \mathbf{v}_\alpha) \setminus ( (-\kappa,\kappa)^{N-2n} + i \mathbf{v}_\alpha)$. We connect the red and blue contours through $L^{\text{sides}}$, which is colored in green in the figure. Note that the union of these three contours is homotopic to $(\RR^{N-2n}+ i \mathbf{v}_\alpha)$, which is the integration contour of the partition function.}
    \label{Zcontfigure}
\end{figure}

\begin{proposition}\label{constructZcont}
$\tilde L^{\text{top}}$ can be deformed along the $(\im x_1,\dots, \im x_{N-2n})$-direction into a new multi-contour $L^{\text{top}}$ such that
\begin{enumerate}
    \item $\boldsymbol{x^c} \in L^{\text{top}}$;
    \item $\Re S(\boldsymbol{x}; \lambda_X(\alpha))$ attains its strict maximum at $\boldsymbol{x^c}$ on $L^{\text{top}}$; 
    \item $\varphi(L^{\text{top}})$ is in the interior of $(\CC \setminus L_\delta)^N$, and
    \item for every $\boldsymbol{x} \in \partial L^{\text{top}}$, $\im \varphi(\boldsymbol{x}) \in (-\pi,0)^{N}$.
\end{enumerate}
\end{proposition}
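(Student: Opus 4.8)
The plan is to deform $\tilde L^{\text{top}}$ in two stages, exactly as suggested by Figure \ref{Zcontfigure}. Recall that $\tilde L^{\text{top}} = \prod_{k=1}^{N-2n}([-\kappa,\kappa] + i\,\im x^c_k)$ is the horizontal box of (real) radius $\kappa$ through $\boldsymbol{x^c}$, and that by Proposition \ref{concavSgen} the real part of $S$ is concave in the $(\Re x_1,\dots,\Re x_{N-2n})$-directions whenever $\varphi$ maps the relevant horizontal plane into $\overline{\mathscr{U}_\mathbf{y}}$. The obstruction to taking $L^{\text{top}} = \tilde L^{\text{top}}$ outright is twofold: first, at the boundary $\partial \tilde L^{\text{top}}$ (where $|\Re x_l| = \kappa$ for some $l$) there is no reason for $\im\varphi_k(\boldsymbol{x})$ to lie strictly inside $(-\pi,0)$ — some coordinates may sit on the edges $\im y_k \in \{-\pi,0\}$, where concavity degenerates (this is the semi-geometric phenomenon, cf.\ the discussion preceding the Proposition); second, near such boundary points, or near a coordinate with $\im y^c_k = -\pi$, the image $\varphi(\boldsymbol{x})$ may graze the branch cut set $L_\delta$, so $\tilde S$ and $\mathrm{L}$ are not even defined on $\tilde L^{\text{top}}$ in the naive sense.

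\textbf{Stage 1: push the contour off the singular locus.} Using that $X$ is semi-geometric — so whenever $\im y^c_k = -\pi$ one has $z^c_k \in \RR_{>0}$, hence $\Re y^c_k \neq 0$, hence (by the choice of $\delta$) $|\Re y^c_k| > \delta$ — and that at the critical point itself $\im y^c_k \le 0$, I would deform $\tilde L^{\text{top}}$ slightly along the $(\im x_1,\dots,\im x_{N-2n})$-directions so that: at $\boldsymbol{x^c}$ the contour is unchanged; away from a neighborhood of $\boldsymbol{x^c}$ every coordinate $\varphi_k$ is pushed to have $\im\varphi_k$ strictly greater than $-\pi$ and strictly less than $0$ near the parts of the contour where it was $=-\pi$ or $=0$; and the whole image stays in the interior of $(\CC\setminus L_\delta)^N$. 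The key point enabling this is Lemma \ref{singularbehaviour}: near $y = -i\pi$, increasing the imaginary part of $y$ strictly decreases $\Re(i\mathrm{L}(-e^y)) = \Re(i\Li(-e^y))$, so a small upward push of the contour near the singular locus (and near the boundary, where the same estimate controls the non-critical directions) can only decrease $\Re S$ relative to the critical value. Combined with the strict inequality (\ref{smallbdyZ}) already built into the choice of $\kappa$ and $\mathbf{v}_\alpha$, this gives (3), (4), and keeps $\boldsymbol{x^c}$ the strict maximum on the deformed box, after possibly shrinking $\kappa$ so that the pushed contour's image lands in $\overline{\mathscr{U}_\mathbf{y}}$ wherever we need concavity.

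\textbf{Stage 2: gradient flow.} On the contour produced by Stage 1, $\Re S$ already attains a strict maximum at $\boldsymbol{x^c}$ — but to make later contour-gluing and the saddle-point hypotheses clean, I would run the downward gradient flow of $\Re S$ (a vector field $\mathscr{V}$ with $\mathscr{V} = -\nabla_{(\Re\boldsymbol{x})}\Re S$ in the real directions, cut off to vanish at $\boldsymbol{x^c}$ and near $\partial$), following the standard Morse-theoretic deformation. Since $\boldsymbol{x^c}$ is a nondegenerate critical point of $\Re S$ on this plane by Proposition \ref{Hesstotor}, and by Proposition \ref{concavSgen} $\Re S$ is concave in the $\Re$-directions on the region swept out (this is why Stage 1 had to ensure the image stays in $\overline{\mathscr{U}_\mathbf{y}}$), the flow decreases $\Re S$ monotonically away from $\boldsymbol{x^c}$, keeps $\boldsymbol{x^c}$ fixed, and after finite time yields the red contour $L^{\text{top}}$ satisfying (1) and (2); the flow is $C^0$-small by compactness of $\tilde L^{\text{top}}$, so (3) and (4) are preserved. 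The main obstacle I anticipate is Stage 1 — precisely, verifying that the upward push near the singular locus $\{\im y_k = -\pi\}$ and near the corners $\partial\tilde L^{\text{top}}$ can be performed \emph{simultaneously in all $N$ coordinates} while keeping $\boldsymbol{x^c}$ both fixed and the strict maximum; this requires carefully using the semi-geometricity (to separate the "flat" coordinates, where $\im y^c_k=-\pi$ but $|\Re y^c_k|>\delta$, from the branch cut) together with Lemma \ref{singularbehaviour} and the concavity of Proposition \ref{concavSgen}, and then choosing $\delta$ and $\kappa$ small enough that all the estimates close up at once.
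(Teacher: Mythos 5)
Your two-stage outline (push off the singular locus, then flow) matches the shape of the paper's proof, and the use of Lemma \ref{singularbehaviour} and the boundary estimate (\ref{smallbdyZ}) in Stage 1 is on the right track. But there is a genuine gap in Stage 2, and it is the crux of the proposition. You assert that $\boldsymbol{x^c}$ is a nondegenerate critical point of $\Re S$ restricted to the horizontal plane, citing Proposition \ref{Hesstotor} — this is false in the semi-geometric case. Proposition \ref{Hesstotor} gives nondegeneracy of the \emph{holomorphic} Hessian $\Hess\tilde S$; the Hessian of $\Re S|_{\mathrm{plane}}$ is $\Re(\Hess\tilde S)$, which is exactly the negative semi-definite matrix from Proposition \ref{concavSgen}, and it degenerates precisely in the directions $x_k$ where $\im\varphi_k(\boldsymbol{x^c})\in\{-\pi,0\}$, i.e.\ at every flat tetrahedron. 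Since your Stage 1 keeps $\boldsymbol{x^c}$ fixed, those coordinates remain on the wall of $\overline{\mathscr{U}_\mathbf{y}}$ at and near $\boldsymbol{x^c}$, so $\Re S|_{\mathrm{plane}}$ only has a \emph{non-strict} maximum there, possibly attained on a positive-dimensional set. Your claim that Stage 1 already "keeps $\boldsymbol{x^c}$ the strict maximum" is therefore unjustified, and nondegeneracy-plus-concavity cannot rescue Stage 2.

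The ingredient you are missing is the volume rigidity argument, which is the new idea the paper highlights in the introduction. The paper's flow is along $\mathscr{V} = \zeta\,\bigl(-i\,\partial\Re S/\partial\im x_1,\dots,-i\,\partial\Re S/\partial\im x_{N-2n}\bigr)$ — note the $-i$, so this deforms the contour in the $\im$-directions, whereas your $\mathscr{V}=-\nabla_{\Re\boldsymbol{x}}\Re S$ "in the real directions" would only slide the plane along itself and not move the contour at all. Along the paper's flow one has $\frac{d}{dt}\Re S = -\zeta\sum_k(\partial\Re S/\partial\im x_k)^2\le 0$, so the decrease is strict unless $\partial\Re S/\partial\im x_k=0$ for all $k$. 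To upgrade the non-strict maximum to a strict one, the paper shows that if $\boldsymbol{x}\ne\boldsymbol{x^c}$ is a stationary point of $\mathscr{V}$ in the interior with $\Re S(\boldsymbol{x})=\Re S(\boldsymbol{x^c})$, then $\boldsymbol{x}$ is also a maximum in the $\Re$-directions, hence a full critical point of the holomorphic $S$; by Proposition \ref{critThurscorrespondence} it yields a $\mathrm{PSL}(2,\CC)$-representation whose volume equals the hyperbolic volume (Proposition \ref{dilogVolgen}), which by Francaviglia's rigidity \cite[Theorems 1.2 and 1.4]{SF} forces the representation to be discrete and faithful and hence $\boldsymbol{x}=\boldsymbol{x^c}$. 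Without this step, your Stage 2 flow can stall at a non-strict maximum and produce no strict maximum at all.
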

\begin{proof}
Suppose $\im y^{c}_k \neq -\pi$ and $0$ for all $k=1,\dots, N$. By Proposition \ref{concavSgen}, we can take $L^{\text{top}} = \tilde{L}^{\text{top}}$. Thus, it suffices to consider the case where $\im y^{c}_k = 0$ or $-\pi$ for some $k=1,\dots, N$. By Proposition \ref{concavSgen}, we know that $\Re S(\boldsymbol{x}; \lambda_X(\alpha))$ is concave but not strictly concave on $\tilde{L}^{\text{top}}$ and attains its non-strict maximum at $\boldsymbol{x^c}$. Notice that the function $S(\boldsymbol{x};\lambda_X(\alpha))$ is not holomorphic at the points $\varphi(\boldsymbol{x}) = \mathbf{y} = (y_1,\dots, y_{N})$ with $y_k = -i \pi$ for some $k=1,\dots,N$. To deal with this, let $\alpha \in \overline{\mathcal{A}_X}$ be the extended angle structure corresponding to $\mathbf{z^c}$. Let $(a_1,\dots,a_N)$ be the corresponding $a$-angles. Note that there exists a normal vector at $\alpha$ that points toward $\mathcal{A}_X$. By Lemma \ref{lincorrespond} and \ref{NZpara}, this induces a vector $\mathfrak{v}$ in $\RR^{N-2n}$. By Lemma \ref{singularbehaviour}, we can decrease $\Re S(\boldsymbol{x};\lambda_X(\alpha))$ at those point by pushing $\boldsymbol{x}$ slightly along the direction $\mathfrak{v}$. As a result, we obtain a new contour such that $S(\boldsymbol{x};\lambda_X(\alpha))$ is defined holomorphic on some open set containing the contour (Figure \ref{Zcontfigure}).

Next, for $\boldsymbol{x} \in \partial \tilde L^{\text{top}}$, if $\Re x_{k} = \pm \kappa$ for some $k=1,\dots,N-2n$, by (\ref{smallbdyZ}) and Proposition \ref{concavSgen}, we know that for any $t\in[0,1]$,
\begin{align*}
    \Re S(\boldsymbol{x} - i t ( \im \boldsymbol{x} - \mathbf{v}_\alpha );\lambda_X(\alpha)) 
    &\leq \max \{\Re S(\boldsymbol{x};\lambda_X(\alpha) ) , \Re S(\Re\boldsymbol{x} + i\mathbf{v}_\alpha;\lambda_X(\alpha) ) \} \\
    &\leq \Re S(\boldsymbol{x^c} ;\lambda_X(\alpha)) ,
\end{align*} 
with equality possibly holds only when $t=0$.  
As a result, by pushing the boundary of $\tilde L^{\text{top}}$ slightly along the direction of $i(\im \boldsymbol{x} - \mathbf{v}_\alpha)$, we can make sure that $\Re S(\boldsymbol{x};\lambda_X(\alpha)) < \Re S(\boldsymbol{x^c};\lambda_X(\alpha))$ on the boundary of the new multi-contour and conditions (3) and (4) are satisfied. 

After that, we further deform the interior of the multi-contour as follows. Let $\zeta(\mathbf{y})$ be a smooth bump function that is positive on the interior of $L^{\text{top}}$ and zero outside. 
We claim that under the flow generated by the vector field
$$ \mathscr{V}(\boldsymbol{x}) = \zeta(\boldsymbol{x})\left(-i\frac{\partial \Re S(\boldsymbol{x};\lambda_X(\alpha))}{\partial \im x_1}, - i\frac{\partial\Re S(\boldsymbol{x};\lambda_X(\alpha))}{\partial \im x_2} , \dots, -i\frac{\partial \Re S(\boldsymbol{x};\lambda_X(\alpha))}{\partial \im x_{N-2n}} \right) \in \CC^{N-2n}$$
for a sufficiently small time, 
we can deform $\tilde{L}^{\text{top}}$ to a new contour $L^{\text{top}}$ satisfying properties (1)-(4). First, since $\boldsymbol{x^c}$ is a critical point of $S$, the flow vanishes at that point and therefore $\boldsymbol{x^c} \in L^{\text{top}}$. This proves (1). 

Next, for all $\boldsymbol{x} \in \tilde{L}^{\text{top}} \setminus\{\boldsymbol{x^c}\}$, suppose $\mathscr{V}(\boldsymbol{x})=0$. If $\Re S(\boldsymbol{x};\lambda_X(\alpha)) < \Re S(\boldsymbol{x^c};\lambda_X(\alpha))$, then this inequality still hold under the flow generated by $\mathscr{V}$. If $\Re S(\boldsymbol{x};\lambda_X(\alpha)) = \Re S(\boldsymbol{x^c};\lambda_X(\alpha))$, then $\Re S(\boldsymbol{x};\lambda_X(\alpha))$ attains its maximum at $\boldsymbol{x}$ on $\tilde{L}^{\text{top}}$, which implies that 
$$
\frac{\partial}{\partial \Re x_1} \Re S = \dots = \frac{\partial}{\partial \Re x_{N-2n}} \Re S = 0.
$$
Since $\mathscr{V}(\boldsymbol{x})=0$, $\boldsymbol{x}$ must also be a critical point of $S$. By Proposition \ref{critThurscorrespondence}, under the bijection $y_k = \Log z_k - i\pi$, we get a solution $\mathbf{z}$ of the gluing equation, which induces a representation $\rho:\pi_1(\SS^3\setminus K) \to \mathrm{PSL}(2;\CC)$. However, by Proposition \ref{dilogVolgen} and \cite[Theorem 1.2]{SF}, the representation volume of $\rho$ coincides with the hyperbolic volume of $\SS^3 \setminus K$. By \cite[Theorem 1.4]{SF}, $\rho$ must be discrete and faithful. Since the shape parameters are completely determined by $\rho$, we must have $\boldsymbol{x} = \boldsymbol{x^c}$, which is impossible since $\boldsymbol{x} \in \tilde{L}^{\text{top}} \setminus\{\boldsymbol{x^c}\}$. 

Altogether, for all $\boldsymbol{x} \in L^{\text{top}} \setminus\{\boldsymbol{x^c}\}$, either $\Re S(\boldsymbol{x};\lambda_X(\alpha)) < \Re S(\boldsymbol{x^c};\lambda_X(\alpha))$ or $\mathscr{V}(\boldsymbol{x}) \neq 0$. As a result, by applying the flow generated by $\mathscr{V}$ for a small time, we obtain a deformed multi-contour satisfying (2).

Finally, since the bump function is supported on the interior of $\tilde L^{\text{top}}$, the boundary of $\tilde L^{\text{top}}$ remains unchanged and satisfies (3) and (4). This completes the proof.
\end{proof}

Next, define 
$L^{\text{sides}} = \{ \boldsymbol{x} - i t( \im \boldsymbol{x} - \mathbf{v}_\alpha) \mid \boldsymbol{x} \in \partial L^{\text{top}}, t\in[0,1]\}$ and $L = L^{\text{sides}} \cup L^{\text{top}}$.
\begin{proposition}\label{Zcont}
$L$ is a multi-contour such that $\boldsymbol{x^c} \in L$ and $\Re S$ attains its strict maximum at $\boldsymbol{x^c}$.
\end{proposition}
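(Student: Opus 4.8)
The plan is to read off the three assertions from the properties of $L^{\mathrm{top}}$ recorded in Proposition \ref{constructZcont}, the strict convexity of $\Re S$ in the imaginary directions from Proposition \ref{concavSgen}, and the exterior estimate (\ref{smallbdyZ}). First I would check that $L = L^{\mathrm{top}} \cup L^{\mathrm{sides}}$ is an embedded piecewise-smooth $(N-2n)$-contour. By construction $L^{\mathrm{top}}$ is a graph over the cube $[-\kappa,\kappa]^{N-2n}$ in the real coordinates, since every deformation carried out in Proposition \ref{constructZcont} (the push near the singularity and near the boundary, and the flow generated by $\mathscr{V}$) moves only the imaginary coordinates; hence $\partial L^{\mathrm{top}}$ lies over $\partial[-\kappa,\kappa]^{N-2n}$, i.e.\ some real coordinate equals $\pm\kappa$ there. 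The parametrization $(\boldsymbol{x}_0,t) \mapsto \boldsymbol{x}_0 - it(\im\boldsymbol{x}_0 - \mathbf{v}_\alpha)$ of $L^{\mathrm{sides}}$ fixes $\Re\boldsymbol{x}_0$ and is affine, hence injective, in $t$ on each fibre; its $t=0$ face is exactly $\partial L^{\mathrm{top}}$ and it otherwise misses $L^{\mathrm{top}}$ (every point of $L^{\mathrm{sides}}$ has a real coordinate equal to $\pm\kappa$, whereas the interior of $L^{\mathrm{top}}$ has all real coordinates in $(-\kappa,\kappa)$). So $L$ is an embedded multi-contour with boundary $\{\Re\boldsymbol{x}_0 + i\mathbf{v}_\alpha : \boldsymbol{x}_0 \in \partial L^{\mathrm{top}}\}$, which is precisely the inner boundary of $(\RR^{N-2n}+i\mathbf{v}_\alpha)\setminus((-\kappa,\kappa)^{N-2n}+i\mathbf{v}_\alpha)$; attaching $L$ to that exterior region yields a contour homotopic to $\RR^{N-2n}+i\mathbf{v}_\alpha$, and by parts (3) and (4) of Proposition \ref{constructZcont} this homotopy stays inside $\varphi^{-1}((\CC\setminus L_\delta)^N)$, where the integrand is holomorphic. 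Finally $\boldsymbol{x^c} \in L^{\mathrm{top}} \subset L$ by Proposition \ref{constructZcont}(1), and since $\kappa$ is chosen so that $\Re\boldsymbol{x^c}$ lies in the open cube, $\boldsymbol{x^c}$ is interior to $L^{\mathrm{top}}$; in particular $\boldsymbol{x^c} \notin L^{\mathrm{sides}}$.

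For the strict maximum, on $L^{\mathrm{top}}$ there is nothing to prove: Proposition \ref{constructZcont}(2) states that $\Re S(\,\cdot\,;\lambda_X(\alpha))$ attains its strict maximum at $\boldsymbol{x^c}$ on $L^{\mathrm{top}}$, so in particular $\Re S(\boldsymbol{x}_0;\lambda_X(\alpha)) < \Re S(\boldsymbol{x^c};\lambda_X(\alpha))$ for every $\boldsymbol{x}_0 \in \partial L^{\mathrm{top}}$. On $L^{\mathrm{sides}}$ I would fix $\boldsymbol{x}_0 \in \partial L^{\mathrm{top}}$ and work along the segment $\gamma(t) = \boldsymbol{x}_0 - it(\im\boldsymbol{x}_0 - \mathbf{v}_\alpha)$, $t \in [0,1]$, which runs purely in the imaginary coordinate directions from $\gamma(0) = \boldsymbol{x}_0$ to $\gamma(1) = \Re\boldsymbol{x}_0 + i\mathbf{v}_\alpha$. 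Because $\varphi$ is affine with real linear part (Lemma \ref{NZpara}), $\im\varphi(\gamma(t))$ is the linear interpolation between $\im\varphi(\boldsymbol{x}_0)$ and $\im\varphi(\Re\boldsymbol{x}_0 + i\mathbf{v}_\alpha)$; the former lies in $(-\pi,0)^N$ by Proposition \ref{constructZcont}(4), and the latter lies in $(-\pi,0)^N$ because $\varphi(\RR^{N-2n}+i\mathbf{v}_\alpha) \subset \mathscr{Y}_\alpha = \prod_{k=1}^N(\RR - i(\pi - a_k))$ with $\boldsymbol a \in (0,\pi)^N$, so by convexity of $(-\pi,0)^N$ we obtain $\varphi(\gamma(t)) \in \mathscr{U}_{\mathbf y}$ for all $t$. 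By the Hessian computation in the proof of Proposition \ref{concavSgen}, over $\varphi^{-1}(\mathscr{U}_{\mathbf y})$ the Hessian of $\Re S$ in the imaginary coordinates is positive definite, so $t \mapsto \Re S(\gamma(t);\lambda_X(\alpha))$ is strictly convex on $[0,1]$ and hence attains its maximum only at an endpoint. But $\Re S(\gamma(0);\lambda_X(\alpha)) < \Re S(\boldsymbol{x^c};\lambda_X(\alpha))$ by the $L^{\mathrm{top}}$ case, while $\gamma(1)$ has a real coordinate equal to $\pm\kappa$, hence lies outside $(-\kappa,\kappa)^{N-2n}+i\mathbf{v}_\alpha$, so (\ref{smallbdyZ}) gives $\Re S(\gamma(1);\lambda_X(\alpha)) < \Re S(\boldsymbol{x^c};\lambda_X(\alpha))$. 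Therefore $\Re S(\boldsymbol{x};\lambda_X(\alpha)) < \Re S(\boldsymbol{x^c};\lambda_X(\alpha))$ for all $\boldsymbol{x} \in L^{\mathrm{sides}}$, and combining this with the $L^{\mathrm{top}}$ case and $\boldsymbol{x^c} \notin L^{\mathrm{sides}}$ shows that $\Re S$ attains its strict maximum on $L$ at $\boldsymbol{x^c}$.

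The main obstacle is the single geometric input of the $L^{\mathrm{sides}}$ step: the claim that $\varphi(L^{\mathrm{sides}})$ stays inside the open product of bands $\mathscr{U}_{\mathbf y}$, which is exactly what licenses the strict-convexity argument. This rests on parts (3) and (4) of Proposition \ref{constructZcont} together with the strict positivity $\boldsymbol a \in (0,\pi)^N$, and it is the place where the semi-geometric setting (flat tetrahedra permitted) forces care that was unnecessary in the purely geometric case of \cite{BAW}. The remaining verifications — that $L$ is genuinely embedded and that attaching it to the exterior region is a homotopy through the holomorphicity domain — are routine.
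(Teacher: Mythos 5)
Your proof is correct and follows essentially the same decomposition and estimates as the paper's: strict maximum on $L^{\text{top}}$ from Proposition \ref{constructZcont}(2), strict convexity of $\Re S$ along the purely-imaginary segments of $L^{\text{sides}}$ from Proposition \ref{concavSgen}, and then the two endpoint estimates from the $L^{\text{top}}$ case and from (\ref{smallbdyZ}). You add worthwhile detail the paper leaves implicit, notably the explicit convexity argument (via Proposition \ref{constructZcont}(4) at one endpoint and $\boldsymbol a\in(0,\pi)^N$ at the other) showing $\varphi(L^{\text{sides}})\subset\mathscr{U}_{\mathbf y}$, and the check that $\boldsymbol{x^c}\notin L^{\text{sides}}$.
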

\begin{proof}
    By Proposition \ref{constructZcont} (1), we have $\boldsymbol{x^c} \in L$. By Proposition \ref{constructZcont} (2), $\Re S(\boldsymbol{x}; \lambda_X(\alpha))$ attains its strict maximum at $\boldsymbol{x^c}$ on $L^{\text{top}}$. 
    On $L^{\text{sides}}$, by Proposition \ref{constructZcont} (4) and Proposition \ref{concavSgen}, we know that 
    \begin{align*}
        &\ \max\{ \Re S(\boldsymbol{x};\lambda_X(\alpha)) \mid \boldsymbol{x} \in L^{\text{sides}} \} \\
        \leq&\  \max\{\Re S(\boldsymbol{x};\lambda_X(\alpha)) , \Re S(\boldsymbol{x} - i(\im \boldsymbol{x} - \mathbf{v}_\alpha) ; \lambda_X(\alpha)) \mid \boldsymbol{x} \in \partial L^{\text{top}} \}.
    \end{align*} 
    By Proposition \ref{constructZcont} (3), we have 
    $$ \max\{\Re S(\boldsymbol{x};\lambda_X(\alpha)) \mid \boldsymbol{x} \in \partial L^{\text{top}}\} < \Re S(\boldsymbol{x^c};\lambda_X(\alpha)).$$
    Besides, by (\ref{smallbdyZ}), 
    $$
\max\{\Re S(\boldsymbol{x} - i(\im \boldsymbol{x} - \mathbf{v}_\alpha), \lambda_X(\alpha)) \mid \boldsymbol{x} \in \partial L^{\text{sides}}\}< \Re S(\boldsymbol{x^c}; \lambda_X(\alpha)) .
$$
This completes the proof.
\end{proof}
Define 
\begin{align}\label{hvalue}
    R(\mathbf{z})
= 
\exp\left(\sum_{k=1}^N \left( -\frac{i(\Log z_k - i\pi)\Log(1-z_k)}{2\pi}  - \frac{i}{\pi} \Li(z_k) \right) \right) \left(\prod_{i=1}^N z_i^{-f_i''} z_i''^{f_i - 1} \right)^{-1}.
\end{align}
We can now prove Theorem \ref{mainthmZ}.
\begin{proof}[Proof of Theorem \ref{mainthmZ}]
Assume $X$ is generalized FAMED with respect to $l$ (see Definition \ref{defgenFAMED}). 
For any $\hbar>0$ and $\alpha = (a_1,b_1,c_1,\dots, a_N,b_N,c_N) \in \mathscr{A}_{X}$, by Propositions \ref{semihbar} and \ref{Tpartiexpress2}, we have
$$
|\mathscr{Z}_{\hbar}(X, \alpha) |
= 
\left|\det(\mathcal{E}_1\mathcal{E}_2) \Big(\frac{1}{2\pi \hbar}\Big)^{N-2n} 
\int_{\RR^{N-2n} + i\mathbf{v}_\alpha} h(\varphi(\boldsymbol{x}))
e^{\frac{1}{2\pi \hbar} S_{\hbar}(\varphi(\boldsymbol{x});\lambda_X(\alpha))} d\boldsymbol{x} \right|,
$$
where 
\begin{align}\label{hdefn}
h( \mathbf{y}) = \exp\left( \sum_{k=1}^N \left( \frac{iy_k}{2\pi} \frac{d}{dy}\mathrm{L}(y_k) - \frac{i}{\pi} \mathrm{L}(y_k) \right) \right) 
\end{align}
and
$$
S_{\hbar}(\boldsymbol{x};\lambda_X(\alpha))
= S(\boldsymbol{x};\lambda_X(\alpha)) + \kappa_\hbar(\boldsymbol{x})\hbar^2
$$
for some holomorphic function $\kappa_{\hbar}$ such that $|\kappa_{\hbar}(\mathrm{w})|$ is bounded above by some constant independent of $\hbar$ on any given compact subset of $\RR^{N-2n}+i\mathbf{v}_\alpha$. 

To prove the first statement, for any angle structure $\alpha \in \mathcal{A}_X$, by Proposition \ref{Tpartiexpress2}, 
$$
|\mathscr{Z}_{\hbar}(X,\alpha)|
= |\mathscr{Z}_{\hbar}(X,\alpha')|
= 
\left|\det(\mathcal{E}_1\mathcal{E}_2) \Big(\frac{1}{2\pi \hbar}\Big)^{N-2n} 
\int_{\RR^{N-2n} + i\mathbf{v}_\alpha} h(\varphi(\boldsymbol{x}))
e^{\frac{1}{2\pi \hbar} S_{\hbar}(\boldsymbol{x};\lambda_X(\alpha))} d\boldsymbol{x} \right|.
$$
Let $\boldsymbol{x}_{\alpha'}$ be the critical point of $\Re S$ in Proposition \ref{critThurscorrespondence} that corresponds to the angle structure $\alpha'$. We let $r_0>0$ and $\Gamma_{\alpha'} = \{\mathbf{y} \in \RR^{N-2n}+i\mathbf{v}_{\alpha'} \mid || \boldsymbol{x} - \boldsymbol{x}_{\alpha'} || \leq r_0\}$ be a ball of real dimension $N-2n$ inside $\RR^{N-2n}+i\mathbf{v}_{\alpha'}$ centered at the critical point $\boldsymbol{x}_{\alpha'}$. We split the integral into two parts: one over the compact part $\Gamma_{\alpha'}$ and another one over the remaining part
$\mathscr{Y}_{\alpha'} \setminus \Gamma_{\alpha'}$
of the multi-contour.
\begin{align*}
&\int_{\RR^{N-2n}+i\mathbf{v}_\alpha} 
h(\varphi(\boldsymbol{x}))
e^{\frac{1}{2\pi \hbar} S_{\hbar}(\boldsymbol{x};\lambda_X(\alpha))} d\boldsymbol{x} \\
=&   
\int_{\Gamma_{\alpha'}} 
h(\varphi(\boldsymbol{x}))
e^{\frac{1}{2\pi \hbar} S_{\hbar}(\boldsymbol{x};\lambda_X(\alpha))} d\boldsymbol{x} 
+ 
\int_{(\RR^{N-2n} \setminus [-\kappa,\kappa]^{N-2n}+i\mathbf{v}_\alpha) \setminus \Gamma_{\alpha'}} 
h(\varphi(\boldsymbol{x}))
e^{\frac{1}{2\pi \hbar} S_{\hbar}(\boldsymbol{x};\lambda_X(\alpha))} d\boldsymbol{x}.
\end{align*}
Similar to \cite[Lemma 7.10]{BAGPN}, there exist constants $A(\alpha'),B(\alpha') > 0$ such that for all $\hbar \in (0,A(\alpha'))$,
$$
\Bigg|
\int_{(\RR^{N-2n} \setminus [-\kappa,\kappa]^{N-2n}+i\mathbf{v}_\alpha) \setminus \Gamma_{\alpha'}} 
h(\varphi(\boldsymbol{x}))
e^{\frac{1}{2\pi \hbar} S_{\hbar}(\boldsymbol{x};\lambda_X(\alpha))} d\boldsymbol{x}
\Bigg|
\leq B(\alpha') e^{\frac{M'}{2\pi \hbar}},
$$
where $M' = \max\{\Re S (\mathbf{y}) \mid  \mathbf{y} \in \partial\Gamma_{\alpha}\}$. By Propositions \ref{dilogVolgen} and  \ref{concavSgen}, we have 
$$M' < -\Vol(\alpha').$$
Besides, on the compact part $\Gamma_{\alpha'}$, by Proposition \ref{dilogVolgen} and \ref{concavSgen}, $\Re S$ attains its maximum at $\boldsymbol{x}_{\alpha'}$ with maximum value $-\Vol(\alpha')$. This proves Theorem \ref{mainthmZ} (1).

To prove the second part, for the prescribed angle structure $\alpha$, by the exponentially decaying property at infinity, we choose $\kappa > 0$ such that (\ref{smallbdy}) holds.
 We write the integral as
\begin{align*}
&\ \ \int_{\RR^{N-2n}+i\mathbf{v}_\alpha} 
h(\varphi(\boldsymbol{x}))
e^{\frac{1}{2\pi \hbar} S_{\hbar}(\boldsymbol{x};\lambda_X(\alpha))} d\boldsymbol{x} \\
=&\ \   
\int_{[-\kappa,\kappa]^{N-2n}+i\mathbf{v}_\alpha} 
h(\varphi(\boldsymbol{x}))
e^{\frac{1}{2\pi \hbar} S_{\hbar}(\boldsymbol{x};\lambda_X(\alpha))} d\boldsymbol{x} \\
& \quad +\int_{\RR^{N-2n} \setminus [-\kappa,\kappa]^{N-2n}+i\mathbf{v}_\alpha} 
h(\varphi(\boldsymbol{x}))
e^{\frac{1}{2\pi \hbar} S_{\hbar}(\boldsymbol{x};\lambda_X(\alpha))} d\boldsymbol{x}.
\end{align*}
For the second integral, similar to \cite[Lemma 7.10]{BAGPN}, there exists constants $A(\alpha),B(\alpha) > 0$ such that for all $\hbar \in (0,A(\alpha'))$,
$$
\Bigg|
\int_{\RR^{N-2n} \setminus [-\kappa,\kappa]^{N-2n}+i\mathbf{v}_\alpha} 
h(\varphi(\boldsymbol{x}))
 e^{\frac{1}{2\pi \hbar} S_\hbar(\boldsymbol{x};\lambda_X(\alpha))} d\boldsymbol{x} 
\Bigg|
\leq B e^{\frac{M'}{2\pi \hbar}},
$$
where $M' = \max\left\{\Re S (\boldsymbol{x};\lambda_X(\alpha)) \mid  \boldsymbol{x} \in \partial(\RR^{N-2n} \setminus [-\kappa,\kappa]^{N-2n}+i\mathbf{v}_\alpha)\right\}$. By (\ref{smallbdy}), we have $M' < S( \boldsymbol{x^c};\lambda_X(\alpha))$. Next, for the first integral, by deformation of multi-contour we have
$$
\int_{[-\kappa,\kappa]^{N-2n}+i\mathbf{v}_\alpha} 
h(\varphi(\boldsymbol{x}))e^{\frac{1}{2\pi \hbar} S_{\hbar}(\boldsymbol{x};\lambda_X(\alpha))} d\boldsymbol{x}
= \int_{L} 
h(\varphi(\boldsymbol{x}))
e^{\frac{1}{2\pi \hbar} S_{\hbar}(\boldsymbol{x};\lambda_X(\alpha))} d\boldsymbol{x},
$$
where $L$ is the multi-contour in Proposition \ref{Zcont}. By applying Proposition \ref{saddle}, the desired result follows from Proposition \ref{dilogVolgen} and \ref{Hesstotor}.
\end{proof}

\section{Asymptotics of Jones functions}
\subsection{Existence of Jones functions} 
In this section, we assume that $X$ is generalized FAMED with respect to $(l,m)$ (see Definition \ref{defgenFAMED5}). By Lemma \ref{NZpara} and Definition \ref{defgenFAMED5}(2), the affine subspace can be parametrized in a way that is independent of the choices of angle structure. Together with Proposition \ref{Tpartiexpress2}, we see that in the partition function, the information about the angle structure $\alpha\in \mathscr{A}_X$ is contained in the last entry of the expression
\begin{align*}
-\boldsymbol{x}^{\!\top}
\left(
\mathbf{E_{N-2n}'}\boldsymbol{u}
\right)
= -\left( \left(\mathbf{E'_{N-2n}}\right)^{\!\top}
\boldsymbol{x} \right) \cdot \boldsymbol{u},
\end{align*}
which can be written in the form
$$ \frac{\tilde{\mathrm{w}} \lambda_X(\alpha)}{2},$$
where $\tilde{\mathrm{w}}$ is a linear combination of $x$'s given by the last entry of
$$- 2 \left(\mathbf{E'_{N-2n}}\right)^{\!\top}
\boldsymbol{x} . $$
Throughout this section, up to renumbering, assume that the pivot positions of $\mathbf{(EB)}^{\!\top}_{N-2n}$ are $1,2,$ $\dots, N-2n$. 
Lemma \ref{tildexinter} below provides an interpretation of the quantity $\mathrm{\tilde w}$. 
\begin{lemma}\label{tildexinter}
Suppose $X$ is generalized FAMED with respect to $(l,m)$.  Under the correspondence in Proposition \ref{critThurscorrespondence}, at the critical point of $\tilde S$, we have
$$ \mathrm{\tilde w} = w_m + C,
$$
where $w_m$ is the logarithmic holonomy of $m$ and $C$ is a constant independent of $\alpha$. 
\end{lemma}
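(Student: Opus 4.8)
The plan is to unwind the definition of $\mathrm{\tilde w}$ in terms of the shape parameters and to recognize the resulting linear combination of logarithms as (a representative of) the logarithmic holonomy of the meridian, using Definition \ref{defgenFAMED5}(3). First I would recall that, by construction, $\mathrm{\tilde w}$ is the last entry of $-2\big(\mathbf{E'_{N-2n}}\big)^{\!\top}\boldsymbol{x}$, i.e. $\mathrm{\tilde w} = C_1 x_1 + \dots + C_{N-2n} x_{N-2n}$, where $(C_1,\dots,C_{N-2n})^{\!\top}$ is precisely the last column of $-2\mathbf{E'_{N-2n}}$ appearing in Definition \ref{defgenFAMED5}(3). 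Next I would use the parametrization $\mathbf{y} = \varphi(\boldsymbol{x}) = \mathbf{(EB)}^{\!\top}_{N-2n}\boldsymbol{x} - i(\boldsymbol{\pi - a^0})$ from Lemma \ref{NZpara}: since we assumed the pivot positions of $\mathbf{(EB)}^{\!\top}_{N-2n}$ are $1,2,\dots,N-2n$, reading off the pivot rows gives $x_j = y_{k_j} + i(\pi - a^0_{k_j})$ for $j=1,\dots,N-2n$, where $k_1 < \dots < k_{N-2n}$ are those pivot positions (which by Remark \ref{rmk4} and the choice there coincide with the pivot positions of $(\mathbf{EB})_{N-2n}$). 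Substituting $y_{k_j} = \Log z_{k_j} - i\pi$ from the bijection (\ref{yzrelate}) of Proposition \ref{critThurscorrespondence}, each $x_j$ becomes $\Log z_{k_j}$ plus a constant (depending only on $a^0_{k_j}$, hence independent of $\alpha$ once $\alpha^0$ is fixed, and under Definition \ref{defgenFAMED5}(2) we may even take $\alpha^0$ arbitrary).

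Plugging this into $\mathrm{\tilde w} = \sum_{j} C_j x_j$ yields
$$
\mathrm{\tilde w} = C_1 \Log z_{k_1} + \dots + C_{N-2n}\Log z_{k_{N-2n}} + \big(C_1 \cdot i(\pi - a^0_{k_1}) + \dots + C_{N-2n}\cdot i(\pi - a^0_{k_{N-2n}})\big),
$$
where the second bracket is a constant $C'$ independent of $\alpha$. By Definition \ref{defgenFAMED5}(3), there is a representative in the homotopy class of the meridian $m$ whose logarithmic holonomy equals $C_1 \Log z_{k_1} + \dots + C_{N-2n}\Log z_{k_{N-2n}} + k\pi i$ for some $k \in \QQ$; since we are evaluating at a critical point of $\tilde S$, Proposition \ref{critThurscorrespondence}(1) tells us the corresponding $\mathbf{z}$ satisfies the gluing equations, so $w_m = \mathrm{H}^\C_{X,m}(\mathbf{z})$ is well-defined and equals that linear combination plus $k\pi i$. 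Hence $\mathrm{\tilde w} = w_m + (C' - k\pi i) =: w_m + C$ with $C$ independent of $\alpha$, as claimed.

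The main obstacle I anticipate is bookkeeping the constant cleanly: one must be careful that the "representative" in Definition \ref{defgenFAMED5}(3) and the logarithmic holonomy function $\mathrm{H}^\C_{X,m}$ differ only by an integer (or rational) multiple of $\pi i$ — i.e. different choices of the logarithm branch / of the flattening of $m$ — so that the discrepancy is genuinely a constant and not a function of $\mathbf{z}$ (equivalently of $\alpha$). This is where the hypothesis that we sit at a point of the gluing variety (so $\Log z + \Log z' + \Log z'' = \pi i$ holds and the holonomy is the honest holonomy of a representation) is used, together with the fact that along the family of critical points parametrized by $\alpha \in \mathcal{A}_X^l(\lambda_X(\alpha))$, $w_m$ varies but $\mathrm{\tilde w} - w_m$ does not. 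Everything else is a direct substitution.
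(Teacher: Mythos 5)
Your proposal is correct and follows essentially the same path as the paper: express $x_j$ in terms of $\Log z_{k_j}$ via the parametrization $\varphi$ and the bijection $y_k=\Log z_k-i\pi$, then appeal to Definition~\ref{defgenFAMED5}(3) to recognize $\sum_j C_j\Log z_{k_j}$ as $w_m$ up to a rational multiple of $i\pi$. (The paper's own write-up cites Lemma~\ref{pretildexinter} rather than Definition~\ref{defgenFAMED5}(3) at the key step, but the clean identification without the extraneous $n_1 w_l$ term genuinely requires Definition~\ref{defgenFAMED5}(3), so your choice of reference is the right one.) One small transcription slip: in your displayed formula for $\mathrm{\tilde w}$ the bracketed constant should be $-i\sum_j C_j a^0_{k_j}$, not $\sum_j C_j\cdot i(\pi-a^0_{k_j})$, since the $-i\pi$ coming from $y_{k_j}=\Log z_{k_j}-i\pi$ cancels the $+i\pi$ in $\varphi$; your surrounding narration already says the constant depends only on the $a^0_{k_j}$, which is consistent with the corrected form (and with the paper's identification of the constant as $-i\mu_X(\alpha^0)$).
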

\begin{proof}
Let $\alpha^0=(a_1^0,\dots,c_N^0) \in \mathcal{A}_X$ be the fixed angle structure we picked in Lemma \ref{NZpara}. Under the bijection $y_k = \Log z_k - i\pi$, we have $\Log z_{k} = x_k + i a_{k}^0$. Thus, by Lemma \ref{pretildexinter}, we have
\begin{align*}
\mathrm{\tilde w} 
&= C_1 x_1 + \dots + C_{N-2n} x_{N-2n} \\
&= C_1 \Log z_{1} + \dots + C_{N-2n} \Log z_{N-2n} - i(C_1a_{1}^0 + \dots + C_{N-2n} a_{N-2n}^0) \\
&= w_{m} - i \mu_X(\alpha^0).
\end{align*}
Let $C = - i\mu_X(\alpha^0)$. Note that $C$ depends only on the fixed angle structure $\alpha^0$ but not on $\alpha$. This completes the proof.
\end{proof}

From Lemma \ref{tildexinter}, if we define a new quantity $\mathrm{w} = \mathrm{\tilde w} - C$, then $\mathrm{w}$ can be interpreted as the holonomy of the meridian.

\begin{proposition}\label{pfexistJ} Suppose $X$ is a generalized FAMED with respect to $(l,m)$. Then there exists a Jones function $J_X\colon \R_{>0} \times \mathcal{W} \to \C$ (where $\mathcal{W}\subset \CC$ is an open horizontal band) that is independent of $\alpha\in\mathscr{A}_X$ such that 
\begin{align*}
&|\mathscr{Z}_{\hbar}(X, \alpha)| 
= \left| 
\int_{\RR + i \mu_X(\alpha) } 
\mathfrak{J}_X(\hbar,\mathrm{w})
e^{\frac{\mathrm{w} \lambda_X(\alpha)}{4\pi \hbar}} d\mathrm{w} \right|,
\end{align*}
where $\lambda_X(\alpha)$ and $\mu_X(\alpha)$ are angular holonomies of $l$ and $m$ respectively.
\end{proposition}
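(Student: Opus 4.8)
The plan is to change variables in the integral expression for $|\mathscr{Z}_{\hbar}(X,\alpha)|$ coming from Proposition \ref{Tpartiexpress2} so that the new integration variable $\mathrm{w} = \tilde{\mathrm{w}} - C$ becomes one of the coordinates, and then integrate out the remaining $N-2n-1$ variables. First I would recall from Proposition \ref{Tpartiexpress2} and the discussion opening this section that
$$
|\mathscr{Z}_{\hbar}(X,\alpha)| = \left(\frac{1}{2\pi\sqrt{\hbar}}\right)^{N-2n} D_1 \left| \int_{\RR^{N-2n} + i\mathbf{v}_\alpha} G_\hbar(\boldsymbol{x}) \, e^{\frac{1}{2\pi\hbar}\left(-\boldsymbol{x}^{\!\top}\mathbf{E}'_{N-2n}\boldsymbol{u}\right)} \, d\boldsymbol{x} \right|,
$$
where $G_\hbar(\boldsymbol{x})$ collects all the $\alpha$-independent factors (the quadratic terms in $\varphi(\boldsymbol{x})$, the quantum dilogarithm product, and the $\mathbf{(EB)}_{N-2n}\mathscr{G}\boldsymbol{\pi}$ linear term), and $\boldsymbol{u} = (0,\dots,0,\lambda_X(\alpha))^{\!\top}$. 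The key algebraic point is that $-\boldsymbol{x}^{\!\top}\mathbf{E}'_{N-2n}\boldsymbol{u}$ equals $\tfrac{1}{2}\tilde{\mathrm{w}}\,\lambda_X(\alpha)$ where $\tilde{\mathrm{w}}$ is the last entry of $-2(\mathbf{E}'_{N-2n})^{\!\top}\boldsymbol{x}$, i.e. a fixed $\RR$-linear functional $\ell(\boldsymbol{x}) = C_1 x_1 + \dots + C_{N-2n} x_{N-2n}$ with $(C_1,\dots,C_{N-2n})^{\!\top}$ the last column of $-2\mathbf{E}'_{N-2n}$ — precisely the vector appearing in Definition \ref{defgenFAMED5}(3).

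Next I would complete $\ell$ to a linear change of coordinates $\boldsymbol{x} \mapsto (\mathrm{w}, \boldsymbol{x}')$ on $\CC^{N-2n}$, choosing $N-2n-1$ auxiliary complex linear coordinates $\boldsymbol{x}' = (x'_1,\dots,x'_{N-2n-1})$ so that the map is a $\QQ$-linear (hence invertible) change of basis; here I use that $(C_1,\dots,C_{N-2n})$ is nonzero, which follows from Definition \ref{defgenFAMED5}(3) since the meridian has nontrivial logarithmic holonomy. By Fubini, writing the integrand in the new coordinates, the $\mathrm{w}$-contour becomes $\RR + i c_0$ for the constant $c_0 = \im \ell(i\mathbf{v}_\alpha) = \ell(\mathbf{v}_\alpha)$ (since $\ell$ has real coefficients), and by Lemma \ref{tildexinter} together with the definition $\mathrm{w} = \tilde{\mathrm{w}} - C$ and $C = -i\mu_X(\alpha^0)$, this constant is exactly $\mu_X(\alpha)$: indeed the combinatorial formula for $\mu_X(\alpha)$ in terms of the $a$-angles $a_1,\dots,a_{N-2n}$ is $C_1 a_1 + \dots + C_{N-2n} a_{N-2n}$ (up to the constant $k\pi$ absorbed into $C$), and $\mathbf{v}_\alpha = (a_1 - a_1^0, \dots, a_{N-2n} - a_{N-2n}^0)$. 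I would then define
$$
\mathfrak{J}_X(\hbar, \mathrm{w}) := \left(\frac{1}{2\pi\sqrt{\hbar}}\right)^{N-2n} D_1 \, e^{\frac{\mathrm{w}C \lambda \text{-normalization}}{\cdots}} \int_{(\text{fixed contour in }\boldsymbol{x}')} \widehat{G}_\hbar(\mathrm{w},\boldsymbol{x}') \, d\boldsymbol{x}',
$$
i.e. the partial integral over the $\boldsymbol{x}'$-variables, absorbing Jacobians and the $\mathrm{w}$-dependent but $\alpha$-independent part of the exponent; one checks the remaining integrand decays rapidly in $\boldsymbol{x}'$ by the same exponential-decay estimates on $\Phi_\B$ used in Proposition \ref{Tpartiexpress1} (the $\boldsymbol{x}'$-directions inherit the angle-structure positivity), so $\mathfrak{J}_X$ is well-defined and holomorphic in $\mathrm{w}$ on a horizontal band $\mathcal{W}$ determined by the range of $\mu_X(\alpha)$ over $\mathscr{A}_X$. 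Then
$$
|\mathscr{Z}_{\hbar}(X,\alpha)| = \left| \int_{\RR + i\mu_X(\alpha)} \mathfrak{J}_X(\hbar,\mathrm{w}) \, e^{\frac{\mathrm{w}\lambda_X(\alpha)}{4\pi\hbar}} \, d\mathrm{w} \right|,
$$
where the factor $\tfrac{1}{4\pi\hbar}$ (rather than $\tfrac{1}{2\pi\hbar}$) appears because the exponent in Proposition \ref{Tpartiexpress2} contributes $\tfrac{1}{2\pi\hbar}\cdot\tfrac{1}{2}\mathrm{w}\lambda_X(\alpha)$.

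The main obstacle I anticipate is bookkeeping rather than conceptual: first, verifying carefully that $\mathfrak{J}_X$ as defined genuinely does not depend on $\alpha$ — the $\boldsymbol{x}'$-contour must be chosen once and for all (say the real slice plus a fixed imaginary shift) and one must use a Bochner--Martinelli / Cauchy-type contour-deformation argument, exactly as in the proof of Proposition \ref{Tpartiexpress2}, to show that the partial integral is insensitive to which admissible $\alpha$ produced it, and that the only $\alpha$-dependence left is through the $\mathrm{w}$-contour height $\mu_X(\alpha)$ and the exponential weight $e^{\mathrm{w}\lambda_X(\alpha)/(4\pi\hbar)}$. Second, I must confirm the identification of $c_0 = \ell(\mathbf{v}_\alpha)$ with $\mu_X(\alpha)$, which requires combining Lemma \ref{pretildexinter} (the combinatorial computation of the meridian holonomy in terms of $\Log z_{k_1},\dots,\Log z_{k_{N-2n}}$ guaranteed by Definition \ref{defgenFAMED5}(3)) with the relation $\Log z_k = x_k + i a_k^0$; the constant $k\pi$ ambiguity in Definition \ref{defgenFAMED5}(3) is harmless since it is absorbed into $C$ and hence into the definition of $\mathfrak{J}_X$. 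Once these two points are in place, the statement follows; I would not belabor the convergence and rapid-decay estimates, citing the analogous arguments already carried out for Proposition \ref{Tpartiexpress1} and \ref{Tpartiexpress2}.
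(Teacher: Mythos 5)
Your proposal matches the paper's proof in essentially every respect: both start from the expression in Proposition \ref{Tpartiexpress2}, observe that the only $\alpha$-dependence in the exponent enters through $-\boldsymbol{x}^{\!\top}\mathbf{E}'_{N-2n}\boldsymbol{u} = \tfrac{1}{2}\tilde{\mathrm{w}}\lambda_X(\alpha)$, perform the affine change of variables making $\mathrm{w} = \tilde{\mathrm{w}} - C$ a coordinate (using $C_1 \neq 0$, the paper's WLOG renumbering), identify the $\mathrm{w}$-contour height as $\mu_X(\alpha)$ via Lemma \ref{tildexinter} and Definition \ref{defgenFAMED5}(3), define $\mathfrak{J}_X$ as the partial integral over the remaining $N-2n-1$ variables, and establish $\alpha$-independence by the same Bochner--Martinelli contour-deformation argument used in Proposition \ref{Tpartiexpress2}. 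The one place your writeup is slightly compressed — the identification of the contour height $c_0$ — does arrive at the correct answer $\mu_X(\alpha) = \ell(\mathbf{v}_\alpha) + \mu_X(\alpha^0)$ once the shift by $C = -i\mu_X(\alpha^0)$ is accounted for, exactly as in the paper.
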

\begin{proof} 
From Proposition \ref{Tpartiexpress2}, we can write
\begin{align*}
\ \ \left|\mathcal{Z}_{\hbar}(X,\alpha)\right| = &\left(\frac{1}{2\pi \sqrt{\hbar}}\right)^{N-2n} D_1\\
\ \  \Bigg|\int_{\boldsymbol{x} \in \RR^{N-2n} + i\mathbf{v}_\alpha } 
&\left[\exp\left(\frac{1}{2\pi \hbar} \left( - \frac{i}{2} \varphi(\boldsymbol{x})^{\!\top} Q \varphi(\boldsymbol{x})  - \frac{i}{2} \sum_{k=1}^N \left(\frac{\varepsilon(T_k)-1}{2}\right) \varphi_k(\boldsymbol{x})^2 \right) \right) \right.  \\
& \ \exp\left(\frac{1}{2\pi\hbar}(- \boldsymbol{x}^{\!\top}  (\mathbf{E_{N-2n}'\boldsymbol \nu - 
\mathbf{(EB)}_{N-2n}\mathscr{G}
\boldsymbol{\pi}) )}\right) \\
& \left.\left. \ \left(\prod_{k=1}^N \Phi_\B\left(\frac{\varphi_k(\boldsymbol{x})}{2\pi \sqrt{\hbar}}\right)\right)^{-1} \right]  
e^{\frac{\mathrm{\tilde w}\lambda_X(\alpha)}{4\pi \hbar} }
d\boldsymbol{x}\right| 
\end{align*}
Write $\mathrm{w} = \sum_{k=1}^{N-2n} C_k x_k - C$, where $C\in i\RR$ is the constant in Lemma \ref{tildexinter}. Without loss of generality, assume that $C_1 \neq 0$. Consider the affine isomorphism $\mathscr{L}: \CC^{N-2n} \to \CC^{N-2n}$ with determinant $C_1 \neq 0$ defined by sending 
$$(x_1,x_2,\dots, x_{N-2n}) \mapsto (\mathrm{w}, x_2,\dots, x_{N-2n}) = \left( \sum_{k=1}^{N-2n} C_k x_k -C, x_2,\dots, x_{N-2n} \right).$$
Note that from Lemma \ref{tildexinter}, since $C = -i\sum_{k=1}^{N-2n} C_k a_k^0$, we have
$$
\sum_{k=1}^{N-2n} i C_k (a_k - a_k^0) - C
= i\sum_{k=1}^{N-2n} C_k a_k 
= i\mu_X(\alpha),
$$
where the last equality follows from Definition \ref{defgenFAMED5}(3). Thus, we have
\begin{align}\label{ZtoJconv}
&|\mathscr{Z}_{\hbar}(X, \alpha)| 
= \left| 
\int_{\RR + i\mu(\alpha) } 
\mathfrak{J}_X(\mathrm{w},\hbar)
e^{\frac{\mathrm{w} \lambda_X(\alpha)}{4\pi \hbar}} d\mathrm{w} \right| ,
\end{align}
where
\begin{align}\label{ZtoJconv2}
\mathfrak{J}_X(\mathrm{w},\hbar)
=&\  \left(\frac{1}{2\pi \sqrt{\hbar}}\right)^{N-2n} D_1 \notag\\
\Bigg|\int_{\mathbf{\hat{x}} \in \RR^{N-2n-1} + i\mathbf{\hat{v}}_\alpha } 
 &\left[\exp\left(\frac{1}{2\pi \hbar} \left( - \frac{i}{2} \varphi(\boldsymbol{x})^{\!\top} Q \varphi(\boldsymbol{x})  - \frac{i}{2} \sum_{k=1}^N \left(\frac{\varepsilon(T_k)-1}{2}\right) \varphi_k(\boldsymbol{x})^2 \right) \right) \right.   \notag  \\
& \exp\left(\frac{1}{2\pi\hbar}(- \boldsymbol{x}^{\!\top}  (\mathbf{E_{N-2n}'\boldsymbol \nu - 
\mathbf{(EB)}_{N-2n}\mathscr{G}
\boldsymbol{\pi}) )}\right) \notag  \\
& \left. \left(\prod_{k=1}^N \Phi_\B\left(\frac{\varphi_k(\boldsymbol{x})}{2\pi \sqrt{\hbar}}\right)\right)^{-1} \right]  d\boldsymbol{x} 
\end{align}
with 
$
\mathbf{\hat{x}} = (x_2,\dots, x_{N-2n}), \mathbf{\hat{v}}_\alpha = ( a_2 - a_2^0, \dots, a_{N-2n} - a_{N-2n}^0)
$
and
$$ x_1 = \frac{\mathrm{w} -  \sum_{k=2}^{N-2n} C_k x_k + C }{C_1}. $$ 
Note that (\ref{ZtoJconv}) holds for any $\alpha \in \mathcal{A}_X$. In particular, the integrand in (\ref{ZtoJconv2}) decays exponentially at infinity and the integral converges absolutely for any $\alpha \in \mathcal{A}_X$. Similar to the proof of Proposition \ref{Tpartiexpress2}, we can deform the integration multi-contour to any $\RR^{N-2n-1} + i\mathbf{\hat{v}}_{\alpha'}  $ for any $\alpha' \in \mathcal{A}_X$ without changing the integral. This shows that $\mathfrak{J}_X$ is independent on the choice of the angle structure. 
\end{proof} 

Lemma \ref{pretildexinter} below is a supporting evidence for the expectation discussed in Remark \ref{12implies3}. We will not use this lemma in the rest of this paper. Readers who are not interested in the lemma below can skip to the next subsection.
\begin{lemma}\label{pretildexinter} Let $\mathcal{V}_X$ be the gluing variety of $X$ (see Section \ref{NZD}). Let $X$ be an ideal triangulation satisfying Definition \ref{defgenFAMED5}(1) and (2). Then there exists $n_1,n_2\in \QQ$ such that for every $\mathbf{z} \in \mathcal{V}_X$, we have 
    $$ C_1\Log z_1 + \dots + C_{N-2n} \Log z_{N-2n} = \mathrm{H}^\CC_{X,m}(\mathbf{z}) + n_1 \mathrm{H}^\CC_{X,l}(\mathbf{z}) + n_2 i \pi.$$
\end{lemma}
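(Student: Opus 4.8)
The plan is to compute both sides of the claimed identity as $\CC$-linear functionals (more precisely, $\ZZ$-linear combinations of logarithms) on $\mathbf{z}$, using the Neumann--Zagier data, and to match them using the generalized FAMED hypotheses. First I would recall that by definition $\mathrm{H}^\CC_{X,m}(\mathbf{z})$ and $\mathrm{H}^\CC_{X,l}(\mathbf{z})$ are the bottom rows of the respective Neumann--Zagier systems, so the pair $(m,l)$ gives two more rows that, together with the $N-1$ edge equations, can be organized into $N\times N$ matrices; in our normalization $l$ is the $N$-th row of $\mathbf{G},\mathbf{G'},\mathbf{G''}$. Using the relation $\Log z + \Log z' + \Log z'' = i\pi$ valid on $\mathcal{V}_X$ (equivalently, the $p_i,p_i',p_i''$ relations), I would rewrite every logarithmic holonomy purely in terms of $\BLog\mathbf z$ and $\BLog\mathbf z''$ via the matrices $\mathbf A=\mathbf G-\mathbf G'$ and $\mathbf B=\mathbf G''-\mathbf G'$, exactly as in Section \ref{NZD}. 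The quantity $C_1\Log z_1 + \dots + C_{N-2n}\Log z_{N-2n}$ is, by Definition \ref{defgenFAMED5}(3) and the definition $(C_1,\dots,C_{N-2n})^{\!\top}=$ last column of $-2\mathbf E'_{N-2n}$, a specific row vector applied to $\BLog\mathbf z$ supported on the pivot coordinates $k_1<\dots<k_{N-2n}$ of $(\mathbf{EB})_{N-2n}$.

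The key step is to identify that supported vector with a combination of the bottom rows of the Neumann--Zagier matrices modulo the edge relations. Concretely, on the gluing variety the edge equations force $(\mathbf{EB})_{N-2n}$-type relations among the $\Log z''_k$, and by Lemma \ref{preNZpara} the subspace cut out by $(\mathbf{EA})_{2n}$ is exactly $\im((\mathbf{EB})_{N-2n}^{\!\top})$; dually, the pivot coordinates of $(\mathbf{EB})_{N-2n}$ give a coordinate system on $\ker(\mathbf{EA})_{2n}$, so any linear functional vanishing on the edge-relation subspace is determined by its values on those pivot coordinates. I would then observe that the holonomy functional of any peripheral curve, restricted to $\mathcal{V}_X$, lies in (a coset of) the lattice spanned by the rows of $(\mathbf A|\mathbf B)$ and the $i\pi\ZZ$-shifts coming from $\Log z+\Log z'+\Log z''=i\pi$; in particular $\mathrm{H}^\CC_{X,m}$ and $\mathrm{H}^\CC_{X,l}$ are two such functionals. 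Since $[m],[l]$ generate $\pi_1$ of the boundary torus, the span of the first $N-1$ edge rows together with the $m$-row and the $l$-row has rank $N$ in the relevant sense (this is the standard NZ fact, cf.\ \cite{NZ}), and $\mathrm{H}^\CC_{X,m}$ is therefore expressible modulo the edge-equation subspace as $n_1 \mathrm{H}^\CC_{X,l}$ plus the functional $\sum C_k \Log z_k$ plus an $i\pi$-term, which is precisely the claimed identity with $n_1,n_2\in\QQ$ (rationality being forced by the rational matrix $\mathbf E'$). Condition (2) of Definition \ref{defgenFAMED5}, i.e.\ the last column of $\mathbf E_{2n}$ vanishing, is what guarantees the functional picks up no $\lambda$-dependent ($\xi$-dependent) contamination and that the expression only involves the pivot coordinates $k_1,\dots,k_{N-2n}$, so the right-hand side is well-defined.

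In more detail, the steps in order: (i) write the holonomy equations for $m$ and $l$ in the $\mathbf A,\mathbf B$ form of Section \ref{NZD}, giving rows that I will call $\mathbf a_m,\mathbf b_m$ and $\mathbf a_l,\mathbf b_l$; (ii) apply $\mathbf E$ and extract, using Definition \ref{defgenFAMED}(4) and Remark \ref{rmk4}, the identity $(\mathbf{EA})_{N-2n}=(\mathbf{EB})_{N-2n}\mathscr G$ on the relevant rows, so that the $l$-holonomy row, after reduction, becomes a combination of $(\mathbf{EB})_{N-2n}\mathscr G\,\BLog\mathbf z$ and $(\mathbf{EB})_{N-2n}\BLog\mathbf z''$; (iii) use $\BLog\mathbf z''=i\pi\mathbf 1-\BLog\mathbf z-\BLog\mathbf z'$ together with $\BLog\mathbf z'=-\BLog(1-\mathbf z)$-type relations on $\mathcal V_X$ to collapse everything onto $\BLog\mathbf z$ supported on pivots; (iv) compare with $-2\mathbf E'_{N-2n}$'s last column; (v) read off $n_1$ from the $l$-row coefficient and $n_2$ from the accumulated $i\pi$ terms. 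The main obstacle I anticipate is bookkeeping the $i\pi$-shifts and the pivot-vs-nonpivot coordinate decomposition cleanly: the logarithmic branch relations $\Log z+\Log z'+\Log z''=i\pi$ introduce $\pi\ZZ$-ambiguities on $\mathcal V_X$, and one must check that after all reductions these combine into a single well-defined $n_2\in\QQ$ independent of $\mathbf z$ — this is where the fact that the identity is claimed on all of $\mathcal V_X$ (a connected-enough set, or at least each component) rather than pointwise matters, and where one leans on the integrality/rationality of $\mathbf G,\mathbf G',\mathbf G'',\mathbf E,\mathbf E'$. The symplectic orthogonality relation \eqref{NZmultizero}, $(\mathbf{EA})_{2n}(\mathbf{EB})_{N-2n}^{\!\top}=0$, is the technical lever that lets me pass freely between "the functional on all coordinates" and "the functional on pivot coordinates only."
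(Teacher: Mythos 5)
Your big-picture plan — dualize $\mathrm{H}^\CC_{X,m}$ via the Neumann--Zagier symplectic structure, exploit Definition~\ref{defgenFAMED5}(2) to control the bottom $2n$ block, and then read off the coefficients $C_k$ — does match the paper's strategy. But the specific mechanism you describe has several concrete gaps. First, the statement that the $N-1$ edge rows together with the $m$-row and the $l$-row "have rank $N$" and that $\mathrm{H}^\CC_{X,m}$ therefore "lies in (a coset of) the lattice spanned by the rows of $(\mathbf A\,|\,\mathbf B)$" is false: the $m$-row is linearly independent of the other $N$ rows (those $N+1$ rows have rank $N+1$), and $\mathrm{H}^\CC_{X,m}$ is not constant on $\mathcal V_X$ while any row-space combination of $(\mathbf A\,|\,\mathbf B)$ is (its value is pinned by the gluing equations). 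As phrased, that observation is the conclusion you want, not something you can "observe" — the argument becomes circular. Second, your step (iii), trying to "collapse everything onto $\BLog\mathbf z$" via $\BLog\mathbf z' = -\BLog(1-\mathbf z)$-type relations, cannot work: $\Log z''$ and $\Log z'$ are not linear in $\Log z$; the only linear relation available is $\Log z + \Log z' + \Log z'' = i\pi$. The elimination of the $\BLog\mathbf z''$ terms must come from subtracting rows of $(\mathbf A\,|\,\mathbf B)$, not from a shape-parameter identity.

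What is actually missing is the NZ duality/intersection formula, which is the engine of the paper's proof: writing $\mathrm{H}^\CC_{X,m} = P\cdot\BLog\mathbf z + Q\cdot\BLog\mathbf z''$, one has $\omega\big((P,Q), r\big) = 2\,i(m,r)$ for every row $r$ of $(\mathbf A\,|\,\mathbf B)$, which in matrix form says $(\mathbf A\,|\,\mathbf B)\,(-Q^{\!\top}, P^{\!\top})^{\!\top} = (0,\dots,0,\pm 2)^{\!\top}$. Multiplying by $\mathbf E$ and invoking Definition~\ref{defgenFAMED5}(2) makes the bottom $2n$ block vanish on the right, so $(\mathbf{EA})_{2n}Q^{\!\top}=0$; then Lemma~\ref{preNZpara} shows $Q$ lies in the row space of $(\mathbf{EB})_{N-2n}$, hence of $\mathbf B$. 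Subtracting the corresponding combination of rows of $(\mathbf A\,|\,\mathbf B)$ from $(P\,|\,Q)$ kills $Q$ entirely, and the subtracted rows evaluate on $\mathcal V_X$ to multiples of $2\pi i$ (edges) plus a multiple of $\mathrm{H}^\CC_{X,l}$ — this is exactly where $n_1$ and $n_2$ arise. The remaining $P$-part is then compared to $(C_1,\dots,C_{N-2n})$ through the top $N-2n$ block and the pivot structure of $(\mathbf{EB})_{N-2n}$. Your detour through Definition~\ref{defgenFAMED}(4), Remark~\ref{rmk4}, and $\mathscr G$ is unnecessary for this lemma; in fact $(\mathbf{EA})_{N-2n}=(\mathbf{EB})_{N-2n}\mathscr G$ holds only modulo the $(\mathbf{EA})_{2n}$-rows, so stating it as a direct identity is already imprecise.
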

\begin{proof}
    Suppose $\mathrm{H}^\CC_{X,m}(\mathbf{z}) = P_1 \Log z_1 + \dots + P_N \Log z_{N-2n} + Q_1 \Log z_1'' + \dots + Q_N \Log z_N''$ for some $P_1,\dots, P_N, Q_1, \dots, Q_N \in \ZZ$. Recall that each row of the matrix $(\mathbf{A} | \mathbf{B})$ corresponds a curve on $\partial(\SS^3 \setminus \nu(K))$, which is either the longitude of $K$ or a loop traveling around an edge of the triangulation. It is known that with respect to the Neumann-Zagier symplectic form $\omega$, for any row $r = (r_A | r_B)$ of the matrix $(\mathbf{A} | \mathbf{B})$, where $r_A, r_B \in M_{1,N}(\ZZ)$, we have 
    $$\omega( (P_1,\dots, P_N, Q_1,\dots Q_N), r) =   (P_1,\dots, P_N) \cdot r_B -  (Q_1,\dots, Q_N) \cdot r_A = 2i(m,r),$$ 
    where $\cdot$ denotes the usual dot product of $\CC^N$ and $i(m,r)$ is the algebraic intersection number of the meridian $m$ and the curve corresponding to the row $r$ (see \cite{NZ} and \cite[Proposition 15.2.14]{BM}). In terms of matrix multiplication, this can be written as
    $$ (\mathbf{A} | \mathbf{B})
    (-Q_1 , \dots , -Q_N , P_1 , \dots , P_N)^{\!\top}
    = (0,\dots, 0 , -2)^{\!\top}. $$
    Multiplying both sides by $\mathbf{E}$, we have
    \begin{align*}
    \begin{pmatrix}
    (\mathbf{EA})_{N-2n} & (\mathbf{EB})_{N-2n}   \\
   (\mathbf{EA})_{2n} & \mathbf{O} 
\end{pmatrix} (-Q_1 , \dots , -Q_N , P_1 , \dots , P_N)^{\!\top}
    = \begin{pmatrix}
        \mathbf{E}_{N-2n} \\ \mathbf{E}_{2n}
    \end{pmatrix}
    (0,\dots,0,2)^{\!\top}.
    \end{align*}
    By Definition \ref{defgenFAMED5}(2), we have
    \begin{align}\label{NZequlemma}
        \begin{pmatrix}
    (\mathbf{EA})_{N-2n} & (\mathbf{EB})_{N-2n}   \\
   (\mathbf{EA})_{2n} & \mathbf{O} 
\end{pmatrix} (-Q_1 , \dots , -Q_N , P_1 , \dots , P_N)^{\!\top}
= (C_1,\dots, C_{N-2n},0,\dots, 0)^{\!\top},
    \end{align}
    where $(C_1,\dots, C_{N-2n})^{\!\top}$ is the last column of $\mathbf{E}_{N-2n}$. By considering the last $2n$ equations in (\ref{NZequlemma}), we have
    $$ (\mathbf{EA})_{2n} (Q_1,\dots, Q_N)^{\!\top} = (0,\dots,0).$$
    By Proposition \ref{NZpara}, we have $(Q_1,\dots, Q_N)^{\!\top} \in \ker (\mathbf{EA})_{2n} = \im ((\mathbf{EB})_{N-2n}^{\!\top})$, which implies that $(Q_1,\dots, Q_N)^{\!\top}$ is in the row space of $(\mathbf{EB})_{N-2n}$. As a result, by adding a linear combination of rows of $(\mathbf{A}|\mathbf{B})$ that in particular contributes to the constants $n_1,n_2$ in the Lemma, we can assume that $E_1=\dots = E_N  = 0$. Besides, by Proposition \ref{preNZpara}, since the pivot positions of $\mathbf{(EB)}^{\!\top}_{N-2n}$ are $1,\dots,N-2n$, we know that $\Log z_{N-2n+1}, \dots, \Log z_{N}$ can be expressed in terms of $\Log z_1, \dots, \Log z_{N-2n}$. Thus, by redefining $n_2$ if necessary, we can assume $D_{N-2n+1} = \dots = D_N = 0$. By considering the first $N-2n$ equations in (\ref{NZequlemma}), we have 
    \begin{align*}
        (\mathbf{EB})_{N-2n} (P_1,\dots, P_{N-2n}, 0,\dots, 0)^{\!\top}
        = (C_1,\dots, C_{N-2n})^{\!\top}.
    \end{align*}
    Since $\mathbf{EB} = \begin{pmatrix}
            \mathrm{Id}_{N-2n} | \star
        \end{pmatrix}$ for some matrix $\star \in M_{N-2n \times N}(\QQ)$, we have $P_k = C_k$ for $k=1,\dots, N-2n$. This completes the proof. 
\end{proof}

\subsection{Potential function and its properties}
Note that under the affine isomorphism $\mathscr{L}$, the potential function can be written as
\begin{align*}
\tilde{S}^J(\mathrm{w}, x_2,\dots, x_{N-2n} ; \xi) 
&= \tilde{S}(\mathscr{L}^{-1}(\mathrm{w}, x_2,\dots, x_{N-2n}) ; \xi )
= J(\mathrm{w},x_2,\dots, x_{N-2n}) - \frac{i\xi \mathrm{w} }{2}, 
\end{align*}
where
\begin{align*}
    J(\mathrm{w},x_2,\dots, x_{N-2n}) 
    =&\ -\frac{i}{2} \varphi(\boldsymbol{x})^{\!\top}
Q \varphi(\boldsymbol{x})
 - \frac{i}{2}\sum_{k=1}^N \left(\frac{\varepsilon(T_k)-1}{2}\right) \varphi_k(\boldsymbol{x})^2
 + i \sum_{k=1}^N \mathrm{L}(\varphi_k(\boldsymbol{x})) \\
&\ - \boldsymbol{x}^{\!\top}  (\mathbf{E}_{N-2n}'\boldsymbol \nu- 
\mathbf{(EB)}_{N-2n}\mathscr{G}
\boldsymbol{\pi}),
\end{align*}
is independent on the angle structure $\alpha$.

\begin{lemma}\label{xbiholo}
The map that sends $w_l$ to $w_m$ is a local biholomorphism at $0$. Moreover, this map sends $0$ to $0$.
\end{lemma}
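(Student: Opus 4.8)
The plan is to deduce the lemma from the review of Neumann--Zagier theory in Section~\ref{NZpotentintro}. First I would record that the meridian $m$ and the preferred longitude $l$ form a basis of $\pi_1(\partial(\SS^3\setminus\nu(K)))\cong\ZZ^2$, since their algebraic intersection number is $1$; in particular $[m]$ and $[l]$ generate this group. By the statement quoted in Section~\ref{NZpotentintro}, applied to the pair $(\gamma_1,\gamma_2)=(l,m)$, the transition map $\Psi_{l,m}$ sending $w_l$ to $w_m=\Psi_{l,m}(w_l)$ is then a locally biholomorphic map around $0$ that fixes $0$. Since the map in the statement is exactly $\Psi_{l,m}$, this already yields the conclusion.

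For a more self-contained argument I would instead proceed as follows. Near the complete hyperbolic structure the deformation variety of $\SS^3\setminus K$ is a smooth one-dimensional complex curve $\mathcal{C}$, and by \cite{NZ} the logarithmic holonomy $w_m$ of the meridian is a local holomorphic coordinate on $\mathcal{C}$ with $w_m=0$ at the complete structure. On $\mathcal{C}$ the logarithmic holonomy $w_l$ of the longitude is then a holomorphic function of $w_m$, vanishing at $w_m=0$, with
$$\frac{dw_l}{dw_m}\Big|_{w_m=0}=\tau,$$
where $\tau$ is the cusp shape (the modulus of the Euclidean structure on the cusp cross-section). Since $\tau$ has nonzero imaginary part, in particular $\tau\neq 0$, the holomorphic inverse function theorem shows that $w_m$ is a holomorphic function of $w_l$ in a neighborhood of $w_l=0$, with derivative $1/\tau\neq 0$ there; moreover $w_m=0$ when $w_l=0$, as both vanish at the complete structure. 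This is precisely the assertion of the lemma.

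The only point requiring care --- hence the main obstacle --- is the non-degeneracy input $\tau\neq 0$, i.e.\ the fact that the longitude holonomy is genuinely a local coordinate on $\mathcal{C}$ rather than a critical value of one; this is exactly the Neumann--Zagier fact that the cusp shape of a hyperbolic cusp is never real, and it is already implicit in the review in Section~\ref{NZpotentintro}. The remaining assertions (holomorphy of the inverse and $0\mapsto 0$) are then immediate.
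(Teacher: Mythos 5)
Your proposal is correct, and your ``more self-contained argument'' is essentially the paper's proof: the paper cites \cite[Lemma~4.1(a)]{NZ} for $\partial w_m/\partial w_l\neq 0$ at the complete structure, applies the inverse function theorem, and notes $w_l=w_m=0$ there; your second paragraph carries out exactly this, merely spelling out that the non-vanishing derivative is the cusp shape. Your first paragraph, which appeals directly to the generic statement about $\Psi_{\gamma_1,\gamma_2}$ recorded in Section~\ref{NZpotentintro}, is somewhat circular in spirit (the lemma is precisely formalizing that review statement for the pair $(l,m)$), but since the second paragraph supplies the genuine justification via \cite{NZ} and the inverse function theorem, the overall argument matches the paper.
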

\begin{proof}
By \cite[Lemma 4.1 (a)]{NZ} at the complete hyperbolic structure, we have
$$
\frac{\partial w_m}{\partial w_l} \neq 0 .$$ 
The first claim follows from the inverse function theorem. The second claim follows from the fact that at the complete hyperbolic structure, we have $w_l = w_m = 0$.
\end{proof}

We study the properties of the potential function $J$ for $\mathrm{w}$ sufficiently close to $0$. Recall that $\mathscr{U_{\mathbf{y}}}$ and $\overline{\mathscr{U_{\mathbf{y}}}}$ are the products of horizontal bands defined by
$$
\mathscr{U}_\mathbf{y} = \prod_{k=1}^N (\RR + i (-\pi,0)) \text{ and } \overline{\mathscr{U_\mathbf{y}}} = \prod_{k=1}^N (\RR + i [-\pi,0]) .
$$
\begin{proposition}\label{concavSxgen}
Let $\mathrm{w} \in \CC$. Given $\mathbf{\hat{v}} \in \RR^{N-2n-1}$, consider the horizontal plane $\RR^{N-2n-1}+i\mathbf{\hat{v}}$. If 
$$\varphi\big(\mathscr{L}^{-1}( \{\mathrm{w}\} \times ( \RR^{N-2n-1}+i\mathbf{\hat{v}}))\big) \subset \mathscr{U}_{\mathbf{y}} \qquad (\text{resp. } \varphi(\mathscr{L}^{-1}( \{\mathrm{w}\} \times (\RR^{N-2n-1}+i\mathbf{\hat{v}}))\big) \subset \overline{\mathscr{U}_{\mathbf{y}}}), $$ 
then the real part of $J(\mathrm{w},x_2,\dots, x_{N-2n}) $ is strictly concave (resp. concave) in the variables \linebreak $(\Re x_2, \dots, \Re x_{N-2n})$ and strictly convex (resp. convex) in the variables $(\mathrm{Im}\text{ } x_2, \dots, \mathrm{Im}\text{ } x_{N-2n})$.
\end{proposition}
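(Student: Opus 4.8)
The plan is to adapt the proof of Proposition~\ref{concavSgen} almost verbatim, the only genuinely new ingredient being the bookkeeping forced by the affine change of variables $\mathscr{L}$ and the elimination of the variable $x_1$. First I would fix $\mathrm{w}\in\CC$ and $\mathbf{\hat v}\in\RR^{N-2n-1}$ and regard $J(\mathrm{w},\,\cdot\,)$ as a function of $\mathbf{\hat x}=(x_2,\dots,x_{N-2n})$ on a neighbourhood of $\RR^{N-2n-1}+i\mathbf{\hat v}$. Writing $\mathscr{L}^{-1}(\mathrm{w},\mathbf{\hat x})=(x_1,\dots,x_{N-2n})$ with $x_1=C_1^{-1}\big(\mathrm{w}+C-\sum_{k\ge 2}C_kx_k\big)$, the composite $\mathbf{\hat x}\mapsto \varphi(\mathscr{L}^{-1}(\mathrm{w},\mathbf{\hat x}))=:\mathbf{y}$ is an affine map $\CC^{N-2n-1}\to\CC^N$ whose linear part $M$ has \emph{real} coefficients (the $C_k$ are rational and $\varphi$ has linear part $(\mathbf{EB})_{N-2n}^{\!\top}$; the complex number $\mathrm{w}$ enters only through the constant term), so $J(\mathrm{w},\,\cdot\,)$ is holomorphic in $\mathbf{\hat x}$ on the region in question — the hypothesis $\varphi\big(\mathscr{L}^{-1}(\{\mathrm{w}\}\times(\RR^{N-2n-1}+i\mathbf{\hat v}))\big)\subset\mathscr{U}_{\mathbf y}$ (resp.\ $\subset\overline{\mathscr{U}_{\mathbf y}}$) being exactly what keeps $\mathbf y$ in the domain of holomorphy of $\mathrm{L}$.

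Next I would exploit pluriharmonicity of $\Re J(\mathrm{w},\,\cdot\,)$: in the coordinates $(\Re x_2,\Im x_2,\dots,\Re x_{N-2n},\Im x_{N-2n})$ its real Hessian splits into a block in the $\Re$-directions and a block in the $\Im$-directions which are negatives of one another, so it is enough to control the $\Re$-block, which equals $\Re\big(\mathrm{Hess}_{\mathbf{\hat x}}J(\mathrm{w},\,\cdot\,)\big)$. Because $M$ is real-affine, the terms $-\tfrac{i}{2}\varphi(\boldsymbol x)^{\!\top}Q\varphi(\boldsymbol x)$ and $-\tfrac{i}{2}\sum_k\big(\tfrac{\varepsilon(T_k)-1}{2}\big)\varphi_k(\boldsymbol x)^2$ become, as functions of $\mathbf{\hat x}$, quadratic polynomials with purely imaginary second-order coefficients, and the term $-\boldsymbol x^{\!\top}(\mathbf{E}'_{N-2n}\boldsymbol\nu-(\mathbf{EB})_{N-2n}\mathscr{G}\boldsymbol\pi)$ is affine; none of these contributes to the real part of the holomorphic Hessian. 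So, exactly as in the proof of Proposition~\ref{concavSgen}, $\Re\big(\mathrm{Hess}_{\mathbf{\hat x}}J(\mathrm{w},\,\cdot\,)\big)=M^{\!\top}\Delta M$, where $\Delta$ is the $N\times N$ diagonal matrix with $k$-th entry $-\Im\big(1/(1+e^{-\varphi_k(\boldsymbol x)})\big)$, negative when $\Im\varphi_k(\boldsymbol x)\in(-\pi,0)$ and non-positive when $\Im\varphi_k(\boldsymbol x)\in[-\pi,0]$.

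Finally I would conclude: in the open case $\Delta$ is negative definite, and $M$ is injective because its linear part $(\mathbf{EB})_{N-2n}^{\!\top}$ has rank $N-2n$ by Lemma~\ref{preNZpara} while fixing $\mathrm{w}$ in $\mathscr{L}^{-1}$ is an affine isomorphism onto an affine subspace; hence $M^{\!\top}\Delta M$ is negative definite, giving strict concavity of $\Re J$ in $(\Re x_2,\dots,\Re x_{N-2n})$ and, via the splitting above, strict convexity in $(\Im x_2,\dots,\Im x_{N-2n})$. In the closed case $\Delta\le 0$ yields concavity and convexity in the respective variables. The only step that needs genuine care — the ``main obstacle'', such as it is — is checking that after the change of variables $\mathscr{L}$ and the substitution for $x_1$ the map $\mathbf{\hat x}\mapsto\mathbf y$ remains real-affine (the complex parameter $\mathrm{w}$ shifting only the constant term), so that the quadratic and linear parts of $J$ truly drop out of the real Hessian; once this is in place the rest is transcribed directly from Proposition~\ref{concavSgen}, and there is no new analytic difficulty.
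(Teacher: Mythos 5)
Your proposal is correct and follows the same route as the paper, which simply refers back to the proof of Proposition~\ref{concavSgen}: you correctly observe that the composite $\mathbf{\hat x}\mapsto\varphi(\mathscr{L}^{-1}(\mathrm{w},\mathbf{\hat x}))$ is affine with real, full-rank linear part $M$ (so the quadratic and linear pieces of $J$ contribute nothing to $\Re\Hess J$), and that the surviving dilogarithm contribution to the real Hessian is $M^{\!\top}\Delta M$, with $\Delta$ definite of the right sign exactly when $\im\varphi_k\in(-\pi,0)$ for every $k$. One small point worth flagging: the expression $-\im\bigl(1/(1+e^{-\varphi_k})\bigr)$ for the diagonal entries of $\Delta$, which you transcribe from the proof of Proposition~\ref{concavSgen}, carries an apparent sign typo --- a direct computation gives $\Re\bigl(i\,\mathrm{L}''(y)\bigr)=\im\bigl(1/(1+e^{-y})\bigr)$ without the leading minus --- but this does not affect the argument, since that quantity is indeed negative on $\im y\in(-\pi,0)$, as asserted.
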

\begin{proof}
The proof is similar to that of Proposition \ref{concavSgen}.
\end{proof}

From Lemma \ref{xbiholo}, given $\mathrm{w}= w_m$ sufficiently close to $0$, we have a corresponding value $w_l = w_l(\mathrm{w})$. Let 
$$\boldsymbol{x^c}(\mathrm{w}) = (x_{1}^c(\mathrm{w}), x_{2}^c(\mathrm{w}), \dots, x_{N-2n}^c(\mathrm{w}))$$ 
be the corresponding critical point of $\tilde{S}(x_1,\dots, x_{N-2n} ; w_l(\mathrm{w}) )$.

\begin{proposition}\label{JandNZ}
For any $\mathrm{w}$ sufficiently close to 0,  the point $(x^c_{2}(\mathrm{w}), \dots, x^c_{N-2n}(\mathrm{w}))$ is a critical point of the holomorphic function $J(\mathrm{w}, x_2,\dots, x_{N-2n})$ with respect to $x_2,\dots, x_{N-2n}$. Furthermore, 
the critical value $J(\mathrm{w}, x^c_{2}(\mathrm{w}),\dots, x^c_{N-2n}(\mathrm{w}))$ satisfies
\begin{align*}
\frac{\partial}{\partial \mathrm{w}} J(\mathrm{w}, x_{2}^c(\mathrm{w}),\dots, x_{N-2n}^c(\mathrm{w})) = \frac{iw_l}{2} \ \text{,} \ \Re J(0, x_{2}^c(0),\dots, x_{N-2n}^c(0)) = - \Vol(\SS^3 \setminus K).
\end{align*}
In particular, we have $ J(\mathrm{w}, x^c_{2}(\mathrm{w}),\dots, x^c_{N-2n}(\mathrm{w})) = i\phi_{m,l}(\mathrm{w}) + C$ for some imaginary constant $C\in \CC$, where $\phi_{m,l}$ is the Neumann-Zagier potential function with respect to $(m,l)$ (see Section \ref{NZpotentintro}).
\end{proposition}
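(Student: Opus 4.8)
The plan is to prove the three assertions in Proposition~\ref{JandNZ} in sequence, bootstrapping from the already-established correspondence between critical points of $\tilde S$ and solutions of the gluing equations (Proposition~\ref{critThurscorrespondence}) together with the volume computation of Proposition~\ref{dilogVolgen}.

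\textbf{Step 1: The partial critical point.} Recall from the definition of $\tilde S^J$ that $\tilde S^J(\mathrm{w}, x_2, \dots, x_{N-2n}; \xi) = J(\mathrm{w}, x_2, \dots, x_{N-2n}) - \tfrac{i\xi \mathrm{w}}{2}$, where the second term is independent of $x_2, \dots, x_{N-2n}$. Hence for fixed $\mathrm{w}$ and fixed $\xi$, a point $(x_2, \dots, x_{N-2n})$ is a critical point of $J(\mathrm{w}, \cdot)$ with respect to $x_2, \dots, x_{N-2n}$ if and only if $\partial \tilde S^J / \partial x_j = 0$ for $j = 2, \dots, N-2n$. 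By Proposition~\ref{critThurscorrespondence}(1), with the choice $\xi = w_l(\mathrm{w})$, the point $\boldsymbol{x^c}(\mathrm{w})$ is a full critical point of $\tilde S(\,\cdot\,; w_l(\mathrm{w}))$, i.e. $\nabla_{\boldsymbol x} \tilde S = 0$; applying the affine isomorphism $\mathscr L$ (which is a change of variables of full rank), the image is a full critical point of $\tilde S^J(\,\cdot\,; w_l(\mathrm{w}))$, so in particular $\partial \tilde S^J/\partial x_j = 0$ for $j = 2, \dots, N-2n$. This gives the first assertion. Here the fact that $\mathscr L$ sends $(x_1, \dots, x_{N-2n})$ to $(\mathrm{w}, x_2, \dots, x_{N-2n})$ with $\mathrm{w} = \sum C_k x_k - C$ and with $\mathrm{w}$ corresponding, at the critical point, to the logarithmic holonomy of the meridian (Lemma~\ref{tildexinter}) is what makes the value $\mathrm{w} = w_m$ compatible with the choice $\xi = w_l(\mathrm{w})$: the critical point of $\tilde S(\,\cdot\,;w_l)$ is precisely the solution of the gluing equations whose meridian holonomy is $w_m$ and whose longitude holonomy is $w_l$, and these are matched by the transition function $\Psi_{m,l}$.

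\textbf{Step 2: The derivative of the critical value.} Write $g(\mathrm{w}) = J(\mathrm{w}, x^c_2(\mathrm{w}), \dots, x^c_{N-2n}(\mathrm{w}))$. Since $(x^c_2(\mathrm{w}), \dots, x^c_{N-2n}(\mathrm{w}))$ is a critical point of $J(\mathrm{w}, \cdot)$ in the last $N-2n-1$ variables, the envelope theorem (chain rule with vanishing partials) gives $g'(\mathrm{w}) = \partial J / \partial \mathrm{w}$ evaluated at the critical point. Now $J(\mathrm{w}, x_2, \dots, x_{N-2n}) = \tilde S^J(\mathrm{w}, x_2, \dots, x_{N-2n}; \xi) + \tfrac{i\xi \mathrm w}{2}$ for any $\xi$, and differentiating in $\mathrm{w}$ with $\xi$ held fixed at $\xi = w_l(\mathrm w)$, the full gradient of $\tilde S^J$ vanishes at the critical point $\boldsymbol{x^c}(\mathrm w)$ (again by Step 1 applied to all $N-2n$ variables), so $\partial J/\partial \mathrm{w}$ at that point equals $\tfrac{i w_l(\mathrm w)}{2} = \tfrac{iw_l}{2}$. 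This is the claimed differential equation. For the value at $\mathrm{w} = 0$: by Lemma~\ref{xbiholo} the point $\mathrm{w}=0$ corresponds to $w_l = 0$, hence to the complete hyperbolic structure, and by Proposition~\ref{dilogVolgen}(1) (with $\lambda_X(\alpha)$ replaced by the appropriate parameter, here $0$, so that $\mathrm{H}^{\CC}_{X,l}(\mathbf z^c) = 0$) we get $\Re \tilde S(\boldsymbol{x^c}(0); 0) = -\Vol(\SS^3 \setminus K)$, and since $\tilde S^J$ differs from $\tilde S$ by the change of variables $\mathscr L$ which preserves the critical value, and since the extra term $-\tfrac{i \xi \mathrm w}{2}$ vanishes at $\mathrm w = 0$, we obtain $\Re J(0, x^c_2(0), \dots, x^c_{N-2n}(0)) = -\Vol(\SS^3 \setminus K)$.

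\textbf{Step 3: Identification with the Neumann--Zagier potential.} By Step 2, $g(\mathrm w)$ and $i\phi_{m,l}(\mathrm w)$ are both holomorphic on a simply connected neighborhood of $0$ and satisfy $g'(\mathrm w) = \tfrac{i w_l}{2} = i \cdot \tfrac{\Psi_{m,l}(\mathrm w)}{2} = i \phi_{m,l}'(\mathrm w)$, using the defining property~(\ref{NZprop}) of $\phi_{m,l}$ (recall $w_l$ regarded as a function of $w_m = \mathrm w$ is exactly $\Psi_{m,l}(\mathrm w)$). Hence $g(\mathrm w) - i\phi_{m,l}(\mathrm w)$ is a constant $C$. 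To see $C \in i\RR$: evaluate at $\mathrm w = 0$, where $\Re(i\phi_{m,l}(0)) = \Re(i \cdot i(\Vol + i\CS)) = -\Vol(\SS^3 \setminus K) = \Re g(0)$ by Step 2, so $\Re C = 0$. This gives $J(\mathrm w, x^c_2(\mathrm w), \dots, x^c_{N-2n}(\mathrm w)) = i\phi_{m,l}(\mathrm w) + C$ with $C \in i\RR$, as claimed. \textbf{The main obstacle} I anticipate is Step 1's bookkeeping: one must check carefully that the branch of $w_l$ as a function of $\mathrm w = w_m$ produced by the transition function $\Psi_{m,l}$ really is the $\xi$ for which Proposition~\ref{critThurscorrespondence}(1) produces the desired critical point (rather than a Galois-conjugate solution of the gluing equations), and that the family $\boldsymbol{x^c}(\mathrm w)$ depends holomorphically on $\mathrm w$ — this last point follows from nondegeneracy of the Hessian (Proposition~\ref{Hesstotor}) and the holomorphic implicit function theorem, but it should be invoked explicitly to justify differentiating $g$ in Step 2.
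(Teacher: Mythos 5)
Your proof is correct and follows essentially the same route as the paper's: Step 1 transports the critical point equation for $\tilde S$ through the affine change of variables $\mathscr L$, Step 2 reads off $\partial J/\partial \mathrm w = i\xi/2 = iw_l/2$ from the $\mathrm w$-component of the critical point equation and combines with Proposition~\ref{dilogVolgen}(1) at $\mathrm w=0$, and Step 3 integrates against the defining properties~(\ref{NZprop}) of $\phi_{m,l}$. Your closing remark that holomorphic dependence of $\boldsymbol{x^c}(\mathrm w)$ on $\mathrm w$ should be justified via nondegeneracy of the Hessian (Proposition~\ref{Hesstotor}) and the implicit function theorem is a legitimate point that the paper leaves implicit in its setup of $\boldsymbol{x^c}(\mathrm w)$ just before the proposition, but it does not change the argument.
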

\begin{proof}
Note that since $S^J=S \circ \mathscr{L}^{-1}$ and $\mathscr{L}$ is an affine isomorphism,
$$\nabla \tilde{S}(x_1,\dots, x_{N-2n}) = ( \tilde{S}_{x_1} ,\dots, \tilde{S}_{x_{N-2n}}) = 0$$
 if and only if
\begin{align}\label{JSiff}
    \nabla \tilde{S}^J(\mathrm{w}, x_2, \dots, x_{N-2n} ;\xi) = ( J_{\mathrm{w}} - i\xi/2, J_{x_2},\dots, J_{x_{N-2n}}) = 0.
\end{align} 
This implies that 
$$ J_{x_2} (\mathrm{w}, x_{2}^c(\mathrm{w}), \dots, x_{N-2n}^c(\mathrm{w})) = \dots= 
J_{x_{N-2n}}(\mathrm{w}, x_{2}^c(\mathrm{w}), \dots, x_{N-2n}^c(\mathrm{w})) = 0$$
for all $\mathrm{w}$. Moreover, 
$$
\frac{\partial}{\partial \mathrm{w}} \Big( J(\mathrm{w}, x_{2}^c(\mathrm{w}),\dots, x_{N-2n}^c(\mathrm{w}) \Big)
= J_{\mathrm{w}} + \sum_{k=2}^{N-2n} J_{x_k} \cdot \frac{d x_{k}^c}{d\mathrm{w}}(\mathrm{w})  
= J_{\mathrm{w}} ,
$$
where the last equality follows from the fact that $(x_{2}^c(\mathrm{w}),\dots, x_{N-2n}^c(\mathrm{w}))$ is the critical point of $J$ with respect to $x_2,\dots, x_{N-2n}$. As a result, by (\ref{JSiff}), we have
$$
\frac{\partial}{\partial \mathrm{w}} \Big(J(\mathrm{w}, x_{2}^c(\mathrm{w}),\dots, x_{N-2n}^c(\mathrm{w})) \Big)
= \frac{i\xi}{2} = \frac{i w_l}{2}.
$$
When $w_l=0$, by Lemma \ref{xbiholo}, we have $\mathrm{w} = 0$. Moreover, when $\mathrm{w}=0$, we have 
\begin{align*}
\tilde{S}^J\left(0 , x_{2}^c(0),\dots, x_{N-2n}^c(0); 0 \right) 
&= J\left(0 , x_{2}^c(0),\dots, x_{N-2n}^c(0)\right). 
\end{align*}
Thus, by Proposition \ref{dilogVolgen}, we have
\begin{align}\label{ReJgiveVol}
\mathrm{Re} \left(J\left(0 , x_{2}^c(0),\dots, x_{N-2n}^c(0)\right)  \right)
= -\Vol(\SS^3 \setminus K). 
\end{align}
Finally, by (\ref{NZprop}), since 
$$
\frac{\partial}{\partial \mathrm{w}} \Big(J(\mathrm{w}, x_{2}^c(\mathrm{w}),\dots, x_{N-2n}^c(\mathrm{w})) - i\phi_{m,l}(\mathrm{w}) \Big)
= \frac{i\mathrm{H}(l)}{2} - \frac{i\mathrm{H}(l)}{2} = 0,
$$
we have $J(\mathrm{w}, x_{2}^c(\mathrm{w}),\dots, x_{N-2n}^c(\mathrm{w})) - i\phi_{m,l}(\mathrm{w}) = C$ for some constant $C$. Note that when $\mathrm{w}=0$, by (\ref{NZprop}) and (\ref{ReJgiveVol}), we have
$\mathrm{Re}(C) = 0$. Thus, $C$ is an imaginary constant.
\end{proof}

The next proposition relates the determinant of the Hessian of $\tilde{S}$ to that of $J$.
\begin{proposition}\label{1loopJ} For $\mathrm{w} = w_m$ sufficiently small, 
we have
$$ \det(\Hess \tilde{S}) (x_1^c(\mathrm{w}), \dots, x_{N-2n}^c(\mathrm{w}); w_l(\mathrm{w})) = \frac{i}{2C_1^2 } \frac{\partial w_l}{\partial w_m}  \det (\Hess J)(\mathrm{w}, x_2^c(\mathrm{w}), \dots, x_{N-2n}^c(\mathrm{w})) . $$
\end{proposition}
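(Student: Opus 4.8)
The plan is to reduce the statement to a block (Schur complement) computation for the Hessian of $J$, combined with Proposition \ref{JandNZ}. Write $n'=N-2n$, $\hat{\boldsymbol x}=(x_2,\dots,x_{n'})$, and let $\boldsymbol x^c(\mathrm w)=(x_1^c(\mathrm w),\dots,x_{n'}^c(\mathrm w))$ be the critical point of $\tilde S(\,\cdot\,;w_l(\mathrm w))$, so that $\mathscr L(\boldsymbol x^c(\mathrm w))=(\mathrm w,x_2^c(\mathrm w),\dots,x_{n'}^c(\mathrm w))$. First I would pass through the affine isomorphism $\mathscr L$: since $\tilde S=\tilde S^J\circ\mathscr L$ and $\mathscr L$ is affine, the chain rule gives $\Hess_{\boldsymbol x}\tilde S=(D\mathscr L)^{\!\top}(\Hess\tilde S^J)(D\mathscr L)$ with no second-order correction term, hence $\det\Hess_{\boldsymbol x}\tilde S=(\det\mathscr L)^{2}\det\Hess\tilde S^J$ at corresponding points, the Jacobian factor being $C_1^{\pm2}$ according to the direction convention. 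Moreover $\tilde S^J(\mathrm w,\hat{\boldsymbol x};\xi)=J(\mathrm w,\hat{\boldsymbol x})-\tfrac{i\xi\mathrm w}{2}$ with $\xi=w_l(\mathrm w)$ held fixed, and the extra term is affine in $(\mathrm w,\hat{\boldsymbol x})$, so $\Hess\tilde S^J=\Hess_{(\mathrm w,\hat{\boldsymbol x})}J$ identically; in particular at the critical point.

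Next I would block-decompose $\Hess_{(\mathrm w,\hat{\boldsymbol x})}J$ at $(\mathrm w,\boldsymbol x^c(\mathrm w))$ by separating the $\mathrm w$-row and column from the $\hat{\boldsymbol x}$-block $H:=\Hess_{\hat{\boldsymbol x}}J$ — the restricted Hessian of $J$ in $x_2,\dots,x_{n'}$ with $\mathrm w$ treated as a parameter, which is the matrix on the right-hand side of the proposition. With $\mathbf b=(\partial_{\mathrm w}\partial_{x_k}J)_{k\ge 2}$, the Schur complement formula gives $\det\Hess_{(\mathrm w,\hat{\boldsymbol x})}J=\det H\cdot(\partial_{\mathrm w}^2J-\mathbf b^{\!\top}H^{-1}\mathbf b)$. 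I would then identify the scalar factor with $g''(\mathrm w)$, where $g(\mathrm w):=J(\mathrm w,\boldsymbol x^c(\mathrm w))$: differentiating the critical-point relations $\partial_{x_k}J(\mathrm w,\boldsymbol x^c(\mathrm w))=0$ for $k\ge2$ (valid by Proposition \ref{JandNZ}) in $\mathrm w$ yields $H\dot{\boldsymbol x}^c=-\mathbf b$, hence $\dot{\boldsymbol x}^c=-H^{-1}\mathbf b$, and a second differentiation of $g$ gives $g''(\mathrm w)=\partial_{\mathrm w}^2J+\mathbf b^{\!\top}\dot{\boldsymbol x}^c=\partial_{\mathrm w}^2J-\mathbf b^{\!\top}H^{-1}\mathbf b$. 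Invertibility of $H$ near $\mathrm w=0$ needed for this step follows by propagating from $\mathrm w=0$: Proposition \ref{Hesstotor} gives non-degeneracy of $\Hess_{\boldsymbol x}\tilde S$ there, the factorization above expresses it as $(\det\mathscr L)^2\det H\cdot g''(0)$, and $g''(0)\ne0$ by Lemma \ref{xbiholo} (so $\det H\ne0$ at $\mathrm w=0$, hence nearby by continuity).

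Finally, Proposition \ref{JandNZ} gives $g'(\mathrm w)=\tfrac{iw_l}{2}$, so $g''(\mathrm w)=\tfrac i2\tfrac{\partial w_l}{\partial w_m}$; assembling the three steps yields the stated identity. I expect the only genuinely delicate point to be the variable bookkeeping — keeping the full Hessian $\Hess_{(\mathrm w,\hat{\boldsymbol x})}J$ carefully distinguished from the restricted Hessian $H=\Hess_{\hat{\boldsymbol x}}J$ with $\mathrm w$ frozen, and correctly tracking the $\mathrm w$-dependence of $\boldsymbol x^c$ when differentiating — together with pinning down the exact power of $C_1$ contributed by $\mathscr L$; the analytic core (Schur complement $=g''$) and the geometric input (Proposition \ref{JandNZ}, Lemma \ref{xbiholo}) are otherwise immediate.
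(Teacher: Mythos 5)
Your proof is correct and is essentially the paper's argument, repackaged. The paper establishes Claims (1)--(3) about the second derivatives of $\tilde{S}^J$ and then assembles them into an explicit factorization $\Hess\tilde{S}^J = P D P^{\!\top}$ with $P$ unitriangular and $D$ block-diagonal with top entry $\tfrac{i}{2}\tfrac{\partial w_l}{\partial w_m}$; this is precisely the Schur complement of the $(\mathrm{w},\mathrm{w})$-entry that you compute, and your identification of that Schur complement with $g''(\mathrm{w})$ for $g(\mathrm{w}) = J(\mathrm{w}, \hat{\boldsymbol{x}}^c(\mathrm{w}))$ is the same chain-rule calculation the paper carries out by hand in Claims (2) and (3). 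Your packaging is slightly cleaner and makes the role of Proposition \ref{JandNZ} ($g'(\mathrm{w}) = \tfrac{i w_l}{2}$, hence $g''(\mathrm{w}) = \tfrac{i}{2}\tfrac{\partial w_l}{\partial \mathrm{w}}$) more transparent.

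The one point you flag as ``delicate'' --- the power of $C_1$ --- deserves the caution. Since $\mathscr{L}$ is defined as the map $(x_1,\dots,x_{N-2n}) \mapsto (\mathrm{w}, x_2, \dots, x_{N-2n})$, we have $\tilde{S} = \tilde{S}^J \circ \mathscr{L}$, so the chain rule gives $\Hess_{\boldsymbol{x}}\tilde{S} = (D\mathscr{L})^{\!\top}\,(\Hess_{(\mathrm{w},\hat{\boldsymbol{x}})}\tilde{S}^J)\,(D\mathscr{L})$ and hence $\det\Hess_{\boldsymbol{x}}\tilde{S} = C_1^{2}\,\det\Hess_{(\mathrm{w},\hat{\boldsymbol{x}})}\tilde{S}^J$, which would produce a factor $\tfrac{iC_1^2}{2}$ rather than the stated $\tfrac{i}{2C_1^2}$. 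The paper's displayed chain-rule identity has $\tilde{S}$ and $\tilde{S}^J$ in swapped positions. This does not affect anything downstream: the proposition is only invoked in Corollary \ref{HessJtor} up to a nonzero multiplicative constant absorbed into $D_2'$, so whether the factor is $C_1^2$ or $C_1^{-2}$ is immaterial there --- but it is worth correcting the constant in the statement.
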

\begin{proof}
Recall that $\tilde{S}^J(x,x_2\dots, x_{N-2n};\xi) = \tilde{S}(\mathscr{L}^{-1}(x,x_2\dots, x_{N-2n}))$, where $\mathscr{L}: \CC^{N-2n} \to \CC^{N-2n}$ is the affine isomorphism with determinant $C_1 \neq 0$ defined by  
$$(x_1,x_2,\dots, x_{N-2n}) \mapsto (\mathrm{w}, x_2,\dots, x_{N-2n}) = \left( \sum_{k=1}^{N-2n} C_k x_k -C, x_2,\dots, x_{N-2n} \right).$$
In particular, we have
\begin{align*}
&\ (\Hess_{\mathrm{w},x_2,\dots,x_{N-2n}} \tilde{S}^J) (\mathrm{w}, x_2(\mathrm{w}), \dots, x_{N-2n}(\mathrm{w})))  \\
=&\  \mathscr{L}^{\!\top} (\Hess_{x_1,\dots,x_{N-2n}} \tilde{S}) (x_1(\mathrm{w}), x_2(\mathrm{w}), \dots, x_{N-2n}(\mathrm{w})) \mathscr{L}
\end{align*} 
and 
\begin{align*}
&\ \det(\Hess_{\mathrm{w},x_2,\dots,x_{N-2n}} \tilde{S}^J) (\mathrm{w}, x_2^c(\mathrm{w}), \dots, x_{N-2n}^c(\mathrm{w})))  \\
=&\ \det(\mathscr{L}^{\!\top}) \det[(\Hess_{x_1,\dots,x_{N-2n}} \tilde{S}) (x_1^c(\mathrm{w}), x_2^c(\mathrm{w}), \dots, x_{N-2n}^c(\mathrm{w}))] \det(\mathscr{L}) \\
=&\ C_1^2\det[(\Hess_{x_1,\dots,x_{N-2n}} \tilde{S}) (x_1^c(\mathrm{w}), x_2^c(\mathrm{w}), \dots, x_{N-2n}^c(\mathrm{w}))] .
\end{align*} 
Thus, 
\begin{align*}
&\ \det(\Hess_{x_1,\dots,x_{N-2n}} \tilde{S}) (x_1^c(\mathrm{w}), x_2^c(\mathrm{w}), \dots, x_{N-2n}^c(\mathrm{w})) \\
=& \ \frac{1}{C_1^2}\det(\Hess_{\mathrm{w},x_2,\dots,x_{N-2n}} \tilde{S}^J) (\mathrm{w}, x_2^c(\mathrm{w}), \dots, x_{N-2n}^c(\mathrm{w}))  .
\end{align*}
and it suffices to compute $\det(\Hess_{\mathrm{w},x_2,\dots,x_{N-2n}} \tilde{S}^J) (\mathrm{w}, x_2^c(\mathrm{w}), \dots, x_{N-2n}^c(\mathrm{w}))$. 
We claim that 
\begin{enumerate}
\item[(1)] for $i,j \in \{2,\dots, N-2n\}$, 
\begin{align*}
\frac{\partial^2 \tilde{S}^J}{\partial x_i x_j} (\mathrm{w}, x_2^c(\mathrm{w}), \dots, x_{N-2n}^c(\mathrm{w})) = \frac{\partial^2 J}{\partial x_i \partial x_j} (\mathrm{w}, x_2^c(\mathrm{w}), \dots, x_{N-2n}^c(\mathrm{w}))),
\end{align*}
\item[(2)] for $i \in \{2,\dots, N-2n\}$, 
\begin{align*}
&\ \ \frac{\partial^2 \tilde{S}^J}{\partial \mathrm{w} \partial x_i} (\mathrm{w}, x_2^c(\mathrm{w}), \dots, x_{N-2n}^c(\mathrm{w})) \\
=&\ \  - 
\sum_{k=2}^{N-2n}\frac{\partial^2 J}{ \partial x_k\partial x_i}(x_2^c(\mathrm{w}), \dots, x_{N-2n}^c(\mathrm{w})) \cdot \Bigg( \frac{\partial x_k^c}{\partial \mathrm{w}} \Bigg),
\end{align*} 
\item[(3)] we have
\begin{align*}
&\ \ \frac{\partial^2 \tilde{S}^J}{\partial \mathrm{w}^2}(\mathrm{w}, x_2^c(\mathrm{w}), \dots, x_{N-2n}^c(\mathrm{w}))  \\
=&\ \  \frac{i}{2} \frac{\partial w_l}{\partial \mathrm{w}} + \sum_{k_1,k_2=2}^{N-2n} \Bigg(\frac{\partial^2 J}{\partial x_{k_1} \partial x_{k_2}} (\mathrm{w}, x_2^c(\mathrm{w}), \dots, x_{N-2n}^c(\mathrm{w})) \cdot \bigg( \frac{\partial x_{k_1}^c}{\partial \mathrm{w}} \bigg)  \bigg( \frac{\partial x_{k_2}^c}{\partial \mathrm{w}} \bigg) \Bigg).
\end{align*}
\end{enumerate}
Assuming these claims, if we write 
$$k_i = - \frac{\partial x_i^c (\mathrm{w})}{\partial \mathrm{w}} $$
 for $i=2,\dots, N-2n$ and $\tilde D = (\Hess J)(\mathrm{w}, x_2(\mathrm{w}), \dots, x_{N-2n}(\mathrm{w})) $,  then we have
$$
\det(\Hess_{\mathrm{w},x_2,\dots,x_{N-2n}} S) (\mathrm{w}, x_2^c(\mathrm{w}), \dots, x_{N-2n}^c(\mathrm{w})))
= \det (P \cdot D \cdot P^{\!\top}),
$$
where 
\begin{align*}
    P=
\begin{pNiceArray}{c | c c c}
1& k_2 & \dots &   k_{N-2n}  \\ \hline
  0  &  \Block{3-3}{\mathrm{Id}_{N-2n-1}}  \\
  \vdots & \\
  0 & \\
\end{pNiceArray}
\quad \text{ and } \quad
D=
\begin{pNiceArray}{c | c c c}
\frac{i}{2} \frac{\partial w_l}{\partial w_m} & 0 & \dots &  0 \\ \hline
  0  &  \Block{3-3}{\tilde D} \\
  \vdots & \\
  0 & \\
\end{pNiceArray}.
\end{align*}

This implies the desired result. Thus, it suffices to prove Claims (1)-(3). Claim (1) follows from the definitions of $\tilde{S}$. Next, for $i\in \{2,\dots, N-2n\}$, since
$$  \frac{\partial \tilde{S}^J}{ \partial x_i}(\mathrm{w}, x_2^c(\mathrm{w}),\dots, x_{N-2n}^c(\mathrm{w})) = 0,$$
by differentiating both sides with respect to $\mathrm{w}$, we have
$$
\frac{\partial^2 \tilde{S}^J}{\partial \mathrm{w} \partial x_i}(\mathrm{w}, x_2^c(\mathrm{w}),\dots, x_{N-2n}(\mathrm{w})) + 
\sum_{k=2}^{N-2n}\frac{\partial^2 \tilde{S}^J}{ \partial x_k\partial x_i}(\mathrm{w}, x_2^c(\mathrm{w}),\dots, x_{N-2n}^c(\mathrm{w})) \cdot \Bigg( \frac{\partial x_k^c}{\partial \mathrm{w}} \Bigg) = 0
$$
and Claim (2) follows. Finally, by Proposition \ref{JandNZ}, since 
$$\frac{\partial}{\partial \mathrm{w}} J(\mathrm{w}, x_2^c(\mathrm{w}),\dots, x_{N-2n}^c(\boldsymbol{x})) = \frac{i w_l}{2},$$ 
by differentiating both sides with respect to $\mathrm{w}$, we have
$$
\frac{\partial^2 J}{\partial \mathrm{w}^2}(\mathrm{w}, x_2^c(\mathrm{w}), \dots, x_{N-2n}^c(\mathrm{w}))
+ \sum_{k=2}^{N-2n} \frac{\partial^2 J}{\partial \mathrm{w} \partial x_k}(\mathrm{w}, x_2^c(\mathrm{w}), \dots, x_{N-2n}^c(\mathrm{w}))
\cdot \Bigg( \frac{\partial x_k^c}{\partial \mathrm{w}} \Bigg) 
= \frac{i}{2} \frac{\partial w_l}{\partial \mathrm{w}}.
$$
Observe that
$$
\frac{\partial^2 \tilde{S}^J}{\partial \mathrm{w}^2}(\mathrm{w}, x_2^c(\mathrm{w}), \dots, x_{N-2n}^c(\mathrm{w}))= \frac{\partial^2 J}{\partial \mathrm{w}^2}(\mathrm{w}, x_2^c(\mathrm{w}), \dots, x_{N-2n}^c(\mathrm{w}))
$$
and
$$
\frac{\partial^2 \tilde{S}^J}{\partial \mathrm{w} \partial x_l}(\mathrm{w}, x_2^c(\mathrm{w}), \dots, x_{N-2n}^c(\mathrm{w})) = \frac{\partial^2 J}{\partial \mathrm{w} \partial x_l}(\mathrm{w}, x_2^c(\mathrm{w}), \dots, x_{N-2n}^c(\mathrm{w})).$$
Thus, we have
$$
 \frac{\partial^2 \tilde{S}^J}{\partial \mathrm{w}^2}(\mathrm{w}, x_2^c(\mathrm{w}), \dots, x_{N-2n}^c(\mathrm{w}))
= \frac{i}{2} \frac{\partial w_l}{\partial \mathrm{w}} - 
 \sum_{k=2}^{N-2n}\frac{\partial^2 \tilde{S}^J}{\partial \mathrm{w} \partial x_k}(\mathrm{w}, x_2^c(\mathrm{w}), \dots, x_{N-2n}^c(\mathrm{w}))\cdot \Bigg( \frac{\partial x_k^c}{\partial \mathrm{w}} \Bigg).
$$
Claim (3) then follows from Claim (2).
\end{proof}

\begin{corollary}\label{HessJtor}
Assume that $X$ is generalized FAMED with respect to $(l,m)$. Then 
\begin{align*}
&\ \ \frac{1}{\left|\sqrt{\pm \det (\Hess_{x_2,\dots,x_{N-2n}} J)(\mathrm{w}, x_2^c(\mathrm{w}), \dots, x_{N-2n}^c(\mathrm{w}))}\right|}\\
=&\ \   
\frac{D_2'}{\left| \sqrt{\pm 
 \left(\prod_{i=1}^N z_i^{-f_i''} z_i''^{f_i - 1} \right) \tau(\SS^3 \setminus K, m, \mathbf{z^c}, X)}\right|}
\end{align*}
for some constant $D_2'$ independent of $\hbar$ and $\mathbf{x}$. 
\end{corollary}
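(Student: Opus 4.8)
The plan is to combine Proposition \ref{1loopJ} with Proposition \ref{Hesstotor}, exactly as the proof of the latter reduced a Hessian determinant to the $1$-loop invariant associated to the curve $l$. The new ingredient is only that we are now working with the curve $m$ instead of $l$, and that the relevant Hessian is that of $J$ in the variables $x_2,\dots,x_{N-2n}$ rather than the full Hessian of $\tilde S$. So the first step is to run the argument of Proposition \ref{Hesstotor} verbatim, but with the Neumann--Zagier matrices $\mathbf{A},\mathbf{B}$ taken with respect to $m$ in place of $l$ (note this is legitimate because the statement of Proposition \ref{Hesstotor} and its proof only used the symplectic property of the Neumann--Zagier matrices and Definition \ref{defgenFAMED}(4), both of which hold after replacing $l$ by any peripheral curve, and in particular by $m$, for a triangulation that is generalized FAMED with respect to $(l,m)$). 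This yields
$$
\Hess(\tilde S^m)(\mathbf{x}^c;\cdot)
= \pm i^{N-2n} G(\det \mathbf{E})\Big(\prod_{k=1}^N z_k''\Big)^{-1}\Big(\prod_{i=1}^N z_i^{-f_i''} z_i''^{f_i}\Big)\,\tau(\SS^3\setminus K, m, \mathbf{z}^c, X),
$$
where $\tilde S^m$ denotes the potential function set up with respect to $m$, and hence $|1/\sqrt{\pm\det\Hess(\tilde S^m)}|$ equals $D_2/|\sqrt{\pm(\prod z_i^{-f_i''}z_i''^{f_i-1})\,\tau(\SS^3\setminus K, m, \mathbf{z}^c, X)}|$ for a nonzero constant $D_2$ independent of $\mathbf{x}^c$.

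The second step is to connect $\det\Hess(\tilde S^m)$ with $\det(\Hess_{x_2,\dots,x_{N-2n}} J)$. Here I would invoke Proposition \ref{1loopJ}, which gives
$$
\det(\Hess \tilde S)(\mathbf{x}^c(\mathrm{w}); w_l(\mathrm{w}))
= \frac{i}{2C_1^2}\,\frac{\partial w_l}{\partial w_m}\,\det(\Hess_{x_2,\dots,x_{N-2n}} J)(\mathrm{w}, x_2^c(\mathrm{w}),\dots, x_{N-2n}^c(\mathrm{w})).
$$
Combined with Lemma \ref{xbiholo}, the factor $\partial w_l/\partial w_m$ is a nonvanishing holomorphic function of $\mathrm{w}$ near $0$, so $|\det(\Hess_{x_2,\dots,x_{N-2n}} J)|$ and $|\det\Hess\tilde S|$ differ by a bounded, bounded-away-from-zero factor for $\mathrm{w}$ in a small neighbourhood $\mathcal O$ of $0$; one might prefer to absorb the genuinely $\mathrm{w}$-dependent factor $\partial w_l/\partial w_m$ into the function $R(\mathbf z_{\mathrm w})$ in the statement of Theorem \ref{thm:Jones:genFAMED} rather than into the constant $D_2'$, but either way the modulus of the ratio is controlled. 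Substituting the formula from Step 1 (with the identification that the critical point of $\tilde S^m$ with respect to $w_l(\mathrm w)$ is exactly $\mathbf x^c(\mathrm w)$, which follows from Proposition \ref{critThurscorrespondence} applied to the curve $m$) then yields
$$
\frac{1}{\big|\sqrt{\pm\det(\Hess_{x_2,\dots,x_{N-2n}} J)(\mathrm{w}, x_2^c(\mathrm{w}),\dots,x_{N-2n}^c(\mathrm{w}))}\big|}
= \frac{D_2'}{\big|\sqrt{\pm(\prod_{i=1}^N z_i^{-f_i''} z_i''^{f_i-1})\,\tau(\SS^3\setminus K, m, \mathbf{z}^c, X)}\big|}
$$
for a constant $D_2'$ independent of $\hbar$ and $\mathbf{x}$, which is the claim.

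The one subtlety I would flag as the main point requiring care — though it is more bookkeeping than a genuine obstacle — is verifying that the argument of Proposition \ref{Hesstotor} really does go through with $m$ substituted for $l$ throughout. The construction there invoked Definition \ref{defgenFAMED}(4), Remark \ref{rmk4}, the reduced row echelon structure of $(\mathbf{EB}\,|\,\mathbf{EA})$, and the relation $(\mathbf{EA})_{2n}(\mathbf{EB})_{N-2n}^{\!\top}=0$ coming from the symplectic form; all of these are statements about the Neumann--Zagier data of whatever peripheral curve is fixed, but one must confirm that "generalized FAMED with respect to $(l,m)$" (Definition \ref{defgenFAMED5}) supplies Definition \ref{defgenFAMED}'s conditions for the curve $m$ as well — or, more carefully, that the reduction only needed properties of the row space of $(\mathbf A\,|\,\mathbf B)$, which is a symplectic-geometric fact independent of which generator of $\pi_1$ of the cusp torus was chosen to build the last row. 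Once that is granted, combined with the non-vanishing of $\tau(\SS^3\setminus K, m, \mathbf z^c, X)$ via \cite[Corollary 1.4]{PW}, the corollary is immediate. I expect no hard analytic input beyond what is already established in Propositions \ref{Hesstotor} and \ref{1loopJ}; the corollary is essentially their composition.
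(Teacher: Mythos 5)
Your plan is not the paper's plan, and the departure introduces a genuine gap. The paper proves the corollary in three algebraic moves: first, Proposition~\ref{1loopJ} is used to rewrite $\det(\Hess_{x_2,\dots,x_{N-2n}} J)$ in terms of $\det(\Hess\tilde S)$ with an explicit factor $\frac{2C_1^2}{i}\frac{\partial w_m}{\partial w_l}$; second, Proposition~\ref{Hesstotor} (as stated, for the curve $l$) converts $\det(\Hess\tilde S)$ into $\tau(\SS^3\setminus K, l, \mathbf z^c, X)$; and third, the $\mathrm w$-dependent factor $\frac{\partial w_m}{\partial w_l}$ is absorbed by applying the \emph{change-of-curve formula for the $1$-loop invariant}, \cite[Theorem 1.19]{PW}, which says precisely that $\frac{\partial w_m}{\partial w_l}\,\tau(\cdot, l, \cdot, \cdot)=\pm\tau(\cdot, m, \cdot, \cdot)$. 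No version of Proposition~\ref{Hesstotor} ``for $m$'' is ever invoked.

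Your Step 1 instead proposes to re-run Proposition~\ref{Hesstotor} with $m$ in place of $l$, producing $\det(\Hess\tilde S^m)$ in terms of $\tau(\cdot,m,\cdot,\cdot)$. This is exactly where the proof breaks. The key input to Proposition~\ref{Hesstotor}'s proof is Definition~\ref{defgenFAMED}(4), via Remark~\ref{rmk4}: it is used to replace $(\mathbf{EB})_{N-2n}\mathscr{G}$ by $(\mathbf{EA})_{N-2n}$ inside the Hessian. That row-equivalence relates the face-adjacency matrix $\mathscr{G}$ (a purely combinatorial object, independent of any peripheral curve) to the Neumann--Zagier data built with $l$ as the last row, and the matrix $\mathbf E$ putting $(\mathbf{EB}\,|\,\mathbf{EA})$ in RREF is determined by that whole matrix including the last row. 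Replacing $l$ by $m$ changes the $N$-th row of $(\mathbf A\,|\,\mathbf B)$, hence changes $\mathbf E$, hence changes both sides of the required row-equivalence. Definition~\ref{defgenFAMED5}, ``generalized FAMED with respect to $(l,m)$,'' does \emph{not} assert that Definition~\ref{defgenFAMED}(4) holds for $m$; it only adds conditions (2) and (3) about $\mathbf E_{2n}$ and the holonomy of $m$. So you cannot conclude the analogue of Proposition~\ref{Hesstotor} for $m$ from the stated hypotheses; the symplectic property of $(\mathbf A\,|\,\mathbf B)$ alone is not enough, contrary to your closing remark.

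There is a second, independent issue: even granting Step 1, your Step 2 quotes Proposition~\ref{1loopJ}, which relates $\det(\Hess J)$ to $\det(\Hess\tilde S)$ --- the Hessian of the $l$-potential --- while Step 1 produces $\det(\Hess\tilde S^m)$. These are Hessians of genuinely different functions (they differ in the term $-\boldsymbol x^\top \mathbf E'_{N-2n}(\boldsymbol\nu - i\tilde{\boldsymbol u})$ and in the parametrization map $\varphi$, since $\mathbf E$ and hence $(\mathbf{EB})_{N-2n}^\top$ change when the last row of $(\mathbf A\,|\,\mathbf B)$ changes). You never establish a relation between $\det(\Hess\tilde S)$ and $\det(\Hess\tilde S^m)$, so the two steps cannot be spliced together. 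Your aside that the $\partial w_l/\partial w_m$ factor ``could be absorbed into $R$ or into $D_2'$'' is also not available: $D_2'$ must be a genuine constant, and $R$ is already fully specified by~(\ref{hvalue}). The paper avoids all of this by routing the $\partial w_m/\partial w_l$ factor through the change-of-curve formula for the $1$-loop invariant, which is exactly the step missing from your argument.
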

\begin{proof}
By Proposition \ref{1loopJ} and \ref{Hesstotor},
\begin{align*}
&\ \ \frac{1}{\left|\sqrt{\pm \det (\Hess_{x_2,\dots,x_{N-2n}} J(\mathrm{w}, x_2^c(\mathrm{w}), \dots, x_{N-2n}^c(\mathrm{w})))}\right|}\\
=&\ \ 
    \frac{D_2}{\left|\sqrt{\pm \frac{2C_1^2 }{i} \frac{\partial w_m}{\partial w_l}
\det(\Hess \tilde{S}) (x_1^c(\mathrm{w}), \dots, x_{N-2n}^c(\mathrm{w}))}\right|}\\
=&\ \  
\frac{D_2'}{\left|\sqrt{\pm 
 \left(\prod_{i=1}^N z_i^{-f_i''} z_i''^{f_i - 1} \right) \frac{\partial w_m}{\partial w_l}\tau(\SS^3 \setminus K, l, \mathbf{z^c}, X)}\right|},
\end{align*}
where $D_2' = D_2 /\sqrt{2C_1^1}$ is a constant independent of $\hbar$ and $\mathrm{w}$.
Then the result follows from the change of curve formula of the 1-loop invariants \cite[Theorem 1.19]{PW}.
\end{proof}

\subsection{Asymptotic expansion formula of the Jones function}
Suppose $X$ is a semi-geometric triangulation with shape parameters $\mathbf{z^c}$. We first study the asymptotics of $\mathfrak{J}(\hbar, 0)$ and construct a multi-contour $L$ for applying the saddle point method. The construction is essentially the same as that in Section \ref{sub:proofs:thm15}. Precisely, by the exponentially decaying property at infinity, we choose $\kappa > 0$ such that 
\begin{align}\label{smallbdy}
\Re J(0, x_2,\dots, x_{N-2n}) < \Re J(0, x_2^c(0),\dots, x^c_{N-2n}(0))
\end{align}
for all $(x_2,\dots,x_{N-2n} )\in (\RR^{N-2n-1}+ i (a_2-a_2^0,\dots,a_N-a_N^0)) \setminus ( (-\kappa,\kappa)^{N-2n-1} + i (a_2-a_2^0,\dots,a_N-a_N^0))$. Let
$
\tilde L^{\text{top}}
= \prod_{k=2}^{N-2n} ([-\kappa,\kappa] + i \im x^c_k(0))
$.
\begin{proposition}\label{constructJcont}
$\tilde L^{\text{top}}$ can be deformed along the $(\im x_2,\dots, \im x_{N-2n})$-direction into a new multi-contour $L^{\text{top}}$ such that
\begin{enumerate}
    \item $(x_2^c(0),\dots,x_N^c(0)) \in L^{\text{top}}$;
    \item $\Re J(x_2,\dots,x_{N-2n})$ attains its strict maximum at $(x_2^c(0),\dots,x_N^c(0))$ on $L^{\text{top}}$; 
    \item $\varphi(\mathscr{L}^{-1}(0,x_2,\dots,x_{N-2n}))$ is in the interior of $(\CC \setminus L_\delta)^N$, and
    \item for every $(x_2,\dots,x_{N-2n}) \in \partial L^{\text{top}}$, $\im \varphi(x_1,\dots,x_{N-2n}) \in (0,\pi)^{N}$,
\end{enumerate}
where in (3) and (4), $x_1 = (-\sum_{k=2}^{N-2n} C_k x_k + C)/C_1$.
\end{proposition}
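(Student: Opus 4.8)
The plan is to transcribe the proof of Proposition \ref{constructZcont} to the $\mathrm{w}=0$ slice, working with the $\mathbf{\hat{x}}=(x_2,\dots,x_{N-2n})$-variables of the Jones integrand (\ref{ZtoJconv2}) in place of the full $\boldsymbol{x}$-variables. First I would record the semi-geometric input: under the bijection $y_k=\Log z_k - i\pi$ the equality $\im y_k^c = -\pi$ corresponds to $z_k^c\in\RR_{>0}$, and since $X$ is semi-geometric $z_k^c\neq 1$, so $\Re y_k^c\neq 0$; hence after shrinking $\delta$ we may assume $|\Re y_k^c|>\delta$ whenever $\im y_k^c=-\pi$. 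If moreover $\im y_k^c\notin\{-\pi,0\}$ for all $k$, then by Proposition \ref{concavSxgen} the function $\Re J(0,\cdot)$ is strictly concave on $\tilde L^{\text{top}}$ with strict maximum at $(x_2^c(0),\dots,x_{N-2n}^c(0))$, so $L^{\text{top}}=\tilde L^{\text{top}}$ works, and conditions (3), (4) hold by continuity after shrinking $\kappa$.

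The substantive case is $\im y_k^c\in\{-\pi,0\}$ for some $k$, where $J(0,\cdot)$ fails to be holomorphic exactly at the points whose image under $\varphi\circ\mathscr{L}^{-1}(0,\cdot)$ has a coordinate equal to $-i\pi$. To push off these points, I would take the extended angle structure $\alpha^c\in\overline{\mathcal{A}_X}$ corresponding to $\mathbf{z^c}$, choose a normal vector at $\alpha^c$ pointing into $\mathcal{A}_X$, and transport it through the parametrisation of Lemma \ref{NZpara} to a direction $\mathfrak{v}$ in the $\mathbf{\hat{x}}$-space; by Lemma \ref{singularbehaviour} a small push of $\mathbf{\hat{x}}$ along $\mathfrak{v}$ strictly decreases $\Re J$ at each offending point, producing a contour contained in an open set on which $J$ is holomorphic. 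Running the same local estimate at $\partial\tilde L^{\text{top}}$, together with (\ref{smallbdy}) and Proposition \ref{concavSxgen}, I can then push the boundary along $i(\im\mathbf{\hat{x}}-\mathbf{\hat{v}}_\alpha)$ so that $\Re J$ is strictly less than its critical value on the boundary and conditions (3), (4) are met.

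It remains to upgrade non-strict to strict concavity on the interior. I would introduce a bump function $\zeta$ supported in the interior of the current contour and flow for a short time along $\mathscr{V}(\mathbf{\hat{x}})=\zeta(\mathbf{\hat{x}})\bigl(-i\,\partial_{\im x_2}\Re J,\dots,-i\,\partial_{\im x_{N-2n}}\Re J\bigr)$; this fixes the critical point (giving (1)) and leaves the boundary — hence (3), (4) — untouched. If the flow also vanished at some other point of the contour where $\Re J$ still equals the maximal value, then that point would be a critical point of $J(0,\cdot)$ and, via Propositions \ref{critThurscorrespondence}, \ref{dilogVolgen} and \ref{JandNZ} (using $\Re J(0,x_2^c(0),\dots)=-\Vol(\SS^3\setminus K)$), would give a representation whose volume equals $\Vol(\SS^3\setminus K)$; by the rigidity theorems \cite[Theorem 1.2 and 1.4]{SF} such a representation is discrete faithful, so the shape parameters and hence the point coincide with the critical point — a contradiction. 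Thus after the flow $\Re J$ has a strict maximum at the critical point, which is (2). I expect the main obstacle to be the bookkeeping that makes the push along $\mathfrak{v}$ simultaneously respect the constraint $x_1=(-\sum_{k\ge 2}C_k x_k + C)/C_1$ defining the $\mathrm{w}=0$ slice and keep conditions (3)--(4) intact; the rest is a faithful adaptation of Proposition \ref{constructZcont}.
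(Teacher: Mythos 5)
Your proposal is a faithful transcription of the paper's own argument, which here consists entirely of the remark "the proof is the same as that of Proposition \ref{constructZcont}": you correctly substitute Proposition \ref{concavSxgen} for \ref{concavSgen}, the $\mathbf{\hat{x}}=(x_2,\dots,x_{N-2n})$ variables at $\mathrm{w}=0$ for the full $\boldsymbol{x}$, and Propositions \ref{JandNZ} and \ref{dilogVolgen} for the identification of the critical value, and you reproduce the same three-stage push--boundary-tilt--gradient-flow construction with the rigidity argument from \cite[Theorems 1.2 and 1.4]{SF}. Two small points: in the easy case (no $\im y_k^c\in\{-\pi,0\}$) the parameter one shrinks to obtain condition (3) is $\delta$, not $\kappa$; and the range "$(0,\pi)^N$" in condition (4) of the stated proposition appears to be a sign typo for "$(-\pi,0)^N$", matching Proposition \ref{constructZcont} and the band $\mathscr{U}_{\mathbf{y}}$ — your treatment implicitly and correctly uses the latter.
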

\begin{proof}
  The proof is the same as that of Proposition \ref{constructZcont}.  
\end{proof}
Define $L^{\text{sides}}$ to be the set of points of the form
$$(x_2,\dots,x_{N-2n}) - i t( \im (x_2,\dots,x_{N-2n}) - (v_2,\dots,v_{N-2n})) $$
with $(x_2,\dots,x_{N-2n}) \in \partial L^{\text{top}}$ and $t\in[0,1]$. 
Let $L = L^{\text{sides}} \cup L^{\text{top}}$.
\begin{proposition}\label{Jcont}
$L$ is a multi-contour such that $\boldsymbol{x^c}(0) \in L$ and $\Re J$ attains its strict maximum at $\boldsymbol{x^c}(0)$.
\end{proposition}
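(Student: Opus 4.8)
The plan is to follow verbatim the strategy used to prove Proposition \ref{Zcont}, now with $S$ replaced by the potential $J(0,x_2,\dots,x_{N-2n})$, the critical point $\boldsymbol{x^c}$ replaced by $(x_2^c(0),\dots,x_{N-2n}^c(0))$, and the convexity input Proposition \ref{concavSgen} replaced by Proposition \ref{concavSxgen}. First I would invoke Proposition \ref{constructJcont}(1) to get $(x_2^c(0),\dots,x_{N-2n}^c(0))\in L^{\text{top}}\subseteq L$, hence $\boldsymbol{x^c}(0)\in L$, and Proposition \ref{constructJcont}(2) to record that $\Re J$ already attains its strict maximum at this point when restricted to $L^{\text{top}}$. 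So the only remaining task is to bound $\Re J$ on $L^{\text{sides}}$ strictly below its value at $\boldsymbol{x^c}(0)$.

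Next I would recall that every point of $L^{\text{sides}}$ lies on a straight segment $(x_2,\dots,x_{N-2n})-it(\im(x_2,\dots,x_{N-2n})-(v_2,\dots,v_{N-2n}))$, $t\in[0,1]$, joining a point $\mathbf{\hat x}\in\partial L^{\text{top}}$ to the horizontal plane $\RR^{N-2n-1}+i(v_2,\dots,v_{N-2n})$, i.e.\ a segment along the imaginary directions only. Using Proposition \ref{constructJcont}(4) (membership of $\im\varphi\circ\mathscr{L}^{-1}$ in the appropriate product of closed horizontal bands along these segments) together with the convexity clause of Proposition \ref{concavSxgen}, the function $t\mapsto \Re J$ restricted to such a segment is convex, so it is bounded by the maximum of its two endpoint values. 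This yields
\begin{align*}
\max\{\Re J\mid L^{\text{sides}}\}
\leq
\max\big\{\Re J(\mathbf{\hat x}),\ \Re J\big(\mathbf{\hat x}-i(\im\mathbf{\hat x}-(v_2,\dots,v_{N-2n}))\big)\ \big|\ \mathbf{\hat x}\in\partial L^{\text{top}}\big\}.
\end{align*}
Finally I would bound each of the two endpoint families: by Proposition \ref{constructJcont}(3) one has $\max\{\Re J(\mathbf{\hat x})\mid \mathbf{\hat x}\in\partial L^{\text{top}}\}<\Re J(0,x_2^c(0),\dots,x_{N-2n}^c(0))$, while by the defining inequality (\ref{smallbdy}) of $\kappa$ the values of $\Re J$ on the plane $\RR^{N-2n-1}+i(v_2,\dots,v_{N-2n})$ outside the box $(-\kappa,\kappa)^{N-2n-1}$ are strictly below $\Re J$ at the critical point, which controls the other endpoint. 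Combining with the $L^{\text{top}}$ case gives the strict maximum on all of $L$.

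The main obstacle is the verification that the convexity hypothesis of Proposition \ref{concavSxgen} genuinely applies along the segments forming $L^{\text{sides}}$: one must check that $\varphi\circ\mathscr{L}^{-1}$ sends the interpolating affine planes into the closed product of bands $\overline{\mathscr{U}_{\mathbf y}}$ (reconciling the sign convention, since Proposition \ref{constructJcont}(4) is stated on $\partial L^{\text{top}}$), so that convexity is available not just at $\partial L^{\text{top}}$ but all the way down the sides to the reference plane. Once this band-membership is in hand the argument is a routine copy of the proof of Proposition \ref{Zcont}.
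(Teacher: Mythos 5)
Your proposal is correct and is essentially the paper's proof: the paper's proof of Proposition~\ref{Jcont} is just the statement ``the proof is the same as that of Proposition~\ref{Zcont},'' and you carry out exactly that adaptation, with the same decomposition of $L$ into $L^{\text{top}}\cup L^{\text{sides}}$, the same convexity-along-segments argument, and the same endpoint bounds via Proposition~\ref{constructJcont} and~(\ref{smallbdy}). The ``obstacle'' you flag is a genuine (but harmless) issue: Proposition~\ref{constructJcont}(4) is almost certainly a typo and should read $\im\varphi \in (-\pi,0)^N$ (matching Proposition~\ref{constructZcont}(4)), after which the affine interpolation down to the reference plane $\RR^{N-2n-1}+i(v_2,\dots,v_{N-2n})$ stays inside $\mathscr{U}_{\mathbf y}$ by convexity of the interval $(-\pi,0)$, so the convexity clause of Proposition~\ref{concavSxgen} applies along the whole of $L^{\text{sides}}$.
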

\begin{proof}
   The proof is the same as that of Proposition \ref{Zcont}.  
\end{proof}

Note that when $\mathrm{w}$ is not zero, the shape parameters corresponding to the critical point of $\mathfrak{J}$ in the variables $x_2,\dots, x_{N-2n}$ may not have non-negative imaginary parts. In particular, the construction in Proposition \ref{constructJcont} and \ref{Jcont} do not apply. Nonetheless, for $\mathrm{w}$ sufficiently small, the next proposition shows that we can obtain a multi-contour for applying saddle point method to $\mathfrak{J}(\mathrm{w},x_2,\dots,x_{N-2n})$ by deforming $L$.

\begin{figure}
    \centering
    \includegraphics[width=0.9\linewidth]{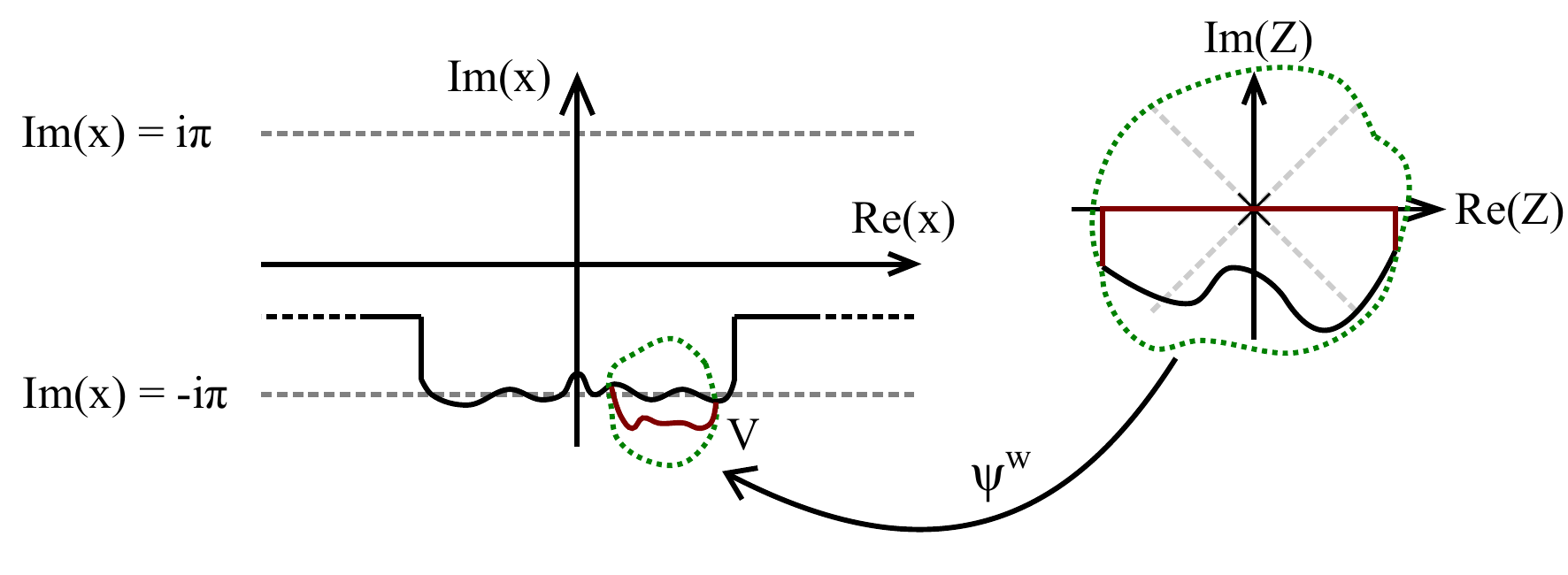}
    \caption{This figure shows a schematic picture of the contours constructed in Proposition \ref{Jcont2} and used in the proof of Theorem \ref{thm:Jones:genFAMED}. As shown in the right figure, Complex Morse Lemma provides a nice local coordinate chart for us to deform the black contour to the maroon contour that passes through the critical point (the origin). 
    }
    \label{Jcontfigure}
\end{figure}

\begin{proposition}\label{Jcont2}
    For $\mathrm{w}$ sufficiently small, there exists a multi-contour $L_{\mathrm{w}}$ such that $L_{0} = L$ and $\Re J(\mathrm{w},x_2,\dots, x_{N-2n})$ attains its strict maximum at $\boldsymbol{x^c}(\mathrm{w})$ on $L_{\mathrm{w}}$.
\end{proposition}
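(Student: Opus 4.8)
The plan is to prove Proposition \ref{Jcont2} by starting from the multi-contour $L$ produced for the case $\mathrm{w}=0$ in Propositions \ref{constructJcont} and \ref{Jcont}, and deforming it continuously as $\mathrm{w}$ varies in a small neighbourhood of $0$, using the Complex Morse Lemma (Lemma \ref{OPFCML}) to repair the contour near the critical point. The point is that for $\mathrm{w}\ne 0$ the critical point $\boldsymbol{x^c}(\mathrm{w})$ of $J(\mathrm{w},\cdot)$ need no longer lie on the top part $L^{\text{top}}$, nor need $\Re J(\mathrm{w},\cdot)$ be concave there, so we cannot simply take $L_{\mathrm{w}}=L$. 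However, by Proposition \ref{1loopJ} and Corollary \ref{HessJtor} the critical point $(x_2^c(\mathrm{w}),\dots,x_{N-2n}^c(\mathrm{w}))$ of $J(\mathrm{w},\cdot)$ is non-degenerate for $\mathrm{w}$ small (since the $1$-loop invariant is non-zero), and it depends smoothly, in fact holomorphically, on $\mathrm{w}$; this is exactly the hypothesis needed to apply Lemma \ref{OPFCML} with $\mathbf{z}=(x_2,\dots,x_{N-2n})$, $\mathbf a=\mathrm{w}$ (or rather its real and imaginary parts), and $\mathbf a_0=0$.

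First I would set up the following. Fix a small closed ball $\Gamma$ of real dimension $N-2n-1$ inside $L^{\text{top}}$ centred at $\boldsymbol{x^c}(0)$, chosen so small that the Complex Morse Lemma applies on a neighbourhood of $\Gamma$: there is a holomorphic change of variables $\psi^{\mathrm{w}}$, depending smoothly on $\mathrm{w}$, with $\psi^{\mathrm{w}}(\mathbf 0)=\boldsymbol{x^c}(\mathrm{w})$ and $J(\mathrm{w},\psi^{\mathrm{w}}(\mathbf Z))=J(\mathrm{w},\boldsymbol{x^c}(\mathrm{w}))-Z_2^2-\dots-Z_{N-2n}^2$. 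In the $\mathbf Z$-coordinates the steepest-descent model contour is the real locus $\{\mathbf Z\in\mathbb R^{N-2n-1}\}$, on which $\Re J$ has a strict maximum at $\mathbf Z=\mathbf 0$. When $\mathrm{w}=0$, the original contour $\Gamma\cap L^{\text{top}}$, pulled back through $\psi^0$, is a contour through the origin on which $\Re J(0,\cdot)$ attains its strict maximum at the origin (by Proposition \ref{constructJcont}(2)); since it is compactly contained and homotopic to the real-locus model through contours keeping the origin as strict max (both being graphs over $\mathbb R^{N-2n-1}$ near $\mathbf 0$ with the quadratic form $-\sum Z_k^2$ controlling the real part), we may as well take the central piece of $L$ to already agree with the image under $\psi^0$ of a small real disk $\{\mathbf Z\in\mathbb R^{N-2n-1}:|\mathbf Z|\le r\}$. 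Then for $\mathrm{w}$ small I define the new central piece $\Gamma_{\mathrm{w}}:=\psi^{\mathrm{w}}(\{\mathbf Z\in\mathbb R^{N-2n-1}:|\mathbf Z|\le r\})$; by the Morse normal form, $\Re J(\mathrm{w},\cdot)$ attains a strict maximum on $\Gamma_{\mathrm{w}}$ at $\boldsymbol{x^c}(\mathrm{w})=\psi^{\mathrm{w}}(\mathbf 0)$, with value $\Re J(\mathrm{w},\boldsymbol{x^c}(\mathrm{w}))$.

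Next I would glue $\Gamma_{\mathrm{w}}$ back to the outer part of $L$. Away from $\Gamma$, on $L^{\text{top}}\setminus\Gamma$ and on $L^{\text{sides}}$, the function $\Re J(0,\cdot)$ is strictly less than $\Re J(0,\boldsymbol{x^c}(0))$ by Propositions \ref{constructJcont} and \ref{Jcont}; by compactness of this outer region and continuity of $J$ in $\mathrm{w}$, for $\mathrm{w}$ small enough we still have $\Re J(\mathrm{w},\cdot)<\Re J(\mathrm{w},\boldsymbol{x^c}(\mathrm{w}))$ there, provided we also know $\Re J(\mathrm{w},\boldsymbol{x^c}(\mathrm{w}))\to\Re J(0,\boldsymbol{x^c}(0))$, which holds by continuity of $\boldsymbol{x^c}(\mathrm{w})$ and of $J$. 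One then interpolates between the boundary sphere $\partial\Gamma_{\mathrm{w}}=\psi^{\mathrm{w}}(\{|\mathbf Z|=r\})$ and the boundary sphere $\partial\Gamma=\psi^0(\{|\mathbf Z|=r\})\subset L$ by a thin collar $L^{\text{collar}}_{\mathrm{w}}$ (e.g. the linear homotopy $t\mapsto (1-t)\psi^0(\mathbf Z)+t\psi^{\mathrm{w}}(\mathbf Z)$ for $|\mathbf Z|=r$, which is a genuine contour for $\mathrm{w}$ small since $\psi^{\mathrm{w}}$ is $C^1$-close to $\psi^0$). On this collar $\Re J(\mathrm{w},\cdot)$ is close to $\Re J(0,\cdot)$ on $\partial\Gamma$, hence strictly below the maximum; then set $L_{\mathrm{w}}:=\Gamma_{\mathrm{w}}\cup L^{\text{collar}}_{\mathrm{w}}\cup(L\setminus\Gamma)$. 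By construction $L_0=L$ and $\Re J(\mathrm{w},\cdot)$ attains its strict maximum on $L_{\mathrm{w}}$ at $\boldsymbol{x^c}(\mathrm{w})$, which is the assertion. (The fact that the outer part of $L$ is still deformable — i.e. that $L_{\mathrm{w}}$ remains homotopic, rel the exponential-decay region, to the original integration contour $\RR^{N-2n-1}+i\mathbf{\hat v}_\alpha$ — follows because all these deformations take place inside a region where the integrand decays and $J$ is holomorphic, so Cauchy's theorem applies.)

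The main obstacle I expect is the bookkeeping of the gluing: one must verify that the collar $L^{\text{collar}}_{\mathrm{w}}$ is embedded, of the right dimension, and that $\Re J(\mathrm{w},\cdot)$ stays strictly below $\Re J(\mathrm{w},\boldsymbol{x^c}(\mathrm{w}))$ everywhere on it, uniformly for $\mathrm{w}$ in a neighbourhood of $0$ — all of which is a compactness-plus-continuity argument but needs the neighbourhood of $\mathrm{w}$ to be shrunk finitely many times. A secondary, more conceptual point is making sure the Complex Morse change of variables $\psi^{\mathrm{w}}$ can be taken to depend holomorphically (not merely smoothly) on $\mathrm{w}$, so that the eventual application of the saddle point approximation (Proposition \ref{saddle}) in the proof of Theorem \ref{thm:Jones:genFAMED} is legitimate; this follows from the holomorphic dependence of $J$ and of $\boldsymbol{x^c}(\mathrm{w})$ on $\mathrm{w}$ together with the holomorphic implicit function theorem used in the proof of the Complex Morse Lemma, but it should be remarked explicitly. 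Everything else — non-degeneracy of the critical point, the value of $\Re J$ at $\boldsymbol{x^c}(0)$ being $-\Vol(\SS^3\setminus K)$, the concavity statements away from the singular locus — is already in hand from Propositions \ref{dilogVolgen}, \ref{JandNZ}, \ref{concavSxgen}, \ref{1loopJ} and Corollary \ref{HessJtor}.
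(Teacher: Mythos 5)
Your proposal is correct and takes essentially the same approach as the paper: apply the Complex Morse Lemma \ref{OPFCML} to obtain a $\mathrm{w}$-dependent normal form near the critical point, replace the central piece of $L$ by the $\psi^{\mathrm{w}}$-image of a real model disk (on which $\Re J$ has a strict maximum at the origin by the quadratic normal form), reattach it to the fixed outer part of $L$ with a thin collar, and use compactness and continuity to keep $\Re J(\mathrm{w},\cdot)$ strictly below $\Re J(\mathrm{w},\boldsymbol{x^c}(\mathrm{w}))$ on the outer part and the collar for $\mathrm{w}$ small. The only cosmetic difference is in the normalization of the central piece: you first adjust $L$ so that its core agrees with $\psi^0$ of a fixed real ball and glue with a linear homotopy $(1-t)\psi^0+t\psi^{\mathrm{w}}$, whereas the paper keeps $L\cap\psi^0(V)$ unchanged, pulls it back by $\psi^{\mathrm{w}}$, projects the preimage to the real locus to form the top and uses vertical segments $\mathbf{Z}-it\,\im\mathbf{Z}$ as the sides.
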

\begin{proof}
We start from the contour $L$ constructed in Proposition \ref{Jcont}. Consider the open set $V \subset \CC^{N-2n-1}$ containing $0$, the open set $A$ containing $\mathrm{w} = 0$, together with the smooth function $\psi: V \times A \to D_{\boldsymbol{x}}$ described in Lemma \ref{OPFCML}. We decompose $L$ into $L = (L \cap \psi^0(V) ) \cup (L \setminus \psi^0(V))$. By Proposition \ref{Jcont}, we know that on $(L \setminus \psi^0(V))$, we have
$$ \Re J(0, x_2,\dots, x_{N-2n}) < \Re J(0, x^c_{2}(0),\dots, x^c_{N-2n}(0)).$$
By continuity, there exists a sufficiently small open set $\mathcal{O}$ containing $0\in \CC$ such that for any $\mathrm{w} \in \mathcal{O}$, on $(L \setminus \psi^0(V))$ we have
$$ \Re J(\mathrm{w}, x_2,\dots, x_{N-2n}) < \Re J(\mathrm{w}, x^c_{2}(\mathrm{w}),\dots, x^c_{N-2n}(\mathrm{w})).$$
Next, consider the preimage $(\psi^{\mathrm{w}})^{-1}(L \cap \psi^0(V) )$. By continuity, for $\mathrm{w}$ sufficiently small, every points  
$(Z_2,\dots,Z_{N-2n}) \in \partial((\psi^{\mathrm{w}})^{-1}(L \cap \psi^0(V) ))$  lies in the region with 
$$ \Re J(\mathrm{w}, \psi^{\mathrm{w}}(Z_2,\dots, Z_{N-2n})) < \Re J(\mathrm{w}, \psi^{\mathrm{w}}(0)).$$
Define $L_{(\psi^\mathrm{w})^{-1}} = L_{(\psi^\mathrm{w})^{-1}}^{\text{top}} \cup  L_{(\psi^\mathrm{w})^{-1}}^{\text{sides}} $, where
$$ L_{(\psi^\mathrm{w})^{-1}}^{top} = \{ (\Re Z_2, \dots, \Re Z_{N-2n}) \mid (Z_2,\dots, Z_{N-2n}) \in (\psi^{\mathrm{w}})^{-1}(L \cap \psi^0(V) ) \} $$
and $L_{(\psi^\mathrm{w})^{-1}}^{sides} $ is the set of points of the form 
$$
(Z_2, \dots, Z_{N-2n} ) - it(\im Z_2,\dots, \im Z_{N-2n}) 
$$
with 
$(Z_2,\dots, Z_{N-2n}) \in \partial((\psi^{\mathrm{w}})^{-1}(L \cap \psi^0(V) ))$ and $ t\in [0,1] $ (see the maroon curve in Figure \ref{Jcontfigure}).
Note that $L_{(\psi^\mathrm{w})^{-1}}$ and $(\psi^{\mathrm{w}})^{-1}(L \cap \psi^0(V) )$ are homotopic to each other. This implies that $\psi^{\mathrm{w}}(L_{(\psi^\mathrm{w})^{-1}})$ and  $L \cap \psi^0(V)$ are homotopic to each other.
Besides, in $(Z_2,\dots, Z_{N-2n})$-coordinate, we have
$$
J(\mathrm{w}, \psi^{\mathrm{w}}(Z_2,\dots,Z_{N-2n})) = J(\mathrm{w}, \boldsymbol{x^c}(\mathrm{w})) - Z_2^2 - \dots - Z_{N-2n}^2.
$$
This implies that on the multi-contour $L_{(\psi^\mathrm{w})^{-1}}$, $\Re J(\mathrm{w}, \psi^{\mathrm{w}}(Z_2,\dots,Z_{N-2n}))$ attains its strict maximum at $(0,\dots,0)$. As a result, the multi-contour $L_{\mathrm{w}} = \psi^{\mathrm{w}}(L_{(\psi^\mathrm{w})^{-1}}) \cup (L \setminus \psi^0(V)) $ satisfies the desired property.
\end{proof}

\begin{proof}[Proof of Theorem \ref{thm:Jones:genFAMED}]
The proof is essentially the same as that of Theorem \ref{mainthmZ}. We write the integral in (\ref{ZtoJconv2}) as
\begin{align*}
&\ \int_{\RR^{N-2n-1}+i\mathbf{\hat{v}}_\alpha} 
h(\varphi( \mathscr{L}^{-1}(\mathrm{w},x_2,\dots,x_{N-2n})))
e^{\frac{1}{2\pi \hbar} J_{\hbar}(\mathrm{w},x_2,\dots,x_{N-2n})} d\boldsymbol{x} \\
=& \ \   
\int_{[-\kappa,\kappa]^{N-2n-1}+i\mathbf{\hat{v}}_\alpha} 
h(\varphi( \mathscr{L}^{-1}(\mathrm{w},x_2,\dots,x_{N-2n})))
e^{\frac{1}{2\pi \hbar} J_{\hbar}(\mathrm{w},x_2,\dots,x_{N-2n})} d\boldsymbol{x} \\
&\ \ + \int_{\RR^{N-2n-1} \setminus ([-\kappa,\kappa]^{N-2n-1}+i\mathbf{\hat{v}}_\alpha)} 
h(\varphi( \mathscr{L}^{-1}(\mathrm{w},x_2,\dots,x_{N-2n})))
e^{\frac{1}{2\pi \hbar} J_{\hbar}(\mathrm{w},x_2,\dots,x_{N-2n})} d\boldsymbol{x}.
\end{align*}
For the second integral, similar to \cite[Lemma 7.10]{BAGPN}, there exists constants $A,B > 0$ such that for all $\hbar \in (0,A)$,
$$
\Bigg|
\int_{\RR^{N-2n-1} \setminus [-\kappa,\kappa]^{N-2n}+i\mathbf{v}_\alpha} 
h(\varphi( \mathscr{L}^{-1}(\mathrm{w},x_2,\dots,x_{N-2n})))
e^{\frac{1}{2\pi \hbar} J_{\hbar}(\mathrm{w},x_2,\dots,x_{N-2n})} d\boldsymbol{x} 
\Bigg|
\leq B e^{\frac{M'}{2\pi \hbar}},
$$
where $M' = \max\left\{\Re S (\mathbf{\varphi(\boldsymbol{x});\lambda_X(\alpha)}) \mid  \boldsymbol{x} \in \partial(\RR^{N-2n} \setminus [-\kappa,\kappa]^{N-2n}+i\mathbf{v}_\alpha)\right\}$. By (\ref{smallbdy}), we have $M' < S( \boldsymbol{x^c};\lambda_X(\alpha))$. Next, for the first integral, by deformation of multi-contour we have
\begin{align*}
&\ \ \int_{[-\kappa,\kappa]^{N-2n-1} + i\mathbf{\hat{v}}_\alpha} 
h(\varphi( \mathscr{L}^{-1}(\mathrm{w},x_2,\dots,x_{N-2n})))
e^{\frac{1}{2\pi \hbar} J_{\hbar}(\mathrm{w},x_2,\dots,x_{N-2n})} d\boldsymbol{x}\\
=&\ \  \int_{L_\mathrm{w}} 
h(\varphi( \mathscr{L}^{-1}(\mathrm{w},x_2,\dots,x_{N-2n})))
e^{\frac{1}{2\pi \hbar} J_{\hbar}(\mathrm{w},x_2,\dots,x_{N-2n})} d\boldsymbol{x},
\end{align*}
where $L_\mathrm{w}$ is the multi-contour in Proposition \ref{Jcont2}. By applying Proposition \ref{saddle}, the desired result follows from Proposition \ref{JandNZ}, Corollary \ref{HessJtor} and Proposition \ref{hvalue}.
\end{proof}

\end{document}